\titleformat{\chapter}[hang]{\vspace{-1cm}\bfseries\raggedright}{\fontsize{35}{40}\selectfont \thechapter\hspace{.35cm}$\cdot$}{14pt}{\fontsize{22}{28}\selectfont}
\newif\ifhide
\definecolor{lblue}{RGB}{230,230,250}
\definecolor{dblue}{RGB}{180,180,250}
\definecolor{ddblue}{RGB}{50,50,140}
\definecolor{lgrey}{RGB}{180,180,180}
\newtheoremstyle{mytheorem} 
	{}{}
	{\itshape}{}
	{\scshape}{.}
	{.5em}{}
\newtheoremstyle{mydef} 
	{}{}
	{}{}
	{\scshape}{.}
	{.5em}{}
\theoremstyle{mytheorem}
\newtheorem{theorem}{Theorem}[chapter]
\newtheorem{lemma}[theorem]{Lemma}
\newtheorem{corollary}[theorem]{Corollary}
\newtheorem{proposition}[theorem]{Proposition}
\newtheorem{prop}[theorem]{Proposition}
\newtheorem{conj}[theorem]{Conjecture}
\theoremstyle{mydef}
\newtheorem{definition}[theorem]{Definition}
\newtheorem{question}{Question}[chapter]
\newtheorem{example}[theorem]{Example}
\newtheorem{remark}[theorem]{Remark}
\newcommand{\id}{\mathrm{id}}
\newcommand{\crn}{\mathrm{cr}_n}
\newcommand{\rad}{\mathrm{rad}}
\newcommand{\decay}{\mathrm{decay}}
\newcommand{\diam}{\mathrm{diam}}
\newcommand{\elp}{\mathcal{E}_{p,\sigma}^m}
\newcommand{\elz}{\mathcal{E}_{\sigma}^m}
\newcommand{\lall}{\Lambda^{\rm all}}
\newcommand{\lstd}{\Lambda^{\rm std}}
\newcommand{\intmu}{\mathrm{INT}_{\mu}}
\newcommand{\enran}{E_n^{\mathrm{ran}}}
\newcommand{\Pn}{P_n}
\newcommand{\pnran}{P_n^{\mathrm{ran}}}
\newcommand{\Uin}{U_{\rm in}}
\newcommand{\Uout}{U_{\rm out}}
\newcommand{\chole}{c_{\rm hole}}
\newcommand{\cgood}{c_{\rm good}}
\newcommand{\cpoly}{c_{\rm poly}}
\newcommand{\lip}{\mathrm{Lip}(\mathbb{R}^d)}
\newcommand{\lipd}{{\rm Lip}(D)}
\newcommand{\IB}{\mathbb{B}}
\newcommand{\IBo}{\mathbb{B}}
\newcommand{\IE}{\mathbb{E}}
\newcommand{\IN}{\mathbb{N}}
\newcommand{\IP}{\mathbb{P}}
\newcommand{\IR}{\mathbb{R}}
\newcommand{\IS}{\mathbb{S}}
\newcommand{\TT}{\mathbb{T}}
\newcommand{\ZZ}{\mathbb{Z}}
\newcommand{\cA}{\mathcal{D}}
\newcommand{\cB}{\mathcal{B}}
\newcommand{\cC}{\mathcal{C}}
\newcommand{\cH}{\mathcal{H}}
\newcommand{\cK}{\mathcal{K}}
\newcommand{\cP}{\mathcal{P}}
\newcommand{\cQ}{\mathcal{Q}}
\newcommand{\fP}{\mathfrak{P}}
\newcommand{\dd}{{\rm d}}
\newcommand{\dist}{\mathrm{dist}}
\newcommand{\vol}{\mathrm{vol}}
\newcommand{\bfone}{\mathbf{1}}
\newcommand{\dia}{\mathrm{diam}(\mathfrak{P})}
\newcommand{\gs}{\gtrsim}
\newcommand{\ls}{\lesssim}
\newcommand{\chg}[1]{\textcolor{black}{#1}}
\DeclareMathOperator*{\esssup}{ess\,sup}
\newcommand{\leqnomode}{\tagsleft@true}
\newcommand{\reqnomode}{\tagsleft@false}
\title{Thesis}
\author{Mathias Sonnleitner}
\date{}
\begin{document}

\setstretch{1.3}
\frontmatter
\pagestyle{empty}
\setlength{\voffset}{-2cm}
\begin{flushright}
\includegraphics[scale=.22]{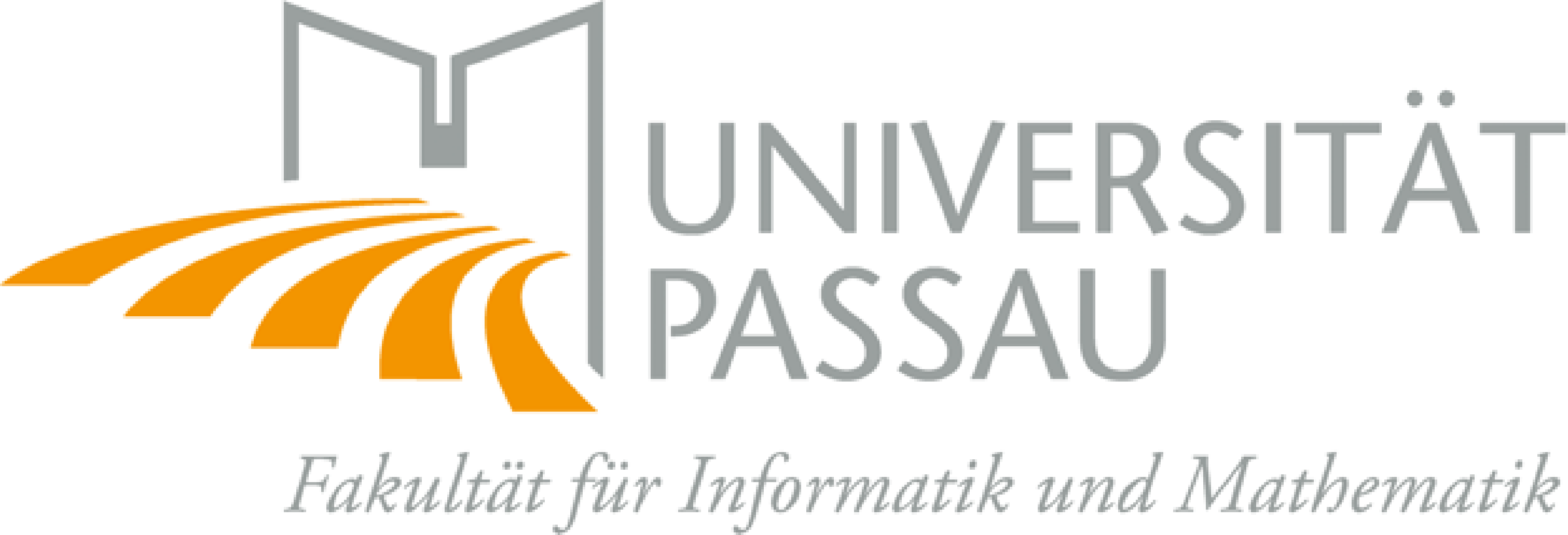}
\end{flushright}
\vspace{1cm}
\begin{center}
	\huge{\textbf{The power of random information for numerical approximation}}\\ 
	\huge{\textbf{and integration}}\\
	\vspace{1cm}
	\Large{Dipl.-Ing.~Mathias Sonnleitner}\\
	\vspace{3cm}
	\Large{A dissertation submitted to the \\ faculty of computer science and mathematics \\ in partial fulfillment of the requirements for the degree of\\ doctor of natural sciences}\\
\end{center}
\vspace{2cm}
\begin{flushleft}
	\begin{tabular}{ll}
		\Large{Advisors:} & \Large{Prof.~Dr.~Joscha Prochno}\\[.1cm]
				& \Large{Univ.-Prof.~Dr.~Aicke Hinrichs}	
	\end{tabular}
\end{flushleft}
	\vspace{1cm}
	\begin{center}
	\end{center}

\newpage
\setlength{\voffset}{-1.8cm}
\setlength{\textheight}{23cm} 

\begin{flushleft}
	\phantom{a}

\vfill

\chg{
	\begin{tabular}{ll}
		\large{Reviewers:} & \large{Prof.~Dr.~Joscha Prochno}\\[.1cm]
				& \large{Univ.-Prof.~Dr.~Aicke Hinrichs}\\[.1cm]
				& \large{Prof.~Dr.~Alexander Litvak}	\\[.5cm]
				\large{Date of oral exam:} & \large{2022-05-18}
	\end{tabular}
	}
\end{flushleft}

\pagestyle{fancy}

\newpage
\begin{center}
	\Large{\textbf{Abstract}}
\end{center}
\vspace{.5cm}

This thesis investigates the quality of randomly collected data by employing a framework built on information-based complexity, a field related to the numerical analysis of abstract problems. The quality or power of gathered information is measured by its radius which is the uniform error obtainable by the best possible algorithm using it. The main aim is to present progress towards understanding the power of random information for approximation and integration problems. 

In the first problem considered, information given by linear functionals is used to recover vectors, in particular from generalized ellipsoids. This is related to the approximation of diagonal operators which are important objects of study in the theory of function spaces. We obtain upper bounds on the radius of random information both in a convex and a quasi-normed setting, which extend and, in some cases, improve  existing results. We conjecture and partially establish that the power of random information is subject to a dichotomy determined by the decay of the length of the semiaxes of the generalized ellipsoid.

Second, we study multivariate approximation and integration using information given by function values at sampling point sets. We obtain an asymptotic characterization of the radius of information in terms of a geometric measure of equidistribution, the \chg{distortion, which} is well known in the theory of quantization of measures. This holds for isotropic Sobolev as well as Hölder and Triebel-Lizorkin spaces on bounded convex \chg{domains}. We obtain that for these spaces, depending on the parameters involved, typical point sets are either asymptotically optimal or worse by a logarithmic factor, again extending and improving existing results. 

Further, we study isotropic discrepancy which is related to numerical integration using linear algorithms with equal weights. In particular, we analyze the quality of lattice point sets with respect to this criterion and obtain that they are suboptimal compared to uniform random points. This is in contrast to the approximation of Sobolev functions and resolves an open question raised in the context of a possible low discrepancy construction on the two-dimensional sphere.

\newpage
\phantom{}

\newpage
\begin{center}
	\Large{\textbf{Acknowledgements}}
\end{center}
\vspace{.5cm}

At this point, I want to acknowledge that this work would not have been created without the support of several important people.

First, I want to express my gratitude to my supervisors Aicke Hinrichs and Joscha Prochno who provided me with the opportunity to conduct my own research and supported me in the process. 

This thesis relies on joint publications with kind colleagues. In addition to my supervisors, these are David Krieg, with whom I exchanged innummerable e-mails about our joint projects and whose comments improved the presentation of our joint work in this thesis, and  Friedrich Pillichshammer, \chg{who guided} me during my first publication. 

Special thanks go to Mario Ullrich, who served as a mentor during my time in Linz, for many (non-)mathematical discussions and helpful advice.

I was fortunate to have met many people during my studies, especially at the Institute of Analysis and the neighboring Institute of Financial Mathematics and Applied Number Theory at Johannes Kepler University (JKU) Linz and the Institute of Mathematics and Computer Science at the University of Graz. The conversations with them enriched my life and made it easy to take short (and also longer) breaks from mathematics.

Even more so, I am glad to have friends who provided most welcome distraction from symbols and formulas.

Many of my teachers deserve my gratitude for increasing my joy in learning which made composing this thesis an overall satisfying experience.

Finally, let me state the obvious fact that I would not exist without my parents. They provided me with much more than mere existence and I want to thank them for their unfailing support. Returning home always has been a source of strength because of them and my brother.

\vspace{1cm}

The support by the Austrian Science Fund (FWF) through the project grants FF5513-N26, which is part of the Special Research Program `` Quasi-Monte Carlo Methods: Theory and Applications'', P32405 ``Asymptotic geometric analysis and applications'', and P34808 ``Information-Based Complexity: Beyond the standard settings'' is gratefully acknowledged. 

\chapter*{Preface}
\addcontentsline{toc}{chapter}{\protect\numberline{}Preface}
\fancyhead{}
\fancyfoot{}
\fancyhead[CO]{\nouppercase{\textsc{\rightmark}}}
\fancyhead[CE]{\nouppercase{\textsc{\leftmark}}}
\fancyfoot[CO,CE]{\thepage}

This thesis is the result of several projects conducted at the Institute of Analysis at JKU Linz and the Institute of Mathematics and Scientific Computing at the University of Graz. It is based on the following four research works (ordered chronologically by first-announcement date) and aims to put them into a common framework.

\begin{enumerate}
	\item[]
		with A.~Hinrichs and J.~Prochno: Random sections of $\ell_p$-ellipsoids, optimal recovery and Gelfand numbers of diagonal operators, 2021. (submitted) \cite{HPS21} (see Chapter~\ref{ch:ell})
	\item[] with F.~Pillichshammer: On the relation of the spectral test to isotropic discrepancy and $L_q$-approximation in Sobolev spaces. \textit{J.~Complex.}, \textbf{67}, Article 101576, 2021. \cite{SP21} (see Chapter~\ref{ch:iso})
	\item[] with D.~Krieg: Random points are optimal for the approximation of Sobolev functions, 2020.  (submitted) \cite{KS20} (see Chapter~\ref{ch:sob})
	\item[] with F.~Pillichshammer: A note on isotropic discrepancy and spectral test of lattice point sets. \textit{J.~Complex.}, \textbf{58}, Article 101441, 2020. \cite{PS20} (see Chapter~\ref{ch:iso})
\end{enumerate}

\noindent The following additional research articles by the author are not included in this thesis.
\begin{enumerate}
	\item[] with D.~Krieg: Function recovery on manifolds using scattered data, 2021. \cite{KS21}
	\item[] with D.~Krieg and E.~Novak: Recovery of Sobolev functions restricted to iid sampling. \chg{\textit{Math.~Comp.}, 91(338):2715-2738, 2022.} \cite{KNS21}
	\item[] with A.~Baci, Z.~Kabluchko, J.~Prochno and C.~Th\"ale: Limit theorems for random points in a simplex. \textit{J.~Appl.~Probab.} (accepted), 2021+. \cite{BKP+20}
\end{enumerate}

\tableofcontents

\mainmatter

\chapter{Introduction} \label{ch:intro}
\fancyhead[CO]{\nouppercase{\textsc{\leftmark}}}
\fancyhead[CE]{\nouppercase{\textsc{\leftmark}}}

One could say that mathematics is an endless process in which the following sequence repeats itself indefinitely. Asking a question, searching for an answer which is often (to the possible discomfort of non-mathematicians) understood as a proof of the mere existence of a solution, discovering the structures behind and raising questions about them. It may come as no surprise that this work is no different as it is guided by a question which arose in previous research and in turn triggers new open problems. In order to give more details, let us introduce the setting and give the motivation behind. The subject of study belongs to theoretical numerical analysis, which deals less with the numerics behind implementable algorithms than with the analysis of the given numerical problem itself. As we understand it, this branch of mathematics lies at a crossroads between approximation theory, functional analysis, geometry, information-based complexity (IBC) and probability. In what follows, we draw upon these fields to set the stage, and fill in details in Chapter~\ref{ch:prelim}. 

Consider the abstract numerical problem of finding a solution $S(f)$ for each object $f$ (or problem instance) in a class $F$. For one reason or another, it might be the case that we cannot solve the problem exactly for each problem instance. For one, the quantity $S(f)$ might be difficult to compute, say it is the integral of a complicated function $f$ belonging to some class of functions $F$. Second, we may only have incomplete knowledge about \chg{$f$, meaning} it cannot be described through the finite amount of available data. 

In either case, we might try to use finitely many observations of any particular $f\in F$ to approximate the solution $S(f)$. Let $L_1(f),\dots,L_n(f)$ be real numbers denoting the $n\in\IN$ observations we collect about $f$, where the measurements $L_1,\dots,L_n\colon F\to \IR$ are known and may be applied to any $f\in F$ to produce observations. To avoid writing them in full, we use the notation $N_n(f)=\big(L_1(f),\ldots,L_n(f)\big)$ for \emph{information} about $f$, which we shall use to construct an approximation $A(f)$ to the true solution $S(f)$. Necessarily, if we are only to use this information, the approximation has to be of the form $A(f)=\varphi\big(N_n(f)\big)$, where $\varphi$ takes $n$ real numbers as input. 

\newpage

To illustrate, let us give two examples which will accompany us throughout this work. In the integration problem, the object $f$ is a function belonging to a class $F$ of functions and has integral $S(f)$ (with respect to some fixed measure). In this case, information may be given for example by function evaluations $N_n(f)=\big(f(x_1),\ldots, f(x_n)\big)$ and $\varphi\big(N_n(f)\big)$ should approximate the integral of $f$. In the approximation problem, the object $S(f)=f$ is to be approximated itself from information $N_n(f)$ given by function evaluations or from linear measurements such as Fourier coefficients.

We view $N_n$ as a map from $F$ to $\IR^n$, returning for each $f$ information $N_n(f)$ about $f$. We then speak of the \chg{\emph{information map}} $N_n$. Similarly, we can interpret $\varphi$ as a map from $\IR^n$ to the set of possible approximants and call it the \chg{\emph{reconstruction map}}. The composition $A=\varphi\circ N_n$ is then an algorithm using the information map $N_n$. In practice, such an algorithm may represent an actual (future) implementation which runs on a computer (or any other Turing machine) and handles finitely many inputs to produce an output in finite time. 

In general, finite information $N_n(f)$ is insufficient to determine $f\in F$ and, even though we may know $f$ itself, we assume it is an unknown element of $F$ except for information $N_n(f)$. Therefore, any, even the best,  algorithm using the information map $N_n$ is bound to incur a non-zero error on some $f\in F$. \chg{The minimal worst-case} error over all possible algorithms using the information map $N_n$ is called the \emph{radius of information} $N_n$ and measures its quality or power. The smaller its radius, the better the information map.  

Naturally, one is interested in solving the given numerical problem of approximating $S(f)$ with an error as small as possible by using $n$ measurements. A lower bound on this error is given by the smallest possible radius over all information maps using at most $n$ measurements. This quantity is called the $n$-th minimal radius of information. Then our main aim is to understand whether (near-)optimal information, whose radius is close to this lower bound (in an asymptotic sense), has to be something singular, arcanely constructed, or rather is typical in the sense that most information maps using $n$ measurements have a radius comparable to the $n$-th minimal radius of information. We phrase this as two related questions.
\begin{center}
How typical is optimal information? How optimal is typical information?
\end{center}

As a shortcoming of our rather abstract approach this pair of questions is not yet well-posed. Although we shall make it more precise in Chapter~\ref{ch:prelim}, we understand it rather as a theme with many facets and do not expect a definite answer but a whole spectrum of them. In the following, we wish to give some idea \chg{how} possible solutions may look like. 

First, we need to clarify what ``typical'' and ``most'' mean in this context. This can be resolved by putting a measure on the set of allowed measurements and thus considering random measurements. In many situations, a canonical choice of measure suggests itself but it may be specific to the problem. The random measurements can be collected to form random information. Then, determining the optimality of typical information amounts to studying the radius of random information and \chg{therefore} we will use the terms ``typical'' and ``random'' interchangeably. 

From a practical point of view, there are several issues we will ignore in this work. We are interested in the radius of information as a theoretical quantity and are not so much concerned with explicit algorithms. Also, we do not consider the often non-trivial process of generating the required random objects. Further, in most cases, we will not be interested in constants but only asymptotic behaviour. Instead, we wish to gain insight into the qualititative behaviour of the typical radius compared to the minimal radius. In this way, we hope to increase our understanding of the problems themselves. For further motivation we draw in the following from previous work in this direction.

To the best of our knowledge, the pair of questions above has not been studied in the literature in such a general context except by Hinrichs, Krieg, \chg{Novak, Prochno and} M.~Ullrich in the articles \cite{HKN+20} and \cite{HKN+21} which initiated part of the research presented in this thesis. The work \cite{HKN+20} asks about the \emph{power of random information}, that is, the typical quality of random information coming in by independent and identically distributed (i.i.d.) observations for various numerical problems. The mentioned works contain results on integration and approximation in Sobolev spaces as well as recovery in ellipsoids and associated Hilbert spaces. This thesis extends these insights about the power of random information and gives possible answers to the two questions above.

In \cite{HKN+20} at least two arguments were put forward to justify the use, and consequently the study, of random instead of optimal information.  First, one can easily increase the number of measurements and thus decrease the error without problem if there is a method of sampling random measurements. This allows for some flexibility compared to specifically chosen information which may need to be re-computed if one adds an additional measurement. Second, optimal information is often tailored to the specific problem, \chg{whereas} typical information can be used for many different problems. This behaviour reflects a kind of \emph{universitality}.

To mention potential applications, the assumption of i.i.d.\ observations is common in (statistical) learning theory, which infers functional dependencies from empirical data from real-world applications, see, e.g., Vapnik~\cite{Vap98}. This field is behind many developments associated with the nowadays popular concept of a neural network which should replace the object it is modeled on, the (human) brain, in modern applications such as pattern recognition or autonomous driving. Therefore, the study of the power of random information may contribute to the understanding of the effectiveness of algorithms used in these fields. 

As mentioned, we expect a spectrum of possible ways how random information might compare to optimal information. Essentially, we distinguish the following ends of the spectrum. Random information may either be close to optimal or completely useless. In fact, these are the only two possibilities appearing in a problem treated in Chapter~\ref{ch:ell}. Let us describe what we can conclude from either one. 

If typical information is close to optimal or, equivalently, the power of random information is best possible with high probability, then one might use random information if optimal information is not available or costly to obtain. Sometimes it is even the case that the best known information is a realization of random information. This is related to the \emph{Probabilistic method} using which one proves the existence of an object with a desired property by establishing it with positive probability for a randomly chosen object from a suitably constructed probability space. For an illustration and examples ranging from random graphs to combinatorial discrepancy we refer to the book of the same name \cite{AS16} by Alon and Spencer. Let us add here that this approach is behind the \chg{construction of suitable} point sets for numerical integration or the use of random matrices in the field of compressed sensing. The latter is a toolbox to efficiently process signals from applications such as facial recognition or magnetic resonance imaging, where only a few large wavelet coefficients are necessary to obtain a good approximation.

On the other hand, a large gap between the power of random information and the optimal behaviour may suggest that measurements have to be carefully selected in order to obtain good algorithms. On a practical note, we may deduce from this how \emph{not} to gather information. It appears that the required lower bounds on the power of random or typical information, which also exhibit the limitations of the Probabilistic method, are far and few in between. Although it is sensible to focus on upper bounds as guarantees for random algorithms, we believe that the other side of the coin deserves attention too. 

Apart from its contribution to solving numerical problems, the study of random information combines different fields of mathematics and may increase our knowledge about random structures. For example, if information is collected via function samples, then the radius of information is related to geometric features of the point set such as the distortion or the discrepancy, see Chapters~\ref{ch:sob} and \ref{ch:iso}, respectively. If the point set is drawn randomly, this is connected to the study of Voronoi tessellations induced by a Poisson point process, \chg{see, e.g.,} Yukich~\cite{Yuk08}. Further, if the underlying set is a sphere, the radius of the largest hole in a random point set determines the Hausdorff distance between the ball and its approximation by the convex hull of the points. These topics belong to stochastic geometry as described for example in the book by Schneider and Weil \cite{SW08}.  

As an example of the power of random information, we would like to mention the famous result on the polynomial tractability of star-discrepancy by Heinrich, Novak, Wasilkowski and Wo\'zniakowski \cite{HNW+01} drawing from progress in empirical process theory due to Talagrand \cite{Tal94}, see also Novak and Wo\'zniakowski \cite[Sec.~3.1.5]{NW08}. The concept of tractability quantifies high-dimensional behaviour and this example shows that random information can be an antidote to the so-called \emph{curse of dimensionality}, which occurs if the problem complexity depends exponentially on the dimension. 

In fact, random information is part of the \emph{blessing of dimensionality} as observed for example by Kainen \cite{Kai97}. Put briefly, this term summarizes regularizing effects as the dimension increases such as the \emph{concentration of measure phenomenon} for which we refer to the book by Ledoux \cite{Led01} of the same name.

Concentration of measure has been extensively used in the local theory of Banach spaces which evolved the field of asymptotic geometric analysis at the midpoint between geometry and functional analysis. There, properties of normed spaces as the dimension tends to infinity are studied. Famous questions like the thin-shell or the hyperplane conjecture are still open and the progress towards their solution shows how unintuitive high-dimensional objects may be to the imagination of us three-dimensional beings, see the books by Artstein-Avidan, Giannopoulos and V.~D.~Milman~\cite{AGM15} as well as V.~D.~Milman and Schechtman \cite{MS86} for an extensive overview of the subject.

A related phenomenon specific to high dimensions has been called \emph{existence versus prevalence} by Vershynin~\cite{Ver06}, see also Giannopoulos, V.~D.~Milman and Tsolomitis~\cite{GMT05}, Litvak, Pajor and Tomczak-Jaegermann \cite{LPT06} as well as the references given there. Roughly speaking, it claims that the existence of one structure with a certain desired property implies that actually most structures satisfy this property. In the works mentioned, the diameter of a random sections of convex bodies has been considered but this also occurs also in related situations. This is a further example of the power of random information and will be discussed in Chapter~\ref{ch:ell}.

\newpage

\textbf{Outline. }
We give an overview of the structure of the remainder of this thesis. First, Chapter~\ref{ch:prelim} gives definitions of the concepts introduced above and lays the foundation for the remaining chapters. In particular, the notion of random information is discussed for linear information given by functionals (Section~\ref{sec:lin}) and standard information given by function evaluations (Section~\ref{sec:std}).  

The main body is composed of three Chapters~\ref{ch:ell}, \ref{ch:sob} and \ref{ch:iso} which are modeled onto published research during work on this thesis, plus an interlude Chapter~\ref{ch:interlude} connecting Chapters~\ref{ch:sob} and \ref{ch:iso}. Let us first describe the common structure of the triple (\ref{ch:ell},\,\ref{ch:sob},\,\ref{ch:iso}). Each opens with a detailed introduction which embeds the related publication(s) into a wider context and serves as a motivation. This is followed by a discussion of the results together with existing work. Afterwards, concepts of the proofs behind the results are highlighted in order to make the proofs more accessible and to prepare for follow-up work. Remaining open questions will be posed and discussed at the end in order to provide points of departure for subsequent research and to continue the mathematical process. Throughout these chapters, we will use the optional notation ``\textsc{Theorem X.x }([XYZ00, Thm.~X])'' to clarify where the corresponding result is derived from. 

Chapter~\ref{ch:ell} treats linear information for the recovery of vectors belonging to general convex bodies and also to generalized ellipsoids. The results obtained there extend the work from the Hilbert space case in \cite{HKN+21} and are related to Gelfand numbers of diagonal operators. We present two upper bounds on the radius of random information, one relying on asymptotic geometric analysis, and another relying on compressed sensing. Then we discuss implications for the important special case of polynomial semiaxes, where we conjecture a dichotomy for the power of random information. This is based on the joint work~\cite{HPS21} with Hinrichs and Prochno.

In Chapter~\ref{ch:sob} we study standard information for the approximation and integration of functions, mainly from Sobolev but also from Hölder and Triebel-Lizorkin spaces. Motivated by the study of the power of random information, we derive a characterization of the radius of information given by a point set in terms of the distortion, which measures the size of an average or the largest hole. As a consequence, this yields the optimality of random information for a certain range of parameters including integration. We discuss connections to quantization theory which we use to deduce our result. This chapter is derived from the joint work \cite{KS20} with Krieg.

The subsequent Chapter~\ref{ch:interlude} does not contain any new results but instead collects material on optimal transport and quantization theory with the aim of discussing the weights used by (near-)optimal algorithms for integration problems. We find that these weights can be given explicitly in terms of the Voronoi cells of the underlying point set. Further, we argue why in some cases weights can be assumed to be normalized and sometimes even of equal size, which yields a (somewhat improvised) connection to the geometrical concept of discrepancy.

Finally, in Chapter~\ref{ch:iso} standard information is discussed in the context of equal weight cubature rules for numerical integration and discrepancy, which is introduced in a general fashion. The main result is a lower bound on the isotropic discrepancy of lattice point sets which answers a question raised by Aistleitner, Brauchart and Dick~\cite{ABD12} and shows that structured point sets may be at a disadvantage compared to random ones. We derive this from a characterization of the isotropic discrepancy of a lattice point set in terms of the so-called spectral test and derive an asymptotic equivalence for the radius of information given by a lattice point set. The results are taken from joint work with Pillichshammer, namely, \cite{PS20} and the follow-up work \cite{SP21}.

Almost all of the proofs are outsourced to the Appendices \ref{ch:ell-app} - \ref{ch:iso-app} at the end of this thesis, which mirror Chapters \ref{ch:ell} - \ref{ch:iso}.

\bigskip

\textbf{Notation. } At this point we wish to clarify some notation. The positive integers are denoted by $\IN=\{1,2,\ldots\}$ and we write $\IN_0=\IN\cup \{0\}$. Given a sequence $x=(x_i)_{i\in\IN}$ and $0<p\le\infty$, write $\|x\|_p:=(\sum_{i=1}^{\infty}|x_i|^p)^{1/p}$ for $p<\infty$ and $\|x\|_{\infty}:=\sup_{i\in\IN}|x_i|$ for $p=\infty$. The unit ball of $\ell_p$, the space of sequences $x$ with $\|x\|_p<\infty$, is $\IB_p:=\{x\colon \|x\|_p\le 1\}$. We write $\ell_p^m$ for the $m$-dimensional subspace spanned by the first $m\in\IN$ coordinates and $\IB_p^m$ for its (closed) unit ball. If $1\le p\le\infty$, then $p^*$ is the Hölder conjugate with $\frac{1}{p}+\frac{1}{p^*}=1$, where $a/\infty:=0$ for any $a\in\IR$. If $p=2$, the space $\ell_2^m$ is equipped with the standard inner product $\langle\cdot,\cdot\rangle$ and $\IS^{m-1}:=\partial \IB_2^m$ is the sphere bounding the unit ball. 

For any real $a\in \IR$ we use the notation $(a)_+:=\max\{a,0\}$ and $a=\lfloor a\rfloor+\{a\}$, where $\lfloor a\rfloor\in \ZZ$ is the integral and $0\le \{a\}<1$ the fractional part of $a$. Logarithms are always taken to basis ${\rm e}=2.71828\dots$.

If $A$ is any set, then $\bfone_A$ denotes the indicator function of $A$ with $\bfone_A(x)=1$ if $x\in A$ and zero else. If $B$ is another set, we do not exclude the case of equality in the notation $A\subset B$. If $A$ is finite, we use $\#A$ to denote its cardinality. 

We assume that all random variables are defined on a common probability space and use $\IP$ and $\IE$ for probability and expectation, respectively. The asymptotic notations $\asymp,\lesssim,\gtrsim$ are explained in Definitions~\ref{def:asymp} and \ref{def:asymp-std}. Other notation is explained at its first point of appearance and collected in the \hyperref[ch:los]{List of symbols}.

\chapter{Preliminaries} \label{ch:prelim}
\fancyhead[CO]{\nouppercase{\textsc{\rightmark}}}
\fancyhead[CE]{\nouppercase{\textsc{\leftmark}}}

In this chapter, we introduce the framework supporting the remainder of the thesis. We wish to clarify the notions used in the introduction and for the sake of completeness allow for some redudance. First, in Section~\ref{sec:ibc}, we explain fundamental concepts such as the radius or minimal radius of information. Section~\ref{sec:lin} introduces the setting of recovery using linear information on which Chapter~\ref{ch:ell} is based. In Section~\ref{sec:std} we discuss standard information which is obtained by function \chg{samples and} a special case of linear information. This is the basis for Chapter~\ref{ch:sob}, and in parts also Chapters~\ref{ch:interlude} and \ref{ch:iso}. 

\section{The radius of information}
\label{sec:ibc}

Recall that for each $f$ in a class of objects $F$ we want to compute a solution $S(f)$ to a numerical problem. Here, the solution operator $S\colon F\to G$ maps into a set $G$ containing the set of possible outcomes $S(F):=\{S(f)\colon f\in F\}$. We use $n$ measurements $L_1(f),\ldots,L_n(f)\in\IR$, where $L_i\colon F\to \IR$ for $i=1,\ldots,n$, and combine them into information $N_n:=(L_1,\ldots,L_n)\colon F\to\IR^n$. Using a reconstruction map $\varphi\colon \IR^n\to G$, we construct from the given information $N_n(f)$ an approximation $\varphi\big(N_n(f)\big)\approx S(f)$. Note that we assume the knowledge of the information map $N_n$, and the map $\varphi$ may (and will) depend on the information map $N_n$. The following diagram illustrates the situation.
\[
\xymatrix{
F\ar[rr]^{S}\ar[rd]_{N_n} & & G\\
& \mathbb{R}^n \ar[ur]_{\varphi}  & }
\]
We impose additional structure and assume that $F$ and $G$ are normed vector spaces equipped with norms $\|\cdot\|_F$ and $\|\cdot\|_G$, respectively. In this way, we arrive at the setting of information-based complexity, as treated for example by Traub, Wasilkowski and Wo\'zniakowski~\cite{TWW88} and in the book series by Novak and Wo\'zniakowski~\cite{NW08,NW10,NW12}. 

\newpage

The problem formulation suggests measuring the closeness $\approx$ between the approximation $A(f)=\varphi\big(N_n(f)\big)$ and the true value $S(f)$ with $\|S(f)-A(f)\|_G$, which is called the \emph{error} of the algorithm $A$ at $f\in F$. It is useful to have a uniform error guarantee for the algorithm $A$ as the instance $f\in F$ producing information $N_n(f)$ may be unknown. Therefore, consider the \emph{worst-case error}
\begin{equation} \label{eq:wce}
e(S\colon F\to G,A)
:=\sup_{f\in F_0} \|S(f)-A(f)\|_G
\end{equation}
of $A$ over the unit ball $F_0:=\{f\colon \|f\|_F\le 1\}$ of $F$. Abusing notation, we frequently replace the supremum by $\sup_{\|f\|_F\le 1}$. 

One reason to consider the worst-case error over the unit ball is that the norm $\|\cdot\|_F$ measures the size or complexity of the input and elements with larger norm may be more difficult to approximate. More precisely, if both $A$ and $S$ are linear maps, we can use the worst-case error to obtain a bound on the error of $A$ at an arbitrary $f\in F$ by normalizing it. This provides some motivation to consider this error criterion and results from 
\begin{align*}
\|S(f)-A(f)\|_G
=\|f\|_F\,\big\|S(f/\|f\|_F)-A(f/\|f\|_F)\big\|_G
\le \|f\|_F\, e(S\colon F\to G,A).
\end{align*}
To ensure a finite worst-case error, it is convenient to suppose that $S$ is bounded, that is, 
\[
\sup_{\|f\|_F\le 1}\|S(f)\|_G
=e(S\colon F\to G,0)<\infty.
\]
In other words, when we do not have any information, we take the algorithm equal to zero and the resulting initial error is supposed to be finite.  

Formally, the radius of information is defined as follows.

\begin{definition}\label{def:radius}
The radius of an information map $N_n\colon F\to \IR^n$, where $n\in\IN$, is
\begin{equation} \label{eq:radius-info}
r(S\colon F\to G,N_n)
:=\inf_{\varphi}e(\chg{S\colon F\to G},\varphi\circ N_n),
\end{equation}
where the infimum is taken over all reconstruction maps $\varphi\colon\IR^n\to G$. 
\end{definition}
Equivalently, the radius $r(S\colon F\to G,N_n)$ is the minimal worst-case error over all algorithms $A=\varphi\circ N_n$ using the information map $N_n$. The term ``radius'' is of geometric origin and is related to the fact that the algorithm which chooses a center of the set of elements of equal information has minimal worst-case error. The concept of a Chebyshev radius from numerical analysis is similar, see, e.g., Novak \cite[Sec.~A.3]{Nov88}. 

\label{loc:lall}
In the following, we shall consider only continuous and linear measurements belonging to a class $\Lambda\subset \lall:=F'$, where $F'$ is the dual space of $F$ and contains all continuous and linear functionals from $F$ to $\IR$. Sections~\ref{sec:lin} and \ref{sec:std} will discuss the two choices of $\Lambda$ considered in this thesis, namely the case of linear information $\Lambda=\lall$ and the case of standard information $\Lambda=\lstd$ consisting of function evaluations.

In general, the class $\Lambda$ gives rise to information maps $N_n$ of the form 
\begin{equation} \label{eq:info-lambda}
N_n=(L_1,\dots,L_n),\quad
\text{where }L_i\in \Lambda \text{ for all }i=1,\dots,n,
\end{equation}
where $n\in\IN$ is the cardinality of $N_n$. The minimal radius is a benchmark for the quality of any information $N_n$ of the form \eqref{eq:info-lambda} and is given as follows.
\begin{definition}\label{def:minimal-radius}
The $n$-th minimal radius of information with respect to $\Lambda$ is
\begin{equation} \label{eq:def-min-info}
r(S\colon F\to G,\Lambda,n)
:=\inf_{N_n}r(S\colon F\to G,N_n),\quad n\in\IN,
\end{equation}
where the infimum is over all information maps $N_n\colon F\to \IR^n$ of the form \eqref{eq:info-lambda}.
\end{definition}
This is the best we can do using $n$ optimal measurements from the class $\Lambda$. By definition, the $n$-th minimal radius will be a non-increasing function of $n$ and, under suitable conditions, will tend to zero as the amount of information increases. \chg{This means} that we can obtain arbitrarily good approximations using finite information, see also Remark~\ref{rem:compact} below. Further, for any $n\in\IN$ and $N_n$ as in \eqref{eq:info-lambda}, we have
\begin{equation} \label{eq:min-info}
r(S\colon F\to G,N_n)
\ge r(S\colon F\to G,\Lambda,n).
\end{equation}
We shall be interested in the question whether \eqref{eq:min-info} can be reversed up to a constant for most information maps.
\begin{question}\label{que:question}\hfill
\begin{center}
	Does it hold that for most information maps $N_n$ with measurements from $\Lambda$ that
\[
r(S\colon F\to G,N_n)
\lesssim r(S\colon F\to G,\Lambda,n)?
\]
\end{center}
\end{question}

This is a very general formulation of the problem studied in this thesis. In order to make sense of it, it is necessary to clarify what ``most'' shall mean. This shall be done in the remaining sections of this chapter. For now, we shall be concerned with explaining the notation $\lesssim$.

\begin{definition}\label{def:asymp}
	Given two sequences $(a_n)_{n\in\IN}$ and $(b_n)_{n\in\IN}$ of non-negative real numbers, we write $a_n\lesssim b_n$ if there exists a constant $C>0$ such that $a_n \le C\,b_n$ for all $n\in\IN$. In this case, we may also write $b_n\gtrsim a_n$. If there exists another constant $c>0$ such that also $a_n \ge c\, b_n$ for all $n\in\IN$ ($a_n\gtrsim b_n$) holds, then we write $a_n\asymp b_n$ and speak of asymptotic equivalence. We shall use this notation also for non-negative real numbers indexed for example by $(n,m)$ instead of $n$ and then demand that the implicit constants are independent of $n$ and $m$. Further we may use subindices as in $\lesssim_p$ to indicate that the implicit constant may depend on an additional parameter $p$.
\end{definition}

The notation makes clear that we are mainly interested in the asymptotic behaviour, that is, in $n$ tending to $\infty$. If Question~\ref{que:question} were true, then the asymptotic equivalence
\[
r(S\colon F\to G,N_n)
\asymp r(S\colon F\to G,\Lambda,n)
\]
would hold for ``most'' information maps $N_n$ with measurements belonging to $\Lambda$. In this case we say that the radius of typical information is asymptotically equivalent to the minimal radius. As we are mostly concerned only with this asymptotic formulation, we will often be careless about constants, which may be deduced from our proofs. 

At the end of this section, we define the numerical problems central to this thesis.

\bigskip

\textbf{Approximation. }
\label{loc:embedding}
In the approximation problem, the map $S$ is equal to the embedding $\id\colon F\to G$ which takes $f\in F$ to $\id(f)=f$ itself. We assume that $F\subset G$ and further that the embedding $\id\colon F\to G$ is bounded, i.e., there exists a constant $C$ such that $\|f\|_G\le C\|f\|_F$ for all $f\in F$. In this case we speak of a continuous embedding of $F$ into $G$ and write $F\hookrightarrow G$. We then use the notation
\[
r(F\hookrightarrow G, N_n):=r(\id\colon F\to G, N_n)
\]
and analogously for the minimal radius. The upcoming section will introduce random linear information for approximation as a preparation for Chapter~\ref{ch:ell} and Section~\ref{sec:std} will introduce standard information used in Chapter~\ref{ch:sob} for the approximation problem.

\bigskip

\textbf{Integration. }
In the integration problem, the space $F$ is a space of real-valued functions on some measure space and $S$ takes a function $f\in F$ to its integral, so that $G=\IR$. Here, we assume that that $F$ is continuously embedded into the space of integrable functions. Then $S$ is a continuous linear functional itself and therefore it does not make sense to consider linear information but only standard information for this problem which will be treated in Chapters~\ref{ch:sob}, \ref{ch:interlude} and \ref{ch:iso} in various guises.

\begin{remark}
	In this thesis, we consider only non-adaptive information maps of the form \eqref{eq:info-lambda} and use the same measurements for every $f\in F$. This is in \chg{contrast} to adaptive information, where one may use the measurement $L_n(f)$ to select a suitable $L_{n+1}\in\Lambda$. Note that if $S$ is linear between normed spaces $F$ and $G$, as in the majority of the problems we will consider, then adaption can only yield an improvement of at most a constant factor, see, e.g., \cite[Thm.~4.4]{NW08}. This justifies our assumption.
\end{remark}

\section{Linear information and Gelfand widths}
\label{sec:lin}

In this section, we consider linear information $N_n$, which is of the form 
\begin{equation} \label{eq:info-lin}
N_n=(L_1,\dots,L_n),\quad
\text{where }L_i\in \lall \text{ for all }i=1,\dots,n,
\end{equation}
with $n\in\IN$. Recall that $\lall$ is the set of all continuous linear functionals on $F$. In the following, we shall assume that $S$ is linear and bounded as this is the case for the approximation and the integration problems we consider. Then the radius of information (Definition~\ref{def:radius}) satisfies the following well-known relation. 

\begin{proposition}\label{pro:radius-general}
	Let $S\colon F\to G$ be a bounded linear map between normed spaces. Then we have, for all linear information maps $N_n$ as in \eqref{eq:info-lin},
\[
\sup_{f\in F_0\cap \ker N_n} \|S(f)\|_G
\le r(S\colon F\to G,N_n)
\le 2\, \sup_{f\in F_0\cap \ker N_n} \|S(f)\|_G,
\]
where $\ker N_n:=\{f\in F\colon N_n(f)=0\}$ is the kernel of $N_n$.
\end{proposition}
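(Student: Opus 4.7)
The plan is to set $\rho := \sup_{f \in F_0 \cap \ker N_n} \|S(f)\|_G$ and to prove both inequalities by exploiting the basic fact that any element of $\ker N_n$ is indistinguishable from $0$ to any algorithm using $N_n$. Linearity of $S$ (and hence of $\ker N_n$ and $S(\ker N_n)$) will then be the only other ingredient.

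For the lower bound, I would fix any reconstruction map $\varphi\colon\IR^n\to G$ and any $f\in F_0\cap\ker N_n$. Since $-f$ also belongs to $F_0\cap\ker N_n$ and $N_n(f)=N_n(-f)=0$, the algorithm $A=\varphi\circ N_n$ produces the same output $\varphi(0)$ at both inputs. By the triangle inequality,
\[
\|S(f)-\varphi(0)\|_G+\|S(-f)-\varphi(0)\|_G\ge 2\|S(f)\|_G,
\]
so at least one of the two summands is $\ge\|S(f)\|_G$. Therefore $e(S\colon F\to G,\varphi\circ N_n)\ge\|S(f)\|_G$. Taking the supremum over $f\in F_0\cap\ker N_n$ and then the infimum over $\varphi$ yields $r(S\colon F\to G,N_n)\ge\rho$.

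For the upper bound, I would exhibit an explicit reconstruction that loses at most a factor $2$. For every $y\in N_n(F_0)$ select (once and for all) some $f_y\in F_0$ with $N_n(f_y)=y$ and set $\varphi(y):=S(f_y)$; on the complement of $N_n(F_0)$ in $\IR^n$, define $\varphi$ arbitrarily (say equal to $0$). Then for any $f\in F_0$, writing $y=N_n(f)$, the difference $f-f_y$ lies in $\ker N_n$ and satisfies $\|f-f_y\|_F\le 2$, so $(f-f_y)/2\in F_0\cap\ker N_n$. By linearity of $S$,
\[
\|S(f)-\varphi(N_n(f))\|_G=\|S(f-f_y)\|_G=2\,\bigl\|S\bigl((f-f_y)/2\bigr)\bigr\|_G\le 2\rho.
\]
Taking the supremum over $f\in F_0$ gives $e(S\colon F\to G,\varphi\circ N_n)\le 2\rho$, hence $r(S\colon F\to G,N_n)\le 2\rho$.

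There is essentially no serious obstacle: the argument is a direct interpolation of indistinguishable instances. The only minor subtlety worth flagging is that the selection $y\mapsto f_y$ in the upper bound invokes the axiom of choice; since Definition~\ref{def:radius} imposes no measurability or continuity on $\varphi$, this causes no problem. The factor $2$ is optimal in this level of generality, reflecting the standard gap between the Chebyshev radius and half the diameter of a set in an arbitrary normed space.
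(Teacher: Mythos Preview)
Your proof is correct and follows essentially the same approach as the paper's (given for the quasi-normed generalization, Proposition~\ref{pro:radius-general-+}, and specialized here with $C_F=C_G=1$): the lower bound via the symmetry $f\leftrightarrow -f$ in $F_0\cap\ker N_n$ and the triangle inequality, and the upper bound via the reconstruction $\varphi(y)=S(f_y)$ for an arbitrary representative $f_y\in F_0$ with $N_n f_y=y$. The remarks on the axiom of choice and on the optimality of the constant $2$ are accurate but not needed.
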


This states that the radius of any linear information $N_n$ is equivalent to the radius of the smallest ball in $G$ containing the symmetric set $\{S(f)\colon f\in F_0\cap \ker N_n\}$ of possible solutions having zero information. For convenience a proof of Proposition~\ref{pro:radius-general} will be provided at the end of this section. Before \chg{we apply} it to random information, we discuss its implications for the minimal radius. 

Derived from Definition~\ref{def:minimal-radius}, the $n$-th minimal radius of linear information is given by
\begin{equation} 
r(S\colon F\to G,\lall,n)
:=\inf_{N_n}r(S\colon F\to G,N_n),\quad n\in\IN,
\end{equation}
where the infimum is over all linear information maps $N_n$ of the form \eqref{eq:info-lin}. This is closely related to the following concept.

\begin{definition}\label{def:gelfand-number}
The $n$-th Gelfand number of $S\colon F\to G$ is given by
\[
c_{n}(S\colon F\to G)
:= \inf_{E_{n-1}} \sup_{f\in F_0\cap E_{n-1}}\|S(f)\|_G,
\]
where the infimum is over all linear subspaces $E_{n-1}\subset F$ of codimension $n-1$ and  for convenience we set $c_n(S\colon F\to G)=0$ if $n\ge \dim(F)$. 
\end{definition}
It is well known that the Gelfand numbers are comparable to the minimal radius in the sense of
\begin{equation}\label{eq:gelfand-radius}
	c_{n+1}(S\colon F\to G)
\le r(S\colon F\to G,\lall,n)
\le 2\, c_{n+1}(S\colon F\to G),\quad
n\in\IN.
\end{equation}
A proof can be found for example in \cite[Prop.~3.1]{Hei94} by Heinrich but may deduced also from Proposition~\ref{pro:radius-general} by taking the infimum over all linear information maps $N_n$ of cardinality $n$, since continuity implies that a subspace $E_n$ of $F$ is of codimension $n$ precisely when it is the kernel of such an $N_n$. 

In the following, let us consider the approximation problem introduced at the end of the previous section. Our assumptions imply that Proposition~\ref{pro:radius-general} applies to the embedding $F\hookrightarrow G$ which we deal with for the approximation problem. In view of this, we may also define, for every linear subspace $E\subset F$,
\begin{equation} \label{eq:radius-def}
\rad_G(F_0,E)
:= \sup_{f\in \chg{F_0\cap E}}\|f\|_G,
\end{equation}
which is the radius of the section of the unit ball $F_0$ with the subspace $E$ measured in the norm of $G$. Then, for all linear information maps $N_n$ we have
\begin{equation} \label{eq:radius-radius}
	\rad_G(F_0,\ker N_n)
	\le r(F\hookrightarrow G,N_n)
	\le 2\,\rad_G(F_0,\ker N_n).
\end{equation}

To lay the foundation for Chapter~\ref{ch:ell}, let us consider now the finite-dimensional case with $F=(\IR^m,\|\cdot\|_K)$ and $G=(\IR^m,\|\cdot\|_G)$ of dimension $m\in\IN$ and equipped with norms $\|\cdot\|_K$ and $\|\cdot\|_G$, respectively. The unit ball $K:=\{x\in\IR^m\colon \|x\|_K\le 1\}\subset \IR^m$ is a convex body, that is a compact convex set with non-empty interior. We will focus mainly on the special case of $\|\cdot\|_G=\|\cdot\|_2$ such that $G=(\IR^m,\|\cdot\|_2)=\ell_2^m$ becomes the Euclidean space equipped with its usual distance. In this case, we drop $G$ from the notation and replace \eqref{eq:radius-radius} by
\begin{equation} \label{eq:rad-r-finite}
\rad(K,\ker N_n)
\le r(K,N_n)
\le 2\,\rad(K,\ker N_n),
\end{equation}
where $r(K,N_n):=r\big((\IR^m,\|\cdot\|_K)\hookrightarrow \ell_2^m,N_n\big)$. This yields the equivalence between the radius of any linear information to the radius of the section of $K$ with the subspace $\ker N_n$. Most of the time, we shall assume that $n<m$ as otherwise vectors may be reconstructed exactly by simply measuring their coordinates. 

By means of \eqref{eq:gelfand-radius}, the minimal radius of information is then up to a constant of two equivalent to the Gelfand numbers of id$\colon (\IR^m,\|\cdot\|_K)\to \ell_2^m$, which we denote by
\begin{equation} \label{eq:gelfand-width}
c_{n}(K)
:= \inf_{E_{n-1}} \sup_{f\in K\cap E_{n-1}}\|x\|_2,\quad n\in \IN,
\end{equation}
where the infimum is again over all linear subspaces $E_{n-1}\subset \IR^m$ of codimension $n-1$. The number $c_n(K)$ is also called the $n$-th Gelfand width of $K$ in $\ell_2$. 

\begin{remark}\label{rem:compact}
Generally speaking, the Gelfand numbers/widths quantify the degree of compactness and are studied as part of the theory of $s$-numbers and $n$-widths, see Pietsch \cite{Pie87} and Pinkus \cite{Pin85} for further information. It is known (see \cite[Prop.~2.4.10]{Pie87}) that a bounded linear map $S$ is compact if and only if the sequence of Gelfand numbers $(c_n(S\colon F\to G)\big)_{n\in\IN}$ decays to zero. By Proposition~\ref{pro:radius-general} this transfers to the sequence of minimal radii and justifies assuming compactness of embeddings.
\end{remark}

Now that we have elaborated sufficiently on the minimal radius and thus optimal linear information we would like to consider random linear information in this finite-dimensional setting. To this end, let us note that any linear functional $L\colon F\to \IR$ is automatically continuous and of the form $L(x)=\langle y,x\rangle,x\in \IR^m$, for some $y\in \IR^m$, where $\langle y,x\rangle = \chg{\sum_{i=1}^{m}y_i x_i}$ is the standard inner product. In this case, we say that $L$ is represented by the vector $y$. In this way, a linear information map $N_n=(L_1,\ldots,L_n)\colon \IR^m\to \IR^n$ can be identified with a $n\times m$-matrix with the representer of $L_i$ forming \chg{the} $i$-th row. 

Via the correspondence between linear functionals and their representers, a random measurement amounts to a random vector $X$ in $\IR^m$. Arguably, random information should not give priority to some particular direction. This amounts to supposing that the direction $X/\|X\|_2$ is uniformly distributed on the sphere $\IS^{m-1}:=\{x\in\IR^m\colon \|x\|_2=1\}$. To explain, the sphere $\IS^{m-1}$ can be equipped with a unique normalized rotation invariant measure $\sigma^{(m-1)}$ and we call a random vector $X'$ uniformly distributed on it if $\IP(X'\in A)=\sigma^{(m-1)}(A)$ for all Borel sets $A\subset \IS^{m-1}$. There is a simple and elegant way to construct such an object. 

Take $g_1,\ldots,g_m$ independent standard Gaussians, that is random variables with density $(2\pi)^{-1/2}\exp(-x^2/2)$, and form the random vector $G_1:=(g_1,\ldots,g_m)$. Then the rotation invariance of $G_1$ implies that its direction $G_1/\|G_1\|_2$ is uniformly distributed on the unit sphere. This motivates the following definition of random linear information.

\begin{definition}\label{def:lin-ran}
	A random linear measurement is given by $x\mapsto \langle G_1,x\rangle$ and random linear information of cardinality $n$ consists of the measurements represented by $G_1,\dots,G_n$, where $G_2,\dots,G_n$ are i.i.d.\ copies of $G_1$. It can be identified with the $n\times m$-matrix $G_{n,m}$ filled with independent standard Gaussian entries and acts by mapping $x\in\IR^m$ to $(\sum_{j=1}^{m}g_{ij} x_j)_{i=1}^{n}\in \IR^n$. Here and in the following, we simplify notation by dropping the arguments of random variables. 
\end{definition}

The radius of such random (Gaussian) information is the random variable $r(K,G_{n,m})$. By relation \eqref{eq:rad-r-finite}, we have, for all realizations,  
\[
\rad\big(K,\ker(G_{n,m})\big)
\le r(K,G_{n,m})
\le 2\, \rad\big(K,\ker(G_{n,m})\big).
\]
Here, the kernel $\enran:=\ker(G_{n,m})$ is a random subspace of $\IR^m$ of codimension $n$ in the following sense, see for example Mattila \cite[Ch.~3]{Mat95} and Meckes~\cite{Mec19}. Let $\mathcal{G}_{m,m-n}$ be the Grassmannian parametrizing all subspaces of $\IR^m$ which are of dimension $m-n$ and thus of codimension $n$. There is a unique probability measure on $\mathcal{G}_{m,m-n}$ which is invariant under rotation and $\enran$ is distributed according to this measure. Note that the probability that $\enran$ has codimension smaller than $n$ is zero. 

In analogy to the radius of information, the radius of any section is at least the radius of a minimal section and thus, for every realization,
\[
\rad(K,n):=c_{n+1}(K)\le \rad(K,\enran),\quad 1\le n < m.
\]
The equivalence \eqref{eq:rad-r-finite} between the power of random (Gaussian) information for the recovery of vectors from an $m$-dimensional normed space with unit ball $K\subset \IR^m$ and the radius of its intersection with a random subspace allows to pose Question~\ref{que:question} as follows.

\begin{question}\label{que:lin}
What is the power of random standard information? 
\hfill
\begin{center}
Do we have for random sections that
$
\rad(K,\enran) \lesssim \rad(K,n)
$,
or equivalently \\for Gaussian information that
$
r(K,G_{n,m}) \lesssim r(K,n)
$
with high probability?
\end{center}
\end{question}
This question will be taken up again in Chapter~\ref{ch:ell} where we present results for a special class of sets, the $\ell_p$-ellipsoids and study the asymptotics as $n$ becomes large but remains much smaller than $m$. As we shall also consider unit balls of quasi-normed spaces, we conclude this section by commenting on them.

A quasi-normed space $F$ is a linear space equipped with a quasi-norm $\|\cdot\|_F$ instead of a norm. In replacement of the triangle inequality it satisfies
\[
\|f_1+f_2\|_F
\le C_F\big(\|f_1\|_F+\|f_2\|_F\big)\quad \text{for some constant }C_F\ge 1 \text{ and all }f_1,f_2\in F.
\]
If norms are replaced by quasi-norms, Proposition~\ref{pro:radius-general} and relation \eqref{eq:gelfand-radius} remain valid with changed constants. For the convenience of the reader we state this in the following proposition and provide a proof.

\begin{proposition}\label{pro:radius-general-+}
	Let $S\colon F\to G$ be a bounded linear operator between quasi-normed spaces, that is, $F$ and $G$ are equipped with quasi-norms with constant $C_F,C_G\ge 1$, respectively.  Then we have, for all linear information maps $N_n$ as in \eqref{eq:info-lin},
\[
C_G^{-1}\sup_{f\in F_0\cap \ker N_n} \|S(f)\|_G
\le r(S\colon F\to G,N_n)
\le 2\, C_F\, \sup_{f\in F_0\cap \ker N_n} \|S(f)\|_G,
\]
where $\chg{\ker N_n=}\{f\in F\colon N_n(f)=0\}$.
\end{proposition}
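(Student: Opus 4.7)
My strategy is to imitate the proof of Proposition~\ref{pro:radius-general}, which is the special case $C_F = C_G = 1$, while paying attention to the single extra application of the quasi-triangle inequality required in each direction. The two inequalities decouple: the lower bound needs the quasi-triangle inequality in $G$ and so picks up $C_G$, while the upper bound needs it in $F$ and picks up $C_F$. Nothing about the relevant geometry changes, since $F_0 \cap \ker N_n$ is still balanced ($\|{-f}\|_F = \|f\|_F$) and $\ker N_n$ is still closed under differences of preimages under $N_n$.

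For the lower bound I would exploit symmetry. Fix an arbitrary reconstruction $\varphi\colon \IR^n \to G$ and $f \in F_0 \cap \ker N_n$. Since $-f \in F_0 \cap \ker N_n$ as well, the algorithm $\varphi \circ N_n$ returns the same value $\varphi(0)$ on both $f$ and $-f$. Writing $2 S(f) = (S(f) - \varphi(0)) - (S(-f) - \varphi(0))$ and applying the quasi-triangle inequality in $G$ gives
\[
2\|S(f)\|_G \le C_G\bigl(\|S(f) - \varphi(0)\|_G + \|S(-f) - \varphi(0)\|_G\bigr) \le 2 C_G\, e(S\colon F\to G, \varphi \circ N_n).
\]
Taking the supremum over $f \in F_0 \cap \ker N_n$ and then the infimum over $\varphi$ yields the claimed lower bound $C_G^{-1}\sup_{f \in F_0 \cap \ker N_n} \|S(f)\|_G \le r(S\colon F\to G, N_n)$.

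For the upper bound I would construct a specific algorithm by a selection argument. For each $y \in N_n(F_0)$ choose, by the axiom of choice, some $h_y \in F_0$ with $N_n(h_y) = y$, define $\varphi(y) := S(h_y)$ on this set, and extend $\varphi$ arbitrarily elsewhere; no measurability is needed since the worst-case error does not require it. Then for any $f \in F_0$ the difference $g := f - h_{N_n(f)}$ lies in $\ker N_n$ and, by the quasi-triangle inequality in $F$, satisfies $\|g\|_F \le C_F\bigl(\|f\|_F + \|h_{N_n(f)}\|_F\bigr) \le 2 C_F$, so $g/(2 C_F) \in F_0 \cap \ker N_n$. By linearity,
\[
\|S(f) - \varphi(N_n(f))\|_G = \|S(g)\|_G = 2 C_F\, \|S(g/(2 C_F))\|_G \le 2 C_F \sup_{h \in F_0 \cap \ker N_n}\|S(h)\|_G.
\]
The worst-case error of this single algorithm thus bounds $r(S\colon F\to G, N_n)$ from above by the desired quantity.

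I do not expect any serious obstacle: the argument is essentially bookkeeping, and the only real decisions are where each quasi-triangle inequality enters and how the factor $2 C_F$ splits between the bound on the preimage and the normalization back into $F_0$. Setting $C_F = C_G = 1$ recovers Proposition~\ref{pro:radius-general} exactly, confirming the placement of constants. One small caveat worth recording in the write-up is the edge case $F_0 \cap \ker N_n = \{0\}$: then $N_n$ is injective on $F$, so the selection forces $h_{N_n(f)} = f$ and the upper bound correctly collapses to zero.
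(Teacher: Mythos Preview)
Your proposal is correct and follows essentially the same approach as the paper's proof: both directions use the symmetric pair $f,-f$ plus one quasi-triangle inequality for the lower bound, and a selection $\varphi(y)=S(h_y)$ with $h_y\in F_0$ plus one quasi-triangle inequality in $F$ for the upper bound. The only cosmetic difference is that for the lower bound the paper argues via $\max\{\|S(f)-\varphi(0)\|_G,\|S(-f)-\varphi(0)\|_G\}\ge C_G^{-1}\|S(f)\|_G$, whereas you sum the two errors; these are equivalent uses of the same inequality.
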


\begin{proof}
	For the lower bound on the radius of information we have to prove that
\[
\inf_{\varphi}\sup_{f\in F_0} \big\|S(f)-\varphi\big(N_n(f)\big)\big\|_G
\ge C_G^{-1}\sup_{f\in F_0\cap\ker N_n}\|S(f)\|_G
\]
where $F_0=\{f\in F\colon \|f\|_F\le 1\}$ and the infimum is over all $\varphi\colon \IR^n\to G$. Thus fix an arbitrary $\varphi\colon \IR^n\to G$. For every $f\in F_0\cap \ker N_n$ also $-f\in F_0\cap \ker N_n$ and, since $S$ is linear,
\begin{equation}\label{eq:radzero}
\|S(f)-\varphi(0)\|_G \ge C_G^{-1}\|S(f)\|_G \quad\text{ or }\quad \|S(-f)-\varphi(0)\|_G \ge C_G^{-1}\|S(f)\|_G
\end{equation}
holds. Moreover, the symmetry of $F_0\cap \ker N_n$ implies
\[
\sup_{f\in F_0}\|S(f)-\varphi(N_n f)\|_G
\ge \sup_{f\in F_0\cap \ker N_n}\max\big\{\|S(f)-\varphi(0)\|_G,\|S(-f)-\varphi(0)\|_G\big\}.
\]
Together with \eqref{eq:radzero} this proves the lower bound.

For the upper bound we specify a map $\varphi$ by $\varphi(y)=S(g)$ for any $y\in N_n(F_0)$, where $g\in F_0$ with $N_n g=y$ is arbitrary. Then 
\[
\sup_{f\in F_0}\|S(f)-\varphi(N_n f)\|_G
\le \sup_{\substack{f_1,f_2\in F_0\\ N_n f_1 =N_n f_2}}\|S(f_1)-S(f_2)\|_G
\le \sup_{f\in 2C_F F_0 \cap \ker N_n}\|S(f)\|_G
\]
since $f_1-f_2\in 2 C_{F} F_0$ if $f_1,f_2\in F_0$ and $N_n (f_1-f_2)=0$ if $N_nf_1 = N_nf_2$. Since $S$ is linear, a rescaling concludes the proof.
\end{proof}

\begin{remark}
	Note that Proposition~\ref{pro:radius-general-+} may be seen as a replacement for \cite[Lem.~3]{HPS21} which applies even to sets not necessarily being unit balls of quasi-normed spaces \chg{and} is more general than what we shall need. The proof of \cite[Lem.~3]{HPS21} is very similar to the one of Proposition~\ref{pro:radius-general-+}.
\end{remark}

\begin{remark}\label{rem:subgaussian}
	One might also consider other distributions on the space of linear functionals giving rise to other notions of typical information. For example, Mendelson, Pajor and Tomczak-Jaegermann~\cite{MPT07} studied recovery of vectors using information given by i.i.d.\ \chg{copies of a linear measurement} $x\mapsto \langle X,x\rangle$, where $X$ is an isotropic and subgaussian random vector, that is, for some $\alpha\ge 1$,
\[
\chg{\IE X =0,\quad }\IE |\langle X,x\rangle|^2 = \|x\|_2^2 \quad \text{and}\quad \|\langle X,x\rangle \|_{\psi_2}\le \alpha \|x\|_2, \quad x\in \IR^m,
\]
where $\|\cdot\|_{\psi_2}$ is an Orlicz norm with function $\psi_2(t)=e^{t^2}-1$, see, e.g., \cite[Ch.~3.5.2]{AGM15}. This includes, aside from the Gaussian measure, the uniform measure on $\{-1,1\}^m$ and the normalized volume on various symmetric convex bodies such as $\ell_p$-balls, $2\le p\le \infty$. 
\end{remark}

\section{Standard information and sampling algorithms}
\label{sec:std}

In the following, we will discuss standard information where the measurements correspond to function evaluations at a point set. In particular, the objects of study will be functions and $F$ will be a normed space of functions $f\colon D\to \IR$ on some (infinite) set $D$, which may be a subset of $\IR^d$, a manifold or an even more general space. As in Section~\ref{sec:ibc}, let $S\colon F\to G$, where $G$ is a normed space. 

The class of standard information is $\lstd:=\{\delta_x\colon x\in D\}$, where for all $x\in D$ the delta functional $\delta_x$ is the point evaluation $f\mapsto \delta_{x}(f):=f(x)$ for all $f\in F$. Then a standard information map is given by
\begin{equation} \label{eq:info-std}
N_n=(\delta_{x_1},\ldots,\delta_{x_n}),\quad
\text{where }x_i\in D \text{ for all }i=1,\dots,n.
\end{equation}

We can identify such an information map $N_n$ with the point set $\Pn=\{x_1,\ldots,x_n\}$ it relies on and write $N_n(f)=\big( f(x_1),\ldots,f(x_n)\big)$. Here and in the following, we always assume that such point sets are non-empty and finite, and that their points are pairwise distinct. For every $x\in D$ the evaluation functional $\delta_x$ is linear and we shall also assume that it is continuous or bounded, that is, there is a constant $C>0$ such that, for every $f\in F$, we have $|f(x)|\le C\|f\|_F$. Then $\lstd\subset \lall$ holds. 

The definition of the radius of the information map $N_n=(\delta_{x_1},\ldots,\delta_{x_n})$ can be written in terms of the point set $\Pn=\{x_1,\ldots,x_n\}$ and we use the notation
\[
r(S\colon F\to G,\Pn)
:=r(S\colon F\to G,N_n)
=\inf_{\varphi}\big\|S(f)-\varphi\big(f(x_1),\ldots,f(x_n)\big)\big\|_G,
\]
where the infimum is over all maps $\varphi:\IR^n\to G$. From now on we shall always replace standard information $N_n$ as in \eqref{eq:info-std} by its underlying point set and refer to the radius of information given by a point set also as the quality of the point set itself.

A special case of \eqref{eq:def-min-info}, the $n$-th minimal radius of standard information is
\[
r(S\colon F\to G,\lstd,n)
=\inf_{\#\Pn= n}
r(S\colon F\to G,\Pn), \quad n\in\IN,
\]
where the infimum is over all $n$-point sets $\Pn\subset D$. Here, the number $\#\Pn=n$ denotes the cardinality of $\Pn$. The $n$-th minimal radius is sometimes called $n$-th sampling number and may be expressed as
\[
r(S\colon F\to G,\lstd,n)
=\inf_{S_{\Pn}}e(S\colon F\to G,S_{\Pn})
\]
where the infimum is over all sampling operators of the form
\begin{equation}\label{eq:sampling-operator}
	\chg{S_{\Pn}\colon F \to G}, 
 \qquad S_{\Pn}(f)=\varphi\big(f(x_1),\hdots,f(x_n)\big)
\end{equation}
where $\Pn=\{x_1,\ldots,x_n\}\subset D$ consists of $n$ points and $\varphi\colon\IR^n\to G$ is arbitrary. By Proposition~\ref{pro:radius-general}, for any bounded linear $S$ and any point set $\Pn\subset D$,
\begin{equation}
	\label{eq:radius-sampling}
	\sup_{\substack{f\in F_0\\ f|_{\Pn}=0}}\|S(f)\|_G
\le r(S\colon F\to G,\Pn)
\le 2 \sup_{\substack{f\in F_0\\f|_{\Pn}=0}}\|S(f)\|_G,
\end{equation}
and by taking the infimum over all $n$-point sets this holds for the sampling numbers. Here, $f|_{\Pn}$ is the restriction of $f$ to $\Pn$. Via Proposition~\ref{pro:radius-general-+} this transfers to the quasi-Banach case, see also Novak and Triebel~\cite[Prop.~19]{NT06}.

Since the infimum in the definition of the sampling numbers is over the smaller set of function evaluations, we have
\[
r(S\colon F\to G,\lstd,n)\ge r(S\colon F\to G,\lall,n),\quad n\in \IN,
\]
and in general comparing the power of standard and linear information is of much interest in IBC, see for example \cite{NW12}.

To formalize Question~\ref{que:question} on the power of random information for standard information, we need to give meaning what ``most'' means and for this need a ``uniform'' measure on the set of evaluation functionals $\lstd$. As each functional $\delta_x\in\lstd$ is in one-to-one correspondence with a point of $x\in D$, this amounts to putting a measure on $D$. In general, let $D=(D,\Sigma,\lambda)$ be a finite measure space, where the measure $\lambda$ will be assumed to be a probability measure. Often, a suitable choice of $\lambda$ can be thought of as uniform measure and may be used to define random standard information as follows. 

Recall that for linear information on $\IR^m$ the space itself parametrizes the linear functionals and we defined a random linear functional as a Gaussian vector. For standard information we can identify measurements with points and use random points instead.

\begin{definition}\label{def:std-ran}
A random point evaluation is given by $\delta_X$, where $X$ is a random point $X$ in $D$ distributed according to $\lambda$, that is $\IP(X\in A)=\lambda(A)$ for every $A\in\Sigma$. For random standard information of cardinality $n\in\IN$ we take the map $N_n=(\delta_{X_1},\ldots,\delta_{X_n})$ where $X_1,\ldots,X_n$ are independent copies of $X$. It can be identified with the random point set $\pnran:=\{X_1,\dots,X_n\}$. 
\end{definition}

In many cases there are obvious choices for such a measure $\lambda$. For example, we will deal with $D$ being some $d$-dimensional open set equipped with the normalized Lebesgue measure defined by $\vol(A\cap D)/\vol(D)$ for all Borel sets $A\subset \IR^d$, where $\vol=\vol_d$ denotes the $d$-dimensional Lebesgue measure. Further examples include Riemannian manifolds, on which the metric induces a volume measure, or the Grassmannian  with its rotation invariant probability measure, which was discussed in the previous section. 

For each realization, the random point set $\pnran=\{X_1,\ldots, X_n\}$ is a point set and we ignore the possibility that points of it coincide as this event will have probability zero in the cases we consider. The radius of random standard information is the random quantity	$r(S\colon F\to G,\pnran)$ and it is at least the $n$-th minimal radius of standard information. In other words, random points can not be better than optimal points. We translate Question~\ref{que:question} to standard information as follows. 

\begin{question}\label{que:std}
What is the power of random standard information?  \hfill
\begin{center}
Do we have for the radius of random information that 
\[
r(S\colon F\to G, \pnran) \lesssim r(S\colon F\to G,\lstd, n)
\]
with high probability?
\end{center}
\end{question}

In order to give an answer to this question, we will study the radius of information of arbitrary realizations and derive expressions which can be estimated for the random point set $\pnran$. The following notation which is a variant of the asymptotic notation in Definition~\ref{def:asymp} will be useful.
\begin{definition}\label{def:asymp-std}
	Given two real numbers $a(\Pn)$ and $b(\Pn)$ which depend on a point set $\Pn\subset D$, we write $a(\Pn)\asymp b(\Pn)$ if there exist $c,C>0$ such that 
\[
c\, a(\Pn) \le b(\Pn) \le C\,a(\Pn)\quad \text{for all }\Pn=\{x_1,\dots,x_n\}\subset D,n\in\IN,
\]
and analogously for $\lesssim$ and $\gtrsim$.	
\end{definition}

\label{loc:cb}
We will deal with Question~\ref{que:std} in Chapters~\ref{ch:sob}, \ref{ch:interlude} and \ref{ch:iso}. We want to describe the approximation and the integration problems for which we study it in more detail. Let $D=(D,\Sigma,\lambda)$ be a finite measure space. If $D$ is additionally a topological space, then the continuity of the evaluation functionals is ensured by assuming that $F\hookrightarrow C_b(D)$, i.e., $F$ is continuously embedded into $C_b(D)$, the space of bounded continuous real-valued functions on $D$ equipped with the supremum norm $\|f\|_{\infty}:=\sup_{x\in D}|f(x)|$. In this case, there exists a constant $C>0$ such that $\sup_{x\in D}|f(x)|\le C\|f\|_F$ for all $f\in F$. This assumption, which is slightly stronger than necessary, is satisfied by the function spaces considered in Chapter~\ref{ch:sob}.

Let further $\mu$ be a probability measure on $D$ which is absolutely continuous with respect to $\lambda$ and has bounded density $\varrho\colon D\to\IR$.

\bigskip

\textbf{$L_q$-approximation.} This is the approximation problem considered at the end of Section~\ref{sec:ibc} with $(G,\|\cdot\|_G)=(L_q(D,\mu),\|\cdot\|_{L_q(D,\mu)})$ for some $0< q \le \infty$. Here, $L_q(D,\mu)$ is the space of (equivalence classes of) measurable functions $f\colon D\to\IR$ with finite $L_q$-norm  
\begin{equation} \label{eq:lqnorm}
\|f\|_{L_q(D,\mu)}
:=\begin{cases}
	\Big(\int_D |f(x)|^q \dd \mu(x)\Big)^{1/q}&\text{if }\chg{0< q<\infty,}\\
	\esssup_{x\in D} |f(x)|&\text{if }\chg{q =\infty}.
\end{cases}
\end{equation}
The embedding into $C_b(D)$ ensures, for all $0< q\le\infty$, the embedding into $L_q(D,\varrho)$ because of $\chg{\|f\|_{L_q(D,\mu)}^q\le C\|\varrho\|_{\infty}\lambda(D)\|f\|_F^q}$, for all $f\in F$. If $\mu=\lambda$ we will omit it in the notation and simply write $L_q(D)$.

\bigskip

\textbf{Integration.} \label{loc:int} Here, we choose $S={\rm INT}_{\mu}$ and $G=\IR$, where 
\[
{\rm INT}_{\mu}\colon F\to \IR,\quad
{\rm INT}_{\mu}(f)
:=\int_D f(x) \dd\mu(x), \quad f\in F,
\]
is the integration functional. Note that ${\rm INT}_{\mu}(f)\le \|f\|_{L_1(D,\mu)}$ and thus $F\hookrightarrow C_b(D)$ also ensures the continuity of $\intmu$. In general, we omit the space $G=\IR$ in the notation and write $e(F,\intmu,A)$ for the worst-case error of an algorithm $A$ and $r(F,\intmu,\Pn)$ for the radius of information given by $\Pn\subset D$. If $\mu=\lambda$, we shall use ${\rm INT}$ instead of $\intmu$. 

\bigskip

Both problems will be studied in Chapter~\ref{ch:sob} for Sobolev spaces and also other function spaces defined on subsets of $\IR^d$. For simplicity, only the case of $\mu=\lambda=\vol$, the Lebesgue measure, will be considered there. At this point, we remark that on $\IR^d$ measurability and integrability are understood in the sense of Lebesgue, except mentioned otherwise. The integration problem is also central to Chapter~\ref{ch:interlude}, where it is treated with respect to general probability measures on $\IR^d$, and Chapter~\ref{ch:iso}, where mainly the cube equipped with $\vol$ is treated.

\begin{remark}
Since the measure $\mu$ influences the contribution of subsets of $D$ to the approximation error, one may use $\mu$ instead of $\lambda$ to define random standard information. However, in this thesis we will not consider sampling from distributions different from the uniform one. 
\end{remark}

\chapter{Gaussian information for generalized ellipsoids}
\label{ch:ell}

In this chapter, we will present results from \cite{HPS21} on the power of random linear information for the recovery of vectors from $\ell_p$-ellipsoids, and thus generalize the work \cite{HKN+21} for the usual ellipsoids. To give some idea, the $\ell_p$-ellipsoids can be defined as the images of $\ell_p$-balls $\IB_p^m$ under diagonal operators and their Gelfand widths are \chg{thus} related to the Gelfand numbers of diagonal operators. In the upcoming Section~\ref{sec:ell-intro} we present their connection to certain function spaces after surveying the literature on random sections of convex bodies.

Let us describe the remainder of this chapter. In Section~\ref{sec:ell-res} we present our main results from \cite{HPS21} and discuss connections with existing work. Then in Section~\ref{sec:ell-poly} we shall look at the important example of $\ell_p$-ellipsoids with polynomial semiaxes and assess the power of random linear information for those. Afterwards in Sections~\ref{sec:ell-rounding}, \ref{sec:ell-lower} and \ref{sec:ell-sparse} we explain the methods behind the proofs, essentially covering topics ranging from asymptotic geometric analysis to sparse approximation. Finally, in Section~\ref{sec:ell-open} we collect points of departure for further research. 

\section{Introduction and motivation}
\label{sec:ell-intro}

Consider the $m$-dimensional approximation problem from Section~\ref{sec:lin}, where linear information is used to recover vectors in the $\ell_2^m$-norm from the unit ball of an $m$-dimensional normed space, which is a symmetric convex body $K$. Motivated by Question~\ref{que:lin}, we study the radius of Gaussian information which is equivalent to the radius of the intersection of $K$ with a random subspace of codimension $n$. We will be interested in the case of $m$ being much larger than $n$, say at least $m>2n$.

The study of the radius, or equivalently the diameter, of the section of a convex body with a linear subspace has its roots in the local theory of Banach spaces, see V.~D.~Milman~\cite{Mil85a,Mil85b}. Today, it is part of the field of asymptotic geometric analysis, where it is connected with low $M^*$-estimates, which will be discussed in Section~\ref{sec:ell-rounding} as an important tool behind our bounds. With particular focus on subspace dimensions proportional to the dimension of the convex body, this was investigated, among others, by Giannopoulos and Milman~\cite{GM97,GM98}, also together with Tsolomitis \cite{GMT05}, and quite recently by Litvak, Pajor and Tomczak-Jaegermann~\cite{LPT06}. 

In \cite{LPT06} a random matrix perspective was employed to show that, on the scale of proportional subspaces, typical intersections of a symmetric convex body $K\subset \IR^m$ are not much larger than minimal intersections. More precisely, typical intersections of $K$ with $N$-codimensional random subspaces are at most a factor $C(\varepsilon)$ larger than the $n$-th Gelfand width $c_n(K)$ provided that $\chg{N/m>\varepsilon}$ and $N/n>1+\varepsilon$ for some fixed $\varepsilon>0$. This is an instance of the ``existence versus prevalence'' phenomenon mentioned in the introduction, see also \cite{GMT05,Ver06}.

In contrast to \cite{LPT06}, where the knowledge of one minimal section suffices, Litvak and Tomczak-Jaegermann~\cite{LT00} obtained a bound on the radius of random section in terms of a suitable average over the tail of the sequence of minimal radii. To make this precise, define the random Gelfand numbers of the operator $\id\colon (\IR^m,\|\cdot\|_K)\to \ell_2^m$, that is the random Gelfand widths of $K$, for $1\le n\le m$, by
\begin{equation} \label{eq:crn-def}
\crn(K)
:=\inf\{a>0\colon \rad(K,E_{n-1}^{\rm ran})<a \},
\end{equation}
where in this relation a property holds for a random subspace $E_{n-1}^{\rm ran}$ of codimension $n-1$ if it is satisfied with exponentially high probability in \chg{$n$}, that is, with probability at least $1-\chg{\exp\big(-\alpha_0(n+1)\big)}$ for a suitable $\alpha_0>0$ fixed beforehand.  Then, for not necessarily symmetric $K$, \cite[Thm.~3.2]{LT00} yields constants $c,C>0$ such that the random Gelfand widths satisfy
\begin{equation}\label{eq:general-upper}
\crn(K)
\le C \frac{1}{\sqrt{n}}\sum_{j=\lfloor c\, n\rfloor}^{m} \frac{c_j(K)}{\sqrt{j}}, \quad 1\le n \le m.
\end{equation}
Recall that the Gelfand width $c_{n+1}(K)=\rad(K,n)$ is the minimal radius over $n$-codimen\-sional sections of $K$, which is in turn equivalent to $r(K,n)$, the $n$-th minimal radius of linear information. It can be deduced from this bound that if the Gelfand widths decay at a polynomial rate faster than $n^{-1/2}$, then the random Gelfand widths are asymptotically equivalent to the Gelfand widths and Question~\ref{que:lin} can be answered in the positive. We will say more about this in the next section when we compare this bound to our results. 

The bound in \eqref{eq:general-upper} is very general but we do not have a matching lower bound. To increase our understanding, we will study concrete examples such as $\ell_p$-ellipsoids which generalize the usual ellipsoid 
\begin{equation} \label{eq:ellipsoid}
\elz
:= \Big\{x\in\IR^m\colon \sum_{i=1}^{m} \frac{|x_i|^2}{\sigma_i^2}\le 1\Big\},
\end{equation}
with semiaxes $\sigma=(\sigma_1,\sigma_2,\ldots)$ of positive length. For the sake of \chg{motivation, we} provide some details about the connection between the sections of ellipsoids and linear information for Hilbert space embeddings before introducing $\ell_p$-ellipsoids. 

In the context of function spaces, ellipsoids appear in the study of a compact embedding of a Hilbert space into an $L_2$-space. This is well known and for example elaborated in \cite{HKN+21}, see also Pinkus~\cite[Ch.~IV]{Pin85}. To give the details, let $S=\id\colon H\to K$ be a compact embedding between separable Hilbert spaces $H,K$. Then one can form an orthonormal basis $e_1,e_2,\ldots$ of $K$ using the normalized eigenfunctions of the compact selfadjoint operator $S^* S$ defined on $K$, where $S^*$ is the adjoint. The basis is assumed to be ordered such that the singular numbers $\sigma_j=\sqrt{\lambda_j}$ with $S^* S e_j=\lambda_j e_j$, where $j\in\IN$, satisfy $\sigma_1\ge \sigma_2\ge \cdots$. The unit ball in $H$ is then given by all functions $f=\sum_{j=1}^{\infty}a_j e_j$ with
\[
\|f\|_H
=\sum_{j=1}^{\infty}\frac{a_j^2}{\sigma_j^2}
\le 1,
\]
that is an infinite dimensional ellipsoid. 

Linear functionals on $H$ can then be identified with sequences and random information becomes a sequence of independent standard Gaussian random variables. It is known that optimal information using $n$ linear functionals is given by evaluating the first coefficients $a_j=\langle f,e_j\rangle_H$ and that the optimal algorithm is $A(f)=\sum_{j=1}^{n}\langle f,e_j\rangle_H e_j$, which is the projection \chg{onto} the coordinates in which the ellipsoid is largest. Then the $n$-th minimal radius of linear information is given by the singular value $\sigma_{n+1}$, which is also the radius of the minimal section of $\elz$, i.e., 
\[
\rad(\elz,n)=r(\elz,n)=\sigma_{n+1}, \quad 0\le n <m.
\]
This is because \chg{sections of an ellipsoid are again ellipsoids and} the diameter of an ellipsoid is twice its radius. Thus, we have equality in Proposition~\ref{pro:radius-general} and consequently relation \eqref{eq:gelfand-radius} instead of the constant two, see, e.g., \cite{TWW88} or \cite[Ch.~4]{NW08}.

In \cite{HKN+21} the relation between random and minimal sections of an ellipsoid has been investigated and by \cite[Thm.~3]{HKN+21} there exists an absolute constant $C>0$ such that 
\begin{equation}\label{eq:upper-2}
\rad(\elz,\enran)
\leq C \frac{1}{\sqrt{n}}\Big(\sum_{j\ge \lfloor n/4\rfloor}\sigma_j^2\Big)^{1/2},\quad \text{for }1\le n<m,
\end{equation}
with exponentially high probability (in $n$). Similar to the bound \eqref{eq:general-upper} it yields that the typical radius is asymptotically equivalent to the minimal radius provided the latter decays fast enough for increasing $n$.

However, if the semiaxes $\sigma=(\sigma_j)_{j\in\IN}$ decay too slowly and are not square-summable, that is $\|\sigma\|_2=\infty$, then the behaviour of the radii of typical sections changes completely. In this case, \cite[Thm.~5]{HKN+21} yields that, for any $\varepsilon>0$,
\begin{equation} \label{eq:lower-2}
\rad(\mathcal{E}_{\sigma}^m,\enran)
\geq  (1-\varepsilon)\sigma_1
\end{equation}
with exponentially high probability, provided that $m$ is large enough compared to $n$. 

Taken together, these bounds yield the following dichotomy on the behaviour of the radius of random information compared to the minimal radius of information. Namely, if $\|\sigma\|_2<\infty$, then the radius of random information, $r(\elz,G_{n,m})$, is close to optimal for all $m \, (>n)$. On the other hand, if not, then random information is asymptotically useless if $m \,(>n)$ is large enough. Both statements hold with exponentially high probability.
\begin{remark}
	It should be noted that in \cite{HKN+21} also an upper bound more suitable for exponentially decaying sequences of semiaxes was obtained, where one looses at most a polynomial factor. We will not consider this but focus our attention on polynomially decaying semiaxes later on. 
\end{remark}

We wish to extend the results about ellipsoids and investigate the above dichotomy for the more general $\ell_p$-ellipsoids, defined as follows. 

\begin{definition}\label{def:elp}
An $m$-dimensional $\ell_p$-ellipsoid, $0< p\le \infty$, with semiaxes of length $ \sigma_1\ge \cdots \ge \sigma_m >0$ is given by
\[
\elp:=\Big\{x\in\IR^m\colon \sum_{i=1}^{m} \frac{|x_i|^p}{\sigma_i^p}\le 1\Big\},
\]
where the sum over the $p$-th powers is replaced by the maximum if $p=\infty$. 
\end{definition}

The $\ell_p$-ellipsoid $\elp$ is the unit ball of $\IR^m$ equipped with the (quasi-)norm
\begin{equation} \label{eq:psigma-norm}
\|x\|_{p,\sigma}
:=\|(x_i/\sigma_i)_{i=1}^m\|_p,\quad
\text{}x\in\IR^m,
\end{equation}
and therefore it is convex if and only if $p\ge 1$. See Figure~\ref{fig:ellipsoids} for an illustration.
\begin{figure}[ht]
	\begin{subfigure}{.22\textwidth}
		\centering
		\includegraphics[width=\linewidth,trim={.5cm 1cm 1cm .5cm},clip]{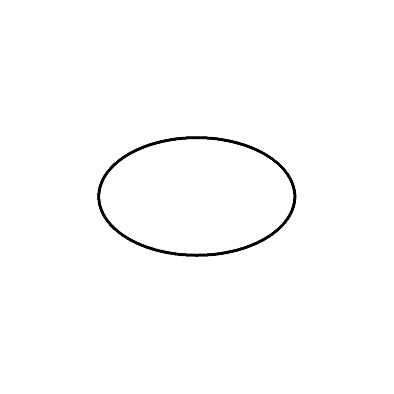}
		\caption*{\phantom{aaa}$p=2$}
	\end{subfigure}
	\begin{subfigure}{.22\textwidth}
		\centering
		\includegraphics[width=\linewidth,trim={.5cm 1cm 1cm .5cm},clip]{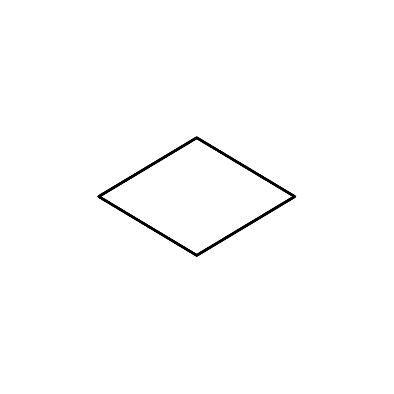}
		\caption*{\phantom{aaa}$p=1$}
	\end{subfigure}
	\begin{subfigure}{.22\textwidth}
		\centering
		\includegraphics[width=\linewidth,trim={.5cm 1cm 1cm .5cm},clip]{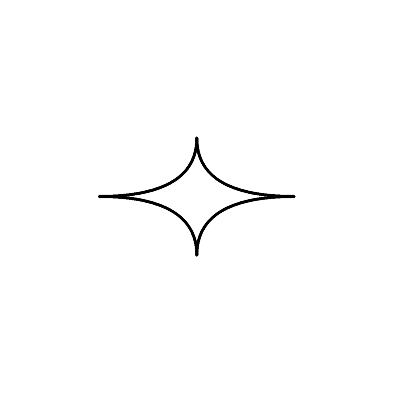}
		\caption*{\phantom{aaa}$p=\frac{1}{2}$}
	\end{subfigure}
	\begin{subfigure}{.22\textwidth}
		\centering
		\includegraphics[width=\linewidth,trim={.5cm 1cm 1cm .5cm},clip]{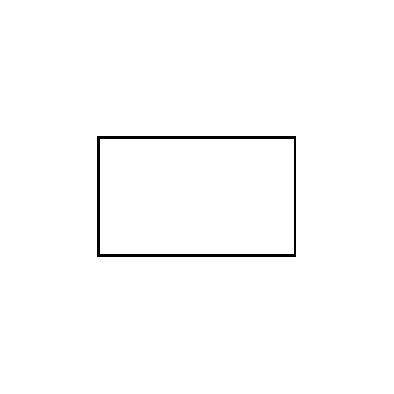}
		\caption*{\phantom{aaa}$p=\infty$}
	\end{subfigure}
	\caption{Two-dimensional $\ell_p$-ellipsoids with semiaxes $\sigma_2=0.6\,\sigma_1$.}
	\label{fig:ellipsoids}
\end{figure}

As a first motivation to study $\ell_p$-ellipsoids, consider the diagonal operator $D_{\sigma}$ with multiplier sequence $\sigma=(\sigma_1,\ldots,\sigma_m)$ taking $x\in\IR^m$ to $D_{\sigma}(x)=(\sigma_1x_1,\ldots,\sigma_m x_m)$. Then $\elp$ is the image of the $\ell_p$-unit ball $\IB_p^m$ under $D_{\sigma}$, and a change of variables shows that, for any linear subspace $E_n\subset \IR^m$,
\[
\rad(\elp,E_n)
=\sup_{\elp\cap E_n}\|x\|_2
=\sup_{\IB_p^m\cap E_n}\|D_{\sigma}x\|_2.
\]
By taking the infimum over all subspaces of codimension $n$ we obtain
\begin{equation} \label{eq:gelfand-diag}
\rad(\elp,n)
=c_{n+1}(D_{\sigma}\colon \ell_p^m\to \ell_2^m) \quad \text{for all }n\in\IN.
\end{equation}
This relation is used in Appendix~\ref{sec:ell-gelfand} to determine the minimal radius for the $\ell_p$-ellipsoids. 

In the study of operators between (quasi-) Banach spaces, diagonal operators are important prototypes, in part due to their particular structure, and in part because some interesting operators act like them. For example, differential operators on periodic functions may be analyzed via diagonal operators applied to their Fourier coefficients, and also embeddings between Besov and Sobolev spaces can be modeled using them. We refer to Carl~\cite{Car81}, König~\cite[Ch.~3.c]{Koe86}, Kühn~\cite{Kue05}, Linde~\cite{Lin85}, Pietsch~\cite[Ch.~7.7.1]{Pie87} and Schütt~\cite{Sch84} for detailed information on this topic.	

Additionally, we would like to mention that $\ell_p$-ellipsoids have been studied quite recently by Juhos and Prochno \cite{JP21} with focus on the asymptotic volume distribution of the intersection of two such generalized ellipsoids as the dimension of the underlying space tends to infinity, and by van Handel \cite{van18} as examples to investigate the intricacies of entropy bounds on the expected supremum of Gaussian processes. 

To give more details, it has been observed by Talagrand in \cite[Ch.~2.5]{Tal14} that ellipsoids $\elz$ are examples where the generic chaining approach is superior to Dudley's bound on the expected supremum of a Gaussian process $(Z_t)_{t\in K}$ with $Z_t=\sum_{i=1}^{m}g_j t_j$, which is indexed by a convex body $K$, in terms of entropy numbers of $K$ is not optimal. Intuitively, this is because an ellipsoid is smaller than its entropy numbers suggest. As noted by van Handel~\cite{van18}, this extends to $\ell_p$-ellipsoids. We will discuss this in the upcoming section. 

At the end of this section, let us briefly present how $\ell_p$-ellipsoids relate to the following spaces of periodic functions, which were introduced by Sprengel~\cite{Spr00} in order to generalize error estimates for interpolation using sparse grids.

\begin{definition}\label{def:aaq}
For $1\le q \le\infty$ and $\alpha\in\IR$ define
\[
A_q^{\alpha}(\TT^d)
:=\Big\{f\in D'(\TT^d)\colon \|f\|_{A_q^{\alpha}(\TT^d)}:=\Big(\sum_{k\in\ZZ^d}\big|(1+\|k\|_2^2)^{\alpha/2}c_k(f)\big|^q\Big)^{1/q}<\infty\Big\},
\]
where $\big(c_k(f)\big)_{k\in\ZZ^d}$ are the Fourier coefficients of $f$ belonging to $D'(\TT^d)$, the space of periodic distributions on the $d$-dimensional torus $\TT^d$. 
\end{definition}

This scale of function spaces encompasses the Wiener algebra of periodic functions with absolutely summable Fourier coefficients ($q=1$ and $\alpha=0$) and periodic Sobolev spaces ($q=2$ and $\alpha\in\IN$). In one dimension, the case $q=\infty$ and $\alpha>1$ corresponds to Korobov spaces, whose multi-dimensional generalizations are integral to the study of lattice rules, see, e.g., Niederreiter~\cite[Ch.~4]{Nie92}. In general, a description of the space $A_q^{\alpha}(\TT^d)$ in terms of smoothness is \chg{probably} out of reach as already in one dimension there is no precise relation between smoothness and decay conditions on the Fourier coefficients. 

To make precise how the function spaces of Definition~\ref{def:aaq} are connected to $\ell_p$-ellipsoids, enumerate the integer points $k\in\ZZ^d$ by $\big(k(j)\big)_{j=1}^{\infty}$ such that their Euclidean norm is non-decreasing. Then, for every $f\in A_q^{\alpha}(\TT^d)$,	the sum defining its norm can be regrouped to
\[
\|f\|_{A_q^{\alpha}(\TT^d)}^q 
=\sum_{j=1}^{\infty} \frac{|c_{k(j)}|^q}{\sigma_j^q}
\]
where $\sigma_j:=(1+\|k(j)\|_2^{2})^{-\alpha/2}$ for $ j\in \IN$. Thus, via the correspondence $f\leftrightarrow \big(c_k(f)\big)_{k\in\ZZ^d}$, the unit ball of $A_q^{\alpha}(\TT^d)$ may be identified with an infinite-dimensional $\ell_q$-ellipsoid. Then a straightforward volume estimate implies the asymptotic relation $\sigma_j\asymp j^{-\alpha/d}$, and if $\alpha/d>1-1/q=1/q^*$ or $\alpha=0$ and $q=1$, then $A_q^{\alpha}(\TT^d)$ is embedded into the continuous functions. Curiously enough, we will encounter this condition on the semiaxes in our study of the behaviour of the radius of a random section.

\section{Discussion of results}
\label{sec:ell-res}

Having sufficiently motivated the study of sections of $\ell_p$-ellipsoids with random subspaces, we discuss our results obtained in \cite{HPS21}. We first state a general upper bound on the radius $\rad(\elp,\enran)$ of a random section for $1\le p\le\infty$ (Theorem~\ref{thm:ell-main}), which we compare with the bound of \eqref{eq:general-upper} given in \cite{LT00}. Then we discuss Corollary~\ref{cor:mstar-estimate} which extends a bound on the supremum of a Gaussian process indexed by an $\ell_p$-ellipsoid obtained by van Handel~\cite{van18}. Afterwards an upper bound on the typical radius (Theorem~\ref{thm:ell-main-quasi}) obtained in the range $0<p<1$ together with a consequence (Corollary~\ref{cor:gelfand-quasi}) for the Gelfand numbers of diagonal operators is presented. 

In Section~\ref{sec:ell-poly} we shall discuss ramifications for polynomial semiaxes. We shall also state a lower bound (Proposition~\ref{pro:lower}) and discuss a dichotomy for the optimality of Gaussian information depending on the decay of the semiaxes. This leads us to Conjecture~\ref{conj:threshold}.

The following main result bounds the typical radius from above via the $p^*$-norm of the tail of the sequence of semiaxes. Here and in the following, the parameter $p^*$ is the Hölder conjugate of $p$, that is $\frac 1p +\frac{1}{p^*}=1$.

\begin{theorem}[{\cite[Thm.~A]{HPS21}}]\label{thm:ell-main}
	There exists a constant $C>0$ such that, for all $m\in\IN$ and $1\le n<m$, we have
	\[
	\rad(\elp,\enran)
		\le C
		\begin{cases}
			n^{-1/2}\sup\limits_{k\le j\le m}\sigma_{j}\sqrt{\log(j)+1} &\text{if } p=1, \\
		\sqrt{p^*}n^{-1/2} \Big(\sum\limits_{j=k}^{m}\sigma_{j}^{p^{*}}\Big)^{1/p^{*}} &\text{if } 1<p \leq \infty,
		\end{cases}
    \]
	with probability at least $1-c_1 \exp(-c_2 n)$, where $c_1,c_2\in(0,\infty)$ are absolute constants, and $k=k(n)\asymp n$ for $p=1$ while $k=k(n)\asymp \frac{n}{p^*}$ for $p>1$.
\end{theorem}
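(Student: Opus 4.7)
The plan is to apply a Gordon-type low $M^{*}$-estimate with truncation. For any symmetric convex body $K\subset\IR^m$ and any $r>0$, such an estimate guarantees $\rad(K,\enran)\le r$ with probability at least $1-c_1\exp(-c_2 n)$, provided that the Gaussian mean width of the truncation satisfies
\[
\ell^{*}(K\cap r\IB_2^m):=\IE\sup_{x\in K\cap r\IB_2^m}\langle g,x\rangle\le c\,r\sqrt{n},
\]
where $g$ is a standard Gaussian vector in $\IR^m$ and $c,c_1,c_2>0$ are absolute constants. This comes from Gordon's min-max inequality combined with Gaussian concentration, and is the topic of Section~\ref{sec:ell-rounding}. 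The task thus reduces to locating, for $K=\elp$, the smallest admissible $r$.

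To estimate $\ell^{*}(\elp\cap r\IB_2^m)$, I would split the coordinates as $\{1,\dots,m\}=A\cup A^c$ with $A=\{1,\dots,k-1\}$ indexing the large semiaxes. Since every $x\in\elp\cap r\IB_2^m$ obeys $\|x_A\|_2\le r$ and $\sum_{j\in A^c}|x_j/\sigma_j|^p\le 1$, the supremum splits as
\[
\ell^{*}(\elp\cap r\IB_2^m)\le r\,\IE\|g_A\|_2+\IE\sup_{\sum|y_j/\sigma_j|^p\le 1}\langle g_{A^c},y\rangle\le r\sqrt{k-1}+\IE\|D_\sigma g_{A^c}\|_{p^{*}},
\]
where the tail term uses the duality $\sup\{\langle g,y\rangle\colon \sum|y_j/\sigma_j|^p\le 1\}=\|D_\sigma g\|_{p^{*}}$ with $D_\sigma$ the diagonal operator of the semiaxes (as already employed in \eqref{eq:gelfand-diag}). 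Standard Gaussian moment bounds then give
\[
\IE\|D_\sigma g_{A^c}\|_{p^{*}}\lesssim
\begin{cases}
\sup_{j\ge k}\sigma_j\sqrt{\log(j)+1} & \text{if } p=1,\\
\sqrt{p^{*}}\,\Bigl(\sum_{j\ge k}\sigma_j^{p^{*}}\Bigr)^{1/p^{*}} & \text{if } 1<p\le\infty,
\end{cases}
\]
the $p=1$ line being the classical estimate on the expected maximum of weighted Gaussians with non-increasing weights, and the $p>1$ line following from $(\IE|g|^{p^{*}})^{1/p^{*}}\asymp\sqrt{p^{*}}$ after pulling the expectation inside the $\ell_{p^{*}}$-norm via Jensen's inequality.

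The final step is to choose $k=k(n)$ proportional to $n$ (or, more carefully, proportional to $n/p^{*}$ when balancing the $\sqrt{p^{*}}$-factor against the first summand) so that $r\sqrt{k-1}\le\tfrac{c}{2}r\sqrt{n}$. The inequality $\ell^{*}(\elp\cap r\IB_2^m)\le cr\sqrt{n}$ then becomes a lower bound on $r$ proportional to the tail quantity divided by $\sqrt{n}$, producing exactly the claimed radius bound, and the admissible $k$ from the balancing dictates the lower summation limit.

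The main obstacle is the passage from expectation to high probability with exponent $c_2 n$. This requires combining Gaussian concentration of $\sup_{x\in\elp\cap r\IB_2^m}\langle g,x\rangle$ around $\ell^{*}(\elp\cap r\IB_2^m)$, whose Lipschitz constant is controlled by the Euclidean radius $r$ of the truncation, with a version of Gordon's escape-through-the-mesh phenomenon providing deviations of order $r\sqrt{n}$ with exponentially small tails. Matching these two estimates up to absolute constants, and ensuring that the coupling of $r$ to $k$ survives the concentration, is the delicate step that determines $c_1,c_2$ in the final probability estimate.
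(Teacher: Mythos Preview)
Your proposal is correct and essentially the same as the paper's proof: truncate $\elp$ at Euclidean radius $r$, bound the Gaussian mean width of the truncation by splitting off the first $k-1$ coordinates and applying H\"older to the tail (this is exactly Proposition~\ref{pro:mstar-estimate}), then balance $k$ against $n$ so that the $M^{*}$-estimate forces $\rad(\elp\cap r\IB_2^m,\enran)<r$ and hence $\rad(\elp,\enran)<r$. Your final paragraph overcomplicates matters, though: the exponential probability $1-c_1\exp(-c_2 n)$ is already built into Gordon's escape-through-the-mesh theorem (Proposition~\ref{pro:escape}), so no separate concentration argument is needed once you invoke that result.
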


The proof will be discussed in Section~\ref{sec:ell-rounding}.

\begin{remark}\label{rem:ell-quasi}
	The notation we use here is different from \cite{HPS21}, where we used $\rad$ also for the radius of information and the radius of Gaussian information $r(\elp,G_{n,m})$. Here, the radius of a typical section $\rad(\elp,\enran)=\rad(\elp,\ker G_{n,m})$ is used to state the results. However, by Proposition~\ref{pro:radius-general-+} these notions are equivalent up to a constant of two if $p\ge 1$ and a constant depending on $p$ otherwise. To see this, note that $\elp$ is the unit ball of $\IR^m$ equipped with $\|\cdot\|_{p,\sigma}$, which satisfies, by the $p$-triangle inequality, 
\[
\|x+y\|_{p,\sigma}\le 2^{(1/p-1)_+}\big(\|x\|_{p,\sigma}+\|y\|_{p,\sigma}\big)\quad\text{for every }x,y\in\IR^m.
\]
\end{remark}

Theorem~\ref{thm:ell-main} canonically extends the bound \eqref{eq:upper-2} obtained in \cite{HKN+21} for the case $p=2$ and shows that if $\sigma\in \ell_{p^*}$ we get a bound on the radius of most sections. In order to compare this with the more general upper bound \eqref{eq:general-upper} from above, obtained by Litvak and Tomczak-Jaegermann in \cite{LT00}, we introduce the following concept. Similar to \cite{HKN+21} we will be interested in bounds which are independent of the dimension $m$ and may be extended to infinite $m$. 
\begin{definition}\label{def:lorentz}
Let $0<p,q\le \infty$. The Lorentz (quasi-)norm of a (possibly finite) sequence $x=(x_1,x_2,\dots)$ is given by
\[
\|x\|_{p,q}
:=\|i^{1/p-1/q}x_i^*\|_q,
\]
where $x_1^*\ge x_2^*\ge\cdots$ is the non-increasing rearrangement of $(|x_i|)_{i=1}^{\infty}$, see, e.g., \cite[Ch.~2.1]{Pie87}. 
\end{definition}

For convenience let us recall that \eqref{eq:general-upper} yields
\begin{equation*}
\crn(\elp)
\le C \frac{1}{\sqrt{n}}\sum_{j=\lfloor c\, n\rfloor}^{m} \frac{c_j(\elp)}{\sqrt{j}}, \quad \text{for all }n\in\IN
\end{equation*}
with exponentially high probability in \chg{$n$.}

To compare this with the upper bound of Theorem~\ref{thm:ell-main}, we need to know the behaviour of the Gelfand widths $\big(c_n(\elp)\big)_{n\le m}$ or, equivalently, the minimal radius of information. Due to equality \eqref{eq:gelfand-diag}, we can use results about Gelfand numbers of diagonal operators to infer the behaviour of $	\big(c_n(\elp)\big)_{n\le m}$ in terms of the sequence of semiaxes $\sigma$. We collect these results in Appendix~\ref{sec:ell-gelfand} and note that for $p<2$ one knows this behaviour only to polynomial order.

The bound in \eqref{eq:general-upper} is finite for all $m\in\IN$ if and only if $\big(c_n(D_{\sigma}\colon \ell_p\to\ell_2)\big)_{n\in\IN}\in \ell_{2,1}$. From Proposition~\ref{pro:lorentzeq} in Appendix~\ref{sec:ell-gelfand} one can deduce that for $p\ge 2$ this holds if and only if $\sigma\in \ell_{p^*,1}$. This is a stronger requirement than $\sigma\in \ell_{p^*}$, which is needed by Theorem~\ref{thm:ell-main} for a finite upper bound independent of $m$.  Note that this observation is already true for $p=2$ and thus could have been made after the statement of \eqref{eq:upper-2}. 

Let us elaborate. For $p=2$ the semiaxes coincide with the minimal radii, i.e., we have $\sigma_{n+1}=\rad(\mathcal{E}_\sigma^m,n)$. If $r(\elz,n)\asymp n^{-\lambda}(\log n)^{\alpha}$ for $\lambda>1/2$ and $\alpha\in \IR$ both \eqref{eq:general-upper} and \eqref{eq:upper-2} imply that the radius of a random section satisfies 
\begin{equation} \label{eq:ell-answer}
\rad(\elz,\enran)\lesssim \rad(\elz,n)
\end{equation}
with exponentially high probability, i.e., we have asymptotic optimality of random information. If $\lambda=1/2$ and $\alpha<-1/2$, then the bound \eqref{eq:upper-2} gives \eqref{eq:ell-answer} with an additional factor of $\sqrt{\log n}$ on the right-hand side.  This improves upon the general bound~\eqref{eq:general-upper} by a factor of $\sqrt{\log n}$. A slightly smaller improvement of $(\log n)^{1/p}$ may be deduced in the case of $p>2$ and $\alpha<-1+1/p$ from Theorem~\ref{thm:ell-main} and Proposition~\ref{pro:gelfandtail}.

Summarizing, we may say that, applied to $\ell_p$-ellipsoids $\elp$ with $1\le p\le \infty$ and $ m\in\IN$, the bound \eqref{eq:general-upper} is in general slightly weaker than Theorem~\ref{thm:ell-main}. However, we wish to emphasize that the bounds differ only at an logarithmic scale and for comparing the polynomial order, as done in Section~\ref{sec:ell-poly}, they are of equal strength.

As a consequence of the proof of Theorem~\ref{thm:ell-main}, more precisely of Proposition~\ref{pro:mstar-estimate} in Section~\ref{sec:ell-rounding} below, we obtain the following corollary on the supremum of a Gaussian process indexed by an $\ell_p$-ellipsoid.
\begin{corollary}[{\cite[Cor.~2]{HPS21}}]\label{cor:mstar-estimate}
Let $g_1,g_2,\dots$ be independent standard Gaussians. Then there exists a constant $C>0$ such that, for all $m\in\IN$, 
\[
\IE\sup_{y\in \elp}\sum_{j=1}^{m}g_j y_j
\le C
\begin{cases}
	\sup\limits_{1\le j\le m}\sigma_{j}\sqrt{\log(j)+1} &\text{if } p=1, \\
\sqrt{p^*} \big(\sum\limits_{j=1}^{m}\sigma_{j}^{p^{*}}\big)^{1/p^{*}} & \text{if }1<p\le\infty.
\end{cases}
\]
\end{corollary}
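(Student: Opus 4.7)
The plan is to exploit duality between $\ell_p^m$ and $\ell_{p^*}^m$ to reduce the statement to a moment estimate for a weighted $\ell_{p^*}$-norm of standard Gaussians. Since $\elp=D_{\sigma}(\IB_p^m)$, where $D_{\sigma}$ is the diagonal operator with entries $\sigma_1,\dots,\sigma_m$, the dual characterization of $\|\cdot\|_p$ yields
\[
\sup_{y\in \elp}\sum_{j=1}^{m}g_j y_j
 \,=\, \sup_{x\in \IB_p^m}\sum_{j=1}^{m}(\sigma_j g_j)\,x_j
 \,=\, \big\|(\sigma_j g_j)_{j=1}^{m}\big\|_{p^*}.
\]
Thus it suffices to bound $\IE \big\|(\sigma_j g_j)_{j=1}^{m}\big\|_{p^*}$ in each of the two cases.

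For $1<p\le\infty$, so that $1\le p^*<\infty$, I would apply Jensen's inequality to the concave function $t\mapsto t^{1/p^*}$ to obtain
\[
\IE \big\|(\sigma_j g_j)_{j=1}^{m}\big\|_{p^*}
 \,\le\, \Big(\sum_{j=1}^{m}\sigma_j^{p^*}\,\IE|g_j|^{p^*}\Big)^{1/p^*}
 \,=\, \big(\IE|g_1|^{p^*}\big)^{1/p^*}\Big(\sum_{j=1}^{m}\sigma_j^{p^*}\Big)^{1/p^*}.
\]
The result in this case then reduces to the classical Gaussian moment estimate $(\IE|g_1|^{p^*})^{1/p^*}\le C\sqrt{p^*}$, valid for all $p^*\ge 1$ with an absolute constant $C$; this follows from the explicit formula $\IE|g_1|^{p^*}=2^{p^*/2}\pi^{-1/2}\Gamma\big((p^*+1)/2\big)$ together with Stirling's approximation.

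For $p=1$ (hence $p^*=\infty$) the task becomes $\IE\max_{1\le j\le m}\sigma_j|g_j|\le C M$, where $M:=\sup_{1\le j\le m}\sigma_j\sqrt{\log(j)+1}$. I would prove this by a layer-cake/peeling argument. By the definition of $M$ we have $1/\sigma_j^2\ge (\log(j)+1)/M^2$, so the Gaussian tail estimate $\IP(|g_j|>s)\le 2e^{-s^2/2}$ combined with a union bound gives, for every $t>0$,
\[
\IP\Big(\max_{1\le j\le m}\sigma_j|g_j|>tM\Big)
 \,\le\, 2\sum_{j=1}^{m}\exp\Big(-\tfrac{t^2}{2}\big(\log(j)+1\big)\Big)
 \,\le\, 2e^{-t^2/2}\sum_{j=1}^{\infty}j^{-t^2/2}.
\]
Splitting the layer-cake integration at a fixed threshold $t_0>\sqrt{2}$ and comparing the inner sum with $\int_1^{\infty}x^{-t^2/2}\,dx$ shows that the right-hand side is integrable on $[t_0,\infty)$, yielding $\IE\max_{1\le j\le m}\sigma_j|g_j|\le CM$ with an absolute constant. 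The only mild obstacle is choosing the threshold carefully so that the resulting constant is genuinely independent of $m$ and of the particular sequence of semiaxes; this is what fixes the regime $t_0>\sqrt{2}$ in which the tail series is summable and decays fast enough in $t$ to be integrated.
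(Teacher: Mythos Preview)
Your proof is correct and follows essentially the same route as the paper. The paper derives the corollary from Proposition~\ref{pro:mstar-estimate} with $k=0$, whose proof is exactly your duality identity $\sup_{y\in\elp}\langle g,y\rangle=\|(\sigma_j g_j)\|_{p^*}$ followed by the same Gaussian moment bound (Jensen for $1<p\le\infty$, and the known asymptotics $\IE\|X\|_\infty\asymp\sup_j b_j^*\sqrt{\log(j)+1}$ for $p=1$, cited from van~Handel rather than reproved via the union bound you give).
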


In view of the dependence on the parameter $p$, this bound extends the mentioned work by van Handel \cite{van18} who deduced it for $1\le p <\infty$ but with an unspecified constant in $p$ which cannot be obtained using his approach, see \cite[Remark 3.4]{van18}. In Corollary \ref{cor:mstar-estimate}, we obtain an upper bound on the behavior in $p$ and thus complement his result.  

\begin{remark}
Employing estimates for entropy numbers of diagonal operators from \cite{Car81} as in \cite{van18}, it can be deduced that the $\ell_p$-ellipsoid $\elp$ with semiaxes satisfying
\[
	\sigma\in \ell_{p^*} \text{ but } \sigma\not\in\ell_{p^*,1} \text{ for }1<p<\infty \quad\text{or}\quad
	\quad \sup_{j\in\IN}\sigma_{j}\sqrt{\log(j)+1}<\infty\text{ but }\sigma\not\in\ell_{\infty,1}
\]
is an example where Dudley's bound, in contrast to Corollary~\ref{cor:mstar-estimate}, fails to give a finite bound if the dimension becomes large. In the case $1<p<\infty$, van Handel explains this with the fact that the entropy numbers need not be estimated for the whole set but for certain thin subsets, in this case dilations of $\ell_{2p-2}$-ellipsoids. For $p\ge 2$ an alternative explanation can be given in terms of $p$-convexity, see \cite[Ch.~4]{Tal14}. Note that in finite dimension Dudley's bound can be off by at most $\log m$, see \cite[Ch.~2]{Tal14}.
\end{remark}

Regarding the case $0<p<1$, it seems that little is known in general about random sections of unit balls of quasi-Banach spaces and different techniques are necessary than in the case of convex bodies.  Relying on methods commonly used in the field of compressed sensing, e.g., in a work of Foucart, Pajor, Rauhut and T.~Ullrich \cite{FPR+10} on the Gelfand widths of $\ell_p$-balls in the quasi-Banach regime, see also Donoho~\cite{Don06b}, we deduce the following upper bound for the radius of random information when the semiaxes have polynomial decay.

\begin{theorem}[{\cite[Thm.~B]{HPS21}}] \label{thm:ell-main-quasi}
Let $0<p\leq 1$ and $\sigma_j=j^{-\lambda}$, $j\in\IN$, for some $\lambda>0$. Then there exist constants $c,C,D>0$ such that, for all $m\in\IN$ and all $1\le n <m$ with
\[
n\ge D\log({\rm e}m/n),
\]
we have
\[
\rad(\elp,\enran)
\le C\,\Big(\frac{\log({\rm e}m/n)}{n}\Big)^{\lambda+1/p-1/2}
\]
with probability at least $1-2\exp(-c\, n)$.
\end{theorem}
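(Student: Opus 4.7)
The plan is to combine a compressed-sensing null space property of Gaussian matrices, adapted from the work of Foucart, Pajor, Rauhut and T.~Ullrich~\cite{FPR+10}, with a sharp Stechkin-type decay of the non-increasing rearrangement of vectors in $\elp$.

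The first ingredient, due to \cite{FPR+10}, is the following null space property in the quasi-Banach regime $0<p\le 1$: there exist absolute constants $c_0,c_1,\tau>0$ such that whenever $s:=\lfloor c_0 n/\log(\mathrm{e}m/n)\rfloor\ge 1$ (which is ensured by the hypothesis $n\ge D\log(\mathrm{e}m/n)$ for a suitable $D$), the Gaussian matrix $G_{n,m}$ satisfies, with probability at least $1-2\exp(-c_1 n)$,
\[
\|v_S\|_2 \;\le\; \tau\, s^{1/2-1/p}\, \|v_{S^c}\|_p
\qquad\text{for every } v\in \ker G_{n,m}\text{ and every }S\subset\{1,\dots,m\}\text{ with }|S|\le s.
\]
This is a standard consequence of the restricted isometry property of Gaussian matrices, derived in \cite{FPR+10} by a dyadic chunking of $v$ into $s$-sparse blocks.

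The second, purely deterministic ingredient exploits the diagonal structure of $\elp$ via a rearrangement argument. Writing $x^*_1\ge x^*_2\ge\cdots$ for the non-increasing rearrangement of $(|x_i|)_{i=1}^m$, I first establish that every $x\in \elp$ satisfies the pointwise decay $x^*_k\lesssim k^{-\lambda-1/p}$. Indeed, the top-$k$ support $T_k$ of $|x|$ satisfies $\sum_{i\in T_k}i^{p\lambda}\ge \sum_{j=1}^k j^{p\lambda}\asymp k^{p\lambda+1}$ (the minimum over all $k$-subsets being attained at $T_k=\{1,\dots,k\}$), whereas $x_i^p\ge (x^*_k)^p$ on $T_k$; combining these two observations gives
\[
(x^*_k)^p\, k^{p\lambda+1}\;\lesssim\; \sum_{i\in T_k}i^{p\lambda}x_i^p\;\le\; \sum_{i=1}^m i^{p\lambda}x_i^p\;\le\; 1.
\]
Summing the $p$-th and second powers of this pointwise bound over $k>s$ (both tails converge since $\lambda+1/p>1/2$) then yields $\sigma_s(x)_p\lesssim s^{-\lambda}$ and $\sigma_s(x)_2\lesssim s^{1/2-1/p-\lambda}$, where $\sigma_s(x)_q$ denotes the best $s$-term approximation error in $\ell_q^m$.

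To combine the two ingredients I would apply the null space property with $S$ equal to the support of the $s$ largest coordinates of $|x|$, so that $\|x_{S^c}\|_p=\sigma_s(x)_p$ and $\|x_{S^c}\|_2=\sigma_s(x)_2$, obtaining
\[
\|x\|_2 \;\lesssim_p\; \|x_S\|_2 + \|x_{S^c}\|_2 \;\le\; \tau\, s^{1/2-1/p}\sigma_s(x)_p + \sigma_s(x)_2 \;\lesssim\; s^{-(\lambda+1/p-1/2)}
\]
for every $x\in \elp\cap \ker G_{n,m}$; the target bound then follows after substituting $s\asymp n/\log(\mathrm{e}m/n)$. I expect the main obstacle to lie in the first step, the clean execution of the quasi-Banach compressed-sensing argument of \cite{FPR+10}: the dyadic chunking proof must track the $p$-dependent constant from the quasi-triangle inequality, and the matching of the sparsity parameter $s$ with the restricted-isometry constants of Gaussian matrices is what produces the explicit $c,C,D$ appearing in the statement. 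The rearrangement estimate $x^*_k\lesssim k^{-\lambda-1/p}$ is by contrast elementary, but is precisely the new input distinguishing $\elp$ from the unweighted $\ell_p$-ball; without it one would only obtain the weaker bound $(\log(\mathrm{e}m/n)/n)^{1/p-1/2}$ that follows from the trivial inclusion $\elp\subset \IB_p^m$.
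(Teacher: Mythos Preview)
Your proposal is correct and follows essentially the same route as the paper: both combine the compressed-sensing machinery of \cite{FPR+10} (RIP of Gaussian matrices, dyadic chunking) with the rearrangement bound $x^*_k\lesssim k^{-\lambda-1/p}$ for $x\in\elp$, which is precisely Lemma~\ref{lem:nonlin-app}. The only difference is in the wrapping: the paper goes through the $\ell_r$-minimization algorithm $\Delta_r$ with $r=\min\{1,q\}$ (so $r=1$ here), bounding $\|x-\Delta_r(N_nx)\|_2$ via Proposition~\ref{pro:rip-sparse} and then invoking Proposition~\ref{pro:radius-general-+}, whereas you apply the null space property directly to $x\in\elp\cap\ker G_{n,m}$. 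Your packaging is slightly more direct for the stated result about $\rad(\elp,\enran)$; the paper's version additionally exhibits an explicit recovery map and is written to cover the more general Theorem~\ref{thm:ell-main-quasi-general} with $p<q\le 2$.
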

To the best of our knowledge, this seems to be the best known upper bound for the minimal radius of sections of $\ell_p$-ellipsoids with $0<p<1$ and thus the best information available is given by random Gaussian measurements. Below, we shall prove the more general Theorem~\ref{thm:ell-main-quasi-general}, where the radius is measured in the $\ell_q$-quasi-norm for $p<q\le 2$. This yields the following bound on Gelfand numbers of diagonal operators in the quasi-Banach regime and extends known results on the identity between $\ell_p^m$ and $\ell_q^m$ from the mentioned works \cite{Don06b} and \cite{FPR+10}.

\begin{corollary}[{\cite[Cor.~4]{HPS21}}]\label{cor:gelfand-quasi}
Let $0<p\leq 1$ and $p<q\le 2$. Assume that $\sigma_j=j^{-\lambda}$, $j\in\IN$, for some $\lambda>0$. Then there exist constants $C,D>0$ such that, for all $m\in\IN$ and all $1\le n <m$ with
\[
n\ge D\log({\rm e}m/n),
\]
we have
\[
c_{n+1}(D_{\sigma}\colon \ell_p^m\to \ell_q^m)
\le C\, \Big(\frac{\log({\rm e}m/n)}{n}\Big)^{\lambda+1/p-1/q}.
\]
\end{corollary}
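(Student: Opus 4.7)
The plan is to deduce the corollary from Theorem~\ref{thm:ell-main-quasi-general}, the $\ell_q$-valued extension of Theorem~\ref{thm:ell-main-quasi} that was mentioned just before the statement. The reduction from a bound on Gelfand numbers of $D_\sigma$ to a bound on the radius of random sections of $\elp$ is straightforward given the diagonal structure, and relies only on an elementary change of variables together with the probabilistic method.

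First I would observe that $\elp = D_\sigma(\IB_p^m)$ and that $D_\sigma$ is a linear bijection of $\IR^m$ (since all $\sigma_j > 0$), hence sends linear subspaces of codimension $n$ to linear subspaces of codimension $n$. Writing $y = D_\sigma x$ this yields
\[
c_{n+1}(D_\sigma \colon \ell_p^m \to \ell_q^m)
= \inf_{E_n} \sup_{x \in \IB_p^m \cap E_n} \|D_\sigma x\|_q
= \inf_{F_n} \sup_{y \in \elp \cap F_n} \|y\|_q,
\]
where the infima run over subspaces of $\IR^m$ of codimension $n$. Denoting by $\rad_q(\elp, F) := \sup_{y \in \elp \cap F} \|y\|_q$ the $\ell_q$-radius of the section of $\elp$ with $F$, we obtain the pointwise estimate $c_{n+1}(D_\sigma \colon \ell_p^m \to \ell_q^m) \le \rad_q(\elp, \enran)$ for every realization of the random codimension-$n$ subspace $\enran$.

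Second, I would apply Theorem~\ref{thm:ell-main-quasi-general}, which (with $\|\cdot\|_2$ replaced by $\|\cdot\|_q$ throughout Theorem~\ref{thm:ell-main-quasi}) gives, under the hypothesis $n \ge D \log(\mathrm{e}m/n)$, the bound
\[
\rad_q(\elp, \enran)
\le C \Big(\frac{\log(\mathrm{e}m/n)}{n}\Big)^{\lambda + 1/p - 1/q}
\]
with probability at least $1 - 2\exp(-cn)$. Since this probability is strictly positive, there exists at least one deterministic codimension-$n$ subspace for which the same bound on the $\ell_q$-radius holds, and feeding this subspace into the identity of the previous paragraph produces the claimed inequality for $c_{n+1}(D_\sigma\colon \ell_p^m \to \ell_q^m)$.

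The conceptually hard part is the underlying Theorem~\ref{thm:ell-main-quasi-general}, whose proof adapts the compressed-sensing strategy of Foucart--Pajor--Rauhut--T.~Ullrich behind Theorem~\ref{thm:ell-main-quasi} from a Hilbert-space target to a quasi-Banach one. One expects to combine a null-space property of a Gaussian random matrix, valid with probability $1 - 2\exp(-cn)$, with the $p$-triangle inequality on $\elp$ to bound $\|y\|_q$ uniformly on $\elp \cap \enran$ by a multiple of $(\log(\mathrm{e}m/n)/n)^{\lambda + 1/p - 1/q}$; the additional power $\lambda$ reflects the polynomial decay of the semiaxes. The delicate point is arranging the argument so that the decay of $\sigma$ and the compressed-sensing exponent $1/p - 1/q$ combine cleanly into a single exponent $\lambda + 1/p - 1/q$, rather than being lost in a product of two separate estimates.
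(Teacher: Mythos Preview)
Your proposal is correct and matches the paper's approach: the paper does not spell out a separate proof of the corollary but presents it as an immediate consequence of Theorem~\ref{thm:ell-main-quasi-general} via exactly the change-of-variables identity $c_{n+1}(D_\sigma\colon\ell_p^m\to\ell_q^m)=\inf_{F_n}\rad_{\ell_q^m}(\elp,F_n)$ (the $\ell_q$-analogue of \eqref{eq:gelfand-diag}) together with the probabilistic-method step you describe. One minor remark on your speculative final paragraph: the paper's proof of Theorem~\ref{thm:ell-main-quasi-general} uses the restricted isometry property (Propositions~\ref{pro:rip-sparse} and~\ref{pro:rip-gaussian}) rather than a null-space property, combined with the best $s$-term approximation bound of Lemma~\ref{lem:nonlin-app}, but this does not affect the corollary's deduction.
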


\newpage

\section{Discussion of results -- polynomial semiaxes}
\label{sec:ell-poly}

In the following, we discuss the consequences of our results for $\ell_p$-ellipsoids with $1\le p\le \infty$ and polynomially decaying semiaxes. Our main reason for doing so is to be able to compare the minimal radius $\rad(\elp,n)$ with the radius of typical sections $\rad(\elp,\enran)$ and to make progress towards extending the dichotomy for the ellipsoids $\elz$ mentioned in Section~\ref{sec:ell-intro}. We give some further motivation to look at polynomial semiaxes.

An incentive can be seen in the function spaces $A_q^{\alpha}(\TT^d)$ of Definition~\ref{def:aaq}, whose unit ball can be identified with an infinite-dimensional $\ell_q$-ellipsoid with semiaxes $\sigma_j\asymp j^{-\alpha/d}$. Let us mention here that Theorem~\ref{thm:ell-main} can be applied to this sequence if and only if $\alpha/d>1/p^*$. In this case, one could use an isomorphism to obtain a bound on random linear information for the spaces $A_q^{\alpha}(\TT^d)$.

For the sake of further motivation, let us mention that the unit ball of $\IR^m$ equipped with the Lorentz (quasi-)norm $\|\cdot\|_{p,q}$ bears some similarity to an $\ell_q$-ellipsoid with semiaxes $\sigma_j=j^{1/q-1/p}$. Under the assumption that the coefficients are ordered non-increasingly, one may interpret recovery of functions whose Fourier or wavelet coefficients belong to a Lorentz ball as a possible application of our results. This includes for example nonlinear approximation classes, in part related to Besov spaces, see for example \cite{DeV98}. 

Let now $\sigma_j=j^{-\lambda}$ for all $j\in\IN$ and some $\lambda>0$. Equivalent results hold if this is weakened to $\sigma_j\asymp j^{-\lambda}$. In order to assess the size of the radius of a typical section of $\elp$, we need to know the minimal radius $\rad(\elp,n)$ for all $n\in\IN$.  Its behavior is known exactly when $p\ge 2$ but can only be deduced up to subpolynomial factors when $1\le p<2$, see Appendix~\ref{sec:ell-gelfand}. Therefore, we will ignore subpolynomial terms in the following discussion and use only the rate of polynomial decay to compare.

To make this precise, we define the rate of polynomial decay (in $n$) of an infinite array $a=(a_{n,m})_{m\in\IN,1\le n <m}$ of real numbers by
\label{loc:decay-det}
\[
\decay(a)
:=\sup\{\varrho\ge 0\colon \exists C>0 \text{ with }a_{n,m}\le C\, n^{-\varrho} \text{ for all }m\in\IN \text{ and }1\le n<m\}.
\]
This definition is derived from the concept of a diagonal limit order used to study Gelfand numbers of diagonal operators arising from a polynomially decaying sequence, see, e.g., Pietsch \cite[6.2.5.3]{Pie07}. 

Further, we need to suppose that $\sigma_j=j^{-\lambda}$, $j\in\IN$, for some 
\[
\lambda>\frac{1}{s}:=\Big(\frac{1}{2}-\frac{1}{p}\Big)_+=\max\Big\{\frac{1}{2}-\frac{1}{p},0\Big\},
\]
since the minimal radius does not decay if $\lambda\le \frac{1}{s}$. For the decay rate of the minimal radius Corollary~\ref{cor:min-order} implies
\begin{equation}\label{eq:min-order}
\decay\big(\rad(\elp,n)\big)
=	\begin{cases}
	\lambda\cdot\frac{p^{*}}{2}&\text{if } 1\le p<2 \text{ and }\lambda<\frac{1}{p^{*}},\\
		\lambda+\frac{1}{p}-\frac{1}{2}&\text{otherwise.}
	\end{cases}
\end{equation}

By Theorem~\ref{thm:ell-main}, if $\sigma$ decays polynomially with exponent $\lambda>\frac{1}{p^*}$, then there is $C>0$ such that the radius of a random section satisfies, for every $m\in\IN$ and $1\le n<m$,  
\begin{equation}\label{eq:bound-polynomial}
\rad(\elp,\enran)
\le C
\begin{cases}
	n^{-\lambda-1/2}\sqrt{\log n}&\text{if }p=1,\\
	n^{-\lambda+1/2-1/p}&\text{if }1<p\leq \infty,
\end{cases}
\end{equation}
with probability $1-c_1\exp(-c_2 n)$. 

\label{loc:decay-ran}
Similar to the above, let us define $\decay\big(\rad(\elp,\enran)\big)$ to be the supremum over all $\varrho\ge 0$ such there exist $C,C_1,C_2\in (0,\infty) $ such that
$
\rad(\elp,\enran)\le C n^{-\varrho}
$
holds with probability at least $1-C_1\exp(-C_2n)$ for all $m\in\IN$ and $1\le n <m$. This notion is somewhat related to the notion of the random Gelfand widths.

Then the combination of \eqref{eq:min-order} and \eqref{eq:bound-polynomial} implies, for $1\le p\le\infty$ and $\lambda>\frac{1}{p^*}$, 
\begin{equation}\label{eq:ran-order}
	\decay\big(\rad(\elp,\enran)\big)
=	\lambda+\frac{1}{p}-\frac{1}{2} 
=	\decay\big(\rad(\elp,n)\big).
\end{equation}
That is, the rate of decay of random information is equal to the one of optimal information which provides a positive partial answer to Question~\ref{que:lin} for $\ell_p$-ellipsoids with polynomially decaying semiaxes.

Therefore, the bound of Theorem~\ref{thm:ell-main} yields an optimal bound on the rate of polynomial decay of typical sections in the case of $\lambda>\frac{1}{p^*}$. However, if $\lambda\le \frac{1}{p^*}$, it does not yield a useful result. Instead, we have the following lower bound on the radius of a random section if $1< p\le 2$ and $m$ is large enough compared to $n$.

\begin{proposition}[{\cite[Prop.~1]{HPS21}}]\label{pro:lower}
	Let $1<p\le 2$ and $\sigma_j=j^{-\lambda},j\in\IN$, for some $\lambda$ with $0<\lambda<\frac{1}{p^*}$. Then, for any $\varepsilon\in (0,1),$ $n\in\IN$ and $m>n$ large enough, we have
\[
	\IP\Big[\rad( \elp,\enran)\ge \frac{1}{2}\Big]
	\ge 1-\varepsilon.
\]
\end{proposition}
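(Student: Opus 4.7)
The plan is to take the extreme point $e_1\in\elp$ (note $\|e_1\|_{p,\sigma}=1/\sigma_1=1$) and show that its orthogonal projection onto the random kernel $\enran$ is close to $e_1$ in both the Euclidean and the $\|\cdot\|_{p,\sigma}$ norms provided $m\gg n$. Writing $y:=P_{\enran^\perp}e_1$ and $z:=e_1-y\in\enran$, the triangle inequality gives $\|z\|_2\ge 1-\|y\|_2$ and $\|z\|_{p,\sigma}\le 1+\|y\|_{p,\sigma}$. If both $\|y\|_2\le\eta$ and $\|y\|_{p,\sigma}\le\eta$ occur, then $\widetilde z:=z/\|z\|_{p,\sigma}\in\elp\cap\enran$ satisfies $\|\widetilde z\|_2\ge(1-\eta)/(1+\eta)\ge 1/2$ for $\eta\le 1/3$, so the problem reduces to controlling both norms of $y$ with probability at least $1-\varepsilon$.

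Let $P:=P_{\enran^\perp}$ so that $y_j=P_{1,j}$. Rotational invariance of the law of $\enran^\perp$ yields $\IE P_{11}=n/m$, and the projection identities $P=P^T=P^2$ give $\sum_{j=1}^{m}P_{1,j}^2=P_{11}$. Since rotations fixing $e_1$ act transitively on $\{e_2,\dots,e_m\}$, the quantities $\IE P_{1,j}^2$ coincide for $j\ge 2$, whence
\[
\IE P_{1,j}^2 \;=\; \frac{\IE P_{11}-\IE P_{11}^2}{m-1} \;\le\; \frac{2n}{m^2}\qquad (j\ne 1,\; m\ge 2).
\]
Because $p\in(1,2]$, Jensen's inequality for the concave map $x\mapsto x^{p/2}$ gives $\IE|y_j|^p\le(\IE y_j^2)^{p/2}$. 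Bounding the diagonal term via $P_{11}\in[0,1]$ and $p\ge 1$ by $\IE P_{11}^p\le\IE P_{11}=n/m$, the sum defining $\|y\|_{p,\sigma}^p$ splits as
\[
\IE\|y\|_{p,\sigma}^p
\;\le\; \frac{n}{m}+\Big(\frac{2n}{m^2}\Big)^{p/2}\sum_{j=2}^{m}j^{\lambda p}
\;\lesssim\; \frac{n}{m}+n^{p/2}\,m^{\lambda p+1-p},
\]
using $\sum_{j=1}^{m}j^{\lambda p}\asymp m^{\lambda p+1}$ (valid since $\lambda p>-1$). The hypothesis $\lambda<1/p^*=1-1/p$ is exactly $\lambda p+1-p<0$, so this expectation (and also $\IE\|y\|_2^2=n/m$) tends to zero as $m\to\infty$ with $n$ fixed.

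The main obstacle is the off-diagonal second-moment estimate $\IE P_{1,j}^2\lesssim n/m^2$, which however follows cleanly from the projection identities and the exchangeability of the coordinates orthogonal to $e_1$. Given the two moment bounds, Markov's inequality produces the event $\{\|y\|_2\le\eta\}\cap\{\|y\|_{p,\sigma}\le\eta\}$ with probability at least $1-\varepsilon$ for all $m$ exceeding some threshold depending on $n,\varepsilon,p$ and $\lambda$, and on this event $\rad(\elp,\enran)\ge 1/2$. The appearance of $1/p^*$ as the critical exponent mirrors the role of $1/2$ for Hilbert-space ellipsoids in \cite{HKN+21}: once the tail of $\sigma$ escapes $\ell_{p^*}$, the single vector $e_1$ already witnesses that random linear information is asymptotically useless.
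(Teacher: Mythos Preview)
Your argument is correct. Both you and the paper exhibit essentially the same witness vector in $\enran$—the orthogonal projection $z=P_{\enran}e_1$ (normalized in $\ell_2$, this is precisely the unit kernel vector maximizing the first coordinate, which is what Lemma~\ref{lem:large-coord} from \cite{HKN+21} delivers)—but the two proofs control $\|z\|_{p,\sigma}$ differently. The paper invokes Lemma~\ref{lem:large-coord} as a black box to bound $1-z_1^2\le n/(\varepsilon m)$ and then applies the crude Hölder estimate $\sum_{j\ge 2}|z_j|^p\le m^{1-p/2}\bigl(\sum_{j\ge 2}z_j^2\bigr)^{p/2}$ together with the worst-case substitution $\sigma_j^{-1}\le\sigma_m^{-1}=m^{\lambda}$; this leads to the explicit condition $n\le\varepsilon\,\sigma_m^2\,m^{2/p^*}$ of Proposition~\ref{pro:lower-general}. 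You instead compute the second moments $\IE P_{1,j}^2\le 2n/m^2$ directly from the projection identities and rotational symmetry, apply Jensen coordinate-wise, and sum against the actual weights $j^{\lambda p}$. Your route is more self-contained (no appeal to \cite{HKN+21}) and uses the full profile of $\sigma$ rather than only $\sigma_m$; the paper's route is shorter once the lemma is granted and yields the more general Proposition~\ref{pro:lower-general} for non-polynomial semiaxes satisfying $\sigma_m\,m^{1/p^*}\to\infty$. Both arrive at the same threshold $\lambda<1/p^*$.
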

 
In other words, if $1< p\le 2$ and the semiaxes decay too slowly compared to $\frac{1}{p^*}$, random information is asymptotically as good as no information at all, that is, its radius does not decay as the following corollary summarizes in terms of random sections.

\begin{corollary}\label{cor:lower}
	Let $1\le p\le 2$ and $\sigma_j=j^{-\lambda},j\in\IN$, for some $\lambda$ with $0<\lambda<\frac{1}{p^*}$. Then
\[
\decay\big(\rad(\elp,\enran)\big)=0.
\]
\end{corollary}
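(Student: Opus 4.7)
The plan is to obtain Corollary~\ref{cor:lower} as a direct consequence of Proposition~\ref{pro:lower}. First I would dispose of the boundary case $p=1$: here $p^{*}=\infty$, so $1/p^{*}=0$, and the hypothesis $0<\lambda<1/p^{*}$ is vacuous, leaving nothing to prove. From now on assume $1<p\le 2$, which is precisely the range in which Proposition~\ref{pro:lower} applies.

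The main argument will be by contradiction. Suppose $\decay\big(\rad(\elp,\enran)\big)>0$. By the definition of $\decay$ for random quantities introduced before Corollary~\ref{cor:lower}, there then exist $\varrho>0$ and constants $C,C_{1},C_{2}\in(0,\infty)$ such that
\[
\rad(\elp,\enran)\le C\, n^{-\varrho}
\]
holds with probability at least $1-C_{1}\exp(-C_{2}n)$, uniformly over all admissible pairs $(n,m)$ with $1\le n<m$. I would fix a single $n$ large enough that simultaneously $Cn^{-\varrho}<1/2$ and $C_{1}\exp(-C_{2}n)<1/4$. Then, for every $m>n$,
\[
\IP\!\left[\rad(\elp,\enran)<\tfrac12\right]\ge 1-C_{1}\exp(-C_{2}n)>\tfrac34.
\]

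Next, I would feed this fixed $n$ into Proposition~\ref{pro:lower} with $\varepsilon=1/4$, obtaining some $m>n$ for which $\IP\big[\rad(\elp,\enran)\ge 1/2\big]\ge 3/4$. Since the events $\{\rad<1/2\}$ and $\{\rad\ge 1/2\}$ are disjoint, their probabilities sum to at most $1$, yielding the contradiction $3/4+3/4>1$. Hence no $\varrho>0$ can enter the supremum defining $\decay$, and $\decay\big(\rad(\elp,\enran)\big)=0$ as claimed.

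The only subtlety worth flagging is the quantifier order: the notion of $\decay$ demands a single probability bound uniform in both $n$ and $m$, whereas Proposition~\ref{pro:lower} provides its lower bound only once $m$ has been chosen sufficiently large depending on $n$ and $\varepsilon$. This mismatch is in fact what drives the contradiction — we are free to pick $m$ ourselves to trigger the proposition, so a single well-chosen pair $(n,m)$ suffices to defeat the hypothetical uniform upper bound. I do not anticipate a genuine obstacle, since all of the technical work is already encapsulated in Proposition~\ref{pro:lower}; the corollary is essentially a packaging step reinterpreting that proposition in the language of the decay rate.
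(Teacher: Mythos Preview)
Your proof is correct and follows exactly the approach the paper intends: the corollary is stated as an immediate consequence of Proposition~\ref{pro:lower}, and the paper gives no further argument. You have simply written out the contradiction argument that the paper leaves to the reader, including the vacuous case $p=1$.
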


We visualize $\decay\big(\rad(\elp,\enran)\big)$, or equivalently the rate of decay of the radius of random information, in Figure~\ref{fig:poly} and discuss it in the following.
\begin{figure}[ht]
\begin{center}
	\begin{tikzpicture}[scale=5]
		\fill[lgrey] (0,0) -- (0,.35) -- (0.35,0) -- (0,0);
		\draw[->] (0,-.1) -- (0,1) node [midway,xshift=-10pt,yshift=-10pt]{$\frac{1}{2}$} node [near end,xshift=-10pt,yshift=0pt]{$1$} node [at end,xshift=-10pt,yshift=0pt]{$\lambda$} ; 	
		\draw[->] (-.1,0) -- (1,0) node [midway,yshift=-10pt,xshift=-10pt]{$\frac{1}{2}$} node [near end,yshift=-10pt,xshift=0pt]{$1$} node [at end,xshift=3pt,yshift=-10pt]{$\frac{1}{p}$}; 	
		\draw (0,.7) -- (.7,0); 	
		\draw (0,0) rectangle (.35,.35); 	
		\draw[dashed] (0,.35) -- (.35,0); 	
		\draw[dashed] (.7,0) -- (.7,1) node [at end,yshift=5pt]{\small{$p=1$}}; 	
		\node at (.35,.75) {$\lambda+\frac{1}{p}-\frac{1}{2}$};
		\node at (.12,.44) {$?$};		
		\node at (.27,.27) {0};
		\node at (.47,.10) {0};
		\clip (0,0) rectangle (0.35,.35);
	\end{tikzpicture}
\end{center}
\caption{This diagram depicts our knowledge about $\decay\big(\rad(\elp,\enran)\big)$ in the $\frac{1}{p}-\lambda$-plane, where $0< p,\lambda\le \infty$ and $\sigma_j=j^{-\lambda},j\in\IN$.}
\label{fig:poly}
\end{figure}

Above the line $\lambda=1-\frac{1}{p}$, where $ 1\le p\le\infty,$ we just deduced that Theorem \ref{thm:ell-main} yields that random information is optimal up to an additional logarithmic factor if $p=1$. The decay rate is equal to $\lambda+\frac{1}{p}-\frac{1}{2}$, see \eqref{eq:ran-order}. As noted above, below and including the line $\lambda=\frac{1}{2}-\frac{1}{p}$, where $ 2\le p\le\infty,$ optimal information does not decay at all, in other words, information is useless and does not help to recover vectors. Geometrically, this corresponds to the fact that, no matter how large the codimension $n\,(<m)$ of a subspace is, the section with $\elp$ has a radius bounded from below. 

In the square, that is for $p\ge 2$ and $\lambda\le\frac{1}{2}$, it follows via $\elz\subset \elp$ from the lower bound \eqref{eq:lower-2} for the case $p=2$ that, no matter how large we choose $n$, if $m$ is large enough, then with high probability $\rad(\elp,\enran)$ is bounded from below by a constant.  That is, $\decay\big(\rad(\elp,\enran)\big)=0$ and so random information is useless. By Corollary~\ref{cor:lower} this also holds for the triangle given by $1< p<2$ and $0<\lambda<1-\frac{1}{p}$.

Finally, note that on the right-hand side of the dashed line where $p=1$, that is, where $0<p<1$, Theorem \ref{thm:ell-main-quasi} provides an upper bound with decay rate $\lambda+\frac{1}{p}-\frac{1}{2}$, which depends on $m$. We do not have a corresponding lower bound for optimal information in this region. 

We pose the following conjecture claiming that there is a threshold of decay separating regimes of completely different behavior of random information.

\begin{conj}\label{conj:threshold}
	Let $1\le p\le\infty$ and $\sigma_j=j^{-\lambda},j\in\IN,$ with $\lambda>\frac{1}{s}$. Then, 
\[
\decay\big(\rad(\elp,\enran)\big)=
\begin{cases}
	\decay\big(\rad(\elp,n)\big)
&\text{if }\lambda>\frac{1}{p^*},\\
	0&\text{if }\lambda\le\frac{1}{p^*}.
\end{cases}
\]
\end{conj}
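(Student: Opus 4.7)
The first case of the conjecture, $\lambda>1/p^*$, is essentially a consequence of Theorem~\ref{thm:ell-main} combined with \eqref{eq:min-order}. Plugging $\sigma_j=j^{-\lambda}$ into the tail estimate of Theorem~\ref{thm:ell-main} with $k\asymp n$ and using $\lambda p^{*}>1$ yields, up to a $\sqrt{\log n}$ factor at $p=1$, the bound $\rad(\elp,\enran)\lesssim n^{-\lambda-1/p+1/2}$ with exponentially high probability. This polynomial order matches $\decay\bigl(\rad(\elp,n)\bigr)$ from \eqref{eq:min-order}, and combined with the trivial inequality $\decay(\rad(\elp,\enran))\le \decay(\rad(\elp,n))$ coming from \eqref{eq:min-info}, the equality of decay exponents follows. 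Note that for $p=1$ this already exhausts the conjecture, since $1/p^*=0$ makes the second case vacuous under the standing hypothesis $\lambda>0$.

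For the second case, $\lambda\le 1/p^*$ with $1<p\le\infty$, the task is to establish a matching lower bound $\rad(\elp,\enran)\ge c>0$ with probability at least $1-\varepsilon$, for any fixed $n$ and $\varepsilon>0$, provided $m$ is taken sufficiently large. Three subranges are already settled by results appearing in the excerpt: the diagonal $p=2$, $\lambda\le 1/2$ is exactly \eqref{eq:lower-2}; the range $p\ge 2$, $\lambda\le 1/2$ follows from \eqref{eq:lower-2} via the inclusion $\elz\subset\elp$; and the strip $1<p<2$, $\lambda<1/p^*$ is Proposition~\ref{pro:lower}. Two subregions remain genuinely open: the gap $p>2$, $1/2<\lambda\le 1-1/p$, and the critical line $\lambda=1/p^*$ for $p\ge 2$.

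My plan for the $p>2$ gap is to locate, with high probability, a vector in $\elp\cap\enran$ whose $\ell_2^m$-norm is bounded below independently of $m$. A natural starting point is one of the canonical boundary points $\sigma_j e_j\in\partial\elp$ with $\|\sigma_j e_j\|_2=\sigma_j$; for a random codimension-$n$ subspace $\enran$ and $m$ large, the $\ell_2$-projection onto $\enran$ retains asymptotically full $\ell_2$-norm by standard concentration on the Grassmannian. The core difficulty is to ensure that the projection stays inside $\elp$ up to a constant factor, since $\ell_2$-orthogonal projections do not preserve the $(p,\sigma)$-geometry. A promising route is a Gordon-type comparison in the spirit of the low $M^*$-estimate presented in Section~\ref{sec:ell-rounding}, applied in reverse to dualize through $\IB_{p^*}^m$ with $p^*<2$. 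Since the dual lacks the subgaussian concentration available for $p^*\ge 2$ that presumably underlies Proposition~\ref{pro:lower}, one would replace subgaussian estimates by small-ball or Chevet-type arguments, combined with a union bound over a family of candidate indices $j$ chosen to match the decay of $\sigma$.

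The main obstacle I foresee is the critical line $\lambda=1/p^*$: it sits exactly at the logarithmic threshold at which Theorem~\ref{thm:ell-main}'s upper bound diverges as $m\to\infty$, so one expects random information to be useless, but a matching lower bound cannot follow from any purely polynomial scaling. Resolving this boundary behaviour will likely require tracking the $m$-dependence with sub-polynomial precision, for instance via a Kashin-type splitting at the critical semiaxis decay, or via a careful concentration estimate on the $\ell_2^m$-diameter of random sections of $\IB_p^m$ at the threshold. Together with the technical passage past $p=2$ described above, this is where I expect most of the genuine work to lie.
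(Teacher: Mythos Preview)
This statement is a \emph{conjecture}, not a theorem: the paper does not prove it and explicitly lists the unresolved cases as an open problem (Question~\ref{que:conj}). So there is no ``paper's own proof'' to compare against; what the paper provides is a discussion (Section~\ref{sec:ell-poly}) of which parameter regions are settled by Theorem~\ref{thm:ell-main}, Proposition~\ref{pro:lower}, and the inclusion $\elz\subset\elp$, together with the identification of the remaining gaps.

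Your summary of the known cases is essentially faithful to that discussion, but your bookkeeping of the open regions is slightly off. The paper's two remaining cases are (i) $1<p<2$ with $\lambda=1/p^{*}$, and (ii) $p>2$ with $1/2<\lambda\le 1/p^{*}$. You wrote ``the critical line $\lambda=1/p^{*}$ for $p\ge 2$'', but for $p=2$ the point $\lambda=1/2$ is already covered by \eqref{eq:lower-2} (since then $\|\sigma\|_2=\infty$), and for $p>2$ the critical line is already contained in your gap (ii). The case you are missing is the critical value $\lambda=1/p^{*}$ for $1<p<2$, which Proposition~\ref{pro:lower} does not reach.

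Regarding your proposed attacks: the projection-plus-Chevet idea for $p>2$ is a reasonable heuristic but remains speculative; controlling the $\|\cdot\|_{p,\sigma}$-norm of an $\ell_2$-orthogonal projection for $p>2$ is exactly the step where the paper's own method breaks (see Section~\ref{sec:ell-lower}, where it is noted that H\"older's inequality restricts the argument to $p\le 2$). The paper's own suggestion is different in flavour: find an analogue of Lemma~\ref{lem:large-coord} with the sphere replaced by the surface of the cube, i.e.\ still seek a kernel vector with a single dominant coordinate but measured in $\ell_\infty$ rather than $\ell_2$. Neither route is carried out, so neither your proposal nor the paper constitutes a proof of the conjecture.
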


In fact, this dichotomy may also be true more generally for non-polynomial semiaxes depending on whether $\|\sigma\|_{p^*}<\infty$ or not, see Question~\ref{que:conj} in Section~\ref{sec:ell-open}.

\section{Low $M^*$-estimates and the rounding of ellipsoids}
\label{sec:ell-rounding}

In this section we will explain how Theorem~\ref{thm:ell-main} follows from a (low(er)) $M^*$-estimate using a rounded version of the $\ell_p$-ellipsoid $\elp$. To illustrate the ideas behind this approach, which has been used already in \cite{HKN+21}, and in a similar form also in \cite{LT00} (and many other papers), we will consider a general convex body $K\subset \IR^m$.

As mentioned, unit balls of normed spaces and in particular the $\ell_p$-ellipsoids, where $1\le p\le \infty$, are examples of convex bodies, and additionally symmetric. In the following, $K$ need not be symmetric but for convenience we assume that it contains the origin in its interior.

A well-known type of result in the field of asymptotic geometric analysis is the low $M^*$-estimate, which consists of an estimate of the radius of the intersection of $K$ with a random subspace in terms of its mean width and a factor depending on the proportion of the dimension of subspace. Let us introduce the necessary notions and give a formal statement together with some history.

The mean width of $K$ measures the average width of $K$ over all directions belonging to the sphere $\IS^{m-1}=\{x\in \IR^m\colon \|x\|_2=1\}$. To formally define it, let $h_{K}\colon \mathbb S^{m-1}\to\IR$, $u\mapsto\sup_{y\in K}\langle u,y\rangle$ be the support function of $K$, which measures the distance from the origin of the supporting hyperplane in direction $u$. That is, the width into direction $u$ is given by $h_K(u)+h_K(-u)$. The (half) mean width of $K$ is then
\begin{equation} \label{eq:mean-width}
	M^{*}(K)
	:=\int_{\mathbb S^{m-1}}h_{K}(u)\,\dd\sigma^{m-1}(u),
\end{equation}
where $\sigma^{m-1}$ is the normalized surface measure on $\mathbb S^{m-1}$. Intuitively, this is the expected radius of a one-dimensional random subspace.

Using the probabilistic representation of the uniform measure on $\mathbb S^{m-1}$ in terms of standard Gaussians (see Section~\ref{sec:lin}) shows that $M^*(K)$ can be written as the supremum of a Gaussian process (see, e.g., \cite[Lem.~9.1.3]{AGM15}), namely,
\begin{equation}\label{eq:mstargaussian}
	M^{*}(K)
	= \frac{1}{c_m} \IE \sup_{t\in K}\sum_{j=1}^{m}t_j g_j,
\end{equation}
where $g_{1},g_{2},\ldots$ are independent standard Gaussian random variables and $c_m\asymp \sqrt{m}$. 

We will state now a form of the lower $M^*$-estimate as presented in \cite[Thm.~9.3.8]{AGM15} with $\gamma=\frac{1}{2}$ there. 
\begin{proposition}\label{pro:escape}
	Let $K\subset\IR^{m}$ be a convex body containing the origin in its interior. For any $1\le n<m$ there exists a subset of the Grassmannian $\mathcal{G}_{m,m-n}$ with Haar measure at least $1-\frac{7}{2} \exp(-\frac{1}{72}a_{n}^{2})$ such that for any subspace $E_{n}$ in this set and all $x\in K\cap E_{n}$ we have
\[
	\|x\|_{2}\le 2\frac{a_{m}}{a_{n}}M^{*}(K),
\]
where, for each $k\in\IN$,
\begin{equation}\label{eq:asymptotic ak}
	a_{k}
	:=\IE\Big(\sum_{j=1}^{k}g_{j}^{2}\Big)^{1/2}
	=\frac{\sqrt{2}\Gamma((k+1)/2)}{\Gamma(k/2)}
	\asymp \sqrt{k}.
\end{equation}
\end{proposition}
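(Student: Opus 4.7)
The plan is to derive Proposition~\ref{pro:escape} as a standard instance of Gordon's escape-through-a-mesh phenomenon, together with the identity linking the Gaussian width of a subset of $\IR^m$ with $M^*$ through $a_m$. This is essentially \cite[Thm.~9.3.8]{AGM15} specialized to $\gamma = 1/2$, and the work consists in recasting the radius bound as a ``random subspace avoids a spherical subset'' problem.

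First I would fix $r>0$ and rephrase the conclusion $K\cap E_n \subset r\IBo_2^m$ as the statement that the random $(m-n)$-dimensional subspace $E_n$ avoids the spherical set
\[
T_r := \bigl\{ x/\|x\|_2 : x \in K,\ \|x\|_2 \geq r \bigr\} \subset \IS^{m-1}.
\]
The Gaussian width of $T_r$ is estimated by homogeneity as
\[
w(T_r) := \IE \sup_{t \in T_r} \sum_{j=1}^{m} g_j t_j \;\le\; \frac{1}{r}\, \IE \sup_{x \in K} \sum_{j=1}^{m} g_j x_j \;=\; \frac{a_m}{r}\, M^*(K),
\]
where the last equality uses the polar factorization of the Gaussian vector $(g_j)_{j=1}^m$ into its uniformly distributed direction (giving $M^*(K)$ via \eqref{eq:mstargaussian}) and its independent length (with expectation $a_m$ by \eqref{eq:asymptotic ak}).

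Next I would invoke Gordon's escape theorem: for any closed $T\subset \IS^{m-1}$ with $w(T) < a_n$, a uniformly random subspace $E_n$ of codimension $n$ satisfies $E_n\cap T = \emptyset$ with probability at least $1-\tfrac{7}{2}\exp\bigl(-(a_n - w(T))^2/c\bigr)$ for some absolute constant $c$; the variant with the constant $72$ used in the statement is precisely the form stated in \cite[Thm.~9.3.8]{AGM15}. Choosing $r = 2\,\tfrac{a_m}{a_n}M^*(K)$ forces $w(T_r)\le a_n/2$, hence $a_n - w(T_r) \ge a_n/2$, and the escape probability then becomes at least $1-\tfrac{7}{2}\exp(-a_n^2/72)$, which yields the claimed bound.

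The only genuinely non-trivial input is Gordon's inequality for min–max of Gaussian processes, from which the escape theorem follows by comparing the process $g_E(t) = \sum g_j t_j$ on $T$, restricted to a subspace, with an auxiliary process; this is the main obstacle in a self-contained proof, but we simply appeal to \cite[Ch.~9]{AGM15}. Everything else is a bookkeeping computation: the homogeneity trick for $w(T_r)$, the identity $\IE\sup_{K}\langle g,\cdot\rangle = a_m M^*(K)$, and the choice of $r$ balancing these two quantities against the deviation probability furnished by the escape theorem.
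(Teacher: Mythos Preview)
Your proposal is correct and follows the standard derivation of the low $M^*$-estimate from Gordon's escape theorem. The paper, however, does not give its own proof of Proposition~\ref{pro:escape}: it simply quotes the result from \cite[Thm.~9.3.8]{AGM15} with $\gamma=\tfrac12$ and attributes it to Gordon~\cite{Gor88}. So there is nothing to compare on the paper's side; what you have written is essentially the argument behind the cited theorem, including the identification $c_m=a_m$ in \eqref{eq:mstargaussian} and the choice $r=2\tfrac{a_m}{a_n}M^*(K)$ that turns $(a_n-w(T_r))^2/18\ge a_n^2/72$ into the stated exponent.
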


Thus it implies 
\[
\rad(K,\enran)
\le 2\frac{a_{m}}{a_{n}}M^{*}(K),
\]
for the random subspace $\enran$ with the claimed probability.

The statement of Proposition~\ref{pro:escape} is adapted from Gordon~\cite{Gor88} who proved it using a min-max principle to derive a result on subspaces escaping through a mesh. In \cite{LT00} and \cite[Ch.~7]{AGM15} one may find some history starting with V.~D.~Milman~\cite{Mil85a,Mil85b}. In particular, Gordon's result improves upon the work \cite{PT86} due to Pajor and Tomczak-Jaegermann who obtained a larger constant. In fact, this would be sufficient for our purposes as we do not care about the size of the absolute constants. 

As in the mentioned works \cite{LT00}, \cite{LPT06} and \cite{MPT07}, see  also \cite[Ch.~7.5]{AGM15}, we use the technique of rounding to improve the power of the low $M^*$-estimates as follows. Define the \emph{rounded body} by
\[
K_{\varrho}:=K\cap \varrho\IBo_2^m=\{x\in K\colon \|x\|_2\le \varrho\},
\]
where $\varrho>0$ is some cutoff threshold. The idea to obtain a bound on the radius of the section with the random subspace $\enran$ is then to derive a bound on $M^*(K_{\varrho})$ for an unspecified $\varrho>0$, which gives by the $M^*$-estimate a bound on $\rad(K_{\varrho}, E_n)$ for most realizations $E_n$ of $\enran$, and to afterwards minimize over all $\varrho>0$ such that 
\begin{equation} \label{eq:rounding}
\rad(K_{\varrho}, E_n)<\varrho\quad\text{and consequently}\quad \rad(K, E_n)< \varrho
\end{equation}
holds for these subspaces. The improvement in this case comes from the fact that we can decrease the mean width $M^*(K_{\varrho})$ by decreasing $\varrho$ and thus cut off the peaky parts of $K$, which are somewhat irrelevant to random subspaces.

For the proof of Theorem~\ref{thm:ell-main} we apply this technique to the $\ell_p$-ellipsoid $\elp$ which results in cutting away the contribution of the first $cn$ semiaxes, where  $0<c<1$.  This will be a consequence of the following proposition.  
\begin{proposition}[{\cite[Prop.~3]{HPS21}}]\label{pro:mstar-estimate}
Let $m\in\IN$. For any $0\le k< m$ and $\varrho>0$,
\[
M^{*}(\elp\cap \varrho \IB_{2}^{m})
	\lesssim
	\begin{cases}
		m^{-1/2}\Big(\varrho\sqrt{k}+\sup\limits_{k+1\leq j \leq m}\sigma_{j}\sqrt{\log(j)+1}\Big) &\text{if } p=1, \\
	\sqrt{p^*}m^{-1/2} \Big(\varrho\sqrt{k} + \big(\sum\limits_{j=k+1}^{m}\sigma_{j}^{p^{*}}\big)^{1/p^{*}}\Big) &\text{if } 1< p\le \infty.
	\end{cases}
\] 
\end{proposition}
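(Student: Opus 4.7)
The plan is to invoke the Gaussian representation \eqref{eq:mstargaussian}, which reduces the proposition to bounding
\[
\IE \sup_{t \in \elp \cap \varrho\IB_2^m} \sum_{j=1}^m t_j g_j,
\]
and then dividing by $c_m \asymp \sqrt{m}$. The key idea is to split the linear functional along the threshold $k$: if $t = (t_1,\dots,t_m) \in \elp \cap \varrho\IB_2^m$, then its head $(t_1,\dots,t_k,0,\dots,0)$ lies in $\varrho\IB_2^m$ (since truncation does not increase the $\ell_2$-norm) and its tail $(0,\dots,0,t_{k+1},\dots,t_m)$ lies in $\elp$ (since $\sum_{j>k}|t_j/\sigma_j|^p \le 1$). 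Subadditivity of $\sup$ yields
\[
\IE \sup_{t \in \elp \cap \varrho\IB_2^m} \sum_{j=1}^m t_j g_j
\le \IE\sup_{\|u\|_2 \le \varrho}\sum_{j=1}^k u_j g_j + \IE\sup_{v \in \elp}\sum_{j=k+1}^m v_j g_j.
\]

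For the head I would apply Cauchy--Schwarz to get $\varrho\, (\sum_{j=1}^k g_j^2)^{1/2}$, whose expectation equals $\varrho\, a_k \lesssim \varrho\sqrt{k}$ by \eqref{eq:asymptotic ak}. For the tail, Hölder's inequality applied to $(v_j/\sigma_j)_{j>k}$ and $(\sigma_j g_j)_{j>k}$ produces, respectively, the deterministic bounds $(\sum_{j>k}|\sigma_j g_j|^{p^*})^{1/p^*}$ when $1 < p \le \infty$, and $\max_{k+1\le j\le m}\sigma_j |g_j|$ when $p=1$.

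For $1 < p < \infty$ I would bring the expectation inside via Jensen's inequality (since $1/p^* \le 1$) to obtain
\[
\IE\Big(\sum_{j>k}|\sigma_j g_j|^{p^*}\Big)^{1/p^*}
\le \Big(\sum_{j>k}\sigma_j^{p^*}\Big)^{1/p^*}(\IE|g_1|^{p^*})^{1/p^*},
\]
and then use the standard Gaussian moment bound $(\IE|g_1|^{q})^{1/q} \lesssim \sqrt{q}$ with $q=p^*$; the cases $p=\infty$ ($p^*=1$) and the boundary constant $\sqrt{p^*}$ follow by direct computation of $\IE|g_1| \asymp 1$.

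The main obstacle is the case $p=1$, where there is no clean Hölder-style bound and one must estimate $\IE \max_{k+1\le j \le m} \sigma_j|g_j|$ directly. Setting $M := \sup_{k+1\le j\le m}\sigma_j\sqrt{\log(j)+1}$ and using the Gaussian tail $\IP(|g_j|>s)\le 2e^{-s^2/2}$, I would derive, for $t>0$,
\[
\IP(\sigma_j |g_j| > M t) \le 2\exp\!\bigl(-M^2 t^2/(2\sigma_j^2)\bigr) \le 2\, e^{-t^2/2} j^{-t^2/2}.
\]
A union bound over $j>k$ yields $\IP(\max_{j>k}\sigma_j|g_j| > Mt) \lesssim e^{-t^2/2}$ for $t\ge 2$, from which integration over $t$ gives $\IE\max_{k+1\le j\le m}\sigma_j|g_j| \lesssim M$. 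Combining both contributions, dividing by $c_m\asymp\sqrt{m}$, and separating the two regimes completes the proof.
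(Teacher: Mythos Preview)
Your proof is correct and follows essentially the same route as the paper: Gaussian representation, split at index $k$, Cauchy--Schwarz on the head, H\"older on the tail, then Gaussian moment bounds. The only cosmetic difference is that the paper packages the tail estimates into a separate lemma (Lemma~\ref{lem:khintchine-gaussian}, citing van Handel for the $\ell_\infty$-case), whereas you supply the union-bound argument for $p=1$ directly; your computation is precisely what lies behind that citation.
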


This follows from an application of Hölder's inequality together with estimates on the norm of a Gaussian vector. A similar approach was used by Gordon, Litvak, Mendelson and Pajor in \cite{GLM+07} for estimating the supremum of a Gaussian process indexed by the intersection of the $\ell_p$-ball $\IB_p^m$ with a scaled $\ell_q$-ball $\varrho\IB_q^m$. Using a suitable cutoff $\varrho>0$, which we will not attempt to optimize, one can obtain from Proposition~\ref{pro:mstar-estimate} the proof of Theorem~\ref{thm:ell-main}. The details are deferred to Appendix~\ref{sec:ell-proof-convex}.

Now, Corollary~\ref{cor:mstar-estimate} follows from Proposition~\ref{pro:mstar-estimate} by setting $k=0$ and $\varrho>0$ large enough such that $\elp\subset \varrho \IB_{2}^m$ together with the representation \eqref{eq:mstargaussian} of the expected supremum of a Gaussian process indexed by a convex body. 

\bigskip

\section{A lower bound}
\label{sec:ell-lower}

The idea behind the proof of the lower bound of Proposition~\ref{pro:lower} for polynomial semiaxes in the case $1< p\le 2$ is as follows. To bound the radius of $\elp$ intersected with a random subspace from below, we have to find with a certain probability a vector in it which has large enough norm. In fact, it is often even possible to find a vector having a single large coordinate with high probability as the following observation made in \cite{HKN+21} shows.

\begin{lemma}[{\cite[Lem.~25]{HKN+21}}]\label{lem:large-coord}
	For any $\varepsilon\in (0,1)$ it holds that, for all $m\in\IN$ and $1\le n< m$,
	\[
	\IP\Big[\sup\big\{ x_1^2\colon \|x\|_2=1, G_{n,m}x=0\big\}\ge 1-\frac{n}{\varepsilon m}\Big]
	\ge 1-\varepsilon.
	\]
\end{lemma}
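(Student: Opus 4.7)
The plan is to reduce the supremum to the squared norm of a projection of the first standard basis vector onto the random subspace $E := \ker G_{n,m}$, and then use a Markov inequality on the one-dimensional quantity $1 - \|P_E e_1\|_2^2$, whose expectation can be computed directly from rotation invariance.

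First I would observe that, with probability one, the Gaussian matrix $G_{n,m}$ has rank $n$, so $E := \ker G_{n,m}$ is a subspace of dimension $m-n$, distributed uniformly on the Grassmannian $\mathcal{G}_{m,m-n}$ (as noted in Section~\ref{sec:lin}). Writing $P_E$ for the orthogonal projection onto $E$, the Cauchy--Schwarz inequality gives
\[
\sup\big\{ x_1^2 \colon \|x\|_2 = 1, \ G_{n,m} x = 0\big\}
= \sup\big\{\langle e_1,x\rangle^2 \colon \|x\|_2 = 1, \ x \in E\big\}
= \|P_E e_1\|_2^2,
\]
with the supremum attained at $x = P_E e_1/\|P_E e_1\|_2$ whenever $P_E e_1 \neq 0$. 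Hence the event whose probability we want to bound is $\{\|P_E e_1\|_2^2 \ge 1 - n/(\varepsilon m)\}$, or equivalently $\{\|P_{E^\perp} e_1\|_2^2 \le n/(\varepsilon m)\}$, since $P_E + P_{E^\perp} = \id$ and $\|e_1\|_2 = 1$.

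Next I would compute $\IE \|P_{E^\perp} e_1\|_2^2$. The subspace $E^\perp$ is uniformly distributed on $\mathcal{G}_{m,n}$ by rotation invariance, so $\IE \|P_{E^\perp} e_i\|_2^2$ is independent of $i \in \{1,\dots,m\}$. Summing gives
\[
m \cdot \IE \|P_{E^\perp} e_1\|_2^2
= \sum_{i=1}^{m} \IE \|P_{E^\perp} e_i\|_2^2
= \IE \operatorname{tr}(P_{E^\perp})
= n,
\]
so that $\IE \|P_{E^\perp} e_1\|_2^2 = n/m$.

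Finally, since $\|P_{E^\perp} e_1\|_2^2 \ge 0$, Markov's inequality yields
\[
\IP\Big[\|P_{E^\perp} e_1\|_2^2 > \frac{n}{\varepsilon m}\Big]
\le \frac{\varepsilon m}{n} \cdot \IE \|P_{E^\perp} e_1\|_2^2
= \varepsilon,
\]
and taking complements gives the claimed bound. There is no real obstacle here: once one spots that the extremal $x_1^2$ is exactly $\|P_E e_1\|_2^2$, the argument is a one-line Markov bound on the simple rotationally invariant quantity $\|P_{E^\perp} e_1\|_2^2$. The only slightly delicate point is confirming that the kernel really does have dimension $m-n$ almost surely, so that the identification of $E$ as a uniformly random element of $\mathcal{G}_{m,m-n}$ is legitimate.
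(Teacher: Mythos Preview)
Your proof is correct. Note, however, that the paper does not actually prove this lemma: it is quoted verbatim from \cite[Lem.~25]{HKN+21} and used as a black box in the proof of Proposition~\ref{pro:lower-general}. Your argument---identifying the supremum as $\|P_E e_1\|_2^2$, computing $\IE\|P_{E^\perp}e_1\|_2^2 = n/m$ via the trace, and applying Markov---is exactly the standard short proof one would expect for this type of statement, and there are no gaps.
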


Using this, we can prove the following more general version of Proposition~\ref{pro:lower}.

\begin{proposition}[{\cite[Prop.~4]{HPS21}}]\label{pro:lower-general}
Let $1< p\le 2$. For any $\varepsilon\in (0,1)$, all $m\in\IN$ and $1\le n< m$ with $n\le \varepsilon\sigma_m^2 m^{2/p^*}$ we have
	\[
	\IP\Big[\rad(\elp,\enran)\ge \frac{\sigma_1}{1+\sigma_1}\Big]
	\ge 1-\varepsilon.
	\]
\end{proposition}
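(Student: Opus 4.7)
The plan is to exhibit, on an event of probability at least $1-\varepsilon$, an explicit vector in $\elp\cap\enran$ whose $\ell_2^m$-norm is at least $\sigma_1/(1+\sigma_1)$. The geometric intuition is that $\elp$ is stretched most in the first coordinate direction (where the semiaxis $\sigma_1$ is largest), so if the random kernel contains a unit vector $x$ whose mass is almost entirely in that direction, then rescaling $x$ to lie in $\elp$ will cost only a mild factor and will produce a vector of large Euclidean norm. Lemma~\ref{lem:large-coord} provides exactly such an $x$: on an event $\Omega_\varepsilon$ with $\IP(\Omega_\varepsilon)\ge 1-\varepsilon$ there is a unit vector $x\in\ker G_{n,m}=\enran$ with $x_1^2\ge 1-n/(\varepsilon m)$, so $\sum_{i\ge 2}x_i^2\le n/(\varepsilon m)$.

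\textbf{Bounding $\|x\|_{p,\sigma}$.} I would split
\[
\|x\|_{p,\sigma}^p=\frac{|x_1|^p}{\sigma_1^p}+\sum_{i=2}^{m}\frac{|x_i|^p}{\sigma_i^p}\le \sigma_1^{-p}+\sigma_m^{-p}\sum_{i=2}^m|x_i|^p,
\]
using $|x_1|\le 1$ and monotonicity of $(\sigma_i)$. Since $p\le 2$, the embedding $\ell_2^{m-1}\hookrightarrow\ell_p^{m-1}$ (equivalently, Hölder's inequality with exponents $2/p$ and $2/(2-p)$) yields
\[
\sum_{i=2}^m|x_i|^p\le (m-1)^{1-p/2}\Big(\sum_{i=2}^m x_i^2\Big)^{p/2}\le m^{1-p/2}\Big(\frac{n}{\varepsilon m}\Big)^{p/2}=\varepsilon^{-p/2}\,n^{p/2}\,m^{1-p}.
\]
The hypothesis $n\le \varepsilon\sigma_m^2 m^{2/p^*}$ combined with the identity $2/p^*=2-2/p$ gives $n^{p/2}\le \varepsilon^{p/2}\sigma_m^p m^{p-1}$, so the tail contribution is controlled exactly: $\sigma_m^{-p}\sum_{i\ge 2}|x_i|^p\le 1$, and hence $\|x\|_{p,\sigma}^p\le 1+\sigma_1^{-p}$.

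\textbf{Rescaling and conclusion.} Set $t:=\|x\|_{p,\sigma}^{-1}$, so that $tx\in\elp\cap\enran$. Then
\[
\|tx\|_2=t=\|x\|_{p,\sigma}^{-1}\ge (1+\sigma_1^{-p})^{-1/p}=\frac{\sigma_1}{(1+\sigma_1^p)^{1/p}}\ge \frac{\sigma_1}{1+\sigma_1},
\]
the final inequality using $(a^p+b^p)^{1/p}\le a+b$ for $a,b\ge 0$ and $p\ge 1$. Consequently $\rad(\elp,\enran)\ge \sigma_1/(1+\sigma_1)$ on $\Omega_\varepsilon$.

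\textbf{Where the difficulty lies.} There is no serious obstruction; the proof is essentially a one-shot deterministic computation fed by Lemma~\ref{lem:large-coord}. The only delicate point is that the scaling produced by the $\ell_2^{m-1}\hookrightarrow\ell_p^{m-1}$ embedding (namely $m^{1-p}$ after combining with the $n/(\varepsilon m)$ bound) must match exactly the exponent $m^{2/p^*}$ appearing in the hypothesis, and this match is precisely what explains why $p^*$ is the critical parameter and why the regime of interest in Proposition~\ref{pro:lower} corresponds to $\lambda<1/p^*$.
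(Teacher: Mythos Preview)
Your proof is correct and follows essentially the same approach as the paper's: both use Lemma~\ref{lem:large-coord} to locate a unit vector in $\ker G_{n,m}$ with large first coordinate, bound the remaining coordinates in $\ell_p$ via H\"older (the $\ell_2^{m-1}\hookrightarrow\ell_p^{m-1}$ embedding), and rescale into $\elp$. The only cosmetic difference is that the paper applies the triangle inequality at the norm level to obtain $\|x\|_{p,\sigma}\le 1+\sigma_1^{-1}$ directly, whereas you bound $\|x\|_{p,\sigma}^p\le 1+\sigma_1^{-p}$ and recover the same conclusion via $(a^p+b^p)^{1/p}\le a+b$ at the end.
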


In order to prove Proposition~\ref{pro:lower-general} we take a random vector as given by Lemma~\ref{lem:large-coord} which belongs to the intersection of the unit sphere with the kernel of $G_{n,m}$  and has a large first coordinate. Then we show using Hölder's inequality that the renormalization by the factor $1+1/\sigma_1$ lies in $\elp$ provided that the dimension $m$ is large enough compared to $n$ when $\sigma_m m^{1/p^*}$ is increasing. For details we refer to Appendix~\ref{sec:ell-proof-convex}.

\chg{In the case $p=2$ this method yields} bounds which are optimal with respect to polynomial order, see \cite[Prop.~24]{HPS21}. \chg{The restriction to} $1\le p\le 2$ is due to Hölder's inequality and different ideas seem to be necessary for the case $p>2$. We do not know if also there it is sufficient to exhibit vectors with a single large coordinate but we believe that this may be true. A possible approach could consist in finding a variant of Lemma~\ref{lem:large-coord} where the sphere is replaced by the surface of the cube.

\begin{remark}
	From Proposition~\ref{pro:lower-general} it can be deduced that the statement of Proposition~\ref{pro:lower} remains true in the case of $\sigma_j=j^{-1/p^*}a_j, j\in\IN,$ with $a_j\to \infty$ as $j\to \infty$. 
\end{remark}
\begin{remark}
	In \cite{HKN+21} both bounds \eqref{eq:upper-2} and \eqref{eq:lower-2} were proven using random matrices. In particular, knowledge on the singular values of structured Gaussian matrices was employed and the Hilbert space structure was used in a crucial way. This is the reason why we could not extend the method they used without the loss of additional factors in the dimension $m$. Perhaps, new results on structured Gaussian random matrices may be of help, see, e.g., Gu\'edon, Hinrichs, Litvak and Prochno~\cite{GHL+17}.
\end{remark}

\section{Sparse approximation}
\label{sec:ell-sparse}

In order to describe the ideas behind the proof of Theorem~\ref{thm:ell-main-quasi}, we give a short introduction to a subfield of compressed sensing, known as sparse approximation or recovery. We refer to the book of Foucart and Rauhut~\cite{FR13} for a mathematical background on the subject. 

It is in part due to random matrices that sparse recovery experienced a relatively recent surge of interest. The seminal papers of Cand\`es, Romberg and Tao~\cite{CRT06} and Donoho~\cite{Don06b} were among the first to investigate this and found that random matrices satisfy with high probability the restricted isometry property which enables exact reconstruction of sparse vectors. Further, an universality principle has been discovered, namely, random matrices can be used to gather measurements which are effective independently of the basis with respect to which a signal is sparse.

The fact that random information is the best information available for many problems in sparse recovery is reflected in the statement of Theorem~\ref{thm:ell-main-quasi} which will be deduced from the more general Theorem~\ref{thm:ell-main-quasi-general} below. To this end, let us give some background on sparse approximation.

Let $m,s\in\IN$ with $1\le s \le m$. A vector $z\in\IR^m$ is called $s$-sparse if at most $s$ of its coordinates are non-zero.  Given information $N_n z =y$, where $N_n\in \IR^{n\times m}$ is a suitable linear information map, sparse vectors can be reconstructed via $\ell_p$-minimization, where $0<p\le 1$, that is,
\begin{equation} \label{eq:ellp-min}
\Delta_p(y):= \text{arg min} \|z\|_p \quad \text{subject to }N_n z=y.
\end{equation}
Note that $\Delta_p$ is a map from $\IR^n$ to $\IR^m$ which depends on $N_n$. If the matrix $N_n$ satisfies the restricted isometry property with a small restricted isometry constant $\delta_{2s}(N_n)$ of order $2s$, which is the smallest $\delta>0$ such that
\begin{equation} \label{eq:rip}
(1-\delta)\|z\|_2^2 
\le \|Az\|_2^2
\le (1+\delta)\|z\|_2^2 \quad \text{for all }2s\text{-sparse }z\in\IR^m,
\end{equation}
then $s$-sparse vectors $z\in\IR^m$ can be recovered exactly, i.e., $z=\Delta_p(N_n z)$. This \chg{is} a consequence of the next proposition which gives a bound on the error of reconstruction using $\ell_p$-minimization in terms of the following concept.
\begin{definition}
Given $0<p\le \infty$, the error of best $s$-term approximation of $x\in \IR^m$ in the $\ell_p$-(quasi-)norm is
\[
\sigma_s(x)_p := \inf\{\|x-z\|_p \colon z \text{ is }s\text{-sparse}\}.
\]
\end{definition}

\begin{proposition}[{\cite[Thm.~1.3]{FPR+10}}]\label{pro:rip-sparse}
If $0<p\le 1$ and $N_n$ satisfies the restricted isometry property with constant $\delta_{2s}<\sqrt{2}-1$, then, for all $x\in\IR^m$,
\[
\|x-\Delta_p(N_nx)\|_p\le C^{1/p} \sigma_s(x)_p,
\]
where $C>0$ is a constant that depends only on $\delta_{2s}$. In particular, the reconstruction of $s$-sparse vectors is exact.
\end{proposition}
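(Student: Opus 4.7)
The plan is to follow the classical Cand\`es--Romberg--Tao / Foucart argument adapted to the quasi-norm regime $0<p\le 1$. Throughout the proof, let $x^{\sharp}:=\Delta_p(N_n x)$, $h:=x^{\sharp}-x$, and observe that $N_n h=0$ since both $x$ and $x^{\sharp}$ match the measurements. Let $T_0\subset\{1,\dots,m\}$ be an index set of $s$ largest coordinates of $x$ (in modulus), so that $\|x_{T_0^c}\|_p=\sigma_s(x)_p$. Sort $T_0^c$ in decreasing order of $|h|$ and partition it into disjoint blocks $T_1,T_2,\dots$ of size $s$ each, and write $T_{01}:=T_0\cup T_1$.

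The first step is to extract the ``cone condition'' from optimality. Using the $p$-triangle inequality $|a+b|^p\le |a|^p+|b|^p$ twice, $\|x^{\sharp}\|_p^p\le\|x\|_p^p$ unfolds into
\[
\|h_{T_0^c}\|_p^p\;\le\;\|h_{T_0}\|_p^p + 2\,\sigma_s(x)_p^p.
\]
This replaces the null-space property used in the $p=1$ case and is the only place where optimality enters.

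The second step uses the restricted isometry property to control $\|h_{T_{01}}\|_2$. Since $N_n h_{T_{01}}=-\sum_{j\ge 2}N_n h_{T_j}$, applying the standard Cand\`es polarization estimate
\[
|\langle N_n h_{T_0},N_n h_{T_j}\rangle|+|\langle N_n h_{T_1},N_n h_{T_j}\rangle|
\;\le\;\sqrt{2}\,\delta_{2s}\,\|h_{T_{01}}\|_2\,\|h_{T_j}\|_2
\]
together with $(1-\delta_{2s})\|h_{T_{01}}\|_2^2\le\|N_n h_{T_{01}}\|_2^2$ yields
\[
\|h_{T_{01}}\|_2\;\le\;\frac{\sqrt{2}\,\delta_{2s}}{1-\delta_{2s}}\sum_{j\ge 2}\|h_{T_j}\|_2.
\]
Now the block ordering bounds each coordinate of $h_{T_j}$ by the $p$-average of $h_{T_{j-1}}$, giving $\|h_{T_j}\|_2\le s^{1/2-1/p}\|h_{T_{j-1}}\|_p$; combining with $\sum_{j\ge 1}\|h_{T_j}\|_p\le\|h_{T_0^c}\|_p$ (valid for $p\le 1$ by $(\sum a_j)^p\le\sum a_j^p$) gives $\sum_{j\ge 2}\|h_{T_j}\|_2\le s^{1/2-1/p}\|h_{T_0^c}\|_p$. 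Finally, $\|h_{T_0}\|_p\le s^{1/p-1/2}\|h_{T_{01}}\|_2$ by Hölder on a set of size $s$, so the $s$-factors cancel and one obtains, with $\rho:=\sqrt{2}\,\delta_{2s}/(1-\delta_{2s})$,
\[
\|h_{T_0}\|_p\;\le\;\rho\,\|h_{T_0^c}\|_p.
\]
The assumption $\delta_{2s}<\sqrt{2}-1$ is exactly what guarantees $\rho<1$; this is the threshold's origin.

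The final step combines these two inequalities. Substituting the second into the first gives $\|h_{T_0^c}\|_p^p\le\rho^p\|h_{T_0^c}\|_p^p+2\sigma_s(x)_p^p$, i.e.\ $\|h_{T_0^c}\|_p^p\le \frac{2}{1-\rho^p}\sigma_s(x)_p^p$, and then $\|h\|_p^p=\|h_{T_0}\|_p^p+\|h_{T_0^c}\|_p^p\le \frac{2(1+\rho^p)}{1-\rho^p}\sigma_s(x)_p^p$, yielding the claim with $C=2(1+\rho^p)/(1-\rho^p)$, which depends only on $\delta_{2s}$. Exact recovery of $s$-sparse vectors follows because $\sigma_s(x)_p=0$ in that case. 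The main technical obstacle I anticipate is keeping track of the $p$-quasi-norm inequalities correctly (in particular the inequality $\sum a_j\le(\sum a_j^p)^{1/p}$, which goes the opposite way from the $p\ge 1$ Minkowski estimate one reflexively writes) and verifying that the sharp Cand\`es polarization identity indeed requires only $\delta_{2s}$ rather than $\delta_{3s}$; both steps are standard but easy to slip on.
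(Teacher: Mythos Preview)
The paper does not prove this proposition at all: it is quoted verbatim as \cite[Thm.~1.3]{FPR+10} and used as a black box in the proof of Theorem~\ref{thm:ell-main-quasi-general}. Your argument is precisely the one given in \cite{FPR+10}, so there is nothing to compare. One small remark: the constant you obtain, $C=2(1+\rho^p)/(1-\rho^p)$, depends on $p$ as well as on $\delta_{2s}$ (it blows up as $p\to 0$), so strictly speaking it does not match the statement's claim that $C$ depends only on $\delta_{2s}$; this is an imprecision in the paper's transcription of the cited result rather than a flaw in your proof.
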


The restricted isometry property can be interpreted as a condition of uniformity on the information map $N_n$, which is satisfied by a rescaling of the Gaussian matrix $G_{n,m}=(g_{ij})_{i=1,j=1}^{n,m}$ with high probability as the following result shows.  
\begin{proposition}[{\cite[Thm.~9.2]{FR13}}] \label{pro:rip-gaussian}
	For every $\delta\in(0,1)$ there exist $C_1,C_2>0$ such that for all $m\in\IN$ the restricted isometry property $\delta_{s}(n^{-1/2}G_{n,m})\le \delta$ holds with probability at least $1-2\exp(-C_2 n)$ provided that
$	
	n\ge C_1 s \log({\rm e}m/s).
$
\end{proposition}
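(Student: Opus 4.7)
The plan is to establish the restricted isometry property for $A := n^{-1/2}G_{n,m}$ by a classical union-bound-over-nets argument. Since the RIP constant $\delta_s(A)$ concerns only $s$-sparse vectors, it suffices to fix a support set $T \subset \{1,\dots,m\}$ of size $s$, control the extreme singular values of the $n\times s$ submatrix $A_T$ formed by the columns of $A$ indexed by $T$, and then union-bound over the $\binom{m}{s} \le (\mathrm{e}m/s)^s$ choices of $T$.

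The first step is a single-vector concentration estimate. For any fixed unit vector $z \in \IR^m$ with support in $T$, the vector $Az$ has entries which are i.i.d.\ centered Gaussians with variance $n^{-1}\|z\|_2^2 = n^{-1}$, so $\|Az\|_2^2$ is distributed as $n^{-1}\chi_n^2$. Standard Laplace-transform bounds for chi-square random variables (e.g.\ Bernstein-type inequalities) yield a constant $c_0 > 0$ such that for all $t \in (0,1)$,
\[
\IP\bigl(\bigl|\|Az\|_2^2 - 1\bigr| > t\bigr) \le 2\exp(-c_0 n t^2).
\]

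The second step transfers this pointwise control into a uniform control over the unit sphere $S_T := \{z \in \IR^m : \mathrm{supp}(z) \subset T,\ \|z\|_2 = 1\}$. Fix $\eta = \delta/4$ and choose a Euclidean $\eta$-net $\mathcal{N}_T \subset S_T$ of cardinality $|\mathcal{N}_T| \le (1 + 2/\eta)^s$. By the single-vector bound with $t = \delta/2$ and a union bound over $\mathcal{N}_T$, the event
\[
E_T := \Bigl\{\sup_{z \in \mathcal{N}_T}\bigl|\|Az\|_2^2 - 1\bigr| \le \delta/2\Bigr\}
\]
has complement of probability at most $2(1+2/\eta)^s \exp(-c_0 n \delta^2/4)$. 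A standard net-lifting argument (bound the largest singular value $M := \sup_{z \in S_T}\|Az\|_2$ by $M \le \sup_{z \in \mathcal{N}_T}\|Az\|_2 + \eta M$, then estimate $\|Az\|_2^2 - 1$ on $S_T$ by comparison with the nearest net point) shows that on $E_T$ one has $\sup_{z \in S_T}|\|Az\|_2^2 - 1| \le \delta$, i.e.\ the RIP inequality holds for every $s$-sparse vector supported in $T$.

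Finally, a union bound over all $\binom{m}{s} \le (\mathrm{e}m/s)^s$ supports gives
\[
\IP\bigl(\delta_s(A) > \delta\bigr) \le 2\Bigl(\frac{\mathrm{e}m}{s}\Bigr)^s \bigl(1 + 8/\delta\bigr)^s \exp\!\bigl(-c_0 n \delta^2/4\bigr).
\]
Choosing $C_1 = C_1(\delta)$ so that $n \ge C_1 s\log(\mathrm{e}m/s)$ forces the combinatorial factor $(\mathrm{e}m/s)^s(1+8/\delta)^s$ to be dominated by $\exp(c_0 n \delta^2/8)$, leaves a residual bound of $2\exp(-C_2 n)$ with $C_2 = c_0\delta^2/8$, as required. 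The main book-keeping obstacle is propagating the chosen $\eta$ through the net-lifting step so that the resulting threshold in $t$ is still a fixed fraction of $\delta$; everything else is then straightforward exponential arithmetic. This approach follows the proof given in \cite[Thm.~9.2]{FR13}.
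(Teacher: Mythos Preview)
The paper does not give its own proof of this proposition; it is quoted directly from \cite[Thm.~9.2]{FR13} and used as a black box. Your sketch reproduces the standard covering/union-bound argument that is indeed the proof in that reference (single-vector chi-square concentration, $\eta$-net on each $s$-dimensional coordinate sphere, union bound over the $\binom{m}{s}$ supports), so there is nothing to compare against and your approach is correct.
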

In fact, the statement is more general and applies also to subgaussian random matrices, see also Remark~\ref{rem:subgaussian}. Taken together, Propositions~\ref{pro:rip-sparse} and \ref{pro:rip-gaussian} show that, provided that enough, in fact not much more than the number of its non-zero entries if $m$ is not too large compared to $n$,  Gaussian measurements are available, a sparse vector can be reconstructed exactly with high probability using $\ell_p$-minimization. Together with the following lemma this completes the proof strategy for Theorem~\ref{thm:ell-main-quasi}.  
\begin{lemma}[{\cite[Lem.~4]{HPS21}}] \label{lem:nonlin-app}
	Let $m\in\IN$, $0<p,q\leq\infty$ and $\sigma_j=j^{-\lambda}$, $1\leq j \leq m$,  for some $\lambda>(1/q-1/p)_+$. Then, for all $1\le s\le m/2$,
\[
\sup_{x\in\elp}\sigma_s(x)_q
\asymp s^{-\lambda+1/q-1/p},
\]
where the implicit constant is independent of $s$ and $m$.
\end{lemma}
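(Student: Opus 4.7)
The plan is to exploit the parametrization $x = D_\sigma y$, under which $x \in \elp$ corresponds to $y \in \IB_p^m$, and to estimate the best $s$-term approximation by exhibiting explicit sparse approximants. Since $\sigma_s(x)_q \le \|x|_{T^c}\|_q$ for any index set $T$ of size $s$, the upper bound reduces to estimating a tail of $x$ in the $\ell_q$-(quasi-)norm. I would split the upper bound into the cases $q < p$ and $q \ge p$, and construct an explicit near-extremizer for the matching lower bound.

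For the case $q < p$, Hölder's inequality applies directly: take $T = \{1,\dots,s\}$ and use exponents $p/q > 1$ and $p/(p-q)$, yielding
\[
\sum_{j > s} \sigma_j^q |y_j|^q \;\le\; \Bigl(\sum_{j > s} j^{-\lambda p q/(p-q)}\Bigr)^{(p-q)/p} \|y\|_p^q.
\]
The hypothesis $\lambda > 1/q - 1/p$ is precisely what ensures the exponent $\lambda p q/(p-q) > 1$, so the $\sigma$-series converges and is of order $s^{1 - \lambda p q/(p-q)}$; a routine simplification of exponents yields $\sigma_s(x)_q \lesssim s^{-\lambda + 1/q - 1/p}$.

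For the case $q \ge p$ this Hölder step fails because $p/q \le 1$, so I would instead use a ``head-then-tail'' trick: for any $0 \le k \le s$ one has $\sigma_s(x)_q \le \sigma_{s-k}(x|_{>k})_q$, where $x|_{>k}$ denotes $x$ with its first $k$ coordinates set to zero (concatenate the exact head of $x$ with an $(s-k)$-sparse approximant of the tail). A standard Stechkin-type inequality gives $\sigma_{s-k}(x|_{>k})_q \lesssim (s-k)^{1/q - 1/p} \|x|_{>k}\|_p$, and monotonicity of $\sigma$ yields
\[
\|x|_{>k}\|_p^p \;=\; \sum_{j > k} \sigma_j^p |y_j|^p \;\le\; \sigma_{k+1}^p \|y\|_p^p \;\lesssim\; k^{-\lambda p}.
\]
Choosing $k = \lfloor s/2 \rfloor$ balances the two factors and delivers the claimed $s^{-\lambda + 1/q - 1/p}$ rate.

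For the matching lower bound I would evaluate the supremum on the test vector $x_j = (2s)^{-1/p}\sigma_j$ for $1 \le j \le 2s$ and $x_j = 0$ otherwise; this is admissible because $\sum_j (x_j/\sigma_j)^p = 1$ and $2s \le m$ by hypothesis. Since $|x_j|$ is non-increasing on its support, the optimal $s$-sparse approximant zeros out coordinates $s+1,\dots,2s$, so
\[
\sigma_s(x)_q^q \;=\; (2s)^{-q/p}\sum_{j=s+1}^{2s} j^{-\lambda q} \;\asymp\; s^{\,1 - q/p - \lambda q},
\]
because the $s$ summands are each comparable to $s^{-\lambda q}$ irrespective of whether $\lambda q$ is larger or smaller than $1$; taking $q$-th roots matches the upper bound. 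The main technical obstacle is the case $q \ge p$: without the head-then-tail truncation one would only recover the trivial Stechkin bound $s^{1/q - 1/p}$ and lose the entire $s^{-\lambda}$ improvement coming from the decay of the semiaxes.
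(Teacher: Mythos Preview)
Your proof is correct, but the upper-bound argument differs from the paper's. The paper avoids the case split entirely by working with the non-increasing rearrangement $(x_k^*)$: from $\sum_j |x_j|^p/\sigma_j^p \le 1$ it extracts the pointwise bound
\[
(x_k^*)^p \le \Big(\sum_{j=1}^k \sigma_{\pi(j)}^{-p}\Big)^{-1} \le \Big(\sum_{j=1}^k \sigma_j^{-p}\Big)^{-1} \asymp k^{-\lambda p - 1},
\]
and then sums $(x_k^*)^q$ over $k>s$, using that $\lambda > (1/q-1/p)_+$ makes the resulting series converge in every regime. This single rearrangement trick handles all $p,q$ uniformly and is somewhat slicker; your approach is equally valid and arguably more transparent in each case (the H\"older step for $q<p$ and the head--tail/Stechkin combination for $q\ge p$ are both natural), but it does require the case distinction you flagged. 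The lower bounds are essentially the same: the paper takes the $2s$ supported coordinates to be \emph{equal} (so $x_j = (\sum_{i\le 2s}\sigma_i^{-p})^{-1/p}$), while you take them proportional to $\sigma_j$; both lie on $\partial\elp$ and yield the identical asymptotic $s^{-\lambda+1/q-1/p}$.
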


Its proof, given in Appendix~\ref{sec:ell-proof-quasi}, is inspired by the proof for the $\ell_p$-balls which can be found in Vyb\'iral~\cite{Vyb12}. 

To state the afore mentioned generalization of Theorem~\ref{thm:ell-main-quasi}, where we measure the radius in the quasi-norm of $\ell_q^m$ instead of $\ell_2^m$, we define the radius $\rad_G(K,E)$ of the section of any set $K\subset \IR^m$ with a linear subspace $E\subset\IR^m$ in $G$ by 
\[
\rad_G(K,E):=\sup_{x\in K\cap E}\|x\|_G,
\]
where $G$ is equipped with a quasi-norm $\|\cdot\|_G$. By Proposition~\ref{pro:radius-general-+} it is related to the radius of information for recovery of vectors in $K$ in $\|\cdot\|_G$. 
\begin{theorem}[{\cite[Thm.~C]{HPS21}}] \label{thm:ell-main-quasi-general}
Let $0<p\leq 1$ and $p<q\le 2$. Assume that $\sigma_j=j^{-\lambda}$, $j\in\IN$, for some $\lambda>0$. Then there exist constants $c,C,D>0$ such that, for all $m\in\IN$ and all $1\le n <m$ with
\[
n\ge D\log({\rm e}m/n),
\]
we have 
\[
\rad_{\ell_q^m}(\elp,\enran)
\le C\, \Big(\frac{\log({\rm e}m/n)}{n}\Big)^{\lambda+1/p-1/q}
\]
with probability at least $1-2\exp(-c\, n)$.
\end{theorem}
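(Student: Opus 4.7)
\textbf{Proof plan for Theorem~\ref{thm:ell-main-quasi-general}.} The strategy is the compressed sensing template described in Section~\ref{sec:ell-sparse}: combine (i) the restricted isometry property of the Gaussian matrix, (ii) a mixed-norm robust recovery bound for $\ell_p$-minimization, and (iii) the explicit rate of best $s$-term approximation on an $\ell_p$-ellipsoid provided by Lemma~\ref{lem:nonlin-app}. Throughout, we measure with $N_n:=n^{-1/2}G_{n,m}$, which has the same kernel as $G_{n,m}$, so $\ker N_n=\enran$.

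First, I would fix a small constant $c_0>0$ (to be determined) and set
\[
s\ :=\ \big\lfloor c_0\, n/\log({\rm e}m/n)\big\rfloor.
\]
Choosing $D=D(c_0)$ large enough, the hypothesis $n\ge D\log({\rm e}m/n)$ forces $s\ge 1$ and also $n\ge C_1(2s)\log({\rm e}m/(2s))$ (using $\log({\rm e}m/s)\lesssim\log({\rm e}m/n)$), so Proposition~\ref{pro:rip-gaussian} applies. For $c_0$ small enough this yields $\delta_{2s}(N_n)<\sqrt{2}-1$ with probability at least $1-2\exp(-cn)$; I condition on this event.

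Next, I would invoke the mixed-norm extension of Proposition~\ref{pro:rip-sparse} (see, e.g., \cite[Thm.~2.1]{FPR+10}), which under the restricted isometry condition just established gives, for every $x\in\IR^m$,
\[
\|x-\Delta_p(N_nx)\|_q \ \le\ C\, s^{1/q-1/p}\,\sigma_s(x)_p.
\]
For $x\in\elp\cap\enran$ we have $N_n x=0$, and since $z=0$ is the unique minimizer of $\|z\|_p$ subject to $N_n z=0$, we get $\Delta_p(0)=0$ and therefore $\|x\|_q\le C s^{1/q-1/p}\sigma_s(x)_p$. Applying Lemma~\ref{lem:nonlin-app} with its parameter $q$ set equal to $p$ (which is admissible since $\lambda>0=(1/p-1/p)_+$) yields $\sup_{x\in\elp}\sigma_s(x)_p\lesssim s^{-\lambda}$. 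Combining and taking the supremum over $x\in\elp\cap\enran$,
\[
\rad_{\ell_q^m}(\elp,\enran)\ \lesssim\ s^{1/q-1/p}\cdot s^{-\lambda}\ =\ s^{-(\lambda+1/p-1/q)}.
\]
Substituting $s\asymp n/\log({\rm e}m/n)$ produces the bound claimed in the theorem.

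The main obstacle is step (ii): Proposition~\ref{pro:rip-sparse} as stated in the excerpt only delivers an $\ell_p\to\ell_p$ error bound, whereas the theorem requires an $\ell_q$ bound on the reconstruction error. The standard route is to decompose the error vector $h:=x-\Delta_p(N_nx)$ into successive blocks of $s$ largest coordinates and combine the restricted isometry property with the embedding $\|z\|_q\le s^{1/q-1/p}\|z\|_p$ valid on $s$-sparse vectors; this produces the mixed-norm estimate, but the argument needs extra care in the quasi-Banach regime $p\le 1$ (and possibly $q<1$), where the $p$-triangle inequality replaces the usual one and constants depend on $p$. Once this extension is in place, the rest of the proof is a bookkeeping exercise.
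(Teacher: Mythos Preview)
Your proposal is correct and follows exactly the compressed-sensing template the paper uses: RIP for the rescaled Gaussian matrix (Proposition~\ref{pro:rip-gaussian}), a block-decomposition argument to pass from an $\ell_r$ recovery bound to an $\ell_q$ one, and Lemma~\ref{lem:nonlin-app} for the best $s$-term approximation rate on $\elp$.

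The one genuine difference is the choice of reconstruction map. You use $\Delta_p$, while the paper uses $\Delta_r$ with $r=\min\{1,q\}$; the paper even remarks that the preprint originally took $r=p$ and ``does not yield the claimed bound without additional modifications'', switching to $r=\min\{1,q\}$ so as to copy \cite[Thm.~3.2]{FPR+10}. With the general form of Lemma~\ref{lem:nonlin-app} stated here (which is applicable with its second parameter equal to $p$, giving $\sup_{x\in\elp}\sigma_s(x)_p\lesssim s^{-\lambda}$), your $r=p$ route goes through by the same block decomposition you sketch: one obtains $\|v\|_q\lesssim s^{1/q-1/p}\|v\|_p$ from RIP, then $\|v\|_p\lesssim\sigma_s(x)_p$ from Proposition~\ref{pro:rip-sparse}, and the exponents combine to $s^{-(\lambda+1/p-1/q)}$. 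The paper's choice $r=\min\{1,q\}$ instead routes through $\sigma_s(x)_r\lesssim s^{-\lambda+1/r-1/p}$ and arrives at the same exponent. So the two proofs differ only in this bookkeeping parameter; the paper also carries out the block decomposition in full rather than citing it.
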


The proof of this theorem is a straightforward adaption of the proof of \cite[Thm.~3.2]{FPR+10} and relies on Lemma~\ref{lem:nonlin-app} as well as other techniques from sparse recovery. Again, details are provided in Appendix~\ref{sec:ell-proof-quasi}. 

\begin{remark}
	Our original proof of Theorem~\ref{thm:ell-main-quasi-general} as given in the preprint \cite{HPS21} used $\ell_r$-minimization with $r=p$ which does not yield the claimed bound without additional modifications. Instead of giving a corrected version of this variant, we can set $r=\min\{1,q\}$ as in the proof of \cite[Thm.~3.2]{FPR+10} since Lemma~\ref{lem:nonlin-app} is more general than \cite[Lem.~4]{HPS21}. To keep the presentation self-contained, we give a complete proof instead of only stating the required modifications.
\end{remark}

\section{Open questions}
\label{sec:ell-open}

\begin{question}\label{que:existence}
	As mentioned in Section~\ref{sec:ell-intro}, knowledge of a single minimal section may imply a bound on typical sections of similar dimension which is in contrast to \eqref{eq:general-upper}, derived from \cite{LT00}, where knowledge of the whole sequence $\big(c_n(K)\big)_{1\le n\le m}$ is needed. This difference has been observed at the end of Section 1.2 in \cite{MY21} by E.~Milman and Yifrach, who stated that it can be expected that knowledge of the whole sequence of minimal radii is needed to obtain good upper bounds, \chg{but a precise formulation is unavailable}.
\begin{center}
	Is the knowledge of the whole sequence of Gelfand widths necessary to obtain an optimal bound on the radius of a random section?
\end{center}
\end{question}

\bigskip

\begin{question}
	To the best of our knowledge, general lower bounds on the radius of typical information or the random Gelfand widths, which could complement the upper bound \eqref{eq:general-upper}, are not available. Related to Question~\ref{que:existence} we pose the following one.
\begin{center}
	Can the upper bound~\eqref{eq:general-upper} be improved? What about lower bounds?
\end{center} 
It would be interesting to have for example a negative statement if the Gelfand widths are not square-summable. We hope that our study of $\ell_p$-ellipsoids sheds some light on these questions.
\end{question}

\bigskip

\begin{question}\label{que:conj}
	As discussed in Section~\ref{sec:ell-poly}, Conjecture~\ref{conj:threshold} is verified except for the two cases
\begin{enumerate}
	\item $1< p<2$ and $\lambda=\frac{1}{p^*}$,
	\item $p> 2$ and $\frac{1}{2}<\lambda\le \frac{1}{p^*}$.
\end{enumerate}
\begin{center}
	Can one prove Conjecture~\ref{conj:threshold} or find counterexamples to it?
\end{center}
It seems reasonable to conjecture even that $\decay\big(\rad(\elp,\enran)\big) = \decay\big(\rad(\elp,n)\big)$ as long as $\|(\sigma_j)_{j\in\IN}\|_{p^*}<\infty$, while $\decay\big(\rad(\elp,\enran)\big)=0$ whenever $\|(\sigma_j)_{j\in\IN}\|_{p^*}=\infty$. With an eye on the case $p=2$, it may also be true that the threshold is determined by the square-summability of the sequence of minimal radii or Gelfand widths. 
\end{question}

\bigskip

\begin{question}
In Corollary~\ref{cor:mstar-estimate} we obtained an upper bound on the dependence on $p$ of the supremum of a Gaussian process indexed by $\elp$. As we do not have a corresponding lower bound, the following would be interesting to know.
\begin{center}
	What is the correct dependence in Corollary~\ref{cor:mstar-estimate} on $p$?
\end{center}
A possible approach could be based on the equality case in Hölder's inequality used for the proof. See also \cite{GLM+07}, where lower bounds on the mean width are provided.
\end{question}

\bigskip

\begin{question}
If $0<p<1$, we do not have a lower bound on the minimal radius of information and therefore cannot compare typical with optimal information. 
\begin{center}
	How sharp is Theorem~\ref{thm:ell-main-quasi} or its extension Theorem~\ref{thm:ell-main-quasi-general}?
\end{center}
We comment on a possible approach to a lower bound to Theorem~\ref{thm:ell-main-quasi-general} or Theorem~\ref{thm:ell-main-quasi} and more generally a lower bound on the Gelfand numbers of diagonal operators in the quasi-Banach regime. In the case of the $\ell_p$-balls, that is  $\sigma_1=\cdots=\sigma_m$, a matching lower bound is provided in \cite[Prop.~2.1]{FPR+10}. Its proof relies on a combinatorial lemma providing a partially overlapping covering of the index set $\{1,\ldots,m\}$ and does not extend to $\ell_p$-ellipsoids with decaying semiaxes. 

In \cite{FPR+10} the authors had to find a different method of proof than the one used by Donoho~\cite{Don06b} who relied on the validity of Carl's inequality, which compares entropy numbers with Gelfand numbers, in quasi-Banach spaces. However, this was only shown several years later by Hinrichs, Kolleck and Vyb\'iral \cite{HKV16}. With this additional knowledge one could possibly prove a lower bound to Theorem~\ref{thm:ell-main-quasi-general} using a lower bound on the entropy numbers of diagonal operators as shown by Kühn~\cite{Kue01} for the case of the identity operator. 

\end{question}

\bigskip

\begin{question}
An important tool in the proof of Theorem~\ref{thm:ell-main-quasi}, Lemma~\ref{lem:nonlin-app} itself is proven only for polynomial semiaxes. Naturally, the following question arises.
\begin{center}
 Can Lemma~\ref{lem:nonlin-app} be generalized to other types of semiaxes?
\end{center}
A generalization to other semiaxes would directly generalize Theorem~\ref{thm:ell-main-quasi-general} and thus Theorem~\ref{thm:ell-main-quasi} to the same class of semiaxes. 

A possible approach to a lower bound may be to bound the supremum from below by the average of $\sigma_s(x)_q$ over $x\in \elp$. Then one can use an expression of a uniform vector in $\elp$ in terms of asymptotically independent vectors, see \cite{JP21}, and use results from Gordon, Litvak, Schütt and Werner \cite{GLS+06} on $k$-th minima ($\chg{(m-k)}$-th maxima) of independent random variables.
\end{question}

\bigskip

\chapter{Function recovery and quantization} \label{ch:sob}

Passing from linear to standard information, this chapter is concerned with the quality of random samples compared to optimal samples. We present the results obtained in \cite{KS20} together with Krieg for the $L_q$-approximation problem and the integration problem for Sobolev but also Hölder and Triebel-Lizorkin spaces. Compared to the original presentation, the proof technique employed here is slightly different, however, and allows for an extension to integration in Triebel-Lizorkin spaces.

Subsequently, we will motivate the study of random standard information, in particular for the important example of Sobolev spaces. Afterwards, in Section~\ref{sec:sob-res} we will present the obtained results for Sobolev spaces and extensions to other spaces. In particular, we will characterize the radius of information given by a point set in terms of a geometric quantity\chg{. This allows} us to compare the expected radius of random information to the minimal radius of information \chg{and consequently to determine} the power of random information in terms of the given parameters. 

Then we will give some details on the proof strategy in Section~\ref{sec:sob-mls}, where we discuss an asymptotically optimal construction for any point set, Section~\ref{sec:sob-fool-int}, where a lower bound and the relation to the integration problem is established, and Section~\ref{sec:sob-quant}, where the geometric quantity will be discussed. Finally, in Section~\ref{sec:sob-open} we collect remaining open questions.

\section{Introduction and motivation}\label{sec:sob-intro}

Given a function $f$ belonging to some function space $F$ defined on a set $D$ we can use standard information, that is, function evaluations $f(x_1),\ldots,f(x_n)$ at some point set $\Pn=\{x_1,\dots,x_n\}\subset D$, in order to solve a numerical problem. We are interested in comparing the radius of random points with the sampling numbers in order to give possible answers to Question~\ref{que:std}. We refer to Section~\ref{sec:std} for a background. 

As function evaluations are in particular linear measurements, linear information can lead to better algorithms than relying on standard information alone. In general, however, evaluations are more easily obtained than, say, Fourier coefficients and thus there is an interest in algorithms using standard information. Further, as we will briefly discuss in the following, recent research shows that for an important type of problem function evaluations can be almost as good as optimal linear information.

In the case of $L_2$-approximation in a function space $F$, it has been obtained by Krieg and M.~Ullrich for Hilbert spaces \cite{KU21}, and for more general spaces in \cite{KU20}, that a least squares algorithm using random points is up to a logarithmic factor as good as the best linear algorithm. There, the sampling density depends on $F$ and is allowed to be uniform in some cases. Together with an improvement by M.~Ullrich~\cite{Ull20} this shows that typical information (with respect to this density) is close to optimal. There was an improvement by Nagel, Schäfer and T.~Ullrich \cite{NSU20} based on progress in the related field of sampling discretization due to Nitzan, Olevskii and Ulanovskii \cite{NOU16} in turn building on the solution of the Kadison-Singer conjecture by Marcus, Spielman and Srivastava~\cite{MSS15}. \chg{Recently, Dolbeault, Krieg and M.~Ullrich \cite{DKU22} provided sharp upper bounds.} However, in general, the logarithmic gap between the \chg{power of standard and linear information cannot be dispensed with} as an example of Sobolev spaces with low smoothness given by Hinrichs, Krieg, Novak and Vyb\'iral~\cite{HKN+21b} shows. 	

It seems difficult to improve the current results on the radius of typical information for general function spaces. We are positive that the methods and results from \cite{KS20}, which we discuss in this chapter, carry over to other settings. Further, we hope to shed light on possible conditions for asymptotic optimality of (sequences of) point sets in more general function spaces, and on the power of random standard information for those.

A starting point for our study was an open question by Hinrichs, Krieg, Novak, Prochno and M.~Ullrich~\cite{HKN+21}, who obtained for the $L_q$-approximation problem for Sobolev spaces $W^1_p([0,1])$ that uniform random points are asymptotically on average optimal if $q<p$ and worse by a logarithmic factor if $q\ge p$. They made use of known asymptotics for averages of spacings between uniformly distributed points on an interval, which is related to the norm of a random point in a simplex as studied by Baci, Kabluchko, Prochno, Thäle and the author in \cite{BKP+20}. In \cite{KS20} we extended the work \cite{HKN+21} to bounded convex sets \chg{in arbitrary dimension}.

Another motivation was to improve a related suboptimal bound on the radius of random information for the approximation of Sobolev functions on manifolds obtained by Ehler, Gräf and Oates~\cite{EGO19} using known asymptotics for the radius of the largest hole amidst a set of uniformly distributed independent random points. Together with D.~Krieg we could close the logarithmic gap from \cite{EGO19} in almost all cases in the work \cite{KS21}, where we transferred the results from \cite{KS20} via charts to manifolds. 

In general, one reason to study Sobolev spaces is their ubiquity in the theory of function spaces, which is in part due to their fundamental role as solution spaces of partial differential equations (PDEs). See for example Evans \cite{Eva10} for a gentle introduction and Maz'ja~\cite{Maz85} for a comprehensive account. Let us give their definition. 

\begin{definition}\label{def:sobolev}
Let $\Omega\subset\IR^d$ be a domain, that is an open and non-empty set. For a smoothness parameter $ s\in\mathbb{N} $ and an integrability parameter $ 1\leq p\le \infty $ we consider the Sobolev space
\[
W^s_p(\Omega):=\Big\{f\in L_p(\Omega)\colon \|f\|_{W^s_p(\Omega)}:=\Big(\sum_{|\alpha|\leq s} \|D^{\alpha}f\|_{L_p(\Omega)}^p\Big)^{1/p}<\infty\Big\},
\]
where the sum is over all multi-indices $ \alpha\in \mathbb{N}_0^d $ with $ |\alpha|=\alpha_1+\ldots+\alpha_d \le s $ and the expression $D^\alpha f = \frac{\partial^{|\alpha|}}{\partial x_1^{\alpha_1}\cdots\partial x_d^{\alpha_d}} f$ denotes a weak partial derivative of order $|\alpha|$. 
\end{definition}

By definition, the Sobolev space $W^s_p(\Omega)$ is a subspace of $L_p(\Omega)$ in which two elements are identified if they differ only on a set of measure zero. Without any additional assumptions, function evaluation is therefore not well defined. The embedding $W^s_p(\Omega)\hookrightarrow C_b(\Omega)$ into the bounded continuous functions eliminates this concern. It holds if 
\begin{equation} \label{eq:embedding}
	s>d/p \quad \text{if }1<p\le \infty \quad \text{or}\quad s\ge d \quad \text{if } p=1,
\end{equation}
under additional assumptions on $\Omega$ which will be satisfied for the domains we consider, see, e.g., \cite[Sec.~1.4.5]{Maz85}. In fact, we shall need the slightly stronger condition of $s>d/p$ for all $1\le p\le\infty$, which we assume from now on. In this case we have a compact embedding $W^s_p(\Omega)\hookrightarrow C_b(\Omega)$, see, e.g., \cite[Sec.~1.4.6]{Maz85}.

Several $s$-numbers of embeddings between Sobolev spaces have been studied, see, e.g., Pinkus~\cite[Ch.~VII]{Pin85} for an overview. There, and also for example in the book by Lorentz, von Golitschek and Makovoz~\cite[Ch.~14]{LGM96}, discretization techniques allow to use bounds from embeddings between sequence spaces. This is connected to the study of diagonal operators as mentioned in the introduction to Chapter~\ref{ch:ell}.

Let us describe the situation for sampling numbers or equivalently the minimal radius of standard information. Recall that the $n$-th minimal radius of information is given in the case of $L_q$-approximation by
\[
r\big(W_p^s(\Omega)\hookrightarrow L_q(\Omega),n\big)
=\inf_{S_{\Pn}}\sup_{\|f\|_{W^s_p(\Omega)}\le 1}\|f-S_{\Pn}(f)\|_{L_q(\Omega)},
\]
where the infimum is over all sampling operators of the form
\begin{equation}\label{eq:sampling-operator1}
	S_{\Pn}\colon W^s_p(\Omega) \to L_q(\Omega), 
	\qquad S_{\Pn}(f)=\varphi\big(f(x_1),\dots,f(x_n)\big)
\end{equation}
for some $n$-point set $\Pn=\{x_1,\ldots,x_n\}\subset \Omega$ and $\varphi\colon \IR^n\to L_q(\Omega)$. Here, we dropped the $\lstd$ in the notation as we will only be concerned with standard information from now on. 

On a bounded Lipschitz domain $\Omega\subset\IR^d$ (see Definition~\ref{def:bounded-lipschitz} below), the minimal radius is known to satisfy 
\begin{equation} \label{eq:minrad-app}
 r\big(W_p^s(\Omega)\hookrightarrow L_q(\Omega),n\big) 
 \,\asymp\, n^{-s/d+(1/p-1/q)_+},
\end{equation}
where $(1/q-1/p)_+=1/q-1/p$ if $q<p$ and zero otherwise, see the discussion below. Note that the rate of decay improves if the smoothness $s$ is large compared to the dimension $d$ but deteriorates otherwise. This is in contrast to anisotropic function spaces of mixed smoothness where the \chg{dependence on the dimension is through a logarithmic factor}. We refer to Temlyakov \cite[Ch.~6.9]{Tem18} for more information and note that for an important class of function spaces of mixed smoothness the \chg{asymptotics of the sampling numbers in $L_2$ are resolved by \cite{DKU22}.}

For the integration problem, which corresponds to $q=1$ above, it is known that 
\[
 r\big(W^s_p(\Omega),{\rm INT},n\big) 
 \,\asymp\, n^{-s/d}.
\]
Recall that, in analogy to the above, the $n$-th minimal radius of information is given by
\[
r\big(W_p^s(\Omega),{\rm INT},n\big)
=\inf_{S_{\Pn}}\sup_{\|f\|_{W^s_p(\Omega)}\le 1}\Big|\int_{\Omega}f(x)\dd x-S_{\Pn}(f)\Big|,
\]
where the infimum is over all sampling operators \chg{as in} \eqref{eq:sampling-operator1} with $L_q(\Omega)$ replaced by $\IR$.

In fact, it is known that for integration as well as $L_q$-approximation the sampling operators achieving the optimal asymptotic rate can be chosen linear, i.e., of the form 
\begin{equation} \label{eq:linear-algo}
	A_{\Pn}(f)
=\sum_{i=1}^{n}u_i f(x_i),
\end{equation}
where $\Pn=\{x_1,\ldots,x_n\}$ is a suitably chosen point set and $u_i$ are real numbers in the case of the integration problem and bounded real-valued functions on $\Omega$ in the case of the approximation problem. In general, the optimal algorithm for the integration \chg{problem} given any standard information is linear. This is a well-known result due to Smolyak and Bakhvalov, see, e.g., \cite[Thm.~4.7]{NW08}. 

The above asymptotics for the minimal radius are classical for special domains like the cube and we do not know their origin. They have been obtained, for example, in the context of finite elements, which are used for the numerical approximation of solutions of PDEs, see, e.g., Heinrich~\cite{Hei94} who refers to the treatise of Ciarlet~\cite{Cia78}. 

Using the moving least squares method, Wendland~\cite{Wen01} proved a result on the local reproduction of polynomials, see Section~\ref{sec:sob-mls}. This was then used by Novak and Triebel in \cite{NT06} to prove the asymptotic behaviour of the radius of minimal information for approximation \eqref{eq:minrad-app} for bounded Lipschitz domains. For sufficiently high smoothness, this may also be deduced from an earlier result due to Narcowich, Ward and Wendland~\cite{NWW04} also relying on \cite{Wen01}. At this point, it seems worthwhile noting that in \cite{NT06} the authors also compared the minimal radius of standard information to the one of linear information and concluded that, if one restricts to linear methods, linear information can be asymptotically better than standard information if and only if $p<2<q$. 

As standard information is given by point sets, it is perhaps natural to look at their geometry to find (asymptotically) optimal point sets. In the works above it has been found that (sequences of) point sets covering the domain well, so that no point is far from the point set, are asymptotically optimal. That is, it was supposed that the covering radius
\begin{equation} \label{eq:covering-rad}
h_{\Pn,\Omega}
:=\sup_{x\in \Omega}\dist(x,\Pn)
\end{equation}
which is the supremum of the distance function
\begin{equation}\label{eq:dist-function}
\dist(\cdot, \Pn)\colon \mathbb{R}^d\to [0,\infty), 
\qquad \dist(x,\Pn):=\min_{y\in \Pn}\|x-y\|_2
\end{equation}
to the $n$-point sampling set $\Pn\subset \Omega$ has to be of the best possible order $n^{-1/d}$, see also \eqref{eq:dist-optimal} below. This assumption of an optimal covering radius is very common in numerical analysis; to give an impression, we refer to  Arcang\'eli, L\'opez de Silanes and Torrens \cite{ALdST07}, Brauchart, Dick, Saff, Sloan, Wang and Womersley \cite{BDS+15}, Duchon~\cite{Duc78}, Mhaskar \cite{Mha10}, as well as Wendland and Schaback~\cite{WS93}, while being well aware of the \chg{incompleteness of this enumeration}. In the following section, we will see that the covering radius is not sufficient to describe which (sequences of) point sets achieve the optimal rate. This allows for an asymptotic description of the expected power of random standard information.

\section{Discussion of results}
\label{sec:sob-res}

Motivated by the question whether random points or typical information is optimal, we derived a geometric characterization for the radius of information given by a point set, Theorem~\ref{thm:sob-main} below. This yields a criterion (Corollary~\ref{cor:sob-char}) of optimality given a sequence of point sets. Using results from quantization theory we show Proposition~\ref{pro:random-covering} and derive Corollary~\ref{cor:sob-ran} which settles Question~\ref{que:std} on the power of random information for the $L_q$-approximation and the integration problem above. Finally, Theorem~\ref{thm:sob-main-ext} extends the results to other spaces, including Hölder classes, Sobolev spaces of fractional smoothness and Triebel-Lizorkin spaces.

In order to investigate the power of random standard information, we analyzed the quality of arbitrary point sets and arrived at the following theorem.

\leqnomode
\begin{theorem}[{\cite[Thm.~1]{KS20}}]\label{thm:sob-main}
	Let $\Omega\subset \mathbb{R}^d$ be a bounded convex domain, $1\le p,q \le \infty$ and $s\in\mathbb{N}$ \chg{as in \eqref{eq:embedding}}.  Then, for any point set $\Pn\subset \Omega$,
	\begin{align}
	\tag{\textit a}
	&r\big(W^s_p(\Omega)\hookrightarrow L_q(\Omega),\Pn\big)\, \asymp \,
	\begin{cases}
\big\|\dist(\cdot, \Pn)\big\|_{L_{\infty}(\Omega)}^{s-d(1/p-1/q)} & \text{if } q\ge p,\vphantom{\bigg|}\\
\big\|\dist(\cdot, \Pn)\big\|_{L_{\gamma}(\Omega)}^s & \text{if } q<p,
\end{cases}\\[9pt]
\tag{\textit b}
	 &r\big(W^s_p(\Omega),{\rm INT},\Pn\big)\, \asymp\, r\big(W^s_p(\Omega)\hookrightarrow L_1(\Omega),\Pn\big),
	\end{align} 
	where $\gamma=s(1/q-1/p)^{-1}$ and $\asymp$ is to be interpreted as in Definition~\ref{def:asymp-std}. The algorithm achieving the upper bounds can be chosen linear, that is, of the form~\eqref{eq:linear-algo}.
\end{theorem}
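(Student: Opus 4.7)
The theorem splits into part (a) on $L_q$-approximation and part (b) relating integration to $L_1$-approximation; my plan is to prove matching upper and lower bounds for (a), then derive (b) as a short consequence.

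\textbf{Upper bound for (a).} I would construct one linear sampling operator $A_{\Pn}$ by moving least squares. Local polynomial reproduction combined with a Bramble-Hilbert-type inequality and the Sobolev embedding should produce, for each $x\in\Omega$, the pointwise estimate
\[
\bigl|f(x)-A_{\Pn}(f)(x)\bigr|\lesssim h(x)^{s-d/p}\,\|f\|_{W^s_p(B(x,ch(x)))},
\]
where $h(x):=\dist(x,\Pn)$ and $c>0$ is a fixed constant. For $q\ge p$, raising this to the $q$-th power, integrating over $\Omega$, bounding the local Sobolev norm by the global one, and using $s-d(1/p-1/q)=(s-d/p)+d/q$, one obtains the desired $\|h\|_\infty^{s-d(1/p-1/q)}\|f\|_{W^s_p}$. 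For $q<p$ I would apply Hölder's inequality on the integral with exponents $p/q$ and $p/(p-q)$; a short calculation shows the exponent on $h$ becomes exactly $\gamma=spq/(p-q)$, while the remaining factor $\int_\Omega\|f\|_{W^s_p(B(x,ch(x)))}^p\,\dd x$ is dominated by $\|f\|_{W^s_p(\Omega)}^p$ using bounded overlap of the local balls.

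\textbf{Lower bound for (a).} Fix a non-negative bump $\phi\in C_c^\infty(\IR^d)$ supported in the unit ball, and for $y\in\Omega$ with $r\le h(y)/2$ set $\phi_{y,r}(x):=\phi((x-y)/r)$. Then $\phi_{y,r}$ vanishes on $\Pn$, and a direct calculation gives $\|\phi_{y,r}\|_{L_q}\asymp r^{d/q}$ and $\|\phi_{y,r}\|_{W^s_p}\asymp r^{d/p-s}$. For $q\ge p$ I would apply the lower bound in Proposition~\ref{pro:radius-general} to a single bump at a point where $h$ essentially attains its maximum, producing $\|h\|_\infty^{s-d(1/p-1/q)}$. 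For $q<p$ the plan is to extract, by a greedy Vitali-type procedure, pairwise disjoint balls $\{B(y_i,ch(y_i))\}_{i\in I}$ such that $h(y)\asymp h(y_i)$ on each ball and $\bigcup_i B(y_i,c h(y_i))$ covers a fixed fraction of $\Omega$. Setting $f=\sum_i a_i\,\phi_{y_i,ch(y_i)}$, the summands have disjoint supports, and duality between weighted $\ell_p$ and $\ell_{p/(p-q)}$ shows that the supremum of $\|f\|_{L_q}/\|f\|_{W^s_p}$ over coefficient sequences is of order $\bigl(\sum_i h(y_i)^{\gamma+d}\bigr)^{1/q-1/p}$. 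The packing estimate $\sum_i h(y_i)^{\gamma+d}\asymp\int_\Omega h(y)^\gamma\,\dd y$ then converts this into $\|h\|_{L_\gamma}^s$.

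\textbf{Part (b) and main obstacle.} The upper bound in (b) is immediate: given any $L_1$-approximant $A_{\Pn}(f)$, integrating it yields an integration algorithm with error at most $\|f-A_{\Pn}(f)\|_{L_1(\Omega)}$. For the matching lower bound I would observe that the fooling functions from part (a) can be chosen non-negative, so $|\int_\Omega f\,\dd x|=\|f\|_{L_1(\Omega)}$, and Proposition~\ref{pro:radius-general} applied to the pair $\pm f$ implies that the integration radius is at least a constant times the $L_1$-approximation radius. I expect the hardest step to be the upper bound for $q<p$: one must track the exponents through the Hölder step so that the weight in front of $\|f\|_{W^s_p}$ collapses to exactly $\gamma$, and one must control the finite overlap of the local balls $B(x,ch(x))$. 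Convexity of $\Omega$ enters here to prevent degeneration of the MLS constants near the boundary and to guarantee good covering behaviour of these local balls.
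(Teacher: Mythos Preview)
Your architecture (MLS upper bound, bump-function lower bound, then (b) from non-negative fooling functions) matches the paper, and the lower bound sketch is essentially right. The genuine gap is in the upper bound for $q<p$, and it is not just bookkeeping.

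The pointwise estimate
\[
|f(x)-A_{\Pn}(f)(x)|\ \lesssim\ h(x)^{s-d/p}\,\|f\|_{W^s_p(B(x,c\,h(x)))},\qquad h(x)=\dist(x,\Pn),
\]
cannot be produced by moving least squares. For MLS to reproduce $\mathcal P_{s-1}^d$ at $x$, the weight support around $x$ must contain enough sample points; but $B(x,c\,h(x))$ with moderate $c$ contains at most one or two points of $\Pn$ (none if $c<1$), so local polynomial reproduction fails for $s\ge 2$. The paper's remedy is to replace $h(x)$ by the radius $r_{\Pn}(x)$ of the smallest ``good cube'' $Q_{\Pn}(x)$ centred at $x$ on which the \emph{local} covering radius is at most $c_{\rm good}\cdot r_{\Pn}(x)$. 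One always has $r_{\Pn}(x)\gtrsim h(x)$, with strict excess near large holes. Wendland's lemma is then applied cube by cube, $\Omega$ is covered by a Besicovitch family of such cubes with bounded overlap, and only \emph{after} the discrete H\"older step does one relate $\sum_i r_i^{\gamma+d}$ back to $\int_\Omega h^\gamma$, via two separate observations: each good cube contains an empty ball of radius $\asymp r_i$, and $h\le c_{\rm good}\,r_i$ on the cube.

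Your H\"older arithmetic is also off. From the pointwise bound you wrote, the exponent on $h$ after H\"older with $(p/q,\,p/(p-q))$ is $(s-d/p)\,q\cdot\tfrac{p}{p-q}=(sp-d)\tfrac{q}{p-q}$, not $\gamma=\tfrac{spq}{p-q}$. In the paper the missing $d$ enters because one first integrates the \emph{constant} local bound over a cube, picking up a factor $r_i^d$, and then $(s-d/p)q+d=(1-q/p)(\gamma+d)$ makes the exponents match. In your continuous formulation there is no analogous source of $d$; moreover the ``bounded overlap'' you invoke actually gives
\[
\int_\Omega \|f\|_{W^s_p(B(x,c\,h(x)))}^p\,\dd x \ \asymp\ \int_\Omega h(y)^d \sum_{|\alpha|\le s}|D^\alpha f(y)|^p\,\dd y,
\]
because $\{x:\|x-y\|<c\,h(x)\}$ has volume $\asymp h(y)^d$ by $1$-Lipschitzness of $h$. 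So the second H\"older factor is not $\|f\|_{W^s_p(\Omega)}^p$ either. (A maximal-function reformulation could repair the exponents, but it does not repair the first problem: you still need a local region containing enough sample points, and that forces the scale $r_{\Pn}(x)$ rather than $h(x)$.)
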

\reqnomode

The proof and the assumption of convexity will be discussed in Sections~\ref{sec:sob-mls} and \ref{sec:sob-fool-int}. 

Theorem~\ref{thm:sob-main} implies for a sequence of point sets $(P_n)_{n\in\IN}$, each consisting of $n$ points, that the radius of information of $\Pn$ is asymptotically determined by the radius of the largest hole amidst the points if $q\ge p$ and by an average of the distance to the point set if $q<p$. The case $q\ge p$, where the covering radius determines the radius of information is included for completeness and can be deduced from the results of \cite{NWW04} and \cite{NT06} mentioned before. Our contribution was to show that the covering radius is too large to capture the radius of information for $q<p$, and in particular, for numerical integration. In general, the assumption of a small covering radius is unnecessarily strong as we shall see in Corollary~\ref{cor:sob-char} below.

Theorem~\ref{thm:sob-main} is a tool to analyze the asymptotic optimality of arbitrary (sequences of) point sets and, in particular, random or typical ones. Before we will come to the quality of random points, let us comment on the optimal behaviour of the minimal $L_{\gamma}$-norm of the distance function to any point set with at most $n$ points. It is well known that this satisfies
\begin{equation} \label{eq:dist-optimal}
\inf_{\#\Pn \le n} \|\dist(\cdot, \Pn)\|_{L_{\gamma}(\Omega)} \,\asymp\, n^{-1/d} 
\qquad \text{for every} \quad
0<\gamma\leq \infty,
\end{equation}
where the infimum is taken over all point sets $\Pn\subset \Omega$ with at most $n$ points. This can be deduced from a standard volume argument using balls centered at the points. For completeness a proof is provided in Appendix~\ref{sec:sob-proof-discuss}.

Thus, asymptotically optimal points, such as for example a suitably scaled lattice (see Chapter~\ref{ch:iso}), can be said to have holes of order $n^{-1/d}$. Plugging this into Theorem~\ref{thm:sob-main}, one can see that the rates for the minimal radius described in the introduction are achieved by such point sets. This \chg{yields} the following characterization of (asymptotically) optimal point sets. 

\begin{corollary}[{\cite[Cor.~1]{KS20}}]\label{cor:sob-char}
	Let $\Omega\subset \mathbb{R}^d$ be a bounded convex domain, $1\le p,q \le \infty$ and $s\in\mathbb{N}$ \chg{as in \eqref{eq:embedding}}. Assume that for each (or at least infinitely many) $ n\in\mathbb{N}$ an $ n $-point set $ P_n\subset \Omega $ is given.  These point sets are asymptotically optimal, i.e.,
	\[
	r\big(W^s_p(\Omega)\hookrightarrow L_q(\Omega),n\big)
	\,\asymp \,r\big(W^s_p(\Omega)\hookrightarrow L_q(\Omega),P_n\big),
	\]
	if and only if 
	\[
	\|\dist(\cdot, P_n)\|_{L_\gamma(\Omega)} \,\asymp\, n^{-1/d},
	\]
	where $\gamma=s(1/q-1/p)^{-1}$ for $q<p$ and $\gamma=\infty$ for $q\ge p$.
\end{corollary}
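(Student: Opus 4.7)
The plan is to combine Theorem~\ref{thm:sob-main} with \eqref{eq:dist-optimal} in a purely mechanical way. Let $\alpha := s - d(1/p - 1/q)$ when $q \ge p$ (paired with $\gamma = \infty$) and $\alpha := s$ when $q < p$, so that Theorem~\ref{thm:sob-main}(a) can be abbreviated as
\[
r\big(W^s_p(\Omega)\hookrightarrow L_q(\Omega),P_n\big)\;\asymp\; \|\dist(\cdot,P_n)\|_{L_\gamma(\Omega)}^{\alpha}
\]
with constants independent of $P_n$, in the sense of Definition~\ref{def:asymp-std}. Under the standing embedding assumption $s > d/p$, one has $\alpha > 0$ in both cases: for $q \ge p$ simply $\alpha = s - d/p + d/q \ge s - d/p > 0$. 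Positivity of $\alpha$ is what will permit the final root extraction.

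First I would pass the displayed equivalence through the infimum over point sets of cardinality at most $n$. Combined with \eqref{eq:dist-optimal}, this identifies the minimal radius:
\[
r\big(W^s_p(\Omega)\hookrightarrow L_q(\Omega),n\big)\;\asymp\;\Big(\inf_{\#P_n\le n}\|\dist(\cdot,P_n)\|_{L_\gamma(\Omega)}\Big)^{\alpha}\;\asymp\; n^{-\alpha/d},
\]
which recovers the rate $n^{-s/d+(1/p-1/q)_+}$ recorded in Section~\ref{sec:sob-intro}. Now by definition $(P_n)$ is asymptotically optimal precisely when $r\big(W^s_p(\Omega)\hookrightarrow L_q(\Omega),P_n\big) \asymp n^{-\alpha/d}$; via Theorem~\ref{thm:sob-main} this translates into $\|\dist(\cdot, P_n)\|_{L_\gamma(\Omega)}^{\alpha}\asymp n^{-\alpha/d}$. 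Since $x\mapsto x^{1/\alpha}$ is a monotone bijection on $[0,\infty)$, this is equivalent to $\|\dist(\cdot,P_n)\|_{L_\gamma(\Omega)}\asymp n^{-1/d}$, which is the claim.

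I do not foresee any real obstacle: the corollary is a short piece of bookkeeping sitting on top of Theorem~\ref{thm:sob-main} and \eqref{eq:dist-optimal}. The only point that needs a line of justification is the positivity $\alpha > 0$ in both regimes, so that raising asymptotic equivalences to the power $1/\alpha$ preserves them; this is exactly where the embedding hypothesis $s > d/p$ is used.
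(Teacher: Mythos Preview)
Your proposal is correct and follows precisely the route the paper indicates: the text preceding the corollary states that plugging \eqref{eq:dist-optimal} into Theorem~\ref{thm:sob-main} recovers the minimal-radius rates and thereby yields the characterization, and you have spelled out this bookkeeping, including the positivity of the exponent $\alpha$ needed to pass the asymptotic equivalence through the power $1/\alpha$. There is nothing to add.
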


By Theorem~\ref{thm:sob-main}b, we clearly get the same characterization for numerical integration as for the problem of $L_1$-approximation. The relation between these two problems will be commented on later in Section~\ref{sec:sob-fool-int}.

Our results for $q<p$ seem to be novel already for $d=1$, where recent results were obtained in \cite{HKN+21}. To the best of our knowledge, similar results only have been known for the spaces $W_\infty^s(\Omega)$ with $s\le 2$, see Sukharev \cite{S79} and Pag\`es \cite{P98}. The latter author uses concepts from the theory of quantization of measures which will be discussed later on.

To illustrate the advantage of the characterization of optimal points in Corollary~\ref{cor:sob-char} compared to conditions involving the covering radius, consider the following example depicted in Figure~\ref{fig:big-hole}. 

\begin{example}[{\cite[Sec.~2.4]{KS20}}]\label{ex:big-hole}
Take for each $n\in\IN$ an $n$-point set $\Pn$ on a bounded convex domain $\Omega\subset\IR^d$. With regard to asymptotic behaviour, \chg{we ask:
\begin{center}
How large can the largest hole admist $\Pn$ be for this to be still optimal as a sampling set for $L_q$-approximation of $W^s_p(\Omega)$-functions? 
\end{center}
}

By Corollary~\ref{cor:sob-char} the sequence of point sets $(P_n)$ can be (but not necessarily is) asymptotically optimal for this problem if and only if  
\begin{equation} \label{eq:condition-largest-hole}
h_{P_n,\Omega}\lesssim n^{-1/d+1/(\gamma+d)}.
\end{equation}
This means that the radius of the largest hole is allowed to exceed the optimal covering radius $n^{-1/d}$ by the polynomial factor $n^{1/(\gamma+d)}$. A proof is provided in Appendix~\ref{sec:sob-proof-discuss}. We also refer to \cite[Thm.~1.2]{BDS+15}, where the necessity of condition~\eqref{eq:condition-largest-hole} (for $q=1$) has been observed for numerical integration on the sphere.
\end{example}

\begin{figure}[ht]
	\begin{center}
		\includegraphics[width=8cm]{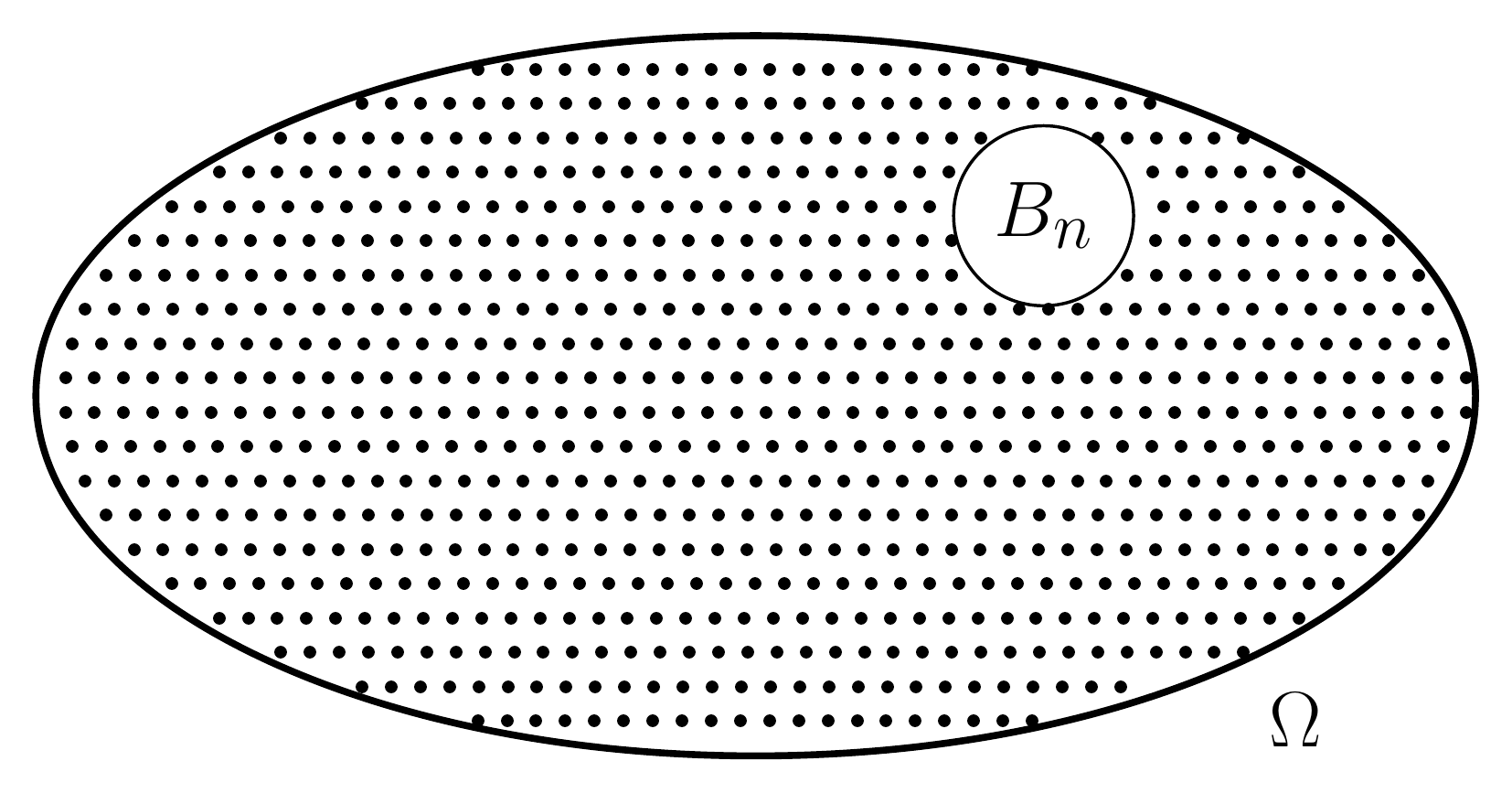}
	\end{center}
	\caption{An otherwise well-spaced point set containing a single large hole $B_n$ whose radius may be larger than the other holes by a polynomial factor without destroying the asymptotic optimality. Courtesy of D.~Krieg.}
	\label{fig:big-hole}
\end{figure}

Now that we have looked at optimal points, let us come to random points which have almost optimal covering properties as the following proposition shows. 

\begin{prop}[{\cite[Prop.~1]{KS20}}]\label{pro:random-covering}
Let $X_1,X_2,\ldots$ be independent and uniformly distributed on a bounded convex domain $\Omega\subset \mathbb{R}^d$. Consider the random $n$-point set $\pnran=\{X_1,\ldots,X_n\}$. Then, for any $\alpha\in (0,\infty)$,
\[
\mathbb{E}\,\|\dist(\cdot,\pnran)\|_{L_{\gamma}(\Omega)}^{\alpha}\,\asymp\, 
\begin{cases}
n^{-\alpha/d} & \text{if } 0<\gamma<\infty,\\
n^{-\alpha/d}(\log n)^{\alpha/d} & \text{if } \gamma=\infty. 
\end{cases}
\]
\end{prop}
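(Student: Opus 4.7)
The starting point is the exact identity, valid for any fixed $x\in\Omega$,
$$
\IP\bigl(\dist(x,\pnran)>t\bigr)=\Bigl(1-\vol\bigl(\Omega\cap B(x,t)\bigr)/\vol(\Omega)\Bigr)^{n},
$$
where $B(x,t)$ is the Euclidean ball. Convexity of $\Omega$ yields, via the cone joining $x$ to an inscribed ball $B(y_{0},r_{0})\subset\Omega$ and intersected with $B(x,t)$, a uniform lower estimate $\vol(\Omega\cap B(x,t))\gtrsim (t\wedge\diam\Omega)^{d}$. Combined with the trivial $\vol(\Omega\cap B(x,t))\le\omega_{d}t^{d}$, the layer-cake formula gives, uniformly in $x\in\Omega$, the one-point moment bound $\IE\dist(x,\pnran)^{\beta}\ls n^{-\beta/d}$ for every $\beta>0$, together with the matching lower bound $\IE\dist(x,\pnran)^{\beta}\gs n^{-\beta/d}$ for $x$ in the interior layer $\Omega_{n}:=\{x\in\Omega\colon B(x,n^{-1/d})\subset\Omega\}$, whose volume is bounded below by $\vol(\Omega)/2$ for large $n$.

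Fix $0<\gamma<\infty$ and write $Y_{n}:=\|\dist(\cdot,\pnran)\|_{L_{\gamma}(\Omega)}$. For the upper bound on $\IE Y_{n}^{\alpha}$ two cases appear. If $\alpha\le\gamma$, Jensen's inequality applied to the concave power $t\mapsto t^{\alpha/\gamma}$ gives $\IE Y_{n}^{\alpha}\le(\IE Y_{n}^{\gamma})^{\alpha/\gamma}=\bigl(\int_{\Omega}\IE\dist^{\gamma}\,\dd x\bigr)^{\alpha/\gamma}\ls n^{-\alpha/d}$. If $\alpha>\gamma$, the monotonicity of $L_{p}$-norms on the finite measure space, $\|f\|_{L_{\gamma}(\Omega)}\le\vol(\Omega)^{1/\gamma-1/\alpha}\|f\|_{L_{\alpha}(\Omega)}$, raised to the $\alpha$-th power reduces the bound to the pointwise estimate on $\IE\dist^{\alpha}$. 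The lower bound proceeds by a second-moment argument: set $t_{n}:=c_{1}n^{-1/d}$ with $c_{1}>0$ small and let $N_{n}:=\vol\{x\in\Omega\colon\dist(x,\pnran)>t_{n}\}$. The uniform positivity $\IP(\dist(x,\pnran)>t_{n})\ge c_{2}$ for $x\in\Omega_{n}$ forces $\IE N_{n}\gs 1$, while $N_{n}\le\vol(\Omega)$ yields $\IE N_{n}^{2}\le\vol(\Omega)\,\IE N_{n}$. Paley--Zygmund then produces $\IP(N_{n}\ge\IE N_{n}/2)\ge c_{3}>0$, and on that event $Y_{n}^{\gamma}\ge t_{n}^{\gamma}N_{n}\gs n^{-\gamma/d}$, hence $\IE Y_{n}^{\alpha}\gs n^{-\alpha/d}$.

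For $\gamma=\infty$, $Y_{n}$ coincides with the covering radius $h_{n}:=h_{\pnran,\Omega}$. A $t$-net of $\Omega$ of cardinality $\asymp t^{-d}$ combined with a union bound gives $\IP(h_{n}>t)\le Ct^{-d}(1-ct^{d})^{n}$; splitting the integral $\int_{0}^{\infty}\alpha t^{\alpha-1}\IP(h_{n}>t)\,\dd t$ at the threshold $t\asymp(\log n/n)^{1/d}$ furnishes the upper bound $(\log n/n)^{\alpha/d}$. For the lower bound I pack $\Omega$ with $M\asymp n/\log n$ disjoint balls of radius $t_{\star}\asymp(\log n/n)^{1/d}$; with the implicit constants tuned small, the expected number $Z$ of balls containing no point of $\pnran$ diverges, and the disjointness of the balls makes the associated indicators negatively correlated, so $\mathrm{Var}(Z)\le\IE Z$ and Chebyshev forces $\IP(Z\ge 1)\to 1$, i.e.\ $h_{n}\gs t_{\star}$ with probability tending to one. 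The principal obstacle throughout is the uniform two-sided control of $\vol(\Omega\cap B(x,t))$ near $\partial\Omega$, which is precisely where the convexity hypothesis on $\Omega$ is used; the remaining ingredients are standard applications of Jensen's inequality, the layer-cake formula, and the second-moment method.
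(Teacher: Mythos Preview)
Your argument is correct and follows a genuinely different, more self-contained route than the paper. For $0<\gamma<\infty$ the paper obtains the upper bound by applying Jensen to reduce to a high $L_p$-moment of $\int_\Omega\dist(\cdot,\pnran)^\gamma\,\dd x$ and then invokes Cohort's limit theorem (Proposition~\ref{pro:limittheorem}) to bound that moment; for $\gamma=\infty$ it simply cites the covering-radius asymptotics of Reznikov and Saff. You instead develop everything from the identity $\IP(\dist(x,\pnran)>t)=(1-\vol(\Omega\cap B(x,t))/\vol(\Omega))^n$ together with the cone/volume estimate coming from convexity: pointwise moment bounds plus the $L_\gamma$--$L_\alpha$ comparison handle the upper bound for finite $\gamma$, while the net/union-bound and packing/negative-correlation arguments handle $\gamma=\infty$. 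Your approach buys independence from the quoted literature (and in spirit reproduces what is inside those references), at the cost of a longer write-up; the paper's approach is shorter but black-boxes the real work.

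One simplification you missed: for $0<\gamma<\infty$ the lower bound requires no probabilistic argument at all. By \eqref{eq:dist-optimal} one has $\|\dist(\cdot,\Pn)\|_{L_\gamma(\Omega)}\gtrsim n^{-1/d}$ deterministically for \emph{every} $n$-point set, hence $\IE Y_n^\alpha\gtrsim n^{-\alpha/d}$ immediately; your Paley--Zygmund step is correct but unnecessary.
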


Essentially, the proposition claims that the average hole size of random points is on average of optimal order $n^{-1/d}$, whereas the largest hole is logarithmically larger compared to optimal points. Its proof will be discussed in Section~\ref{sec:sob-quant}.

Together with Theorem~\ref{thm:sob-main} we obtain the following corollary on the power of random information for $L_q$-approximation and integration in Sobolev spaces. 

\begin{corollary}[{\cite[Cor.~2]{KS20}}]\label{cor:sob-ran}
Let $X_1,X_2,\ldots$ be independent and uniformly distributed on a bounded convex domain $\Omega\subset \mathbb{R}^d$, $1\le p,q \le \infty$ and $s\in\mathbb{N}$ \chg{as in \eqref{eq:embedding}}. Consider the random $n$-point set $\pnran=\{X_1,\ldots,X_n\}$. Then
\[
 \IE\, r\big(W_p^s(\Omega)\hookrightarrow L_q(\Omega),\pnran\big) 
 \,\asymp\, \begin{cases}
 	r\big(W^s_p(\Omega)\hookrightarrow L_q(\Omega),n/\log n\big) & \text{if } q\ge p,\\
	r\big(W^s_p(\Omega)\hookrightarrow L_q(\Omega),n\big) & \text{if } q< p.\vphantom{\Big|}
	\end{cases}
\]
\end{corollary}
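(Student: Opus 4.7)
The plan is to combine the pointwise geometric characterization of Theorem~\ref{thm:sob-main} with the asymptotic behaviour of the $L_\gamma$-norm of the distance function for uniform random points provided by Proposition~\ref{pro:random-covering}, and then match the resulting expressions against the known minimal-radius asymptotics $r(W_p^s(\Omega)\hookrightarrow L_q(\Omega),n)\asymp n^{-s/d+(1/p-1/q)_+}$ recalled in \eqref{eq:minrad-app}.

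First I would apply Theorem~\ref{thm:sob-main}a for each realisation of $\pnran$ to obtain deterministic constants $c,C>0$ (depending only on $p,q,s,d,\Omega$, not on the point set) such that
\begin{equation*}
c\,\|\dist(\cdot,\pnran)\|_{L_\gamma(\Omega)}^{\alpha}
\,\le\, r\big(W_p^s(\Omega)\hookrightarrow L_q(\Omega),\pnran\big)
\,\le\, C\,\|\dist(\cdot,\pnran)\|_{L_\gamma(\Omega)}^{\alpha},
\end{equation*}
where $(\gamma,\alpha)=(\infty,\,s-d(1/p-1/q))$ in the case $q\ge p$, and $(\gamma,\alpha)=(s/(1/q-1/p),\,s)$ in the case $q<p$. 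The embedding assumption $s>d/p$ forces $\alpha>0$ in both regimes, so the exponent is admissible in Proposition~\ref{pro:random-covering}. Taking expectations and pulling out the uniform constants reduces the corollary to computing $\IE\|\dist(\cdot,\pnran)\|_{L_\gamma(\Omega)}^{\alpha}$.

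In the regime $q\ge p$, Proposition~\ref{pro:random-covering} with $\gamma=\infty$ and $\alpha=s-d(1/p-1/q)$ yields
\begin{equation*}
\IE\,\|\dist(\cdot,\pnran)\|_{L_\infty(\Omega)}^{s-d(1/p-1/q)}
\,\asymp\, n^{-s/d+(1/p-1/q)}(\log n)^{s/d-(1/p-1/q)}
\,=\, (n/\log n)^{-s/d+(1/p-1/q)},
\end{equation*}
which by \eqref{eq:minrad-app} is comparable to $r(W_p^s(\Omega)\hookrightarrow L_q(\Omega),n/\log n)$, since $(1/p-1/q)_+=1/p-1/q$ here. In the regime $q<p$, the finite-$\gamma$ case of Proposition~\ref{pro:random-covering} with $\alpha=s$ gives $\IE\|\dist(\cdot,\pnran)\|_{L_\gamma(\Omega)}^{s}\asymp n^{-s/d}$, which matches $r(W_p^s(\Omega)\hookrightarrow L_q(\Omega),n)\asymp n^{-s/d}$ because $(1/p-1/q)_+=0$ in this range. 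This settles both cases simultaneously.

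There is essentially no real obstacle in this combination step; it is purely bookkeeping once the two inputs are in hand. The substantive difficulties are hidden inside the ingredients: the deterministic lower bound in Theorem~\ref{thm:sob-main}a requires a fooling-function argument exploiting the convexity of $\Omega$, while the $\gamma=\infty$ half of Proposition~\ref{pro:random-covering}, responsible for the extra logarithmic factor in the $q\ge p$ case, relies on the sharp largest-hole asymptotics for an i.i.d.\ sample (typically derived via quantization theory). Granting these, the corollary follows directly by linearity of expectation and \eqref{eq:minrad-app}.
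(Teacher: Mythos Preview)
Your proposal is correct and follows essentially the same route as the paper: the paper explicitly states that Corollary~\ref{cor:sob-ran} is obtained by combining Theorem~\ref{thm:sob-main} with Proposition~\ref{pro:random-covering} and then reading off the rates via \eqref{eq:minrad-app}. Your handling of the exponents $(\gamma,\alpha)$ and the observation that $s>d/p$ ensures $\alpha>0$ (so that Proposition~\ref{pro:random-covering} applies with the needed moment) is exactly the bookkeeping the paper relies on, and indeed the paper's Remark following the corollary stresses that arbitrarily high moments of $\|\dist(\cdot,\pnran)\|_{L_\gamma(\Omega)}$ are required precisely for this step.
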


Summarizing, we get that, on average, $n$ random points are as asympotically as good as $n/\log n$ optimal ones if $q\ge p$ and asymptotically optimal if $q<p$. This generalizes the one-dimensional result in \cite[Thm.~4.1]{HKN+20} ($s$ has to be replaced by one there) and gives an answer to Question~\ref{que:std} for the integration and the $L_q$-approximation problem.

Before discussing the extension to other function spaces, let us note a few things.

\begin{remark}
	We stated Corollary~\ref{cor:sob-ran} for the expected value of the radius of random information but \chg{in fact it} also holds with arbitrarily high probability by adjusting the constants and using Markov's inequality giving that, for $q<p$, 
\[
\IP\big[r\big(W_p^s(\Omega)\hookrightarrow L_q(\Omega),\pnran\big)> C\,\varepsilon^{-1}\,r\big(W^s_p(\Omega)\hookrightarrow L_q(\Omega),n\big)\big]\le \varepsilon,
\]
where $C>0$ is some constant independent of $n$ and $\varepsilon>0$ can be made arbitrarily small.
\end{remark}

\begin{remark}
	Proposition~\ref{pro:random-covering} is a slightly modified version of Proposition 1 in \cite{KS20}. There, we overlooked that estimates on arbitrary large moments are necessary for deducing Corollary~\ref{cor:sob-ran}. Thus, the proof of \cite[Cor.~2]{KS20} is flawed and it is necessary to carry out some minor amendments. The details will be explained at the end of Section~\ref{sec:sob-quant}. 
\end{remark}
\begin{remark}
	Instead of the worst-case error criterion one can consider a randomized error criterion for a general random algorithm, where one first takes the expected error on an individual function and then the supremum over all functions in the unit ball.  Together with Krieg and Novak we studied in \cite{KNS21} the power of random information in the randomized setting by adapting the techniques from Section~\ref{sec:sob-mls} below. See also the references cited there for further information. 
\end{remark}

In the remainder of this section we will present generalizations of the above results to other classes of sets $\Omega$ and also to other function spaces on $\Omega$ including Sobolev-Slobodeckij spaces of fractional smoothness, Triebel-Lizorkin spaces, H\"older-Zygmund spaces and Bessel potential spaces.

Let $\Omega\subset\IR^d$ be a bounded convex domain. Several of the function spaces mentioned are part of the scale of Triebel-Lizorkin spaces $F^s_{p\tau}(\Omega)$, where $0<p<\infty$, $0<\tau\le \infty$ and $s>d/p$, which ensures $F^s_{p\tau}(\Omega)\hookrightarrow C_b(\Omega)$. \chg{As for $W^s_p(\Omega)$ the condition $s>d/p$ can be weakened for $p\le 1$ but for simplicity we will always assume that $s>d/p$ holds for the $F$-spaces we consider.} These are closely related to Besov spaces $B^s_{p\tau}(\Omega)$ for which also $\tau=\infty$ is possible and we provide a more information on both scales in Appendix~\ref{sec:sob-proof-ext}. Here, we simply note that $F^s_{p\tau}(\Omega)$ covers a variety of interesting spaces:
\begin{itemize}
	\item For $\tau=2$ and $1<p<\infty$, we obtain the fractional Sobolev space (or Bessel potential space) $H^s_p(\Omega)$, see e.g.\  \cite{NT06}. If additionally $s\in\mathbb{N}$, we arrive at the classical Sobolev spaces $W^s_p(\Omega)$ from Definition~\ref{def:sobolev}.
\item For $s\not\in\IN$, $1\le p<\infty$ and $\tau=p$, we obtain the Sobolev-Slobodeckij space $W^s_p(\Omega)$ of fractional smoothness, see e.g.~Dupont and Scott~\cite{DS93}, and note that $ F^s_{pp}(\Omega) $ is equal to the Besov space $B^s_{pp}(\Omega)$.
\end{itemize}
We also want to discuss the H\"older spaces $C^s(\Omega)$, which are not included in this scale.  For $s\in\IN$, the H\"older space $C^s(\Omega)$ is the space of all $s$ times continuously differentiable functions with
\[
  \Vert f \Vert_{C^s(\Omega)} := \max_{|\alpha| \le s}\, \sup_{x\in\Omega}\, \vert D^{\alpha}f (x) \vert < \infty,
\]
where $\alpha$ is a multi-index as in Definition~\ref{def:sobolev}. For $ s\not\in\IN $, the space $C^s(\Omega)$ is defined to be the space of functions $f \in C^{\lfloor s\rfloor}(\Omega)$ with
\begin{equation}\label{eq:semi-norm-hoelder}
	|f|_{C^s(\Omega)}
	:=\max_{|\alpha|=\lfloor s\rfloor}\,\sup_{\substack{x,y\in\Omega\\x\ne y}}\, \frac{|D^{\alpha}f(x)-D^{\alpha}f(y)|}{\|x-y\|^{\{s\}}} < \infty,
\end{equation}
where $ s=\lfloor s\rfloor + \{s\} $ with $ 0< \{s\}<1 $ and $C^{0}(\Omega)=C(\Omega)$.  It is equipped with the norm $\|\cdot\|_{C^{\lfloor s\rfloor}(\Omega)}+|\cdot|_{C^s(\Omega)}$. 

\begin{theorem}[{\cite[Thm.~2]{KS20}}]\label{thm:sob-main-ext}
Theorem~\ref{thm:sob-main}, Corollary~\ref{cor:sob-char} and Corollary~\ref{cor:sob-ran} remain valid for arbitrary real parameters $0<p,q,\tau\le \infty$ and $s\in\IR$ with $s>d/p$ if we replace $W_p^s(\Omega)$ either by $C^s(\Omega)$ for $p=\infty$ or by $F^s_{p\tau}(\Omega)$ for $p<\infty$.
\end{theorem}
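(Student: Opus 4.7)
The plan is to inspect the proof of Theorem~\ref{thm:sob-main} (and the subsequent corollaries) and verify that every step relies only on a small number of \emph{local} structural properties of $W^s_p(\Omega)$ that persist for the Triebel-Lizorkin and H\"older scales. Once this is done, Corollary~\ref{cor:sob-char} is a purely formal consequence (combining Theorem~\ref{thm:sob-main-ext} with \eqref{eq:dist-optimal}), and Corollary~\ref{cor:sob-ran} follows by inserting Proposition~\ref{pro:random-covering} into the new version of Theorem~\ref{thm:sob-main}a. So the real work is to re-prove Theorem~\ref{thm:sob-main}a--b for $F^s_{p\tau}(\Omega)$ and $C^s(\Omega)$.

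For the upper bound I would feed the arbitrary point set $\Pn$ into the moving-least-squares algorithm of Section~\ref{sec:sob-mls}. The only space-specific input this construction needs is a \emph{local Whitney-type polynomial reproduction inequality}: for every Euclidean ball $B=B(x,r)\subset\Omega$ and every $f$ in the unit ball of $X\in\{F^s_{p\tau}(\Omega),C^s(\Omega)\}$ there exists a polynomial $P$ of degree $<s$ with
\[
  \|f-P\|_{L_\infty(B)}\,\lesssim\, r^{s-d/p}\,\|f\|_{X(B)},
\]
(with the natural modification $d/p\to 0$ when $X=C^s$). Such estimates are standard and follow from the well-known atomic/Taylor-type decompositions of $F$-spaces combined with the embedding $F^s_{p\tau}(\Omega)\hookrightarrow C_b(\Omega)$ that we assume through $s>d/p$. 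Once this local estimate is in place, summation over a Whitney-type covering of $\Omega$ by balls whose radii are proportional to $\dist(\cdot,\Pn)$ produces a linear algorithm of the form~\eqref{eq:linear-algo} whose error is $\lesssim \|\dist(\cdot,\Pn)\|_{L_\infty}^{s-d(1/p-1/q)}$ for $q\ge p$ and $\lesssim \|\dist(\cdot,\Pn)\|_{L_\gamma}^{s}$ for $q<p$, exactly as in the Sobolev case.

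For the lower bound I would reuse the fooling-function strategy of Section~\ref{sec:sob-fool-int}. Here the essential input is a scaled bump: fix a smooth $\varphi\in C_c^\infty(B(0,1))$ that is nonnegative and not identically zero, and for a ball $B(y,r)\subset\Omega$ disjoint from $\Pn$ put $f_{y,r}(z)=r^{s-d/p}\varphi((z-y)/r)$. A direct scaling computation, using the definition of $F^s_{p\tau}$ or $C^s$, gives $\|f_{y,r}\|_X\lesssim 1$, $\|f_{y,r}\|_{L_q}\asymp r^{s-d/p+d/q}$, and $\int f_{y,r}\asymp r^{s-d/p+d}$. Plugging such bumps, centered at points where $\dist(\cdot,\Pn)$ is large, into the lower bound $\sup_{f|_{\Pn}=0}\|f\|_{L_q}\le r(X\hookrightarrow L_q,\Pn)$ from \eqref{eq:radius-sampling} (in its quasi-Banach variant, Proposition~\ref{pro:radius-general-+}) reproduces the desired lower bound; for $q<p$ one has to combine bumps on many disjoint balls via a duality/$\ell_\gamma$-balancing argument analogous to the one used for $W^s_p$.

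The main obstacle, and therefore the step to execute most carefully, is the quasi-Banach case $0<p<1$ (or $\tau<1$) when combining the local bumps: the triangle inequality is only available up to the constant $C_F$ of Proposition~\ref{pro:radius-general-+}, and the $F$-norm of a sum of bumps with disjoint supports must be controlled by an $\ell_\tau$- rather than $\ell_p$-expression, which is exactly what yields the exponent $\gamma=s(1/q-1/p)^{-1}$ independent of $\tau$. Once this is checked, part~(b) of Theorem~\ref{thm:sob-main} transfers without change because it only uses the Smolyak--Bakhvalov linearization and the trivial inequality $|\mathrm{INT}(f)|\le\|f\|_{L_1(\Omega)}$, both of which are insensitive to the precise choice of $X$.
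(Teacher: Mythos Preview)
Your overall strategy matches the paper's: reduce to a few local structural ingredients (polynomial approximation with the right scaling, a seminorm that behaves well under efficient coverings, and scaled bumps for fooling), then rerun the Sobolev proof. However, two of your key steps are underspecified or slightly wrong, and these are precisely the places where the paper does real work.

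First, for the lower bound when $q<p$ you say one controls the $F$-norm of a sum of disjointly supported bumps by an $\ell_\tau$-expression, and that this is ``exactly what yields the exponent $\gamma$ independent of $\tau$''. This is backwards. The paper's mechanism is the \emph{atomic decomposition} of $F^s_{p\tau}$ (Theorem~13.8 in \cite{Tri11}): the bumps, after rescaling, are $(s,p)$-atoms, and the $F$-norm of their sum is bounded by the sequence norm $\|\lambda\|_{f_{p\tau}}$. Because the supporting dyadic cubes are pairwise disjoint, at each point only one term in the inner $\ell_\tau$-sum survives, so the $\tau$ disappears and one is left with an $\ell_p$-type expression. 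That is why $\gamma$ does not depend on $\tau$; it is not a consequence of an $\ell_\tau$-control. A ``duality/$\ell_\gamma$-balancing argument analogous to $W^s_p$'' does not work directly here, since for general $F$-spaces the norm of a sum of disjointly supported functions does \emph{not} decompose the way $\|\cdot\|_{W^s_p}$ does; atomic decomposition is the substitute.

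Second, your claim that part~(b) ``transfers without change'' misses the actual subtlety. The integration lower bound requires a \emph{nonnegative} fooling function. The original argument in \cite{KS20} used a wavelet characterisation of $F^s_{p\tau}$, but wavelets satisfy moment conditions and hence integrate to zero, so nonnegativity is lost. The paper's fix (Remark~\ref{rem:f-int}) is again the atomic decomposition: atoms need not satisfy moment conditions, so one may take $\psi\ge 0$ and the sum of rescaled atoms is nonnegative. Without this, the extension of Theorem~\ref{thm:sob-main}b to $F$-spaces does not go through.

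Finally, for the upper bound you also need that the chosen seminorm on $F^s_{p\infty}$ can be \emph{glued} over an efficient covering (the analogue of \eqref{eq:norms-efficient}). The paper handles this by working with the intrinsic seminorm via averaged differences $d_t^{M,\Omega}f$, for which $d_t^{M,Q_i}f\le d_t^{M,\IR^d}f$ pointwise; a generic ``atomic/Taylor-type'' local estimate does not automatically give this summability.
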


We defer the proof to Appendix~\ref{sec:sob-proof-ext}. In particular, Theorem~\ref{thm:sob-main-ext} implies that the fine index $\tau$, measuring smoothness on a finer scale, does not influence the asymptotic quality of point sets. 

In order to discuss general domains, we define the function spaces on a general bounded measurable set $\Omega\subset\IR^d$ by restriction of the corresponding function spaces on $\IR^d$. Let
\begin{equation} \label{eq:restriction}
A(\Omega):=\{ f\vert_\Omega \colon f\in A(\IR^d)\},
\qquad \Vert g \Vert_{A(\Omega)} := \inf_{f \in A(\IR^d)\colon f\vert_\Omega=g} \Vert f \Vert_{A(\IR^d)}
\end{equation}
for $A\in \{W_p^s, F^s_{p\tau}, C^s\}$ with parameters as above. If there is a bounded extension operator ${\rm ext}\colon A(\Omega)\to A(\IR^d)$ with $\chg{{\rm ext}(f\vert_\Omega)} = f$ these spaces coincide with the (intrinsically) defined ones above. Here and everywhere else in this thesis, equality between function spaces is meant up to equivalence of norms and the equivalence constants vanish in the asymptotic notation.

\begin{remark}\label{rem:domains}
	It can be deduced from the proofs below that Theorem~\ref{thm:sob-main}, Corollary~\ref{cor:sob-char} and Corollary~\ref{cor:sob-ran} and also Theorem~\ref{thm:sob-main-ext} remain valid in the case
	\begin{itemize}
		\item $q\ge p$ if $\Omega$ satisfies an interior cone condition, see Definition~\ref{def:cone-condition} in the following section, and if there is a bounded extension operator. This includes all bounded Lipschitz domains. For more information see Appendix~\ref{sec:sob-proof-poly} for Sobolev spaces as well as Appendix~\ref{sec:sob-proof-ext} for the other function spaces. 
		\item $q<p$ if $\Omega$ is a bounded convex set with non-empty interior. 
	\end{itemize}
 \end{remark}

\begin{remark}\label{rem:f-int}
	In our paper \cite{KS20} Theorem~\ref{thm:sob-main-ext} did not apply to Theorem~\ref{thm:sob-main}b, the equivalence of the integration to the $L_1$-approximation problem. Here, we use a novel approach not published yet in order to remedy this. The idea is to use a different way to construct fooling functions and estimate their norm. We want to thank M.~Ullrich for pointing us to the concept of atomic decompositions which we, together with my coauthor D.~Krieg, employed to prove this extension. See also the discussion in Section~\ref{sec:sob-fool-int}.
\end{remark}

\begin{remark}
	As mentioned, our paper \cite{KS21} relies on the work of \cite{KS20} discussed here. There, we showed an analogon of Theorem~\ref{thm:sob-main} for Bessel potential spaces on the sphere. In particular, the lower bounds in \cite[Thm.~2]{KS21} (see also Remark~2 there) partially rely on the lower bound discussed in the previous Remark~\ref{rem:f-int}, which we provide in this thesis. 
\end{remark}

\newpage

\section{Local moving least squares on good cubes} \label{sec:sob-mls}

In the following, we will discuss the strategy behind the proof of the upper bound of the characterization given in Theorem~\ref{thm:sob-main}a (and its extension, Theorem~\ref{thm:sob-main-ext}). The goal is to find, for each point set $\Pn$ in the domain $\Omega$, a reconstruction map $\varphi$ using information given by the function values at $\Pn$ such that the corresponding algorithm has a worst-case error over the unit ball of the Sobolev space $W^s_p(\Omega)$ which is bounded by a constant times the $L_{\gamma}(\Omega)$-norm of the distance function to $\Pn$.

For finding a suitable algorithm we use the technique of moving least squares which enables to reconstruct polynomials exactly and then use the fact that Sobolev functions can be well approximated by polynomials via Taylor's theorem. Let us give some details.

Building on earlier work, moving least squares approximation was developed by Lancaster and Salkauskas \cite{LS81} in 1981 and in recent decades attracted attention due to the increase in computing power, see Wendland~\cite[Sec.~4.4]{Wen04} for additional references. It is a special case of weighted least squares approximation, a variant of the usual least squares method, working as follows. 

Given a point set $\Pn=\{x_1,\dots,x_n\}$ and a function $f$ belonging to a space $F$, one tries to find a function $g_m$ in a fixed $m$-dimensional subspace $X_m$ of $F$ which minimizes the weighted sum of squares
\[
\sum_{i=1}^{n}|f(x_i)-g(x_i)|^2\, w(x,x_i), \quad w\ge 0,
\]
for the point $x$ and then returns $g_m(x)$ as an approximation to $f(x)$. This procedure reproduces elements from $X_m$ exactly provided they are uniquely defined by the function values at $\Pn$. In general, increasing the number of points $n$ allows to increase the dimension $m$ and thus for a better approximation. On a side note, weighted least squares approximation is the backbone of the results \chg{in \cite{DKU22,KU20,KU21}} described in the introduction to this chapter and effective for recovery, see for example Cohen and Migliorati~\cite{CM17}.

In the case of moving least squares on a $d$-dimensional domain, the finite-dimensional space is the space of polynomials
\[
\cP_m^d:=\Big\{\sum_{|\alpha|\le m}c_{\alpha}x^{\alpha}\colon c_{\alpha}\in\IR\text{ for all }\alpha\in \IN_0^d\Big\}
\]
of degree at most $m\in\IN$ in $d$ variables, where $x^{\alpha}=x_1^{\alpha_1}\cdots x_d^{\alpha_d}$ is the monomial associated to the multi-index $\alpha$. Further, the weight function $w$ is a bump function depending on $(x-x_i)/\delta$, where $\delta>0$ is a suitably chosen scaling parameter.  If the point set $\Pn$ is locally dense enough, then polynomials can be reconstructed exactly using moving least squares. This will be made precise by the following lemma which follows directly from results by Wendland~\cite{Wen01} (see also \cite[Thm.~4.7]{Wen04} in his book).

To measure the local density of a point set $\Pn\subset \IR^d$ on a set $\Omega\subset\IR^d$, we use the covering radius as given in \eqref{eq:covering-rad}, which measures the radius of the largest hole of the point set $\Pn$ in $\Omega$. In the following, a \emph{hole} in a point set $\Pn\subset \Omega$ will be a ball contained in $\Omega$ empty of $\Pn$, i.e., not containing a point of $\Pn$.

\begin{lemma}\label{lem:wendland}
	Let $K \subset\mathbb{R}^d$ be a compact set satisfying an interior cone condition with parameters $r$ and $\theta$, and let $m\in \mathbb{N}$. There are constants $c_0,c_1,c_2>0$ depending solely on $\theta,m$ and $d$ such that for any $\Pn=\{x_1,\dots,x_n\}\subset K$ with covering radius $h_{\Pn,K}\le c_1 r$ there exist continuous functions $u_1,\dots,u_n\colon K\to\mathbb{R}$ with
\begin{enumerate}[label={\emph{(\roman*)}}]
\item $\displaystyle \pi(y)=\sum_{i=1}^n \pi(x_i) u_i(y)$ for all $y\in K$
and $\pi\in \mathcal{P}_m^d $,
\item $\displaystyle \sum_{i=1}^n |u_i(y)| \le c_0$ for all $y\in  K$ and
\item $u_i(y)=0$ for all $i\le n$ and $y\in K$ with $\Vert x_i-y \Vert \ge c_2 h_{\Pn, K}$.
\end{enumerate}
\end{lemma}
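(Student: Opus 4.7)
The plan is to follow the moving least squares construction. Fix a continuous bump $\phi\colon[0,\infty)\to[0,\infty)$ that is strictly positive on $[0,1/2)$ and zero on $[1,\infty)$, and set $\delta := c_2 h_{\Pn,K}$ where $c_2$ will be chosen later. For each evaluation point $y\in K$, consider the weighted least squares problem of finding $p_y\in\mathcal{P}_m^d$ minimizing
\[
\sum_{i=1}^n \phi\!\left(\tfrac{\|x_i-y\|}{\delta}\right)\bigl|p(x_i)-f(x_i)\bigr|^2
\]
over $p\in\mathcal{P}_m^d$. The functional $p_y(y)$ is a linear functional of the data $(f(x_i))_i$, so there exist coefficients $u_i(y)$ with $p_y(y)=\sum_i u_i(y) f(x_i)$. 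Property \emph{(i)} is then automatic, because if $f\in\mathcal{P}_m^d$ the minimum is zero and attained at $p_y=f$. Property \emph{(iii)} is also immediate from the support of $\phi$: only $x_i$ with $\|x_i-y\|<\delta$ contribute to the Gram matrix, so we may set $u_i(y)=0$ for $\|x_i-y\|\ge c_2 h_{\Pn,K}$.

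The main obstacle is property \emph{(ii)}, which amounts to a uniform bound on the norm of the solution operator of the local weighted least squares problem. For this I would show a discrete polynomial norming inequality: there is a constant $C>0$, depending only on $\theta$, $m$ and $d$, such that for every $y\in K$,
\[
\sup_{x\in B(y,\delta)\cap K}|p(x)| \;\le\; C \max_{i\colon \phi(\|x_i-y\|/\delta)>0}|p(x_i)|,
\qquad p\in\mathcal{P}_m^d.
\]
The interior cone condition with parameters $r,\theta$ gives, for $y\in K$, a truncated cone $\Gamma_y\subset B(y,\delta/2)\cap K$ whose shape depends only on $\theta$ and whose size is proportional to $\delta$, provided $c_1$ is small enough relative to $c_2$ (so that $\delta\le r$). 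Choosing $c_2$ large enough, the condition $h_{\Pn,K}\le c_1 r$ and the definition of covering radius guarantee that $\Gamma_y$ is covered by balls of radius $h_{\Pn,K}$ centred at active sample points. Hence any $p\in\mathcal{P}_m^d$ with $|p(x_i)|$ small at all active sample points is, by standard Markov--Bernstein estimates on the rescaled reference cone, small on all of $\Gamma_y$ and therefore on $B(y,\delta)\cap K$. A scaling and compactness argument on the finite-dimensional space $\mathcal{P}_m^d$ restricted to the reference cone delivers the uniform constant $C$.

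From the norming inequality, the local Gram matrix $G(y)=\bigl(\sum_i w_i(y) x_i^\alpha x_i^\beta\bigr)_{\alpha,\beta}$ satisfies $\|G(y)^{-1}\|\lesssim \delta^{-(2|\alpha|)}$ uniformly in $y$, after an appropriate rescaling of the monomial basis around $y$. Writing the coefficients $u_i(y)$ explicitly in terms of $G(y)^{-1}$ and the shifted basis then yields $\sum_i |u_i(y)|\le c_0$ with $c_0$ depending only on $\theta,m,d$, proving \emph{(ii)}. Continuity of each $u_i$ in $y$ follows because $w_i(y)$ is continuous and $y\mapsto G(y)^{-1}$ is continuous on $K$ by uniform invertibility.
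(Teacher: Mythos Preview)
Your approach is correct in outline but takes a much longer route than the paper. You are essentially re-proving \cite[Thm.~4.7]{Wen04} from scratch. The paper instead treats that theorem as a black box: Wendland's result already gives exactly this conclusion, but only for \emph{quasi-uniform} point sets (separation distance comparable to covering radius). The paper's proof is then a three-line reduction: greedily select $y_1=x_1$, then iteratively $y_i\in\Pn\setminus\bigcup_{j<i}B(y_j,h_{\Pn,K})$ until exhaustion; the resulting subset $X\subset\Pn$ has $q_X\ge\tfrac12 h_{\Pn,K}$ and, by the triangle inequality, $h_{X,K}\le 2h_{\Pn,K}$. Apply Wendland's theorem to $X$ and set $u_i\equiv 0$ for $x_i\in\Pn\setminus X$. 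Your route would be self-contained; the paper's buys brevity and modularity.

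As written, your sketch also has soft spots that would need real work. The norming inequality does not follow from ``Markov--Bernstein plus compactness'' in the way you suggest: bounding $\|\nabla p\|$ already requires the sup-norm you are trying to control. Wendland's actual argument first places an explicit $\mathcal{P}_m^d$-unisolvent configuration of reference points inside the cone at $y$ with controlled Lagrange constant, then perturbs to nearby sample points. Likewise, the bound you write on $G(y)^{-1}$ does not by itself control $\sum_i|u_i(y)|$ when the number of active points is unbounded; the cancellation between $\sum_i w_i(y)$ and $G(y)^{-1}$ that makes this work has to be made explicit. None of this is fatal, but it is the bulk of Wendland's proof---precisely what the paper's thinning trick sidesteps.
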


Before we elaborate on the interior cone condition, let us give some background on how this lemma follows from \cite[Thm.~4.7]{Wen04}. There, it is required that the point set $\Pn$ (or rather an underlying sequence of $n$-point sets) is quasi-uniform, that is there exists a constant $c_{\rm qu}>0$ with
\begin{equation} \label{eq:quasi-uniform}
h_{\Pn,K}
\le q_{\Pn}
\le c_{qu} h_{\Pn,K},
\end{equation}
where $q_{\Pn}:=\frac{1}{2}\min_{i\neq j}\|x_i-x_j\|_2$ is the separation distance of $\Pn=\{x_1,\dots,x_n\}$. In Appendix~\ref{sec:sob-proof-ge} we will deduce Lemma~\ref{lem:wendland} \chg{by throwing away points too close to others in order to arrive at a quasi-uniform point set without affecting the local density by much.} 

\begin{definition}\label{def:cone-condition}
We say that a set $\Omega\subset \mathbb{R}^d$ satisfies an interior cone condition with radius $r>0$ and angle $\theta\in (0,\pi/2)$ if, for all $x\in \Omega$, there is a direction $\xi(x)\in \mathbb{S}^{d-1}$ such that the cone
\[
C(x,\xi(x),r,\theta):=\left\{x+\lambda y\colon y\in \mathbb S^{d-1}, \langle y, \xi(x)\rangle \geq \cos\theta,\lambda\in [0,r]\right\}
\]
with apex $x$ is contained in $\Omega$. See Figure~\ref{fig:cone-condition} for an illustration.
\end{definition}

\begin{figure}[ht]
	\centering
	\begin{tikzpicture}[path fading=fade right,line cap=round,line join=round,x=1cm,y=1cm, scale=1]
			\node at (4,1) {$\Omega$};
			\draw [shift={(4.98,1.85)},color=lgrey,fill=lgrey,fill opacity=0.10000000149011612] (0,0) -- (-80:0.8888888888888862) arc (-80:-35:0.8888888888888862) -- cycle;
		\draw[thick] (0,-1.5) --(1.06,0.59);
		\draw[thick] (0,-1.5) -- (2.5,-1.7);
		\draw[thick]  (1.06,0.59)-- (2.52,1.85);
		\draw[thick]  (2.52,1.85)-- (5.5,2.9);
		\draw[thick]  (8,2)-- (5.5,2.9);
		\draw[thick] (8,2)--(9,1);
		\draw[thick]  (2.5,-1.7)-- (6.8,-1.39);
		\draw[thick]  (9,1)-- (6.8,-1.39);

		\draw  (4.98,1.85)-- (7.437456132866975,0.12927069094686194);
		\draw (4.98,1.85)-- (7.437456132866975,0.12927069094686194) node [midway,yshift=7pt,sloped]{\small{$r$}};
		\draw  (4.98,1.85)-- (5.500944533000792,-1.1044232590366234);
		\draw [shift={(4.98,1.85)}]  plot[domain=4.886921905584122:5.672320068981571,variable=\t]({1*3*cos(\t r)+0*3*sin(\t r)},{0*3*cos(\t r)+1*3*sin(\t r)});
		\draw [->,thick] (4.98,1.85) -- (5.724243218734402,0.6817721536851105) node [very near end,xshift=14pt,yshift=-4pt]{\small{$\xi(x)$}};
		\fill (4.98,1.85) circle (.04) node [yshift=7pt] at (4.98,1.85) {\small{$x$}};
		\node [xshift=15pt,yshift=-16pt] at (4.98,1.85) {\small{$\theta$}};
		\draw  (5.724243218734402,0.6817721536851105)-- (6.591898825040471,-0.6801743374386566);
		\draw [shift={(0.88,-0.81)},color=lgrey,fill=lgrey,fill opacity=0.10000000149011612] (0,0) -- (0:0.8888888888888862) arc (0:45:0.8888888888888862) -- cycle;
		\draw  (0.88,-0.81)-- (3.88,-0.81);
		\draw [shift={(0.88,-0.81)}]  plot[domain=0:0.7853981633974483,variable=\t]({1*3*cos(\t r)+0*3*sin(\t r)},{0*3*cos(\t r)+1*3*sin(\t r)});
		\draw  (0.88,-0.81)-- (3.0013203435596427,1.3113203435596423);
		\draw (0.88,-0.81)-- (3.0013203435596427,1.3113203435596423) node [midway,yshift=7pt,sloped]{\small{$r$}};
		\draw [->,thick] (0.88,-0.81) -- (2.2670836530974685,-0.2354511387410111) node [very near end,xshift=3pt,yshift=-8pt]{\small{$\xi(x)$}};
		\fill (0.88,-0.81) circle (.04) node [yshift=-7pt] at (0.88,-0.81) {\small{$x$}};
		\node [xshift=16pt,yshift=11pt] at (0.88,-0.81) {\small{$\theta$}};
		\draw  (2.267,-0.235)-- (3.6516385975338603,0.33805029709526946);

  	\end{tikzpicture}
	\qquad
	\caption{For each $x\in\Omega$ the cone $C(x,\xi(x),r,\theta)$ may point in a different direction but is of the same shape.}
	\label{fig:cone-condition}
\end{figure}
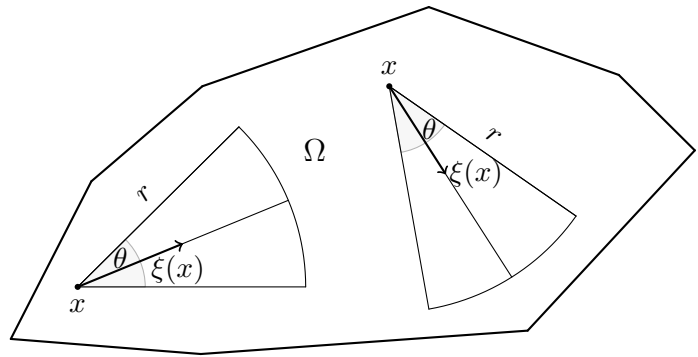

The interior cone condition above is related to similar regularity conditions, in part proposed to characterize the class of domains for which certain (Sobolev) function space embedding and extension theorems hold, see for example the books by Adams and Fournier~\cite{AF03} and Grisvard~\cite{Gri85}. Let us mention here that bounded convex domains satisfy an interior cone condition as above. The following lemma is a consequence of \cite[Prop.~11.26]{Wen04}. 

\begin{lemma}\label{lem:convex-cone}
Any bounded convex set $\Omega\subset \IR^d$ containing a closed ball of radius $r$ satisfies an interior cone condition with radius $r$ and angle $\theta=2\arcsin\bigl(r/2\,\diam(\Omega)\bigr)$.
\end{lemma}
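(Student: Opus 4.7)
The plan is to fix a closed ball $B(y_0,r)\subseteq\Omega$ and, for each $x\in\Omega\setminus\{y_0\}$, to point the cone toward $y_0$ by choosing $\xi(x):=(y_0-x)/\|y_0-x\|_2$ (when $x=y_0$ any direction works since $B(x,r)\subseteq\Omega$ already contains the cone of radius $r$). Writing $t:=\|y_0-x\|_2\le D:=\diam(\Omega)$ and fixing an arbitrary unit vector $y$ at angle $\phi\le\theta$ from $\xi(x)$ together with $\lambda\in[0,r]$, the task becomes showing that $p:=x+\lambda y$ lies in $\Omega$. I would reduce this, via convexity of $\Omega$, to exhibiting a point $z\in B(y_0,r)$ such that $p$ belongs to the segment $[x,z]$.

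For the witness I would take $z:=x+sy$ with $s$ the larger root of $\|x+sy-y_0\|_2^2=s^2-2ts\cos\phi+t^2=r^2$, namely $s=t\cos\phi+\sqrt{r^2-t^2\sin^2\phi}$. The expression under the root is non-negative because
\[
t\sin\phi\;\le\;D\sin\theta\;=\;2D\sin(\theta/2)\cos(\theta/2)\;=\;r\cos(\theta/2)\;\le\;r,
\]
using the defining identity $\sin(\theta/2)=r/(2D)$. Then $z$ lies on $\partial B(y_0,r)\subseteq\Omega$, and $p\in[x,z]$ precisely when $0\le\lambda\le s$; since $\lambda\le r$ the question reduces to the chord inequality
\[
t\cos\phi+\sqrt{r^2-t^2\sin^2\phi}\;\ge\;r.
\]

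The main obstacle is verifying this chord inequality uniformly in $t\in(0,D]$ and $\phi\in[0,\theta]$, and this is precisely where the choice $\theta=2\arcsin(r/(2D))$ is tailored. If $t\cos\phi\ge r$ the inequality is immediate. Otherwise both sides are positive, and squaring reduces it to the equivalent condition $t\le 2r\cos\phi$. Here I would exploit that the hypothesis $B(y_0,r)\subseteq\Omega$ forces $D\ge 2r$, so $\sin(\theta/2)=r/(2D)\le 1/4$ and in particular $\theta<\pi/4$; hence $2\cos^2\phi>1$ for every $\phi\le\theta$, whence $r/\cos\phi<2r\cos\phi$. Combined with the case assumption $t<r/\cos\phi$, this yields $t<2r\cos\phi$ as required. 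A pleasant by-product is that this single estimate handles both the near regime $t\le r$ (where $p$ in fact lies inside $B(y_0,r)$) and the far regime $r<t\le D$, so no additional case split is needed.
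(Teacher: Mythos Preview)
Your proof is correct. The paper itself does not give an argument for this lemma but simply records it as a consequence of \cite[Prop.~11.26]{Wen04}, which is stated for the more general class of sets that are star-shaped with respect to a ball; your computation is precisely the explicit verification of that proposition in the convex case. In fact, the only place you invoke convexity is to conclude $[x,z]\subseteq\Omega$ from $x\in\Omega$ and $z\in\overline{B}(y_0,r)\subseteq\Omega$, and this already follows from $\Omega$ being star-shaped with respect to $B(y_0,r)$---so your argument, unchanged, also proves the slightly more general version that the paper alludes to in Appendix~\ref{sec:sob-proof-le}.
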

Here and in the following, balls and cubes are always assumed full-dimensional. For applying Lemma~\ref{lem:wendland} to a non-compact set, and in particular a bounded convex domain, we shall need the following lemma, proven in Appendix~\ref{sec:sob-proof-ge}.
\begin{lemma} \label{lem:closure-CC}
	If a set $\Omega\subset\IR^d$ satisfies an interior cone condition, then its closure $\overline{\Omega}$ satisfies an interior cone condition with the same parameters $r$ and $\theta$.
\end{lemma}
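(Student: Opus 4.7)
The plan is to extend the direction assignment $\xi\colon\Omega\to\mathbb{S}^{d-1}$ furnished by the hypothesis to all of $\overline{\Omega}$ via a compactness argument, and then to verify that the extended cone still fits inside $\overline{\Omega}$ by a two-step approximation. For $x\in\Omega$ nothing has to be done, as the hypothesis directly gives $C(x,\xi(x),r,\theta)\subset\Omega\subset\overline{\Omega}$. For a boundary point $x\in\overline{\Omega}\setminus\Omega$ I would fix any sequence $(x_n)_{n\in\IN}\subset\Omega$ with $x_n\to x$; since $\mathbb{S}^{d-1}$ is compact, the sequence of directions $\bigl(\xi(x_n)\bigr)_{n\in\IN}$ admits a convergent subsequence $\xi(x_{n_k})\to\xi$, and I would declare $\xi(x):=\xi$.

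The remaining task is to show $C(x,\xi,r,\theta)\subset\overline{\Omega}$. Pick $y=x+\lambda z$ with $z\in\mathbb{S}^{d-1}$, $\lambda\in[0,r]$ and $\langle z,\xi\rangle\ge\cos\theta$. If the inequality is strict, then by continuity of the inner product also $\langle z,\xi(x_{n_k})\rangle\ge\cos\theta$ for all sufficiently large $k$, hence $x_{n_k}+\lambda z\in C\bigl(x_{n_k},\xi(x_{n_k}),r,\theta\bigr)\subset\Omega$; letting $k\to\infty$ yields $y\in\overline{\Omega}$. For a boundary direction with $\langle z,\xi\rangle=\cos\theta$ I would approximate $z$ by strictly interior directions such as $z^{(m)}:=(z+\tfrac{1}{m}\xi)/\|z+\tfrac{1}{m}\xi\|$, which for $\theta\in(0,\pi/2)$ satisfies $\langle z^{(m)},\xi\rangle>\cos\theta$ by a one-line computation exploiting $\cos\theta>0$. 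Applying the first case to each $y^{(m)}:=x+\lambda z^{(m)}\in\overline{\Omega}$ and passing to the limit $y^{(m)}\to y$ gives $y\in\overline{\Omega}$ by closedness.

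The argument is essentially routine compactness-plus-continuity; the only mildly delicate step is the second approximation, which is needed because cone membership only provides the closed condition $\langle z,\xi\rangle\ge\cos\theta$ while the directions $\xi(x_{n_k})$ merely converge to $\xi$ rather than equal it, so one must bootstrap from the easy open-cone case to the closed cone by interior approximation of $z$.
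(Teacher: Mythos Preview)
Your proof is correct and follows essentially the same approach as the paper: both arguments extract a limit direction via compactness of $\mathbb{S}^{d-1}$ and then verify that the limiting cone lies in $\overline{\Omega}$ by exploiting convergence of the approximating cones. The only cosmetic difference is that the paper argues by contradiction (asserting that any point of $C(x,\xi,r,\theta)$ is arbitrarily close to some point of $C(x_{n_k},\xi(x_{n_k}),r,\theta)$ for large $k$, which is a Hausdorff-convergence statement left implicit), whereas you give a direct argument with an explicit open/boundary case split; your version is in fact a bit more careful about the boundary directions $\langle z,\xi\rangle=\cos\theta$.
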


Let us continue \chg{by} explaining the ideas behind the proof of the upper bound of Theorem~\ref{thm:sob-main}a. The strategy is different depending on whether $q\ge p$, where we basically extend the proof of \cite[Prop.~21]{NT06} to point sets with non-optimal covering radius and employ Lemma~\ref{lem:wendland} on $\overline{\Omega}$, or $q<p$, where it will be applied to small parts of the domain, which themselves satisfy an interior cone condition. This case distinction is also reflected by the structure of Appendix~\ref{ch:sob-app}, where Sections~\ref{sec:sob-proof-ge} and \ref{sec:sob-proof-le} contain the proofs needed for the case $q\ge p$ and $q<p$, respectively. The proof for the already known case $q\ge p$ is meant as a preparation for the novel approach developed in \cite{KS20} for the case $q<p$ on the basis of the mentioned works \cite{NT06,NWW04,Wen01}. In the following, we will explain the ideas behind starting with a brief description of the simpler case. 

In the case $q\ge p$, we apply Lemma~\ref{lem:wendland} to the closure of the bounded convex domain $\Omega$, which satisfies an interior cone condition by Lemma~\ref{lem:convex-cone} and Lemma~\ref{lem:closure-CC}. To bound the approximation error of the resulting algorithm $f\mapsto \sum_{i=1}^{n}f(x_i)u_i$, we cover $\Omega$ by parts of diameter $h_{\Pn,\Omega}$ and use error estimates on the local approximation of Sobolev functions by polynomials. For more details see Section~\ref{sec:sob-proof-ge}.

In the case $p>q$, we cannot achieve the desired bound by applying Lemma~\ref{lem:wendland} to the whole domain. Instead, we have to suitably cover it by pieces adapted to the point set $\Pn$ which satisfy an interior cone condition in such a way such that the constants involved in the error bounds do not depend on the point set. Our approach requires that the radius of the interior cone condition satisfied by any piece of the cover should be proportional to the diameter of the piece. This is where the convexity of the domain comes into play.  If the cover is constructed with cubes as it is the case in our work \cite{KS20}, the following lemma supplies the required interior cone condition. To dispose of the convexity assumption, a different approach seems to be required as Figure~\ref{fig:dent} shows, see Question~\ref{que:sob-convexity} below. 

\begin{lemma} \label{lem:local-CC}
Let $\Omega\subset\mathbb{R}^d$ be a bounded convex set containing a closed ball of radius $r>0$. Then there exist constants $c_{\theta}>0$ and $\theta'\in (0,\pi/2)$ depending on $\Omega$ and $r$ such that closure of the intersection of $\Omega$ with any cube $Q(x, \varrho )$ of radius $0< \varrho \le r$ centered at some $x\in \Omega$  satisfies an interior cone condition with radius $c_\theta  \varrho $ and angle $\theta'$.
\end{lemma}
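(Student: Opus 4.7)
The plan is to construct, for each admissible cube, a ``deep'' point $c$ inside $\Omega\cap Q(x,\varrho)$ around which the Euclidean ball $B(c,\epsilon\varrho)$ of radius proportional to $\varrho$ still lies in $\Omega\cap Q(x,\varrho)$, and then to derive the interior cone condition at every $y\in K:=\overline{\Omega\cap Q(x,\varrho)}$ from the ice-cream cone $\mathrm{conv}\bigl(\{y\}\cup \overline{B(c,\epsilon\varrho)}\bigr)\subset K$. The construction of $c$ is where convexity enters essentially: fix once and for all $x_0\in\Omega$ with $\overline{B(x_0,r)}\subset\Omega$, set $\epsilon:=r/(2\,\diam(\Omega))\in(0,1/2]$, and, given $x\in\Omega$ and $0<\varrho\le r$, put
\[
\lambda:=\frac{\varrho}{r+\|x-x_0\|_2}\in(0,1],\qquad c:=(1-\lambda)\,x+\lambda\, x_0.
\]
The affine bijection $z\mapsto(1-\lambda)x+\lambda z$ sends $\overline{B(x_0,r)}$ onto $\overline{B(c,\lambda r)}$, whose image lies in $\Omega$ by convexity. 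Since $\|c-x\|_\infty\le\|c-x\|_2=\lambda\|x-x_0\|_2$, every point of this ball is at $\ell^\infty$-distance at most $\lambda\|x-x_0\|_2+\lambda r=\varrho$ from $x$, so it also sits inside $Q(x,\varrho)$; and $\lambda r\ge \varrho r/(2\,\diam(\Omega))=\epsilon\varrho$. Hence $\overline{B(c,\epsilon\varrho)}\subset\Omega\cap Q(x,\varrho)$ with $\epsilon$ depending only on $\Omega$ and $r$.

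For the cone at $y\in K$ set $\xi_y:=(c-y)/\|c-y\|_2$ (any unit vector if $y=c$) and $\theta':=\arcsin(\epsilon/(2\sqrt d))\in(0,\pi/2)$. By convexity of $K$ the ice-cream cone $\mathrm{conv}\bigl(\{y\}\cup \overline{B(c,\epsilon\varrho)}\bigr)$ lies in $K$, so it suffices to prove $C(y,\xi_y,\epsilon\varrho,\theta')\subset\mathrm{conv}\bigl(\{y\}\cup \overline{B(c,\epsilon\varrho)}\bigr)$. For any $\eta\in\IS^{d-1}$ with $\langle\eta,\xi_y\rangle\ge\cos\theta'$, the point $y+t\eta$ lies in the ice-cream cone iff $t$ does not exceed the larger root $\tau_{\mathrm{far}}$ of the quadratic $\|y+\tau\eta-c\|_2^2=\epsilon^2\varrho^2$ in $\tau$. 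Minimising the resulting lower bound $u\cos\theta'+\sqrt{\epsilon^2\varrho^2-u^2\sin^2\theta'}$ over $u:=\|y-c\|_2\in[0,\diam(K)]$ yields $\tau_{\mathrm{far}}\ge\epsilon\varrho$; the choice $\sin\theta'=\epsilon/(2\sqrt d)$ is dictated by the bound $\diam(K)\le\diam Q(x,\varrho)=2\varrho\sqrt d$, which keeps the discriminant non-negative uniformly in $y$. Hence $C(y,\xi_y,\epsilon\varrho,\theta')\subset K$, giving the interior cone condition with $c_\theta=\epsilon$ and the above $\theta'$, both depending only on $\Omega$ and $r$.

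The main obstacle is the construction of $c$: a direct approach working inside the tangent cone from $x$ to $\overline{B(x_0,r)}$ would require an awkward case split between the conical part and the spherical cap of $\mathrm{conv}\bigl(\{x\}\cup \overline{B(x_0,r)}\bigr)$, whereas the affine-combination trick $c=(1-\lambda)x+\lambda x_0$ bypasses this entirely and scales correctly as $x$ ranges over $\Omega$. Convexity of $\Omega$ is essential, since a sharp inward dent in $\partial\Omega$ near $x$ could destroy any $\varrho$-proportional lower bound on the radius of an inner ball; this is presumably why the lemma is stated only for convex $\Omega$.
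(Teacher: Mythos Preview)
Your proof is correct and follows the same two-step strategy as the paper: first exhibit a closed ball of radius proportional to $\varrho$ inside $K=\overline{\Omega\cap Q(x,\varrho)}$, then use that a bounded convex set containing such a ball satisfies an interior cone condition with parameters depending only on the ball radius and the diameter of the set. The paper accomplishes both steps by citing two auxiliary lemmas (Lemma~\ref{lem:convex-cone} and Lemma~\ref{lem:ballincone}), whereas you inline the arguments, constructing the inner ball directly via the convex combination $c=(1-\lambda)x+\lambda x_0$ and carrying out the ice-cream-cone argument from scratch; this is a difference in packaging rather than substance. One minor slip: your displayed inclusion $\overline{B(c,\epsilon\varrho)}\subset\Omega\cap Q(x,\varrho)$ should read $\subset K$, since the closed ball may touch the boundary of the open cube; this is harmless, as $\Omega\cap\overline{Q(x,\varrho)}\subset K$ follows by approximating along the segment from $x$.
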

Note that the exact choice of $r>0$ does not matter as this lemma is applied to very small cubes if the points become more and more dense. We use the notation
\begin{equation} \label{eq:cubes-balls}
Q(x,\varrho):=\{y\in\IR^d\colon\|x-y\|_{\infty}<\varrho\}\quad \text{and}\quad B(x,\varrho):=\{y\in\IR^d\colon\|x-y\|_2<\varrho\}
\end{equation}
for an open cube or ball centered at $x\in\IR^d$ and of radius $\varrho>0$.

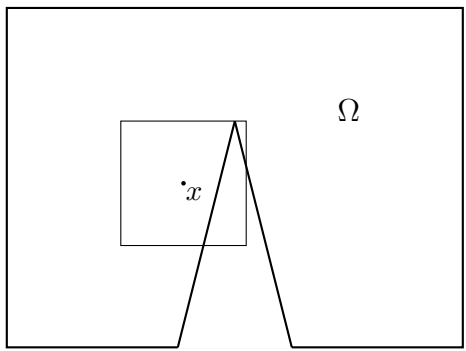
\begin{figure}[ht]
	\centering
    \begin{tikzpicture}[scale=1.5]
		\draw[thick] (0,0) rectangle (4,3);
		\draw[white,very thick] (1.5,0) -- (2.5,0);
		\fill[white] (1.5,0) -- (2.5,0)--(2,2)--(1.5,0);
		\draw[thick] (1.5,0)--(2,2);
		\draw[thick] (2.5,0)--(2,2);
		\draw (1.0,0.9) rectangle (2.1,2);
		\fill (1.55,1.45) circle (.02) node[xshift=4pt,yshift=-4pt] {\small{$x$}};
		\node at (3.0,2.1) {$\Omega$};
	\end{tikzpicture}
	\qquad
	\caption{The depicted bounded domain $\Omega$ is not convex and its intersection with the small cube has two connected components. By translating the cube to the left, one of the components becomes arbitrarily small and therefore cannot contain a single cone which has radius proportional to the cube. This can happen for an arbitrarily small cube and shows that Lemma~\ref{lem:local-CC} does not apply to $\Omega$. This remains unchanged by suitably rounding the corners of $\Omega$ such that its boundary is smooth.}
	\label{fig:dent}
\end{figure}

One reason why we choose cubes in order to construct the covering is that they form a family which is invariant under translations and rescalings. This is needed to ensure the independence of the constants in the Sobolev embedding theorem, which is used for the estimates on the local error of polynomial approximation by Sobolev functions, see Lemma~\ref{lem:sob-poly}. However, other \chg{set systems such as balls} may work as well, see Remark~\ref{rem:non-cubes} below.

Combined with Lemma~\ref{lem:wendland}, the local cone condition from Lemma~\ref{lem:local-CC} yields the following local approximation result, which will be applied for each cube in a suitable cover of the domain.

\begin{lemma} \label{lem:local-estimate}
	Let $\Omega\subset\IR^d$ be a bounded convex set containing a ball of radius $r>0$, $1\le p\le\infty$ and $s\in\IN$ \chg{as in \eqref{eq:embedding}}. There are constants $\cgood \in(0,1)$ and $C>0$, such that for any cube $Q(x,\varrho)\subset\IR^d$ of radius $0< \varrho \le  r $ centered at $x\in\Omega$ and any point set $\Pn=\{x_1,\dots,x_n\}\subset \Omega$ with
\begin{equation}\label{eq:good-cube}
 \sup_{y\in \Omega\cap Q(x,\varrho)} \dist(y,\Pn) \le \cgood\, \varrho ,
\end{equation}
there are bounded continuous functions $u_1,\dots,u_n\colon \Omega\cap Q(x,\varrho)\to \IR$ such that for any $f\in W^s_{p}(\IR^d)$, we have
\[
\sup_{y\in \Omega\cap Q(x,\varrho)} \Big\vert f(y) - \sum_{i=1}^n f(x_i) u_i(y) \Big\vert
\le C\,  \varrho ^{s-d/p}|f|_{W^s_{p}(Q(x,\varrho))}.
\]
\end{lemma}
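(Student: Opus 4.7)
The plan is to combine Wendland's polynomial reproduction (Lemma~\ref{lem:wendland}) with a local Bramble--Hilbert approximation of Sobolev functions by polynomials of degree at most $s-1$. First, by Lemma~\ref{lem:local-CC} the closure $K:=\overline{\Omega\cap Q(x,\varrho)}$ satisfies an interior cone condition with radius $c_\theta\varrho$ and angle $\theta'$ depending only on $\Omega$ and $r$. I would apply Lemma~\ref{lem:wendland} to $K$ with polynomial degree $m=s-1$, choosing $\cgood$ small enough (relative to the constants $c_1$ from Lemma~\ref{lem:wendland} and $c_\theta$ from Lemma~\ref{lem:local-CC}) so that the hypothesis \eqref{eq:good-cube} supplies the local covering-radius assumption of Lemma~\ref{lem:wendland} for the relevant subset of $P_n$ (extending $u_i$ by zero for points of $P_n$ outside a slight enlargement of $K$). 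This yields continuous functions $u_i\colon K\to\IR$ that reproduce every $\pi\in\mathcal{P}_{s-1}^d$, satisfy $\sum_i|u_i(y)|\le c_0$ uniformly in $y\in K$, and vanish whenever $\|x_i-y\|\ge c_2\cgood\,\varrho$.

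The analytical input is the following Bramble--Hilbert-type estimate: for every $f\in W^s_p(\IR^d)$ there exists a polynomial $\pi_f\in\mathcal{P}_{s-1}^d$ (an averaged Taylor polynomial of $f$ on $Q(x,\varrho)$) such that
\[
\sup_{z\in Q(x,\varrho)}|f(z)-\pi_f(z)|\le C_1\,\varrho^{s-d/p}\,|f|_{W^s_p(Q(x,\varrho))},
\]
with $C_1$ depending only on $s,p,d$. This follows from the Sobolev embedding $W^s_p(Q(0,1))\hookrightarrow L_\infty(Q(0,1))$ (valid since $s>d/p$) applied to $f-\pi_f$ on the unit cube, combined with a standard rescaling argument exploiting that every cube of radius $\varrho$ is an affine image of the unit cube.

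Combining the two ingredients via polynomial reproduction, for any $y\in\Omega\cap Q(x,\varrho)$ I would write
\[
f(y)-\sum_i f(x_i)u_i(y)=\bigl(f(y)-\pi_f(y)\bigr)-\sum_i\bigl(f(x_i)-\pi_f(x_i)\bigr)u_i(y),
\]
bound the first term directly by the Bramble--Hilbert estimate, and observe that by property (iii) only $x_i$ within distance $c_2\cgood\varrho$ of $y$ contribute to the sum, so for $\cgood$ small enough they all lie in the enlarged cube $Q(x,2\varrho)$. Working throughout with this slightly enlarged cube (absorbing the resulting factor $2^{s-d/p}$ into the constant and using that $Q(x,2\varrho)$ still satisfies a suitable interior cone condition) and applying the same approximation estimate there, together with property~(ii), the second term is bounded by $c_0C_1\varrho^{s-d/p}|f|_{W^s_p(Q(x,\varrho))}$, after finally transferring the seminorm from the enlarged cube back to $Q(x,\varrho)$ via an extension argument or a mild reformulation in the construction. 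The main obstacle I anticipate is exactly this bookkeeping: keeping every constant independent of the base point $x\in\Omega$ and the scale $0<\varrho\le r$, which crucially relies on the translation-and-scale invariance of the family of cubes (giving a uniform Bramble--Hilbert constant) and on the uniformity of the interior cone condition furnished by Lemma~\ref{lem:local-CC}. A secondary technical subtlety is that contributing sample points $x_i$ may lie marginally outside $Q(x,\varrho)$ even though the target set of the $u_i$ is contained in it; this is handled by the enlargement-and-reduction step just described.
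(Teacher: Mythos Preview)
Your overall scheme (local cone condition $\Rightarrow$ Wendland reproduction $\Rightarrow$ Bramble--Hilbert) is exactly the paper's, but the step you flag as ``secondary'' is precisely the one you do not close. The good-cube hypothesis \eqref{eq:good-cube} controls $\dist(y,\Pn)$ for $y\in\Omega\cap Q(x,\varrho)$, i.e.\ the covering radius with respect to \emph{all} of $\Pn$. Lemma~\ref{lem:wendland} on $K=\overline{\Omega\cap Q(x,\varrho)}$ requires the covering radius with respect to $\Pn\cap K$, and the nearest sample point to a given $y$ may well lie outside $Q(x,\varrho)$. Your proposed fix---enlarge to $Q(x,2\varrho)$---does not help: to apply Lemma~\ref{lem:wendland} on the enlarged set you would need \eqref{eq:good-cube} on $\Omega\cap Q(x,2\varrho)$, which you do not have; and even if you did, you would end with $|f|_{W^s_p(Q(x,2\varrho))}$, which for a given $f\in W^s_p(\IR^d)$ cannot be ``transferred back'' to $|f|_{W^s_p(Q(x,\varrho))}$ by any extension argument.

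The paper resolves this with one extra geometric ingredient, Lemma~\ref{lem:ballinball}: the interior cone condition on $K$ (from Lemma~\ref{lem:local-CC}) guarantees that every ball $B(z,2c_{\theta'}^{-1}\cgood\,\varrho)$ with $z\in\Omega\cap Q(x,\varrho)$ contains a smaller ball $B(y,2\cgood\,\varrho)$ lying entirely in $K$. Since $y$ is then an interior point of $\overline{\Omega\cap Q(x,\varrho)}=\Omega\cap Q(x,\varrho)$ (convexity), hypothesis \eqref{eq:good-cube} places a point of $\Pn$ in this smaller ball, hence in $Q(x,\varrho)$. This yields $h_{\Pn\cap Q,\,\Omega\cap Q}\le c_1\varrho'$ directly, so Wendland applies with points already inside the cube, and the polynomial approximation of Lemma~\ref{lem:sob-poly} on $Q(x,\varrho)$ gives the seminorm on the correct cube without any enlargement.
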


The local upper bound provided by this lemma is in terms of a Sobolev seminorm $|\cdot|_{W^s_p(Q(x,\varrho))}$ which is essentially the sum in the definition of the norm (Definition~\ref{def:sobolev}) restricted to $|\alpha|=s$, see also \eqref{eq:sob-seminorm}. 

Lemma~\ref{lem:local-estimate} applies if condition \eqref{eq:good-cube} is satisfied, that is, if the covering radius $h_{\Pn,\Omega\cap Q(x,\varrho)}$ with respect to the point set $\Pn$ and the domain $\Omega\cap Q(x,\varrho)$ is small compared to the radius $\varrho$ of the cube. This motivates the following definition.

\begin{definition}
	We say that $Q(x,\varrho)$ with $x\in\Omega$ and $0<\varrho\le r$ is a good cube if it satisfies condition \eqref{eq:good-cube}.
\end{definition}

The smaller the radius of a good cube, the better the bound provided by Lemma~\ref{lem:local-estimate} \chg{if} $s-d/p>0$. Therefore, we would like to work for every $x\in\Omega$ with a good cube $Q_{\Pn}(x)$ of smallest radius possible as given by the following definition for which we provide some justification below.

\begin{definition}\label{def:small-cube}
	Define for each $\Pn\subset\Omega$ and $x\in\Omega$ the radius $r_{\Pn}(x)$ as the infimum over all $ \varrho \in(0, r )$ such that $Q=Q(x,\varrho)$ satisfies \eqref{eq:good-cube} and let $Q_{\Pn}(x):=Q\big(x,r_{\Pn}(x)\big)$.
\end{definition}

The radius $r_{\Pn}(x)$ is well-defined since we may assume that, without loss of generality, for any $\Pn\subset \Omega$ and $x\in \Omega$ there is some $ \varrho \in(0, r )$ such that \eqref{eq:good-cube} holds for the cube $Q(x,\varrho)$. Otherwise, we have $\|\dist(\cdot,\Pn)\|_{L_{\gamma}(\Omega)}\ge c$ for some $c>0$, see Lemma~\ref{lem:sob-trivial} in Appendix~\ref{sec:sob-proof-le}. Then the upper bound of Theorem~\ref{thm:sob-main}a would become trivial.

The infimum in Definition~\ref{def:small-cube} is in fact attained, see Lemma~\ref{lem:inf-min} in Appendix~\ref{sec:sob-proof-le}. Consequently, also $r_{\Pn}(x)$ satisfies \eqref{eq:good-cube} and $Q_{\Pn}(x)$ is a good cube, the smallest of all good cubes centered at $x$. Note that we could also have chosen $r_{\Pn}(x)$ to be the minimizer up to a constant factor and the proof still works.

Subsequently, we consider only the collection $\{Q_{\Pn}(x)\colon x\in\Omega\}$ of good cubes of minimal radius and we will cover the domain by a suitable finite subset of them. The following proposition yields a suitable cover. It follows from a Besicovitch-type covering theorem for cubes as presented by de Guzman \cite[Thm.~1.1]{DeG75}.

\begin{proposition}\label{pro:good-cover}
	Let $\Omega\subset\IR^d$ be bounded. There exist $M_1,M_2\in\mathbb{N}$ depending only on $d$ such that for any point set $\Pn\subset\Omega$ there are centers $y_1,\ldots, y_N\in \Omega$ such that the cubes $Q_i:=Q_{\Pn}(y_i)$, where $i=1,\dots,N$,
\begin{enumerate}
\item form a cover, i.e., $\Omega\subset \bigcup_{i=1}^N Q_i$,
\item can be distributed in at most $M_1$ families, i.e., $\{Q_1,\ldots,Q_N\}=\bigcup_{j=1}^{M_1}\cQ_j,$ with each family $\cQ_j$ consisting of pairwise disjoint cubes,
\item cover efficiently, i.e., no point of $\IR^d$ is contained in more than $M_2$ of them.
\end{enumerate}
\end{proposition}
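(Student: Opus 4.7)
The plan is to invoke the Besicovitch-type covering theorem for cubes, namely \cite[Thm.~1.1]{DeG75}, applied to the collection
\[
\mathcal{F} := \{Q_{\Pn}(x) : x\in \Omega\}.
\]
Two features make $\mathcal{F}$ suitable input for such a theorem: every $x\in\Omega$ is the centre of the cube $Q_{\Pn}(x)\in\mathcal{F}$ (so $\mathcal{F}$ trivially covers $\Omega$), and the radii $r_{\Pn}(x)$ are uniformly bounded from above by $r$ thanks to Definition~\ref{def:small-cube}. De Guzman's theorem then produces constants $M_1,M_2\in\IN$ depending only on the dimension $d$, together with an at-most-countable subfamily $\{Q_i\}_{i\in I}\subset \mathcal{F}$ that still covers $\Omega$, splits into at most $M_1$ pairwise disjoint sub-families, and has pointwise multiplicity bounded by $M_2$. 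These last two conclusions are exactly properties (2) and (3) of the proposition, and they are preserved under passing to any further subfamily.

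For property (1) with a finite index $N$, I would exploit the multiplicity bound (3): it forces
\[
\sum_{i\in I}|Q_i| \;\le\; M_2\,\Big|\bigcup_{i\in I} Q_i\Big|,
\]
and the right-hand side is finite because every cube of $\mathcal{F}$ sits inside the $r$-neighbourhood of the bounded set $\Omega$. Hence only finitely many $Q_i$ can have volume above any given positive threshold, and the small remaining cubes can be pruned by a greedy step that discards any $Q_i$ already contained in a retained larger $Q_j$; since each $x\in\Omega$ lies in the open cube $Q_{\Pn}(x)$, coverage of $\Omega$ is preserved throughout.

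The main obstacle is precisely this finite-extraction step, because \cite[Thm.~1.1]{DeG75} in its natural formulation returns only a countable subfamily, and the radii $r_{\Pn}(x)$ can shrink as $x$ approaches $\Pn$ (or the boundary of $\Omega$), so a uniform lower bound on radii is not obvious. The cleanest route to circumvent this is to transfer the argument to the compact closure $\overline{\Omega}$: extend the definition of $r_{\Pn}$ to $\partial\Omega$ via the same infimum, observe that $r_{\Pn}(x)>0$ on all of $\overline{\Omega}$ (because any good cube must reach some point of $\Pn$, which is a strictly positive distance away from $\overline{\Omega}\setminus\Pn$), and use lower semicontinuity on the compact set $\overline{\Omega}$ to obtain a strictly positive infimum of the radii of the cubes produced by de Guzman's theorem. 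This automatically makes the selected family finite, while keeping $M_1, M_2$ dependent only on $d$.
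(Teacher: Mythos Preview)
Your overall strategy matches the paper: apply de Guzman's theorem \cite[Thm.~1.1]{DeG75} to the family $\{Q_{\Pn}(x):x\in\Omega\}$ to obtain an at-most-countable cover with properties (2) and (3), then argue finiteness via a uniform positive lower bound on the radii. Your pruning paragraph is a dead end and you rightly abandon it.

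Where you diverge from the paper is in establishing that lower bound. The paper observes that every minimal good cube $Q_{\Pn}(x)$ must contain at least two points of $\Pn$, whence $r_{\Pn}(x)>q_{\Pn}/\sqrt{d}$ with $q_{\Pn}=\tfrac12\min_{i\ne j}\|x_i-x_j\|_2$ the separation distance of $\Pn$. Each disjoint family $\cQ_j$ then has finite cardinality by the volume bound $\#\cQ_j\cdot(2q_{\Pn}/\sqrt{d})^d\le\vol(\{y:\dist(y,\Omega)\le r\sqrt{d}\})$. This is a direct combinatorial argument with no topology involved.

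Your compactness route has two genuine gaps. First, lower semicontinuity of $x\mapsto r_{\Pn}(x)$ is asserted but not proved; it does hold, but you would need to argue that if $x_n\to x$ and $r_{\Pn}(x_n)\to\rho>0$ then $Q(x,\rho)$ is again good, by passing to the limit in condition~\eqref{eq:good-cube}. Second, your positivity argument yields only $r_{\Pn}(x)\ge\dist(x,\Pn)/\cgood$, which is vacuous at the sample points $x\in\Pn$; handling those would require a separate argument via the interior cone condition of $\Omega$. Both gaps are fillable, so your approach can be completed, but the paper's separation-distance bound is considerably shorter.
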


To complete the proof of the upper bound of Theorem~\ref{thm:sob-main}, apply Lemma~\ref{lem:local-estimate} to each of the cubes $Q_i$ given by Proposition~\ref{pro:good-cover} to get, for every $i=1,\ldots,N$, a linear algorithm $S_{\Pn,i}\colon W^s_p(\Omega\cap Q_i)\to L_q(\Omega\cap Q_i)$. Then select for each point $x\in\Omega$ a cube $Q_i$ containing $x$ and let $S_{\Pn}(f)(x):=S_{\Pn,i}(f)(x)$. In this way, we form a global linear sampling algorithm $f\mapsto S_{\Pn}(f)=\varphi\big(f(x_1),\ldots,f(x_n)\big)\in L_{q}(\Omega)$, defined on $\Omega$. In fact, since the components $S_{\Pn,i}(f)$ are bounded, the algorithm returns a bounded function. It remains to combine the estimates given by Lemma~\ref{lem:local-estimate} with Hölder's inequality and the efficiency of the covering. A detailed proof will be provided in Appendix~\ref{sec:sob-proof-le}. We have not used the second property of the covering in Proposition~\ref{pro:good-cover}; it will be needed for the lower bound described in the next section.

\begin{remark}\label{rem:non-cubes}
	Note that Proposition~\ref{pro:good-cover} also works if the cubes are replaced by the familiy of all translations and rescalings of a fixed bounded set satisfying an interior cone condition, see \cite[Sec.~I.1, Rem.~(4)]{DeG75} where this is attributed to A.~P.~Morse. It should be possible to replace everywhere in our proofs the cubes by such a family.
\end{remark}

\begin{remark}
	Concerning Proposition~\ref{pro:good-cover}, we proved in \cite{KS20} the existence of a similar cover directly, see Lemma~11 and 12 as well as Proposition~2 there. During a seminar talk on our work by David Krieg in Linz, Simon Hackl suggested that the required cover may be obtained using \cite[Thm.~1.1]{DeG75}. Together with D.~Krieg, we succeeded in thus simplifying the proof in this way and the author wants \chg{to} thank S.~Hackl. This is one reason why the proof of the upper bound in the case $q<p$, which is given here, is slightly different from the one given in \cite{KS20}. 
\end{remark}

\section{Fooling functions and the integration problem} \label{sec:sob-fool-int}

We come now to the lower bound of Theorem~\ref{thm:sob-main}a which relies on the well-known technique of fooling functions and also discuss the related equivalence between $L_1$-approximation and integration stated in Theorem~\ref{thm:sob-main}b, and its extension Theorem~\ref{thm:sob-main-ext}. 

Given a sampling set $\Pn$, a \emph{fooling function} forces any algorithm using information provided by $\Pn$ to \chg{produce a large worst-case} error. Exhibiting such a function is a popular \chg{approach} to lower bounds for sampling algorithms. To elaborate a bit, let $\Omega$ be some domain and $f_\ast\in W^s_p(\Omega)$ with norm at most one and $f_\ast(x_i)=0$ for all $i=1,\ldots,n$. Then an algorithm of the form $S_{\Pn}\colon f\mapsto \varphi\big(f(x_1),\ldots,f(x_n)\big)$ cannot distinguish $f_\ast$ from $-f_\ast$, i.e., it satisfies $S_{\Pn}(f_*)=S_{\Pn}(-f_*)$. Therefore, the radius of information satisfies
\begin{align*}
	r\big(W_p^s(\Omega)\hookrightarrow L_q(\Omega),\Pn\big) &\,=\, \inf_{S_{\Pn}} \sup_{\|f\|_{W^s_p(\Omega)}\le 1} \Vert S_{\Pn}(f) - f \Vert_{L_q(\Omega)}\\
 &\ge\, \inf_{u\in L_q(\Omega)}\, \max\left\{ \Vert u - f_* \Vert_{L_q(\Omega)}, \Vert u + f_* \Vert_{L_q(\Omega)} \right\}\\
&\ge\, \|f_\ast\|_{L_q(\Omega)}.
\end{align*}
This reproves the lower bound in relation \eqref{eq:radius-sampling}. Any $f_{\ast}$ with sufficiently large $\|f_{\ast}\|_{L_q(\Omega)}$ can be a fooling function. In particular, it is meaningful to rescale such that $\|f_{\ast}\|_{W^s_p(\Omega)}=1$. We describe how to construct such a function.  

In the case $q\ge p$ it is enough to consider a smooth non-negative function $f_\ast$ which is supported \chg{in a large hole} with radius close to $h_{\Pn,\Omega}$. Scaling properties of the Sobolev and the $L_q$-norm can then be used to obtain the required estimates.

Again, the case $q<p$ demands a different approach. There, we have to take multiple disjoint holes in which the fooling function is supported. Intuitively, this is because we are interested in an average error. This distinction is well-known in the literature, \chg{see, e.g., the proofs of the lower bounds in \cite{BDS+15} and in \cite{NT06}}. To obtain the disjoint holes we will use the cover of Proposition~\ref{pro:good-cover} and select a family of disjoint cubes as in Definition~\ref{def:small-cube} with a large sum of radii. The following lemma \chg{then provides} a suitable hole of proportional size in every such cube and makes clear why we did choose good cubes of small radius.

\begin{lemma} \label{lem:good-hole}
	There exists a constant $\chole\in (0,\infty)$ such that for every point set $\Pn\subset \Omega$ and any $x\in\Omega$ the intersection $\Omega\cap Q_{\Pn}(x)$, contains a ball of radius $\chole\, r_{\Pn}(x)$.
\end{lemma}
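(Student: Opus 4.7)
The plan is to combine the local interior cone condition from Lemma~\ref{lem:local-CC} with the elementary fact that any cone $C(x,\xi,r,\theta)$ contains a Euclidean ball of radius proportional to $r$. Since $r_{\Pn}(x)<r$ by Definition~\ref{def:small-cube}, I would first apply Lemma~\ref{lem:local-CC} to the cube $Q_{\Pn}(x)=Q\big(x,r_{\Pn}(x)\big)$ and obtain constants $c_\theta>0$ and $\theta'\in(0,\pi/2)$, depending only on $\Omega$ and $r$, such that $\overline{\Omega\cap Q_{\Pn}(x)}$ satisfies an interior cone condition with radius $c_\theta r_{\Pn}(x)$ and angle $\theta'$. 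Since $x\in\Omega\cap Q_{\Pn}(x)\subset \overline{\Omega\cap Q_{\Pn}(x)}$, this yields a direction $\xi\in\IS^{d-1}$ so that the cone $C:=C(x,\xi,c_\theta r_{\Pn}(x),\theta')$ is contained in $\overline{\Omega\cap Q_{\Pn}(x)}$.

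Next, I would verify the geometric claim that $C$ contains a ball of radius proportional to $r_{\Pn}(x)$. Writing $R:=c_\theta r_{\Pn}(x)$, the ball $B:=B\bigl(x+\tfrac{R}{2}\xi,\tfrac{R}{2}\tan^2(\theta'/2)\bigr)$ lies in the interior of $C$: for any $z=x+\tfrac{R}{2}\xi+w$ with $\|w\|_2\le \tfrac{R}{2}\tan^2(\theta'/2)$, the vector $v:=z-x$ satisfies $\|v\|_2\le R$ and $\langle v,\xi\rangle/\|v\|_2\ge (R/2-\|w\|_2)/(R/2+\|w\|_2)\ge \cos\theta'$, so that $z\in C$. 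Setting $\chole:=\tfrac{c_\theta}{2}\tan^2(\theta'/2)$ gives a ball $B$ of radius $\chole\, r_{\Pn}(x)$ with $B\subset C\subset \overline{\Omega\cap Q_{\Pn}(x)}$.

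It remains to promote the inclusion $B\subset \overline{\Omega\cap Q_{\Pn}(x)}$ to the required inclusion $B\subset \Omega\cap Q_{\Pn}(x)$, which is the one subtle point of the proof. Since $B$ is open, it is contained in $\operatorname{int}\bigl(\overline{\Omega\cap Q_{\Pn}(x)}\bigr)$. The set $\Omega\cap Q_{\Pn}(x)$ is the intersection of two open convex sets and is nonempty (it contains $x$), hence it is itself open and convex. Applying the standard fact that $\operatorname{int}(\overline{U})=U$ for every nonempty open convex set $U\subset\IR^d$ yields $B\subset \Omega\cap Q_{\Pn}(x)$, completing the proof with $\chole=\tfrac{c_\theta}{2}\tan^2(\theta'/2)$, which depends only on $\Omega$ and $r$.

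The main obstacle is the last step: without the convexity of $\Omega$ (and hence of $\Omega\cap Q_{\Pn}(x)$) one cannot in general conclude that an open subset of the closure lies inside the open intersection itself, which is another instance of why the convexity assumption on $\Omega$ enters the $q<p$ argument. Given the convexity, the geometric computation in the cone is routine and all constants are absorbed into $\chole$.
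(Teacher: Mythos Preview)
Your argument is correct for the lemma as literally stated, but it takes a genuinely different route from the paper and misses a property that the paper's proof establishes and that both subsequent applications rely on.

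The paper does not go through Lemma~\ref{lem:local-CC} at all. Instead it exploits the \emph{minimality} of $r_{\Pn}(x)$: since $r_{\Pn}(x)$ is the infimum of radii satisfying~\eqref{eq:good-cube}, the half-sized cube $Q(x,r_{\Pn}(x)/2)$ fails~\eqref{eq:good-cube}, so there is a point $y\in \Omega\cap Q(x,r_{\Pn}(x)/2)$ with $\dist(y,\Pn)\ge \cgood\, r_{\Pn}(x)/4$. One then applies Lemma~\ref{lem:ballinball} to $\Omega$ and the ball $B(y,\cgood\, r_{\Pn}(x)/4)$ to obtain a ball of radius $\chole\, r_{\Pn}(x)$ inside $\Omega\cap B(y,\cgood\, r_{\Pn}(x)/4)\subset \Omega\cap Q_{\Pn}(x)$.

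What this buys is that the resulting ball is a genuine \emph{hole}: it is contained in $B(y,\cgood\, r_{\Pn}(x)/4)$ and hence empty of $\Pn$. This emptiness is used in the upper bound proof to obtain~\eqref{eq:dist-lower-int}, and in the lower bound proof to place the supports of the fooling functions. Your cone-based argument produces a ball in $\Omega\cap Q_{\Pn}(x)$ but with no control over its intersection with $\Pn$; indeed, your argument works verbatim for any cube $Q(x,\varrho)$ with $x\in\Omega$ and $0<\varrho\le r$, and so cannot exploit what is special about the \emph{smallest} good cube. This is exactly the content of the sentence introducing the lemma, that it ``makes clear why we did choose good cubes of small radius.'' So while your proof closes the literal statement, it would not serve as a drop-in replacement in the paper.
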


To obtain the required estimates in the case $q<p$, we need some control over the Sobolev norm of a sum of functions with disjoint supports. Implicitly we used a property of Sobolev spaces, see for \chg{example} \cite[eq.~(9)]{Hei94}, which states that the Sobolev norm of a sum of a linear combination of functions $f_1,\ldots,f_M$ with pairwise disjoint support, which are obtained by scaling and translating a fixed function, can be described using an $\ell_p$-norm of the coefficients. This property is shared by other function spaces to a certain extent as discussed below.

Before this, we comment on the proof of Theorem~\ref{thm:sob-main}b which states that for the Sobolev spaces $W^s_p(\Omega)$ the radius of information for the integration problem is asymptotically equivalent to the one for the $L_1$-approximation problem. 

In order to obtain a suitable sampling algorithm to approximate the integral, we use the fact that if we have an algorithm $S_{\Pn}$ of the form $S_{\Pn}\colon f\mapsto \varphi\big(f(x_1),\ldots,f(x_n)\big)$ for the problem of $L_1$-approximation, then $S_{\Pn}(f)\in L_1(\Omega)$ and
\[
\Big|\int_{\Omega} f(x)- \int_{\Omega} S_{\Pn}(f)(x)\dd x\Big|
\le \|f-S_{\Pn}(f)\|_{L_1(\Omega)}.
\]
Regarding the lower bound, the above approach using fooling functions also applies to the integration problem if $f_\ast\ge 0$ since then $\|f_\ast\|_{L_1(\Omega)}={\rm INT}(f_\ast)$. Thus, Theorem~\ref{thm:sob-main}b is proven if we can find non-negative fooling functions.

Indeed, we shall use non-negative fooling functions in the proofs in Appendix~\ref{ch:sob-app} below. In Section~\ref{sec:sob-proof-le} there, we employ techniques which will also be suitable for the extension to other function spaces mentioned in Theorem~\ref{thm:sob-main-ext}. In our original work \cite{KS20} we extended the lower bound for the $L_q$-approximation problem to Triebel-Lizorkin spaces, which do not behave as well as Sobolev spaces with respect \chg{to} sums of functions of disjoint supports of different sizes, by using a wavelet characterization pointed out to us by W. Sickel.

However, the generalization of Theorem~\ref{thm:sob-main}b dealing with the integration problem did not succeed using the wavelets as these have to satisfy moment conditions which require them to have an integral of zero. Using atomic decompositions instead gives non-negative fooling functions and thus \chg{proves} also the extension of Theorem~\ref{thm:sob-main}b to the Triebel-Lizorkin spaces, see also Remark~\ref{rem:f-int} above. This will be done in the Appendix, Section~\ref{sec:sob-proof-le}.

At this point, we want to say a few more words on the proof of the generalization to other spaces, that is the proof of Theorem~\ref{thm:sob-main-ext}. As a crucial ingredient, we need a bound on polynomial approximation in terms of a seminorm that scales well. The work \cite{NT06} contains the required tools, even for spaces on the more general bounded Lipschitz domains defined below. Further, our method requires that the seminorms on the cubes can be glued together in a suitable manner. In this regard, our attempt at an extension of Theorem~\ref{thm:sob-main} to Besov spaces failed. We believe that this is solely a shortcoming of our technique and that Theorem~\ref{thm:sob-main-ext} holds also for Besov spaces, see also Question~\ref{que:besov}. For a definition of Triebel-Lizorkin and Besov spaces and details of the proof see Appendix~\ref{sec:sob-proof-ext}.

\section{Optimal quantizers and the distortion of random points} \label{sec:sob-quant}

In the following, we shall prove Proposition~\ref{pro:random-covering} on the expected average hole size of random points, which together with Theorem~\ref{thm:sob-main}, leads to the main result of Corollary~\ref{cor:sob-ran}. 

At the heart of the study of the radius of information of an arbitrary, and thus in particular also a typical or random, point set $\Pn=\{x_1,\ldots,x_n\}$ for the problems considered in this chapter lies the quantity 
\begin{equation} \label{eq:distortion}
D_{\mu,\Pn,\gamma}:=
\|\dist(\cdot,\Pn)\|_{L_{\gamma}(\IR^d,\mu)}^{\gamma}
=\int_{\IR^d}\dist(x,\Pn)^{\gamma}\dd \mu(x),
\end{equation}
where $0<\gamma<\infty$ and $\mu$ is a Borel measure on $\IR^d$ which we assume to be absolutely continuous with respect to the Lebesgue measure. In the context of quantization theory, which originated from signal processing, the number $D_{\mu,\Pn,\gamma}$ is also called the \emph{distortion}, see the monograph \cite{GL00} by Graf and Luschgy for more information on this subject. Before, we considered only the Lebesgue measure restricted to a domain $\Omega$ but in the following we shall provide some background from a more general perspective.

Roughly speaking, quantization means replacing a continuous object by a discrete one. In quantization theory one seeks to replace a random vector $X$ in $\IR^d$ with law $\mu$ by a quantized version which assumes only finitely many values. For this, one uses a Borel measurable map $T\colon\IR^d\to\IR^d$ with an image of cardinality $\#T(\IR^d)=n\in\IN$ and forms the random variable $T(X)$. Such a map is called an $n$-quantizer. To measure how well the quantized version $T(X)$ represents $X$ itself, one considers the quantization error $ \IE \|X-T(X)\|_2^r $, where $1\le r <\infty$ is such that $\IE \|X\|_2^r<\infty$.

If the image of an $n$-quantizer is some $n$-point set $\Pn\subset \IR^d$, then the optimal quantizer $T_{\Pn}$ maps a realization of $X$ to the closest point of $\Pn$. This means that it is of the form
\begin{equation} \label{eq:opt-quant}
	T_{\Pn}=\sum_{i=1}^{n}x_i\bfone_{C(x_i,\Pn)},
\end{equation}
where 
$
C(x_i,\Pn):=\{x\in\IR^d\colon \|x-x_i\|_2=\dist(x,\Pn)\}
$
are the Voronoi cells with respect to $\Pn$. By deciding ties arbitrarily, the collection $\{C(x_i,\Pn)\colon i=1,\ldots,n\}$ becomes a disjoint partition of $\IR^d$. A moment of thought reveals that the quantization error of $T_{\Pn}$ is exactly the distortion, i.e.,
\begin{equation} \label{eq:dist-quant}
	\IE\|X-T_{\Pn}(X)\|_2^r
=\sum_{i=1}^{n}\int_{C(x_i,\Pn)}\dist(x,\Pn)^r\dd\mu(x)
= D_{\mu,\Pn,r}.
\end{equation}

In quantization theory, one is often interested in the minimal quantization error over all $n$-quantizers, or equivalently in the infimum of the distortion over all $n$-point sets $\Pn$. In \eqref{eq:dist-optimal} we gave rough asymptotics for this quantity but in fact exact asymptotics are available. Also for random quantizers precise limit theorems have been obtained, for example, by Cohort~\cite{Coh04} and Yukich~\cite{Yuk08} who build on work of Zador~\cite{Zad82}. We deduce from \cite[Thm.~1]{Coh04} the following statement on random distortion.

\begin{proposition}\label{pro:limittheorem}
Let $0<\gamma<\infty$ and $X_1,X_2,\ldots$ be independent and uniformly distributed on a bounded measurable set $\Omega\subset \mathbb{R}^d$ satisfying an interior cone condition. Consider the random $n$-point set $\pnran=\{X_1,\ldots, X_n\}$. Then, for every $p=2,3,\ldots$,
\[
n^{\gamma/d}\frac{1}{\vol(\Omega)}\int_{\Omega}\dist(x,\pnran)^{\gamma}{\rm d}x
\,\xrightarrow[n\to\infty]{L_p}\, 
\left(\frac{\vol(\Omega)}{\vol\bigl(\IB_2^d\bigr)}\right)^{\gamma/d}\Gamma\Bigl(1+\frac{\gamma}{d}\Bigr).
\]
\end{proposition}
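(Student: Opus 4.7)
The plan is to recognize the quantity in question as a rescaled empirical quantization error and then invoke \cite[Thm.~1]{Coh04} directly. First I would introduce $\mu = \vol(\Omega)^{-1}\vol|_{\Omega}$, the uniform probability measure on $\Omega$, and rewrite
\[
\frac{1}{\vol(\Omega)}\int_\Omega \dist(x,\pnran)^\gamma\,\dd x \,=\, \int_{\IR^d} \dist(x,\pnran)^\gamma\,\dd\mu(x) \,=\, D_{\mu,\pnran,\gamma}.
\]
Since the sample $X_1,\ldots,X_n$ forming $\pnran$ is i.i.d.\ $\mu$, this is exactly the distortion of the random point configuration drawn from $\mu$, in the sense of \eqref{eq:distortion}, and thus matches the object of study in \cite{Coh04}.

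Next, \cite[Thm.~1]{Coh04} provides $L_p$-convergence of this rescaled random quantization error towards the Zador-type constant for random quantizers:
\[
n^{\gamma/d}\, D_{\mu,\pnran,\gamma} \,\xrightarrow[n\to\infty]{L_p}\, \Gamma\Bigl(1+\frac{\gamma}{d}\Bigr)\, \vol\bigl(\IB_2^d\bigr)^{-\gamma/d} \int_{\IR^d} h(x)^{1-\gamma/d}\,\dd x,
\]
where $h$ denotes the Lebesgue density of $\mu$. In our case $h = \vol(\Omega)^{-1}\bfone_{\Omega}$, so
\[
\int_{\IR^d} h(x)^{1-\gamma/d}\,\dd x \,=\, \vol(\Omega)\cdot\vol(\Omega)^{\gamma/d-1} \,=\, \vol(\Omega)^{\gamma/d},
\]
which combined with the universal constant above gives precisely $\Gamma(1+\gamma/d)\bigl(\vol(\Omega)/\vol(\IB_2^d)\bigr)^{\gamma/d}$, matching the claimed limit.

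The main obstacle, and really the only non-bookkeeping step, is to check that the hypotheses of \cite[Thm.~1]{Coh04} are met by our $\mu$. Boundedness of $\Omega$ takes care of compact support and all finite moments of $\|X\|_2$, while the density is constant and thus trivially integrable in every relevant power. The delicate point is the geometric regularity of the support, which is where the interior cone condition enters: it guarantees that for every $x\in\overline{\Omega}$ and every sufficiently small $r>0$, we have $\vol\bigl(\Omega\cap B(x,r)\bigr)\gtrsim r^d$, preventing pathological boundary behaviour and yielding the uniform envelope estimates needed to upgrade the almost-sure limit from Zador's theorem to $L_p$-convergence for every finite integer $p\ge 2$. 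Once this regularity check is carried out, the proposition follows by plain substitution into Cohort's formula, with no further probabilistic input required.
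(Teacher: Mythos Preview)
Your proposal is correct and follows essentially the same route as the paper: both invoke \cite[Thm.~1]{Coh04} directly and note that the only nontrivial hypothesis to verify is the geometric regularity $\vol\bigl(\Omega\cap B(x,\varrho)\bigr)\ge c_\Omega\,\vol\bigl(B(x,\varrho)\bigr)$, which is supplied by the interior cone condition. The paper states this in one sentence, while you additionally spell out the identification with the distortion and the explicit computation of the limiting constant from the density $h=\vol(\Omega)^{-1}\bfone_\Omega$, but the substance is identical.
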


In order to deduce this from \cite[Thm.~1]{Coh04}, it remains to observe that the interior cone condition satisfied by $\Omega$ yields a constant $c_{\Omega}>0$ such that for every $x\in\Omega$ and every $0<\varrho\le 1+\sup_{x\in\Omega}\|x\|_2$ we have $\vol\bigl(\Omega\cap B(x,\varrho)\bigr)\ge c_{\Omega} \vol\bigl(B(x,\varrho)\bigr)$. 

We are now ready to give the proof of Proposition~\ref{pro:random-covering}. If $0<\gamma<\infty$, the lower bound is clear from the behaviour of optimal points as mentioned in \eqref{eq:dist-optimal}. For the upper bound, an application of Jensen's inequality shows that for $p= p(\alpha,\gamma)\in \IN$ chosen large enough such that $p\ge \alpha/\gamma$ we have 
\[
\Big(\IE \,n^{\alpha/d} \|\dist(\cdot,P)\|_{L_{\gamma}}^{\alpha}\Big)^{\gamma/\alpha}
\le 
\Big(\IE \Big[n^{\gamma/d} \int_{\Omega}\dist(x,P)^{\gamma}\dd x \Big]^{p}\Big)^{1/p}.
\]
By Lemma~\ref{lem:convex-cone} the bounded convex domain $\Omega$ satisfies the assumptions of Proposition~\ref{pro:limittheorem}, and thus the right-hand side converges to some finite limit. This concludes the proof of Proposition~\ref{pro:random-covering} for this choice of $\gamma$. 

The case $\gamma=\infty$ follows from asymptotics for the size of the largest hole in a random point set which were given by  Reznikov and Saff~\cite{RS16}. More precisely, we obtain the statement of Proposition~\ref{pro:random-covering} in this case from Corollary 2.3 there applied to $\Phi(r)=r^d$ together with the fact that convex domains satisfy an interior cone condition. 

\begin{remark}
	The form of the limit theorem in Proposition~\ref{pro:limittheorem} is different from the one used in \cite[Sec.~3]{KS20}. There, we relied on \cite[Thm.~2]{Coh04} which gives an almost sure convergence together with convergence in $L_2$ but we neglected that convergence of arbitrary high moments is necessary. The above proof remedies this.

\end{remark}

\begin{remark}
Actually, we used only a special instance of Cohort's result which is valid for different measures $\mu$, too. Further, the arguments behind the proof of Proposition~\ref{pro:limittheorem} transfer also to points sampled from other measures which are absolutely continuous with respect to the Lebesgue measure and have a density bounded from below. One probably needs to employ different methods and arrives at qualitatively distinct results if more general measures are considered. 
\end{remark}
\begin{remark}
	Proposition~\ref{pro:limittheorem} also holds in more general spaces. For example, a simplified proof working on general Riemannian manifolds was presented by us in \cite{KS21} and shows that one may easily obtain convergence in $L_p$ for $0<p<\infty$ using Jensen's inequality. 
\end{remark}
\begin{remark}
	One can also show Corollary~\ref{cor:sob-ran} without Theorem~\ref{thm:sob-main} or Proposition~\ref{pro:limittheorem} by using techniques similar in spirit to \cite{KNS21}.	
\end{remark}
\begin{remark}
	Regarding the asymptotic constant in Proposition~\ref{pro:limittheorem}, note that the quantity $\bigl(\vol(\Omega)/\vol\bigl(\IB_2^d\bigr)\bigr)^{1/d}$ is related to the concept of volume ratio, which plays an important role in Banach space geometry, see, e.g.,  Szarek and Tomczak-Jaegermann \cite{ST80}. Further, as $\gamma\to\infty$ the factor $\bigl(\Gamma(1+\frac{\gamma}{d})\bigr)^{1/\gamma}$ tends to infinity, which corresponds to the fact that the covering radius of $\pnran$ is typically of larger order than $n^{-1/d}$.
\end{remark}

	\begin{remark}\label{rem:random-covering}
		On smoothly bounded domains or polytopes, exact asymptotics are also available for the covering radius of i.i.d.\ uniform random points. In \cite{RS16} a proof may be found for the case of three-dimensional polytopes but in fact it generalizes to arbitrary dimension. Even more precise results such as a limiting Gumbel law, an important extreme value distribution showing up as the limit distribution of rescaled maxima, were derived by Penrose~\cite{Pen21}. Further, the covering radius of a random point set is connected to the coupon collector's problem, which asks for the number of coupons (points) that a collector has to draw in order to obtain a complete collection (hit every set in a diameter-bounded equal volume partition of $\Omega$).
\end{remark}

\section{Open questions}
\label{sec:sob-open}

\begin{question}
	When discussing the moving least squares method, we started with general subspaces which we then replaced by the space of polynomials. It would be interesting to consider other subspaces and develop a more general formulation of our results.
\begin{center}
	Can  Theorem~\ref{thm:sob-main} be generalized to other function spaces?
\end{center}
It seems worthwhile to pass from moving least squares to general weighted least squares in order to extend our results to other function spaces where smoothness is not determined via polynomials. In particular, it could be of interest to study the integration problem in Hilbert spaces of functions on which function evaluations are continuous. These widely used spaces are called reproducing kernel Hilbert spaces and the corresponding kernel allows for an analysis of sampling methods, see for example the book \cite{BT04} by Berlinet and Thomas-Agnan or \cite[Ch.~10]{Wen04}. 
\end{question}

\begin{question}\label{que:besov}
As mentioned in Section~\ref{sec:sob-fool-int}, we could not extend our results to Besov spaces. 
The following question is left open maybe due to our own incompetence or due to the lack of a suitable seminorm. 
\begin{center}
	Does Theorem~\ref{thm:sob-main-ext} also hold for Besov spaces $B^s_{p\tau}$?
\end{center}
Similar to the proof of Theorem~23 in \cite{NT06} one may use interpolation arguments to prove an upper bound, but in this way an additional $\varepsilon>0$ appears in the exponent of the $L_{\gamma}$-norm in the case $q<p$. Therefore, new ideas seem to be required. 
\end{question}

\begin{question}\label{que:sob-convexity}
	As explained in Section~\ref{sec:sob-mls}, convexity is central to our proof technique and therefore new ideas seem necessary to answer the following question.
\begin{center}
	Can Theorem~\ref{thm:sob-main} be extended to general bounded Lipschitz domains?
\end{center}
A possible approach could consist in using a different cover adapted to the boundary of the domain, as well as to the point set used. However, caution is required to ensure \chg{independence of the constants from the point set.} 
\end{question}

\begin{question}
	We did not track the hidden constants in Theorem~\ref{thm:sob-main} (and the derived results). As the dependence of the involved constants on the dimension is important for tractability considerations, the following question comes to mind.
 \begin{center}
	 What is the dependence of the constants in Theorem~\ref{thm:sob-main} on the dimension?
 \end{center}
 We think that our proof technique leaves an exponentially growing gap with regard to this question. It appears easier to determine for fixed dimension the asymptotic constants in front of the expected radius of random information if $n$ tends to infinity. For example, one could use a more elaborate fooling function such as a smoothed version of the distance function, as suggested to us by M.~Ullrich, and employ the full strength of the limiting behaviour of random distortion given by Proposition~\ref{pro:limittheorem}. Such questions are of interest also for the minimal radius and optimal points, see, e.g., Novak~\cite{Nov20}.
\end{question}

\begin{question}\label{que:sob-weights}
	One drawback of the linear algorithm attaining the upper bound in the integration problem is that we do not know the weights it uses.
\begin{center}
	Can the weights of a suitable linear algorithm achieving the asymptotic upper bound in Theorem~\ref{thm:sob-main}b be expressed geometrically?
\end{center}
It is clear that the weights of any linear algorithm (of the form  \eqref{eq:linear-algo}) achieving the asymptotic upper bound have to depend on the point set. In the upcoming Chapter~\ref{ch:interlude} we will see that these weights can be given explicitly for Hölder spaces with small smoothness and this choice may be optimal for higher smoothness too. 
\end{question}
 
\chapter{Interlude - optimal transport and cubature rules}
\label{ch:interlude}

This chapter connects the integration problem to the concept of discrepancy, which is related to equal-weight integration, and thus links Chapter~\ref{ch:sob} with the upcoming Chapter~\ref{ch:iso}. The focus will be on the weights used by (near-)optimal cubature rules for Hölder and Lipschitz functions. For these weights a geometric expression as volumes of Voronoi cells is shown, and further that equal weights can be optimal in high dimensions for uniform random points. Along the way, we pile dirt onto heaps and use classical results from probability to study Monte Carlo.

The main results of Chapter~\ref{ch:sob} show that we can assess the quality of any point set $\Pn$ for several approximation and integration problems in terms of the distortion, an average over the distance function to $\Pn$ defined in \eqref{eq:distortion}. Much alike, both the distortion and the discrepancy, which will be introduced in Chapter~\ref{ch:iso}, measure the distance between a measure $\mu$ and a point set $\Pn$, which can be represented by a sum of Dirac measures. Distances between measures have been studied in the field of optimal transport, which gave rise to the concept of the Wasserstein (or Kantorovich) distance, a well known way to metrize the space of measures. We refer to the book of Villani~\cite{Vil03} for a comprehensive account and limit ourselves to a brief introduction in the following. 

To transport the unit mass from a Borel probability measure $\mu$ on $\IR^d$ to another, say $\nu$, one can use a transference plan or coupling $\pi$ which is a probability measure on $\IR^d\times \IR^d$ with marginals $\mu$ and $\nu$. We collect all such transference plans between $\mu$ and $\nu$ into a set denoted by $\Pi(\mu,\nu)$. Optimal transport concerns itself with finding a transference plan with minimal transportation cost, that is, attaining the infimum in
\[
\mathcal{T}_r(\mu,\nu)
:=\inf_{\pi\in \Pi(\mu,\nu)}\int_{\IR^d\times \IR^d} \|x-y\|_2^r \dd\pi(x,y),
\]
where we took the Euclidean distance raised to some power $0<r<\infty$ as a cost function.  By setting $W_r=\mathcal{T}_r^{1/r}$ for $ 1\le r<\infty$ and $W_r=\mathcal{T}_r$ for $0<r<1,$ one can define a metric on the space of probability measures with finite $r$-th moments, the Wasserstein (or Kantorovich) distance, see \cite[Thm.~7.3]{Vil03}. The case $r=1$ is especially interesting as we will see in a moment.

To prepare for the connection between Wasserstein distance and the integration problem, assume from now on that $\mu$ is absolutely continuous with respect to the Lebesgue measure and that $\nu$ is supported on a point set $\Pn=\{x_1,\ldots,x_n\}\subset \IR^d$, i.e., of the form 
\begin{equation} \label{eq:discrete}
\nu_{\Pn,a}
:=\sum_{i=1}^{n}a_i\delta_{x_i},\quad
\text{where }a=(a_1,\dots,a_n)\in\IR^n
\text{ with }\sum_{i=1}^{n}a_i=1,
\end{equation}
where for any $x\in\IR^d$ the Dirac measure of any $A\subset \IR^d$ equals one if $x\in A$ and zero else.

The optimal transportation cost between $\mu$ and $\nu_{\Pn,a}$ as above attains a minimal value which is equal to the distortion, that is, for any $0<r<\infty$, 
\begin{equation} \label{eq:transport}
	\inf_{a\in\IR^n} \mathcal{T}_r(\mu,\nu_{\Pn,a})
	= \int_{\IR^d} \min_{i=1,\dots,n} \|x-x_i\|_2^r \dd\mu(x)
\quad(=D_{\mu,\Pn,r})
\end{equation}
and the infimum over the weights is in fact attained at 
\begin{equation} \label{eq:voronoi}
\mu\big(C(x_1,\Pn)\big),\dots,\mu\big(C(x_n,\Pn)\big)
\end{equation}
where $C(x_i,\Pn)$ is the Voronoi cell of $x_i$ as in \eqref{eq:opt-quant}. A proof can be deduced from \cite[Lem.~3.1]{GL00} and will be provided for completeness in Appendix~\ref{ch:interlude-app}. The idea behind is to consider the coupling induced by the map $(x,y)\mapsto \big(x,T_{\Pn}(x)\big)$, where $T_{\Pn}$ is the quantizer from \eqref{eq:opt-quant} in Section~\ref{sec:sob-quant} mapping $x$ to the nearest point of $\Pn$.

Let us recall the integration problem on $\IR^d$. Suppose we are given a normed space of functions $F$ on $\IR^d$ such that the embedding $F\hookrightarrow C_b(\IR^d)$ holds, see Section~\ref{sec:std}. The goal is to find $\varphi\colon \IR^n\to \IR$ such that, for every $f\in F$, $A(f)=\varphi\big(f(x_1),\ldots,f(x_n)\big)$ approximates the integral ${\rm INT}_{\mu}(f)=\int_{\IR^d}f(x)\dd\mu(x)$ with respect to the absolutely continuous probability measure $\mu$ on $\IR^d$. Here, assuming absolute continuity ensures that the problem is not too easy.

As mentioned in the introduction to Chapter~\ref{ch:sob} the optimal $\varphi$ can be chosen linear, that is, the optimal sampling operator is of the form
\begin{equation}\label{eq:cubature}
	Q_{\Pn,a}\colon F\to \IR,\quad f\mapsto\sum_{i=1}^{n}a_i f(x_i)=\int_{\IR^d}f(x)\dd\nu_{\Pn,a}(x),
\end{equation}
with $\nu_{\Pn,a}$ as in \eqref{eq:discrete}. This is a cubature rule with points $\Pn=\{x_1,\ldots,x_n\}$ and weights $a=(a_1,\ldots,a_n)\in \IR^n$. Note that we mean in fact $\Pn=(x_1,\ldots,x_n)$ to be able to associate the weights to the points but out of habit use the set notation which should not cause confusion. Then the worst-case error over the unit ball $F_0$ of $F$ is
\[
e(F,\intmu, Q_{\Pn,a})
=\sup_{f\in F_0}\Big|\int_{\IR^d} f(x) \dd \mu(x)-\int_{\IR^d} f(x) \dd \nu_{\Pn,a}(x)\Big|,
\]
and can be seen as a distance between the measures $\mu$ and $\nu_{\Pn,a}$. If we consider the special case $F=\lip$, the space of Lipschitz functions $f\colon \IR^d\to\IR$, equipped with the seminorm
\[
|f|_{\lip}
:=\sup_{\substack{x,y\in\IR^d\\x\neq y}}\frac{|f(x)-f(y)|}{\|x-y\|_2},
\]
then the Kantorovich-Rubinstein duality theorem (\cite[Thm.~1.14]{Vil03}) yields that, for any $\nu_{\Pn,a}$ as in \eqref{eq:discrete},
\begin{equation} \label{eq:lip-wce}
\sup_{|f|_{\lip}\le 1} \Big|\int f(x)\dd\mu(x) - \int_{\IR^d} f(x) \dd \nu_{\Pn,a}(x)\Big|
=W_1(\mu,\nu_{\Pn,a}).
\end{equation}
This means that the worst-case error over Lipschitz functions with seminorm at most one can be expressed as the $W_1$-distance between the measure $\mu$ according to which we want to integrate and the discrete measure $\nu_{\Pn,a}$ representing the algorithm $Q_{\Pn,a}$.

It seems worth noting that the metric $W_1$ is also called the earth mover's distance for the following reason. If one imagines the density of $\mu$ to be a distribution of dirt, which one wishes to collect into heaps of size $a_i$ located at points $x_i$, and the work needed for transporting dirt is proportional to the distance it is moved, then the least amount of work is exactly the right-hand side of \eqref{eq:lip-wce}.

\bigskip

\textbf{Optimal weights.} By means of the expression of the minimal distance $W_1=\mathcal{T}_1$ in \eqref{eq:transport} one can deduce that the normalized weights minimizing the worst-case error over the unit ball of the seminormed space $\lip$ are given as in \eqref{eq:voronoi} by the $\mu$-content of the Voronoi cells. This also holds more generally as we now explain.

Equality \eqref{eq:lip-wce} extends to the Hölder classes $C^{s}(\IR^d),0<s<1$, with seminorm $|\cdot|_{C^{s}(\IR^d)}$ defined in \eqref{eq:semi-norm-hoelder}. It can be deduced for example from the more general Theorem~5 in Gruber \cite{Gru04} that 
\begin{equation} \label{eq:hoelder-wce}
\inf_{a_1,\ldots,a_n\in\IR}\sup_{|f|_{C^{s}(\IR^d)}\le 1} \Big|\int f(x)\dd\mu(x) - \sum_{i=1}^{n}a_i f(x_i)\Big|
=D_{\mu,\Pn,s},
\end{equation}
where the optimal weights are again as in \eqref{eq:voronoi}. This holds true for $s=1$ if $C^s(\IR^d)$ is replaced by $\lip$. For convenience we provide a proof in Appendix~\ref{ch:interlude-app}.

Strictly speaking, since we take the supremum over a set of seminorm at most one and not over the unit ball of a normed space, we are not in the setting considered before. However, the space $C^s(\IR^d)$ can be equipped by the norm $\|\cdot\|_{\infty}+|\cdot|_{C^s(\IR^d)}$, see the line below \eqref{eq:semi-norm-hoelder}, and one can proceed analogously for $\lip$. As the seminorm is bounded by the norm thus defined, one can get an upper bound for the radius of information of $\Pn$ for integration in these spaces. 

More can be said if $\mu$ has bounded support $D\subset \IR^d$. Then the space of Lipschitz functions $f\colon D\to \IR$ will be denoted by $\lipd$ and becomes a normed space when equipped with the norm $\|\cdot\|_{\lipd}$ which is the restriction of $\|\cdot\|_{\lip}$. For $\lipd$ and the Hölder spaces $C^s(D)$ defined in Section~\ref{sec:sob-res} we have the following known result.

\begin{proposition} \label{pro:lip}
	Let $s\in (0,1)$ and $\mu$ be a probability measure supported on a bounded set $D\subset \IR^d$. Then, for any point set $\Pn=\{x_1,\ldots,x_n\}\subset D$,
\[
\inf_{a_1,\ldots,a_n\in \IR}\sup_{\|f\|_{C^s(D)}\le 1} \Big|\int_{\IR^d} f(x)\dd\mu(x) - \sum_{i=1}^{n}a_i f(x_i)\Big|
\asymp D_{\mu,\Pn,s},
\]
where the implicit constants depend only on $D$ and the weights given in \eqref{eq:voronoi} attain the bound. The statement remains true for $s=1$ if $C^s(D)$ is replaced by $\lipd$.
\end{proposition}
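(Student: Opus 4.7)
The plan is to reduce to \eqref{eq:hoelder-wce}, which already gives an \emph{equality} when the supremum is over the seminorm ball in $C^s(\IR^d)$ and the weights are those from \eqref{eq:voronoi}. The only new ingredient is the passage from the full norm $\|\cdot\|_{C^s(D)}=\|\cdot\|_\infty+|\cdot|_{C^s(D)}$ on the bounded set $D$ to the seminorm on all of $\IR^d$, and the resulting constants will depend only on $\diam(D)$.

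For the upper bound, I would fix the weights $a_i^\ast:=\mu\bigl(C(x_i,\Pn)\bigr)$ from \eqref{eq:voronoi}. Given any $f\in C^s(D)$ with $\|f\|_{C^s(D)}\le 1$, one has $|f|_{C^s(D)}\le 1$, and the McShane-type infimal convolution
\[
\tilde f(x)\,:=\,\inf_{y\in D}\bigl(f(y)+\|x-y\|_2^s\bigr),\qquad x\in\IR^d,
\]
extends $f$ to $\IR^d$ with $|\tilde f|_{C^s(\IR^d)}\le 1$; the verification boils down to the subadditivity $(a+b)^s\le a^s+b^s$ valid for $a,b\ge 0$ and $s\in(0,1]$. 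Since $\mu$ is supported in $D$ and $\Pn\subset D$, integrals and cubature sums coincide for $f$ and $\tilde f$, so \eqref{eq:hoelder-wce} yields
\[
\Bigl|\int f\,\dd\mu-\sum_{i=1}^n a_i^\ast f(x_i)\Bigr|=\Bigl|\int \tilde f\,\dd\mu-\sum_{i=1}^n a_i^\ast \tilde f(x_i)\Bigr|\le D_{\mu,\Pn,s}.
\]

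For the lower bound I would use the fooling function $f_\ast(x):=\dist(x,\Pn)^s$. Since $\dist(\cdot,\Pn)$ is $1$-Lipschitz and $|a^s-b^s|\le|a-b|^s$ holds for $a,b\ge 0$, I obtain $|f_\ast|_{C^s(D)}\le 1$, while trivially $\|f_\ast\|_\infty\le\diam(D)^s$, hence $\|f_\ast\|_{C^s(D)}\le M_D:=1+\diam(D)^s$. Crucially $f_\ast$ vanishes on $\Pn$, so the cubature sum is zero regardless of the chosen weights $(a_i)$, whereas $\int f_\ast\,\dd\mu=D_{\mu,\Pn,s}$ by the definition \eqref{eq:distortion}. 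Rescaling by $M_D^{-1}$ produces an admissible test function in the unit ball of $C^s(D)$ and delivers
\[
\sup_{\|f\|_{C^s(D)}\le 1}\Bigl|\int f\,\dd\mu-\sum_{i=1}^n a_i f(x_i)\Bigr|\,\ge\,\frac{D_{\mu,\Pn,s}}{M_D},
\]
uniformly in the weights $(a_i)$.

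The case $s=1$ is handled verbatim with $f_\ast(x):=\dist(x,\Pn)$ and the standard Lipschitz McShane extension $\tilde f(x)=\inf_{y\in D}\bigl(f(y)+\|x-y\|_2\bigr)$; the role of \eqref{eq:hoelder-wce} is then played by the Kantorovich--Rubinstein identity \eqref{eq:lip-wce} combined with \eqref{eq:transport}. I expect the only delicate point to be the seminorm-preserving extension, which is why I single out the subadditivity $(a+b)^s\le a^s+b^s$; everything else is routine book-keeping of constants depending on $\diam(D)$.
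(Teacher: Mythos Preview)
Your proof is correct and follows the paper's approach: the same fooling function $\dist(\cdot,\Pn)^s$ for the lower bound, and the Voronoi weights together with \eqref{eq:hoelder-wce} for the upper bound. The McShane extension you single out as the delicate point is in fact unnecessary---since both $\mu$ and $\Pn$ live in $D$, the paper simply uses $\|f\|_{C^s(D)}\le 1\Rightarrow|f|_{C^s(D)}\le 1$ and applies the Voronoi-weight estimate $|f(x)-f(x_i)|\le\|x-x_i\|_2^s$ directly on $D$.
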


Since linear algorithms are optimal for this integration problem this proposition implies that the radius of information given by the point set $\Pn$ satisfies
\[
r\big(C^s(D),\intmu,\Pn\big)
\asymp D_{\mu,\Pn,s},
\]
which extends Theorem~\ref{thm:sob-main-ext} for these spaces. An analogous result holds for $\lipd$.

Proposition~\ref{pro:lip} provides us with (near-)optimal weights and the cubature rule 
\begin{equation} \label{eq:opt-cubature}
Q_{\Pn,\mu}(f)
:=\sum_{i=1}^{n}\mu\big(C(x_i,\Pn)\big)f(x_i),
\end{equation}
which is asymptotically optimal in the sense of
\[
\chg{ e(C^s(D),\intmu,Q_{\Pn,\mu})
\le C\, r\big(C^s(D),\intmu,\Pn\big), }
\]
where the constant $C$ is independent of $\Pn$.  

In general, it is a question of interest to compute (near-)optimal weights for general point sets and in most cases they are unknown. It would be interesting to have an analogon of Proposition~\ref{pro:lip} for other classes such as Sobolev spaces, see also Question~\ref{que:sob-weights} and  Remark~\ref{rem:opt-weights} below.

Before we turn to the case of equal weights, we show that, under minor assumptions, normalized weights \chg{cannot} be much worse than general real weights. Since we were not able to locate a proof of this known fact, we provide one in Appendix~\ref{ch:interlude-app}.

\begin{proposition}\label{pro:weights}
	Let $\mu$ be a probability measure supported on a set $D\subset \IR^d$ and let $F$ be a normed space of functions with $F\hookrightarrow C_b(D)$. Suppose that the constant function $\bfone_D$ belongs to $F$ (and thus all constant functions). If $Q_{\Pn,a}$ is a cubature rule as in \eqref{eq:cubature}, then the normalized weights $a_i^*=a_i/\sum_{i=1}^{n}a_i, i=1,\ldots,n,$ satisfy
\[
\sup_{\|f\|_F\le 1} \Big|\intmu(f)-\sum_{i=1}^{n}a_i^* f(x_i)\Big|
\le (1+C\|\bfone_D\|_F) \sup_{\|f\|_F\le 1} \Big|\intmu(f)-\sum_{i=1}^{n}a_i f(x_i)\Big|.
\]
\end{proposition}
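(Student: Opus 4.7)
The plan is to compare the two cubature rules via a centering trick and a preliminary estimate controlling the deviation of $S := \sum_{i=1}^{n} a_i$ from $1$. Throughout, write $e$ for the worst-case error of $Q_{\Pn,a}$ appearing on the right-hand side of the claim.

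First, I would test the original rule on the constant function $\bfone_D$, which lies in $F$ by hypothesis. Since $\intmu(\bfone_D) = 1$, this immediately yields the preliminary estimate
\[
|1 - S| \,=\, \Bigl|\intmu(\bfone_D) - \sum_{i=1}^{n} a_i\Bigr| \,\le\, e\,\|\bfone_D\|_F,
\]
so $S$ is forced to be close to $1$ whenever the original rule is accurate. This is the only place the hypothesis $\bfone_D\in F$ will enter.

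Second, for arbitrary $f\in F$ with $\|f\|_F \le 1$, I would introduce the centered function $g := f - \intmu(f)\,\bfone_D \in F$. It satisfies $\intmu(g)=0$ and, by the embedding $F\hookrightarrow C_b(D)$ with constant $C$,
\[
\|g\|_F \,\le\, \|f\|_F + |\intmu(f)|\,\|\bfone_D\|_F \,\le\, 1 + C\,\|\bfone_D\|_F,
\]
using $|\intmu(f)|\le\|f\|_\infty\le C\|f\|_F\le C$. Since $\sum_i a_i^* = 1$, the normalized error annihilates constants, and therefore
\[
\intmu(f) - \sum_{i=1}^{n} a_i^* f(x_i)
\,=\, -\sum_{i=1}^{n} a_i^* g(x_i)
\,=\, \frac{1}{S}\Bigl(\intmu(g) - \sum_{i=1}^{n} a_i g(x_i)\Bigr),
\]
where the last equality uses $\intmu(g)=0$ together with $a_i^* = a_i/S$. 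Applying the original error bound to $g$ then gives
\[
\Bigl|\intmu(g) - \sum_{i=1}^{n} a_i g(x_i)\Bigr| \,\le\, e\,\|g\|_F \,\le\, e\,(1 + C\|\bfone_D\|_F).
\]

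Taking the supremum over $\|f\|_F \le 1$ combines the two ingredients to yield the claimed inequality. I expect the main obstacle to be the final bookkeeping step: the natural estimate above carries an extra factor $1/|S|$, and to recover the stated constant $(1+C\|\bfone_D\|_F)$ one has to absorb it. This will either require working in the regime $|S|\ge 1$, where the conclusion is immediate, or, when $|S|<1$, a more delicate case analysis that exploits the tight link $|1-S|\le e\,\|\bfone_D\|_F$ from the first step to close the remaining gap directly.
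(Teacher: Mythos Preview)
Your centering approach is different from the paper's, and the gap you flag is real and not removable by the case analysis you sketch. Knowing only $|1-S|\le e\,\|\bfone_D\|_F$ gives no uniform lower bound on $|S|$: when $|S|<1$ you get at best $|S|\ge 1-e\,\|\bfone_D\|_F$, so the resulting factor $1/|S|$ depends on $e$ and cannot be absorbed into the fixed constant $(1+C\|\bfone_D\|_F)$.

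The paper avoids introducing $1/|S|$ altogether by a more direct splitting. Instead of centering, it uses the triangle inequality
\[
\bigl|Q_{\Pn,a^*}(f)-\intmu(f)\bigr|\;\le\;\bigl|Q_{\Pn,a}(f)-\intmu(f)\bigr|+\Bigl|\sum_{i=1}^n(a_i^*-a_i)f(x_i)\Bigr|,
\]
bounds the first term by $e$, and for the second writes $a_i^*-a_i=(1-S)a_i^*$ and estimates $\bigl|\sum_i(a_i^*-a_i)f(x_i)\bigr|\le C\,|1-S|$ using $\|f\|_\infty\le C$; your first step $|1-S|\le e\,\|\bfone_D\|_F$ then finishes. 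Note, however, that the inequality $\bigl|\sum_i(1-S)a_i^* f(x_i)\bigr|\le C\,|1-S|$ is only immediate when all $a_i^*$ (equivalently all $a_i$) share the same sign, so that $\sum_i|a_i^*|=1$. For weights of mixed sign the paper's argument, as written, runs into essentially the same difficulty your approach makes explicit.
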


The assumptions on the function space $F$ are satisfied for all function spaces considered until now if the set $D$ is bounded. This provides some justification for studying equal weights $a_1=\cdots=a_n=1/n$. 

\begin{remark}\label{rem:opt-weights}
In the case of reproducing kernel Hilbert spaces, one can compute optimal weights explicitly by solving a linear system, see, e.g., \cite[Ch.~10.2]{NW10}. For example, this can be done for the Sobolev spaces $W^s_2(\Omega)$, where a kernel can be derived from  Novak, M.~Ullrich, Wo\'zniakowski and Zhang \cite{NUW+18}, see also Section~2.3 in \cite{KS20}. However, there does not seem to be a convenient geometric expression of near-optimal weights.
\end{remark}

In the following, we will discuss equal-weight rules in Hölder spaces and discover that these are in fact asymptotically optimal, provided the dimension is large enough. This motivates the study of discrepancy in the upcoming Chapter~\ref{ch:iso}.

\bigskip

\textbf{Equal weights. }
Define, for $\Pn=\{x_1,\ldots,x_n\}\subset \IR^d$, the equal-weight cubature rule
\begin{equation} \label{eq:qmc}
Q_{\Pn}(f)
:=Q_{\Pn,(1/n,\ldots,1/n)}
=\frac{1}{n}\sum_{i=1}^{n}f(x_i).
\end{equation}
Thus we allow only the reconstruction map $\varphi\colon \IR^n\to\IR$ with $\varphi(y)=\frac{1}{n}\sum_{i=1}^{n}y_i$ and we may interpret the worst-case error of $Q_{\Pn}$, which is 
\[
e(F,\intmu,Q_{\Pn})
= \sup_{f\in F_0}\Big| \frac{1}{n}\sum_{i=1}^{n}f(x_i)-\int_{\IR^d} f(x)\dd\mu(x)\Big|,
\]
as a measure of \chg{the} quality of information given by sampling at $\Pn$. 

At first sight, it seems that by restricting to equal weights much flexibility is lost and only special kinds of point sets may yield good equal-weight rules. This is true, for example, if one requires exact integration on subspaces as in the case of spherical designs, surveyed e.g.\ by Brauchart and Grabner in \cite{BG15}. 

However, if we are interested in asymptotically optimal equal-weight rules with an error allowed to exceed the minimal error
\[
\chg{r(F,\intmu,n)}
=\inf_{\#\Pn= n}r(F,\intmu,\Pn)
\]
by a proportional amount, then in some cases typical information is asymptotically optimal. As in Section~\ref{sec:std}, we use for each $n\in\IN$ the random point set $\pnran:=\{X_1,\ldots,X_n\}$ with random points $X_1,\ldots,X_n$ drawn independently and uniformly according to the probability measure $\mu$ on $\IR^d$. 

Given random information, the corresponding random equal-weight rule associating to each $f$ the random number $\frac{1}{n}\sum_{i=1}^{n}f(X_i)$ is the prototypical example of a Monte Carlo method, whereas the algorithm in \eqref{eq:qmc} using the deterministic points above is often called a quasi-Monte Carlo method. 
Monte Carlo methods are very popular among practioners in fields such as physics or mathematical finance where high-dimensional integrals have to be evaluated and common cubature rules based on polynomial interpolation become computationally expensive or numerically unstable. In a moment, we shall give some intuition why it is so effective in these situations. We refer to \cite{Nie92} (see also the survey \cite{Nie78}) for further information.

The law of large numbers states that, almost surely,
\[
\frac{1}{n}\sum_{i=1}^{n}f(X_i)\to \IE f(X_1)=\int_{\IR^d}f(x)\dd\mu(x) \quad \text{as }n\to\infty
\]
provided that $f(X_1)$ is integrable. Further, the central limit theorem implies that the fluctuations around this limit are asymptotically of order $n^{-1/2}$, that is, we have the convergence in distribution
\[
\sqrt{n}\,\Big(\frac{1}{n}\sum_{i=1}^{n}f(X_i) - \int_{\IR^d}f(x)\dd\mu(x) \Big)
\to \mathcal{N}\big(0,{\rm Var}_{\mu}(f)\big)\quad \text{as }n\to\infty, 
\]
to a centered Gaussian with variance ${\rm Var}_{\mu}(f)=\IE f(X_1)^2 -\big(\IE f(X_1)\big)^2$ provided that $f(X_1)$ is square-integrable.

Thus we make an error on individual functions which is of the dimension- but also smoothness-independent rate $n^{-1/2}$. For obtaining a bound on the worst-case error over a function class, one can employ empirical process theory or entropy estimates, see, e.g., van der Vaart and Wellner~\cite{VW96}. However, this necessarily increases the error, if only by a factor.

In high dimensions the rate $n^{-1/2}$ can be optimal for functions of low effective smoothness $s/d$. For example, it follows from Theorem~\ref{thm:sob-main-ext} and \eqref{eq:dist-optimal} (see also Proposition~\ref{pro:lip} for $s\in(0,1)$) that for the Hölder spaces $C^{s}(D)$, where $D\subset \IR^d$ is a bounded convex set and $s>0$, the $n$-th minimal radius of information satisfies 
\[
r\big(C^s(D),{\rm INT},n\big)\asymp n^{-s/d},
\]
where integration is with respect to the Lebesgue measure. In fact, using the Monte Carlo method above one can achieve this optimal rate if $s/d<1/2$ at least on the unit cube as the following discussion shows.

Among others, Kloeckner \cite{Klo20} studied the related topic of (Wasserstein) distances between a measure $\mu$ and its empirical measure $\mu_n:=\frac{1}{n}\sum_{i=1}^{n}\delta_{X_i}$ which corresponds to the Monte Carlo method. Theorem~1.4 in \cite{Klo20} shows that, see also related work by Fournier and Guillin~\cite{FG15} and van der Vaart and Wellner~\cite[Sec.~2.7.1]{VW96}, for any $s\ge 1$ and any probability measure $\mu$ on $[0,1]^d$, 
\[
\IE \sup_{\|f\|_{C^{s}([0,1]^d)}\le 1}\Big|\frac{1}{n}\sum_{i=1}^{n}f(X_i)-\int_{[0,1]^d} f(x)\dd\mu(x)\Big|
\lesssim 
\begin{cases}
	n^{-s/d},& \text{if }s<d/2,\\
	n^{-1/2}\log n,& \text{if }s=d/2,\\
	n^{-1/2},& \text{if }s>d/2.\\
\end{cases}
\]
Note that here for $s\in\IN$, in \chg{contrast} to our definition, the space $C^s$ consists of functions whose derivatives of order $s-1$ are Lipschitz continuous and is normed slightly differently.  This means that in the low smoothness regime $s/d<1/2$ the Monte Carlo method achieves on average the optimal rate $n^{-s/d}$. At the threshold $s/d=1/2$ an additional logarithm comes into play and for higher smoothness we arrive at the standard Monte Carlo rate of $n^{-1/2}$ which cannot be improved in general.

The special case of Lipschitz functions ($s=1$) leads to the case distinction into $d\ge 3$, where we have the optimal rate, $d=2$, where we loose a logarithm, and $d=1$, where we have the rate $n^{-1/2}$. Since Lipschitz functions are related to Wasserstein distance $W_1$, this case received particular attention, see, e.g., Weed and Bach~\cite{WB19} for some references on recent progress. 

It is no coincidence that also in the related field of optimal matching, where one tries to find a bijection between two random point sets minimizing a sum of distances, the two-dimensional case is especially interesting, see for example \cite[Ch.~4]{Tal14}. There, the author calls the worst-case error of such a Monte Carlo method a discrepancy as it measures the distance between the uniform measure and its associated empirical measure. This is related to but different from the discrepancy we will study in the following Chapter~\ref{ch:iso}.

\begin{remark}
	It is a curious phenomenon that we have a threshold as above. According to Bobkov and Ledoux~\cite{BL20}, who investigated optimal matchings in one dimension, this is connected to the fact that periodic functions of smoothness $s>1/2$ can be expanded into absolutely convergent Fourier series, which is no longer true for $s=1/2$.  
\end{remark}

\addtocontents{toc}{\protect\pagebreak}
\chapter{Isotropic discrepancy of lattice point sets} \label{ch:iso}

In this final chapter we will consider the isotropic discrepancy, a quantity related to the radius of information for certain integration problems, where we restrict to equal-weight cubature algorithms using information either at specific point sets or random ones. As motivated in Chapter~\ref{ch:interlude}, the findings here complement the study of the radius of (random) standard information in Chapter~\ref{ch:sob}. More precisely, we study the isotropic discrepancy of a special type of structured point sets, called lattice point sets, and show that in dimension $d \ge 2$ they do not attain the optimal behaviour exhibited by random constructions.

After a rather general introduction, we present in Section~\ref{sec:iso-res} the results obtained together with F.~Pillichshammer in the publications \cite{PS20} and \cite{SP21}. The subsequent Sections~\ref{sec:iso-hyp} and \ref{sec:iso-lll} contain the ideas behind the proofs and Section~\ref{sec:iso-open} ends this chapter with open questions and opportunities for subsequent work.

\section{Introduction and motivation} \label{sec:iso-intro}

We feel that the motivation for the study of istropic discrepancy, which is a special kind of discrepancy related to convex sets, benefits from a more general perspective. Thus, consider a measure space $(D,\Sigma,\mu)$ with $\mu(D)=1$ and a set system $\cA\subset\Sigma$ containing $D$ as well as the empty set. Then, for any point set $\Pn=\{x_1,\ldots,x_n\}\subset D$, the corresponding equal-weight algorithm $Q_{\Pn}\colon f\mapsto \frac{1}{n}\sum_{i=1}^{n}f(x_i)$ as in \eqref{eq:qmc} incurs an error on the indicator function $\bfone_A\colon D\to\IR$ which can be bounded via
\begin{equation} \label{eq:disc-ineq}
\Big|\frac{1}{n}\sum_{i=1}^{n}\bfone_A(x_i)-\int_D \bfone_A(x)\dd\mu(x)\Big|
\le D_{\cA}(\Pn),\quad A\in\cA,
\end{equation}
where
\begin{equation} \label{eq:disc-def}
	D_{\cA}(\Pn)
:=\sup_{A\in\cA} \Big|\frac{\#(A\cap \Pn)}{n} - \mu(A)\Big|
\end{equation}
is the discrepancy of the point set $\Pn$ with respect to the system $\cA$ (and the measure $\mu$ which is omitted in the notation). 

Analytically, the discrepancy $D_{\cA}(\Pn)$ quantifies the uniform error made by the algorithm $Q_{\Pn}$ on the set of characteristic functions belonging to $\cA$, and geometrically it compares the maximal deviation between the fraction of points of $\Pn$ belonging to such a set and its $\mu$-volume. There is a whole theory related to this notion of irregularity of distribution, see, e.g., the books by Beck and Chen~\cite{BC08}, Drmota and Tichy~\cite{DT97}, and Kuipers and Niederreiter~\cite{KN74}.

In the following, we would like to construct a space of functions built from indicator functions such that the discrepancy $D_{\cA}(\Pn)$ is equal to the quality of $Q_{\Pn}$ for this space. For this purpose, we follow Pausinger and Svane~\cite{PS15} and start with the class of simple functions 
\begin{equation} \label{eq:simple}
\Sigma(\cA)
:=\Big\{\sum_{j=1}^{m}a_j \bfone_{A_j}\colon m\in\IN \text{ and }a_j\in \IR, A_j\in \cA \text{ for all }j=1,\ldots,m,\Big\}.
\end{equation}
The complexity of a simple function $f\in\Sigma(\cA)$ is captured by the sum of its coefficients, minimized over all representations, i.e.,
\[
V_{\cA}(f)
:=\inf\Big\{\sum_{j=1}^{m}|a_j|\colon f=a_0\bfone_{D}+\sum_{j=1}^{m}a_j \bfone_{A_j} \text{ with }m, a_j \text{ and } A_j \text{ as in \eqref{eq:simple}}\Big\}
\]
which is called the $\cA$-variation of $f$.  Note that the constant part does not contribute.

By direct calculation, we can extend \eqref{eq:disc-ineq} to simple functions and obtain
\begin{equation} \label{eq:kh}
\Big| \frac{1}{n}\sum_{i=1}^{n}f(x_i)-\int_D f(x)\dd\mu(x)\Big|
\le D_{\cA}(\Pn) V_{\cA}(f),\quad f\in\Sigma(\cA).
\end{equation}
It is important to note that this bound decomposes into a part depending only on the point set and another depending only on the function. For this reason one may call \eqref{eq:kh} a Koksma-Hlawka inequality in the spirit of a theorem of the same name, for which we refer to e.g.\ \cite[Ch.~2, Thm.~5.5]{KN74}. 

To extend \eqref{eq:kh} further, we use uniform limits of simple functions to find a suitable function space containing $\Sigma(\cA)$. Let $f\colon D\to\IR$ be a measurable function such that there exists a sequence of simple functions $(f_n)_{n\in\IN}\subset \Sigma(\cA)$ converging uniformly to $f$. Then define the $\cA$-variation of $f$ by
\[
V_{\cA}(f)
:=\inf\big\{\liminf_{n\to\infty} V_{\cA}(f_n)\colon f_n\in\Sigma(\cA), f_n\to f\text{ uniformly}\big\}.
\]
If we denote by $V(\cA)$ the space of all functions with finite $\cA$-variation, then it defines a seminorm on it which is zero for the constant functions.  We obtain
\begin{equation} \label{eq:disc-wce}
	\sup_{V_{\cA}(f)\le 1}
\Big| \frac{1}{n}\sum_{i=1}^{n}f(x_i)-\int_D f(x)\dd\mu(x)\Big|
=D_{\cA}(\Pn),
\end{equation}
where the upper bound follows from \eqref{eq:kh} and for the lower bound a sequence $(A_n)_{n\in\IN}\subset \cA$ with $V_{\cA}(\bfone_{A_n})\to D_{\cA}(\Pn)$ works. 

The identity \eqref{eq:disc-wce} states that the worst-case error of any equal-weight algorithm based on a point set over the functions with $\cA$-variation at most one is equal to the discrepancy with respect to $\cA$. This should be compared with \eqref{eq:lip-wce} relating the worst-case error of a cubature rule on functions with $|f|_{\lip}\le 1$ with the Wasserstein distance. Admittedly, the set $\{f\in V(\cA)\colon V_{\cA}(f)\le 1\}$ may seem less intuitive, as it is constructed by completing simple functions in the uniform norm. Thus, we will be more concrete and specialize to the case which interests us in this chapter.

Let now $D=[0,1)^d$ be equipped with the Lebesgue measure $\vol$ and consider the set system $\cA=\cK$, the class of convex sets contained in $[0,1)^d$. Then we call 
\begin{equation} \label{eq:iso-def}
D_{\cK}(\Pn)
:= \sup_{K\in\cK} \Big|\frac{\#(K\cap \Pn)}{n} - \vol(K)\Big|
\end{equation}
the isotropic discrepancy of the point set $\Pn=\{x_1,\ldots,x_n\}$. It is also denoted by $J_n(\Pn)$ instead, which is the notation used in \cite{PS20,SP21}. Its definition dates back to Hlawka~\cite{Hla64} and its name \chg{was coined by} Zaremba~\cite{Zar70} who apparently based it on the isotropy, that is the rotation invariance, of the family of convex sets. This is in \chg{contrast} to the family \chg{$\mathcal{A}$} of boxes with sides parallel to the coordinate axes used for estimating integration errors in anisotropic function spaces of dominating mixed smoothness.

The relation \eqref{eq:disc-wce} expresses the isotropic discrepancy as a worst-case error of the corresponding equal-weight or quasi-Monte Carlo rule. In this sense, it measures the quality of the information given by a point set for a specific integration problem. It can be deduced from \cite[Thm.~3.12]{PS15} that the twice continuously differentiable functions $C^2( (0,1)^d)$ are continuously embedded into the space $V_{\cK}$ if it is equipped with the norm $\|\cdot\|_{\infty}+V_{\cK}(\cdot)$. Additionally, the class $\cK$ encompasses many other set systems used for studying worst-case errors, as for example \chg{$\mathcal{A}$ which is} related to Hardy-Krause variation and worst-case errors in spaces of mixed smoothness. Therefore, it bounds the integration error of equal-weight rules in related function spaces.

Concerning further applications, Aistleitner, Brauchart and Dick~\cite{ABD12} used an area-preserving map $\Phi\colon [0,1)^2\to \IS^2$ to relate the isotropic discrepancy to the spherical cap discrepancy, where we equip $\IS^2$ with its normalized rotation invariant measure $\sigma^{(2)}$ and consider the system $\cC$ of all spherical caps
\[
{\rm Cap}(x,t)
:= \{y\in \mathbb{S}^{2}\colon \langle x, y\rangle \ge t\},\quad x\in\mathbb{S}^{2}, t\in [-1,1],
\]
that is, intersections of the sphere with \chg{half-spaces}. Through elaborate computations, they proved in \cite[Thm.~6]{ABD12} an upper bound for the spherical cap discrepancy of the mapped point set in terms of the isotropic discrepancy of the original point set $\Pn\subset [0,1)^2$, which is of the form
\begin{equation} \label{eq:iso-sphere}
	D_{\cC}\big(\Phi(\Pn)\big)
	\le 11\, D_{\cK}(\Pn).
\end{equation}
This was then applied to Fibonacci lattice points and certain digital nets, which are both classes of point sets with low discrepancy with respect to \chg{$\mathcal{A}$}. Discussing their results, they stated on p.~1001 in \cite{ABD12}: 
\begin{quote}
``Whether $(0,m,2)$-nets and/or Fibonacci lattices achieve the optimal rate of convergence for the isotropic discrepancy is an open question.''
\end{quote}
 This served as our original motivation to study the isotropic discrepancy of more general lattice point sets as introduced below. In the above question, the optimal rate is meant to be the rate of decay of the minimal isotropic discrepancy given by
\begin{equation} \label{eq:iso-min}
D_{\cK}(n)
:=\inf_{\#\Pn= n}D_{\cK}(\Pn),
\end{equation}
where the infimum is over all point sets $\Pn\subset [0,1)^d$ with $n$ points. In \cite{Sch75} Schmidt proved the lower bound
\begin{equation} \label{eq:iso-lower}
D_{\cK}(n)
\gtrsim n^{-2/(d+1)}
\end{equation}
using a now famous technique. This bound is attained up to a power of a logarithm as shown by Stute~\cite{Stu77} for $d\ge 3$ and Beck~\cite{Bec88} for $d=2$. The proofs rely on the probabilistic method and thus one does not know of an explicit point set achieving this upper bound. Interestingly enough, the proof of Stute shows that i.i.d.\ random points with density absolutely continuous to the Lebesgue measure satisfy this bound and are thus optimal up to a logarithmic factor. Note that this cannot hold for $d=2$ as the central limit theorem yields a rate of at best $n^{-1/2}$, see Chapter~\ref{ch:interlude}.

We investigate the isotropic discrepancy of lattice-point sets, defined as follows. 
\begin{definition}\label{def:lattice}
Let $d\in\IN, d\ge 2,$ and let
\[
L=\Big\{\sum_{i=1}^{d}k_i b_i\colon k_i\in\ZZ\text{ for }i=1,\dots,d\Big\}
\]
be a $d$-dimensional lattice spanned by linearly independent basis vectors $b_1,\ldots,b_d\in \IR^d$. If the lattice $L$ contains $\ZZ^d$, it is called an integration lattice and generates the lattice point set $\cP(L):=L\cap [0,1)^d$. 
\end{definition}

The Fibonacci lattice point set is a special case which can be generated by a single vector and thus is of rank one, see, e.g., \cite[Ch.~5.3]{Nie92}. To define it, let $F_1=F_2=1$ and $F_{m}=F_{m-1}+F_{m-2}$ for $m> 2$ be the Fibonacci numbers. Then the Fibonacci lattice point set with $F_n$ points is given by
\[
\mathcal{F}_n
=\Big\{\Big(\frac{i}{F_n},\Big\{\frac{iF_{n-1}}{F_n}\Big\}\Big),\quad i=0,\dots,F_{n}-1\Big\},
\]
where $\{\cdot\}\in [0,1)$ is the fractional part. For an illustration of $\mathcal{F}_8$ with $F_8=21$ points see Figure~\ref{fig:spectraltest} in Section~\ref{sec:iso-hyp} below.

Lattice point sets are used in numerical integration for constructing good equal weight rules, see, e.g., \cite[Ch.~5]{Nie92} as well as Sloan and Joe~\cite{SJ94}. In particular, the Fibonacci lattice point set achieves the best order for discrepancy with respect to \chg{$\mathcal{A}$}. It also has optimal dispersion, which measures the size of the largest empty axis-parallel box in the cube, see Breneis and Hinrichs \cite{BH20} as well as Lachmann and Wiart \cite{LW21}. Let us mention that also for the dispersion uniform random points are the best point sets known in certain (high) dimensions, see, e.g., Litvak and Livshyts \cite{LL21}.

However, the results presented in the next section will show that the special structure of lattice point sets compared to random points is a disadvantage with respect to isotropic discrepancy.

\begin{remark}
	The Voronoi cells of a lattice point set are of equal volume, at least when it is considered on the torus $\TT^d\simeq [0,1)^d$. Therefore, equal weights are optimal for the integration of (periodic) Hölder and Lipschitz functions as Proposition~\ref{pro:lip} shows. This \chg{justifies considering} only equal weights for the integration of functions from $V_{\cK}$ when using lattice point sets.
\end{remark}

\section{Discussion of results} \label{sec:iso-res}

Among the results we present here are a lower bound on the isotropic discrepancy of lattice point sets (Theorem~\ref{thm:iso-lower}), a characterization of their isotropic discrepancy in terms of the spectral test of the underlying lattice (Theorem~\ref{thm:iso-char}) and an equivalence for the radius of information of lattice point sets for $L_q$-approximation, $1\le q\le\infty$, in the Sobolev spaces $W^s_p([0,1)^d)$ (Theorem~\ref{thm:sob-lat}).

In \cite[Thm.~1]{PS20} we derived a lower bound on the isotropic discrepancy of lattice-point sets. We present a corrected version as follows, see Remark~\ref{rem:errata}.
\begin{theorem}[{\cite[Thm.~1]{PS20}}]\label{thm:iso-lower}
Let $\cP(L)$ be an $n$-element lattice point set in $[0,1)^d$. Then 
\[
D_{\cK}\big( \cP(L)\big) 
\ge \min\Big\{\frac{1}{2\sqrt{d}+1},\frac{c_d}{n^{1/d}}\Big\},
\]
where 
\[
c_d:=\frac{\sqrt{\pi}}{2\sqrt{d}+1} \left(\Gamma\left(\frac{d}{2}+1\right)\right)^{-1/d}.
\]
If additionally $\sigma(L)\leq 1/2$, then $c_d$ may be replaced by $c_d'=c \left(\Gamma\left(\frac{d}{2}+1\right)\right)^{-1/d}$ for some $c>0$ that is independent of $d$.
\end{theorem}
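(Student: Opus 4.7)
The plan is to exhibit, for every integration lattice $L$ with $\#\cP(L)=n$, a convex open subset $K\subset[0,1)^d$ containing no point of $\cP(L)$; then by the definition of the isotropic discrepancy, $D_{\cK}(\cP(L))\ge\vol(K)$. The key tool is the dual lattice $L^*=\{y\in\IR^d:\langle y,x\rangle\in\ZZ\text{ for all }x\in L\}$, which satisfies $L^*\subset\ZZ^d$ (since $L\supset\ZZ^d$) and $\det(L^*)=1/\det(L)=n$.

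First I would apply Minkowski's first theorem to $L^*$ and the Euclidean ball to obtain a nonzero $v^*\in L^*$ with $\|v^*\|_2\le 2(n/\omega_d)^{1/d}$, where $\omega_d=\pi^{d/2}/\Gamma(d/2+1)$. By the defining property of $L^*$, $\langle v^*,x\rangle\in\ZZ$ for every $x\in L$, so $L$ lies on the family of parallel hyperplanes $H_k=\{x:\langle v^*,x\rangle=k\}$, and the open slabs $S_k=\{x:k<\langle v^*,x\rangle<k+1\}$ are convex and lattice-point free. Since $v^*\in\ZZ^d$, the function $x\mapsto\langle v^*,x\rangle$ takes values on $[0,1]^d$ in an integer-endpoint interval of length $\|v^*\|_1\le\sqrt{d}\,\|v^*\|_2$, so at most that many slabs meet the cube. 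Pigeonhole together with the Minkowski bound then produces an $S_{k_0}$ with
\[
\vol\bigl(S_{k_0}\cap[0,1)^d\bigr)\;\ge\;\frac{1}{\|v^*\|_1}\;\ge\;\frac{\omega_d^{1/d}}{2\sqrt{d}\,n^{1/d}},
\]
and $K:=S_{k_0}\cap[0,1)^d$ yields $D_{\cK}(\cP(L))\ge\vol(K)$. Splitting into the regimes $n\le\omega_d$ versus $n>\omega_d$, and carefully accounting for the ``$+1$'' from the endpoints of the slab count, then recovers $\min\{1/(2\sqrt{d}+1),\,c_d/n^{1/d}\}$.

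For the refinement under $\sigma(L)=1/\lambda_1(L^*)\le 1/2$, equivalently $\|v^*\|_2\ge 2$, the factor $\sqrt{d}$ lost in the step $\|v^*\|_1\le\sqrt{d}\,\|v^*\|_2$ must be removed. The idea is to inscribe inside a single lattice-point-free slab a convex body whose volume scales with the slab \emph{thickness} $1/\|v^*\|_2\le 1/2$ (rather than with $1/\|v^*\|_1$), using the hypothesis $\sigma(L)\le 1/2$ precisely to guarantee that such a body fits in $[0,1)^d$; Minkowski then converts $1/\|v^*\|_2$ into $\Theta(\omega_d^{1/d}/n^{1/d})=\Theta(\Gamma(d/2+1)^{-1/d}/n^{1/d})$, giving $c_d'/n^{1/d}$. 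The main obstacle is quantitative rather than conceptual: in the first part, one must bookkeep the endpoint ``$+1$'' cleanly to obtain $(2\sqrt{d}+1)$ rather than $2\sqrt{d}$, while in the second part one must design the inscribed body inside the tilted slab--cube intersection so that its volume truly picks up $1/\|v^*\|_2$ with a dimension-free constant and not merely the weaker $\omega_d/\|v^*\|_2^d$ arising from a naive inscribed Euclidean ball, which would only give a bound of order $1/n$.
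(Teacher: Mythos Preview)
Your approach for the unconditional bound is correct and closely parallels the paper's, though with a mild twist: you use an \emph{empty slab} between adjacent hyperplanes (a convex set with positive volume and zero points), whereas the paper uses a \emph{crowded hyperplane} (a convex set with zero volume containing at least $n\sigma(L)/(\sqrt d+\sigma(L))$ points). Both rest on the same two ingredients---Minkowski's theorem applied to the dual lattice $L^\bot$ (your $L^*$) to bound the shortest dual vector, and a pigeonhole count over the $O(\sqrt d/\sigma(L))$ hyperplanes or slabs meeting the cube---and both produce the constant $(2\sqrt d+1)^{-1}$ after the ``$+1$'' bookkeeping. The paper packages these as a separate characterization $D_{\cK}(\cP(L))\ge\sigma(L)/(\sqrt d+\sigma(L))$ (Theorem~6.2) and a spectral-test lower bound $\sigma(L)\gtrsim\Gamma(d/2+1)^{-1/d}n^{-1/d}$ (Proposition~6.3), then combines them by splitting on $\sigma(L)\gtrless 1/2$; your splitting on $n\gtrless\omega_d$ amounts to the same thing.

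For the refinement under $\sigma(L)\le 1/2$, however, there is a genuine gap. You correctly diagnose the obstacle---one needs the slab--cube intersection to have volume $\asymp$ thickness $=\sigma(L)$ with a \emph{dimension-free} implied constant, not the $1/\sqrt d$ lost through $\|v^*\|_1\le\sqrt d\,\|v^*\|_2$---but you do not supply the tool that overcomes it. The paper's solution is to choose specifically the slab containing the cube's center $(1/2,\dots,1/2)$; under $\sigma(L)\le 1/2$ every hyperplane in that slab lies within distance $1/2$ of the center, and a recent result of K\"onig and Rudelson \cite{KR20} then guarantees that each such hyperplane section of the cube has $(d-1)$-dimensional volume bounded below by an absolute constant. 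Integrating across the slab gives $\vol(\text{slab}\cap[0,1)^d)\ge c\,\sigma(L)$ with $c$ independent of $d$. Without invoking this (or an equivalent) lower bound on near-central hyperplane sections of the cube, your ``design the inscribed body'' step remains a wish rather than an argument; pigeonhole alone cannot remove the $\sqrt d$.
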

Stirling's formula for the Gamma function $\Gamma(x)=\int_0^{\infty}t^{x-1}{\rm e}^{-t}\dd t, x>0$, see \eqref{eq:stirling}, implies
\[
c_d^{-1}\sqrt{\frac{\pi\,{\rm e}}{2}}\frac{1}{d}\to 1 \quad \text{as }d\to\infty.
\]

Theorem~\ref{thm:iso-lower} shows that the isotropic discrepancy of lattice point sets decays slower than the minimal isotropic discrepancy \eqref{eq:iso-min} and answers the question from \cite{ABD12} mentioned near the end of the introduction to this chapter in the negative. Therefore, the upper bound of \cite[Lem.~17]{ABD12} on the spherical discrepancy of mapped Fibonacci lattice points, which is of order $n^{-1/2}$ cannot be improved. We remark that Larcher~\cite{Lar89} has shown the lower bound of order $n^{-1/d}$ already for the class of lattice point sets of rank one by using a different approach than ours. In dimension $d\ge 3$, the lower bound of Theorem~\ref{thm:iso-lower} implies together with the discussion on the minimal isotropic discrepancy that random information prevails over information given by lattice point sets. 

We obtained the lower bound in Theorem~\ref{thm:iso-lower} using a characterization of the isotropic discrepancy of $\cP(L)$ in terms of the spectral test $\sigma(L)$ of the lattice given as follows.
\begin{definition}\label{def:spectral-dual}
The spectral test of a $d$-dimensional lattice $L$ is given by
\[
\sigma(L):=\frac{1}{\min\big\{\|h\|_2\colon h \in L^{\bot}\setminus\{0\}\big\}},
\]
where the dual lattice is given by
\[
L^{\bot}:=\left\{h \in \IR^d\colon \langle h, x \rangle \in \ZZ \text{ for all } x \in L\right\}.
\]
\end{definition}

We present the corrected version of Theorem~2 in \cite{PS20}, see again Remark~\ref{rem:errata}.

\begin{theorem}[{\cite[Thm.~2]{PS20},\cite[Thm.~1]{SP21}}]\label{thm:iso-char}
Let $\cP(L)$ be an $n$-element lattice point set in $[0,1)^d$. Then 
\[
\frac{\sigma(L)}{\sqrt{d}+\sigma(L)}
\le D_{\cK}\big(\mathcal{P}(L)\big)
\leq d \, 2^{2(d+1)} \sigma(L).
\]
If $\sigma(L)\le 1/2$, then the lower bound can be replaced by $c\,\sigma(L)$,  where $c>0$ does not depend on the dimension $d$.
\end{theorem}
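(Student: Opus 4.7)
The plan is to prove the lower and upper bounds by two essentially independent arguments. For the lower bound, I would exploit that all points of $\mathcal{P}(L)$ lie on a family of equally-spaced parallel hyperplanes, and exhibit a convex slab between two consecutive such hyperplanes that contains no lattice point but still has substantial Lebesgue volume. For the upper bound, I would reduce control of convex-set discrepancy to control at scale $\sigma(L)$ by approximating any convex body by lattice-aligned objects and paying for the boundary separately. I expect the upper bound to be the main obstacle, because it requires a \emph{simultaneous} bound on a surface-area term (which produces the exponential factor $2^{2(d+1)}$) and a Fourier-type tail controlled by the spectral test.

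For the lower bound, let $h \in L^{\perp} \setminus \{0\}$ realize $\|h\|_2 = 1/\sigma(L)$; since $L \supset \ZZ^d$ gives $L^\perp \subset \ZZ^d$, this $h$ is integral, and every $x \in L$ satisfies $\langle h,x\rangle \in \ZZ$. Thus for each $k\in\ZZ$ the open slab $S_k := \{x \in [0,1)^d : k<\langle h,x\rangle<k+1\}$ is convex and disjoint from $\mathcal{P}(L)$. The projection of $[0,1]^d$ onto $\IR\, h$ has length $\sum_i |h_i| \le \sqrt{d}\,\|h\|_2$ by Cauchy-Schwarz, so there are at most $\sqrt{d}/\sigma(L)+1$ nonempty slabs partitioning $[0,1]^d$ up to a null set. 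Pigeonhole yields a slab $S_{k^{*}}$ of volume at least $\sigma(L)/(\sqrt{d}+\sigma(L))$, and this slab alone witnesses the claimed lower bound in $D_{\cK}(\mathcal{P}(L))$. Under the extra assumption $\sigma(L) \le 1/2$, the $\sqrt{d}$ loss from pigeonhole should be avoided by invoking Vaaler's inequality: the central hyperplane section $\{x\in [0,1]^d:\langle h/\|h\|_2,x\rangle=t^{*}\}$ at the cube's center has $(d-1)$-volume at least $1$. Write the slab volume as $\int g(t)\,\mathrm{d}t$ with $g$ the $(d-1)$-cross-section function; $g$ is log-concave by Brunn's theorem and integrates to one, so from $g(t^{*})\ge 1$ together with a standard bound for log-concave densities ($g \ge g(t^{*})/e$ on an interval of length $\ge 1/(2g(t^{*}))$ around its maximum), the slab containing $t^{*}$ receives mass at least a dimension-free constant times $\sigma(L)$ whenever $\sigma(L)\le 1/2$.

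For the upper bound, the strategy is to cover $[0,1]^d$ by axis-aligned cubes of side $s$ comparable to $\sigma(L)$, distinguish cubes that lie inside $K$, outside $K$, or cross $\partial K$, and collect the resulting contributions. A convex body in $[0,1]^d$ has its boundary covered by at most $C\, d\, 2^{d}/s^{d-1}$ small cubes (the origin of the exponential factor in the theorem). The non-boundary cubes contribute $0$ to the discrepancy provided each small cube contains (nearly) exactly $n s^d$ points of $\mathcal{P}(L)$, which is where the spectral test must enter. The clean way to formalize this is to approximate $\bfone_K$ by $\bfone_K \ast \psi_\epsilon$ at scale $\epsilon\asymp \sigma(L)$, apply the torus Poisson summation identity
\[
\frac{1}{n}\sum_{x\in\mathcal{P}(L)}f(x)-\int_{[0,1]^d}f(x)\,\mathrm{d}x=\sum_{k\in L^{\perp}\setminus\{0\}}\hat f(k),
\]
use that $\|k\|_2\ge 1/\sigma(L)$ on $L^\perp\setminus\{0\}$ to bound the tail by the rapid decay of $\hat\psi_{\epsilon}$, and separately estimate the error $\bfone_K-\bfone_K\ast\psi_{\epsilon}$ by the $\epsilon$-neighborhood of $\partial K$. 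The core difficulty is balancing these two errors and showing that the cumulative cost is of order $d\,2^{2(d+1)}\sigma(L)$; the exponential factor is consistent with the dimension dependence in the number of boundary small cubes of a convex body in the unit cube, while the linear dependence on $\sigma(L)$ comes from the scale at which boundary and tail must be cut off.
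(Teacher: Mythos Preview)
Your slab argument is correct in spirit but differs from the paper's. The paper does not look for an \emph{empty} slab of large volume; instead it uses pigeonhole on the \emph{points}: since at most $\sqrt{d}/\sigma(L)+1$ hyperplanes of $\cH^*$ meet $[0,1)^d$, one of them---a convex set of volume zero---carries at least $\frac{\sigma(L)}{\sqrt{d}+\sigma(L)}\,n$ points, which already witnesses the bound. Your version (pigeonhole on slab volumes) gives essentially the same inequality and is equally valid. For the case $\sigma(L)\le 1/2$, however, your log-concavity sketch is incomplete: from $g(t^{*})\ge 1$ alone one cannot conclude that $g$ stays bounded below on an interval of length $\sigma(L)$, because the maximum $M=g(t^{*})$ could a priori be large and $g$ could collapse outside an interval of length $\asymp 1/M$. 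You would need Ball's theorem ($M\le\sqrt{2}$) or a comparable input. The paper sidesteps this by directly citing K\"onig--Rudelson, which gives a dimension-free lower bound for \emph{every} section of the cube at distance $\le 1/2$ from the centre; then the empty open slab between the two adjacent hyperplanes sandwiching the centre has volume $\ge c\,\sigma(L)$.

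\textbf{Upper bound.} Here your plan diverges substantially from the paper's and has a genuine gap. Tiling by axis-aligned cubes of side $s\asymp\sigma(L)$ does \emph{not} give ``each small cube contains nearly $ns^{d}$ points'': the spectral test controls the spacing of the hyperplane covering, not point counts in axis-parallel boxes, and a lattice with excellent $\sigma(L)$ can have very uneven box counts. The Fourier/Poisson approach you sketch is in principle workable but far from the stated constant: controlling $\sum_{k\in L^{\perp}\setminus\{0\}}\widehat{\bfone_K}(k)\,\widehat{\psi_{\epsilon}}(k)$ uniformly over convex $K$ requires nontrivial decay estimates for $\widehat{\bfone_K}$ (of Beck type), and there is no obvious route from there to the explicit factor $d\,2^{2(d+1)}$. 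The paper's argument is purely geometric and avoids Fourier analysis entirely: tile $\IR^{d}$ by translates of the \emph{fundamental parallelotope} $\fP$ of $L$ with respect to an LLL-reduced basis. Each cell contains exactly one lattice point, so only cells meeting $\partial K$ contribute, and the discrepancy is bounded by the volume of the $\mathrm{diam}(\fP)$-neighbourhood of $\partial K$. A Steiner-formula estimate gives this volume $\le 2^{d+3}\,\mathrm{diam}(\fP)$, and the LLL reduction yields $\mathrm{diam}(\fP)\le d\,2^{d-1}\sigma(L)$; multiplying gives $d\,2^{2(d+1)}\sigma(L)$. The key idea you are missing is to use the lattice's \emph{own} fundamental domain as the tiling, with a basis chosen so that its diameter is controlled by $\sigma(L)$.
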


To give some background, let us mention that the spectral test originated as a means to evaluate the randomness of linear congruential pseudorandom number generators, see Knuth~\cite[Ch.~3.3.4]{Knu98} and \cite[Ch.~7.2]{Nie92}. It is a well known numerical quantity to assess the coarseness of (integration) lattices and admits a convenient geometric interpretation which is developed in Section~\ref{sec:iso-hyp}, see also \cite[p.~29]{SJ94}. 

\begin{example}
	Consider $n=m^d$ for some $m\in\IN$. Then the lattice point set generated by the scaled integer lattice $(1/m)\ZZ^d$ contains $n$ points. Its dual is given by $m\ZZ^d$ and thus $\sigma\big( (1/m)\ZZ^d\big)=\|(m,0,\dots)\|_2=n^{-1/d}$. \chg{Its isotropic discrepancy appears to be unknown.}
\end{example}

\begin{example}
	Let $n=m^{d-1}$ for some $m\in\IN$. Then the lattice point set generated by $L=\ZZ\times(1/m)\ZZ^{d-1}$ has $n$ points and is contained in the hyperplane $\{0\}\times\IR^{d-1}$. Its dual is $L^{\bot}=\ZZ\times m\ZZ^{d-1}$ and $\sigma(L)=\|(1,0,\dots,0)\|_2=1$. Its isotropic discrepancy is one.
\end{example}
	
If $\sigma(L)>1/2$, then the lattice point set $\cP(L)$ is concentrated on a few hyperplanes, whereas the case $\sigma(L)\le 1/2$ corresponds to a more evenly distributed $\cP(L)$.  We have the following lower bound on the spectral test of an $n$-element lattice point set, which together with Theorem~\ref{thm:iso-char} implies Theorem~\ref{thm:iso-lower}.

\begin{proposition}[{\cite[Prop.~3]{PS20}}]\label{pro:spectral-lower}
Let $\cP(L)$ be an $n$-element lattice point set in $[0,1)^d$. Then 
\[
\sigma(L) \ge \frac{\sqrt{\pi}}{2} \left(\Gamma\left(\frac{d}{2}+1\right)\right)^{-1/d} \frac{1}{n^{1/d}}.
\]
\end{proposition}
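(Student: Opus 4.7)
The plan is to apply Minkowski's first theorem on lattice points in symmetric convex bodies to the dual lattice $L^{\bot}$, using a Euclidean ball as the test body. This will directly produce an upper bound on the shortest nonzero vector of $L^{\bot}$, which by Definition~\ref{def:spectral-dual} is the reciprocal of $\sigma(L)$.

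First I would compute the covolume of $L^{\bot}$ in terms of $n$. Since $L$ is an integration lattice, it contains $\ZZ^d$, so $\ZZ^d \subseteq L$ and the unit cube $[0,1)^d$ is a fundamental domain of $\ZZ^d$. The number of lattice points of $L$ that land in $[0,1)^d$ equals the index $[L:\ZZ^d]=n$, and therefore the covolume satisfies $\det(L)=1/n$. By the standard duality relation $\det(L)\cdot \det(L^{\bot})=1$ we obtain $\det(L^{\bot})=n$.

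Next I apply Minkowski's first theorem to $L^{\bot}$ and the centrally symmetric convex body $B(0,r)=\{x\in\IR^d\colon \|x\|_2\le r\}$. The theorem guarantees that if
\[
\vol\bigl(B(0,r)\bigr)\,\ge\,2^{d}\,\det(L^{\bot})\,=\,2^{d}n,
\]
then $B(0,r)$ contains a nonzero point of $L^{\bot}$. Since $\vol(B(0,r))=r^{d}\,\pi^{d/2}/\Gamma(d/2+1)$, choosing $r$ such that equality holds gives
\[
r\,=\,\frac{2\,n^{1/d}\,\Gamma(d/2+1)^{1/d}}{\sqrt{\pi}}.
\]
Consequently there exists $h\in L^{\bot}\setminus\{0\}$ with $\|h\|_2\le r$, so that
\[
\sigma(L)\,=\,\frac{1}{\min\{\|h\|_2\colon h\in L^{\bot}\setminus\{0\}\}}\,\ge\,\frac{1}{r}\,=\,\frac{\sqrt{\pi}}{2}\,\Gamma(d/2+1)^{-1/d}\,n^{-1/d},
\]
which is the claimed inequality.

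There is no real obstacle here: the only subtle point is getting the covolume bookkeeping correct (that $L\supseteq \ZZ^d$ implies $\det(L)=1/n$ rather than $n$, hence $\det(L^{\bot})=n$), after which Minkowski's theorem applied to the Euclidean ball yields the bound in one line.
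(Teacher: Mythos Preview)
Your proposal is correct and follows essentially the same route as the paper: apply Minkowski's first theorem to the dual lattice $L^{\bot}$ with a Euclidean ball, using $\det(L^{\bot})=n$, to bound the shortest nonzero dual vector and hence $\sigma(L)$. The only cosmetic difference is that you derive $\det(L^{\bot})=n$ from the index $[L:\ZZ^d]=n$ and the duality relation, whereas the paper simply cites this fact.
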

We did not find a proof of this result in the literature and thus provide a proof in Section~\ref{sec:iso-hyp} using Minkowski's theorem which is commonly used for bounding the length of the shortest vector in a lattice.

In Proposition~\ref{pro:spectral-lower} the rate $n^{-1/d}$ can be achieved by rank-one lattices when $n$ is prime, see Dick, Larcher, Pillichshammer and Wo\'zniakowski~\cite[Lem.~2]{DLP+11} who bound a figure of merit equivalent, up to constants in $d$, to the inverse of the spectral test.

\begin{remark}\label{rem:errata}
	Theorem~\ref{thm:iso-lower} and Theorem~\ref{thm:iso-char} are the corrected versions of \cite[Thm.~1 and Thm.~2]{PS20} respectively. The upper bound in the latter has been already repaired by \cite[Thm.~1]{SP21}; details are discussed in Remark~\ref{rem:erratum-upper} in Section~\ref{sec:iso-lll}. Recently, we discovered also an error in the lower bound of \cite[Thm.~2]{PS20} which affects also \cite[Thm.~1]{PS20} without the restriction of $\sigma(L)\le 1/2$, see Remark~\ref{rem:erratum-lower} in Section~\ref{sec:iso-hyp}.
\end{remark}
\begin{remark}
	The characterization in Theorem~\ref{thm:iso-char} is true for more general point sets \chg{of the form $(L+a)\cap [0,1)^d$ for some $a\in\IR^d$ which} contain $n$ points, but one has to adjust the constants. This follows from the fact that by volume comparison the volume of the fundamental domain is roughly the inverse of the number of points. 
\end{remark}

\begin{remark}
	Since, in dimension two, the minimal spherical cap discrepancy and isotropic discrepancy are of order $n^{-3/4}$ and $n^{-2/3}$ up to logarithmic factors, respectively, the bound of \cite{ABD12} stated in \eqref{eq:iso-sphere} cannot be used to prove that the mapped Fibonacci lattice point set has optimal spherical discrepancy as the numerics suggest. However, it may be used to \chg{improve} on the current rate of $n^{-1/2}$ for the best known deterministic constructions, see also Etayo~\cite{Eta21} for such a sequence of point sets apart from the mapped Fibonacci lattice point set. Note that the upper bound on the minimal spherical cap discrepancy was obtained by Beck using the Probabilistic method, more precisely, jittered sampling, see, e.g., \cite[Thm.~24D]{BC08}. See also the author's master thesis \cite{Son19} for more information on spherical discrepancy.
\end{remark}

The following application of Theorem~\ref{thm:iso-char} characterizes the radius of information given by a lattice point set with respect to the problems of approximation and integration of Sobolev functions on the cube.

\begin{theorem}[{\cite[Thm.~2]{SP21}}]\label{thm:sob-lat}
	Let $1\le p,q\le \infty$ and \chg{$s\in\IN$ as in \eqref{eq:embedding}}. For every lattice point set $\mathcal{P}(L)$ in $[0,1)^d$ it holds that
\[
r\big(W^{s}_p([0,1)^{d})\hookrightarrow L_q([0,1)^d),\mathcal{P}(L)\big)\,
 \asymp \,\sigma(L)^{s-d(1/p-1/q)_+}.
\]
Here, the asymptotic notation conceals constants independent of the integration lattice $L$.
\end{theorem}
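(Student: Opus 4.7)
The natural plan is to combine the general characterization in Theorem~\ref{thm:sob-main} with a geometric analysis of the distance function of a lattice point set. Applied to the convex open cube $(0,1)^d$ (which differs from $[0,1)^d$ by a null set), Theorem~\ref{thm:sob-main} yields
\[
r\big(W^s_p([0,1)^d)\hookrightarrow L_q([0,1)^d),\mathcal{P}(L)\big) \,\asymp\, \|\dist(\cdot,\mathcal{P}(L))\|_{L_\gamma([0,1)^d)}^\beta,
\]
where $(\beta,\gamma)=\big(s-d(1/p-1/q),\infty\big)$ if $q\ge p$ and $(\beta,\gamma)=\big(s,s(1/q-1/p)^{-1}\big)$ if $q<p$. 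In both cases $\beta$ coincides with $s-d(1/p-1/q)_+$, so Theorem~\ref{thm:sob-lat} reduces to the dimension-free equivalence
\[
\|\dist(\cdot,\mathcal{P}(L))\|_{L_\gamma([0,1)^d)}\,\asymp\,\sigma(L),
\]
with implicit constants independent of $L$ (but depending on $d$ and $\gamma$).

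For the lower bound I would exploit the hyperplane description of the spectral test to be developed in Section~\ref{sec:iso-hyp}. By definition of $\sigma(L)$ there exists $h\in L^\bot\setminus\{0\}$ with $\|h\|_2=1/\sigma(L)$, so every point of $L$ lies on one of the parallel hyperplanes $H_k=\{x\in\IR^d\colon \langle h,x\rangle=k\}$, $k\in\ZZ$, pairwise separated by Euclidean distance $\sigma(L)$. Any $x\in[0,1)^d$ whose distance to $\bigcup_k H_k$ exceeds $\sigma(L)/4$ satisfies $\dist(x,\mathcal{P}(L))\ge\dist(x,L)\ge\sigma(L)/4$. Since the inclusion $L\supset\ZZ^d$ forces $L^\bot\subset\ZZ^d$ and hence $\sigma(L)\le 1$, at least a constant fraction $c(d)>0$ of the volume of $[0,1)^d$ consists of such ``middle of the slab'' points, and integrating yields $\|\dist(\cdot,\mathcal{P}(L))\|_{L_\gamma}\gtrsim\sigma(L)$ for every $\gamma\in(0,\infty]$.

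For the upper bound it is enough to control $\|\dist\|_{L_\infty}$, since the $L_\gamma$-norm on the unit cube is dominated by the $L_\infty$-norm. The essential input is a transference theorem from the geometry of numbers bounding the covering radius $\rho(L)$ in terms of $\lambda_1(L^\bot)$, e.g.\ Banaszczyk's inequality $\rho(L)\cdot\lambda_1(L^\bot)\le d$, which translates to $\rho(L)\le d\,\sigma(L)$. Combined with the $\ZZ^d$-periodicity of $L$ (used to translate a Euclidean-nearest lattice point of $x\in[0,1)^d$ into the cube while retaining membership in $\mathcal{P}(L)$), this should give $\|\dist(\cdot,\mathcal{P}(L))\|_{L_\infty([0,1)^d)}\lesssim \sigma(L)$ uniformly in $L$. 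The main obstacle is precisely this upper bound: the transference input from the geometry of numbers is essential and seemingly cannot be replaced by a self-contained volume comparison without losing polynomial factors, and in addition the passage from the covering radius of the full lattice $L$ to the effective covering radius by the finite set $\mathcal{P}(L)$ requires careful bookkeeping of boundary effects near $\partial[0,1)^d$ so as not to introduce a further $\sigma(L)$-independent loss.
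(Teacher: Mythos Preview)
Your reduction via Theorem~\ref{thm:sob-main} to the equivalence $\|\dist(\cdot,\mathcal P(L))\|_{L_\gamma([0,1)^d)}\asymp\sigma(L)$ is exactly the paper's route; this equivalence is isolated as Proposition~\ref{pro:distspectral}. Your lower bound via the hyperplane covering is also the paper's argument (carried out in Appendix~\ref{sec:iso-proof-dist}): one shows the ``middle of the slab'' set has volume at least $1/2$ by counting that at most $\sqrt d/\sigma(L)+2$ slabs of width $2t\sigma(L)$ meet the cube.

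The only genuine difference is the transference input for the upper bound. You invoke Banaszczyk's inequality $\rho(L)\lambda_1(L^\bot)\le d$, whereas the paper uses the LLL-reduced basis already developed in Section~\ref{sec:iso-lll} (Lemma~\ref{lem:diam-bound}) to get $\mathrm{diam}(\mathfrak P)\le d\,2^{d-1}\sigma(L)$. Either works; Banaszczyk gives a better $d$-dependence, but the LLL route is self-contained within the thesis. More importantly, the paper dispatches the boundary issue you flag rather cleanly and without periodicity: take $r_0=\tfrac12\|\dist(\cdot,\mathcal P(L))\|_{L_\infty}$, pick an empty ball $B(y,r_0)$ with $y\in[0,1)^d$, use Lemma~\ref{lem:ballinball} (interior cone condition of the cube) to find $B(z,u_d r_0)\subset B(y,r_0)\cap[0,1)^d$, and observe that the LLL fundamental cell containing $z$ must lie entirely inside this ball once $\mathrm{diam}(\mathfrak P)\le u_d r_0$, forcing a lattice point into $\mathcal P(L)\cap B(y,r_0)$ --- a contradiction. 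This avoids any bookkeeping of integer translates near $\partial[0,1)^d$; your Banaszczyk bound could be slotted into the same argument by replacing ``fundamental cell containing $z$'' with ``nearest lattice point to $z$''.
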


Note that the Sobolev space $W^s_p\big([0,1)^d\big)$ is defined via restriction, see \eqref{eq:restriction}, and an analogous result holds for the space defined on $(0,1)^d$. Theorem~\ref{thm:sob-lat} is a consequence of Theorem~\ref{thm:sob-main} applied to the Sobolev space $W^s_p([0,1)^d)$ and the following equivalence, which may be interesting on its own. 
\begin{proposition}[{\cite[Prop.~8]{SP21}}]\label{pro:distspectral}
For every lattice point set $\mathcal{P}(L)$ in $[0,1)^d$ and every $0<\gamma \le \infty$, we have
\[
\sigma(L)
\lesssim_{d,\gamma} \bigl\|\dist\bigl(\cdot,\mathcal{P}(L)\bigr)\bigr\|_{L_{\gamma}([0,1)^d)}
\lesssim_d \sigma(L).
\]  
\end{proposition}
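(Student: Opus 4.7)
For the lower bound my plan is to use the shortest dual vector to produce a set of measure $1/2$ on which the distance function is already comparable to $\sigma(L)$. Let $h\in L^{\bot}\setminus\{0\}$ attain the minimum, so that $\|h\|_2=1/\sigma(L)$; since $\ZZ^d\subseteq L$ we have $L^{\bot}\subseteq\ZZ^d$, and hence $h\in\ZZ^d\setminus\{0\}$. The whole lattice $L$ sits in the hyperplane family $H_k:=\{y\in\IR^d\colon\langle h,y\rangle=k\}$, $k\in\ZZ$, with consecutive hyperplanes separated by the Euclidean distance $\sigma(L)$. Thus on the strip
\[
M:=\bigl\{x\in[0,1)^d\colon \dist(\langle h,x\rangle,\ZZ)\ge 1/4\bigr\}
\]
one has $\dist(x,\mathcal{P}(L))\geq\dist(x,L)\geq\sigma(L)/4$. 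Because $h$ is a nonzero integer vector, the map $x\mapsto\langle h,x\rangle\bmod 1$ pushes Lebesgue measure on $[0,1)^d$ forward to the uniform measure on $[0,1)$, so $\vol(M)=1/2$; integrating the pointwise bound over $M$ yields $\|\dist(\cdot,\mathcal{P}(L))\|_{L_\gamma([0,1)^d)}\gtrsim_{\gamma}\sigma(L)$, with the case $\gamma=\infty$ being immediate.

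For the upper bound it suffices to prove the $L_{\infty}$ estimate, as $[0,1)^d$ has unit volume. First I would invoke Banaszczyk's transference theorem to get $\lambda_d(L)\leq d/\lambda_1(L^{\bot})=d\,\sigma(L)$, and then the standard bound $\mu(L)\lesssim_d\lambda_d(L)$ for the covering radius of a lattice, yielding $\mu(L)\lesssim_d\sigma(L)$. If $\mu(L)>1/2$ then $\sigma(L)\gtrsim_d 1$, and the trivial estimate $\dist(x,\mathcal{P}(L))\leq\|x\|_2\leq\sqrt d$ (using $\mathbf{0}\in\mathcal{P}(L)$) is already of the right order. Otherwise, for each $x\in[0,1)^d$ I would pick an axis-parallel cube $Q$ of side $2\mu(L)+\varepsilon$, with $\varepsilon>0$ arbitrarily small, such that $x\in Q$ and $\overline{Q}\subseteq[0,1)^d$ (such a $Q$ exists since $x_i<1$ and $\mu(L)\le 1/2$). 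The ball inscribed in $Q$ has radius exceeding $\mu(L)$, so by the very definition of the covering radius it contains a point of $L$; this point lies in $\overline Q\subseteq[0,1)^d$, hence in $\mathcal{P}(L)$, and is at distance at most $\diam Q\lesssim\sqrt d\,\mu(L)\lesssim_d\sigma(L)$ from $x$.

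The hard part will be controlling the interaction between Euclidean distance on $[0,1)^d$ and the $\ZZ^d$-periodicity of $L$: the point of $L$ in $\IR^d$ closest to an $x\in[0,1)^d$ may sit just outside the cube, and reducing it modulo $\ZZ^d$ back into $\mathcal{P}(L)$ can shift it by as much as $\sqrt d$, which is far too large when $\sigma(L)$ is small. The cube-packing step above circumvents this boundary effect by forcing the search region to lie inside $[0,1)^d$; this is feasible precisely because Banaszczyk's transference guarantees that every axis-parallel cube of side $\gtrsim_d\sigma(L)$ already contains a lattice point, providing enough density near the boundary of the unit cube.
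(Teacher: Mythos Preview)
Your proof is correct and, in both halves, takes a somewhat different route than the paper.

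\textbf{Lower bound.} Both arguments pass through the same hyperplane family $\{\langle h,\cdot\rangle=k\}$ coming from a shortest dual vector $h$. The paper bounds the measure of the ``fat'' region near these hyperplanes by counting how many hyperplanes meet the cube and estimating each slab's $(d-1)$-volume, obtaining a $t_d$ depending on $d$. Your version is sharper and shorter: because $\ZZ^d\subset L$ forces $h\in\ZZ^d$, the map $x\mapsto\langle h,x\rangle\bmod 1$ pushes Lebesgue measure on $[0,1)^d$ to the \emph{exact} uniform measure on $[0,1)$, so $\vol(M)=1/2$ with no estimation needed. This buys you a lower bound whose $\gamma$-dependent constant is $2^{-1/\gamma}/4$, independent of $d$, whereas the paper's is genuinely dimension-dependent.

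\textbf{Upper bound.} Here the two proofs part ways. The paper bounds the diameter of a fundamental parallelotope for an LLL-reduced basis by $d\,2^{d-1}\sigma(L)$ (Lemma~\ref{lem:diam-bound}) and then argues by contradiction: if the largest empty ball in $\Omega$ had radius $\gtrsim_d\sigma(L)$, an interior-cone/ball-in-ball step would fit a whole fundamental cell inside it, impossible since every cell contains a lattice point. You instead invoke Banaszczyk's transference inequality to bound the covering radius $\mu(L)\lesssim_d\sigma(L)$ directly, and then handle the cube boundary by sliding an axis-parallel cube of side $2\mu(L)+\varepsilon$ so that it stays in $[0,1)^d$. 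Your route is cleaner conceptually (it identifies the relevant lattice quantity, $\mu(L)$, by name) but imports a stronger black box; the paper's LLL argument is more elementary and self-contained, at the cost of worse constants. One cosmetic point: your case split should be at $\mu(L)<1/2$ rather than $\le 1/2$, since with side $2\mu(L)+\varepsilon$ you need strict room to fit $\overline Q$ inside $[0,1)^d$; splitting at, say, $\mu(L)\le 1/3$ makes the write-up painless.
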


The proof and thus the proof of Theorem~\ref{thm:sob-lat} will be given in Appendix~\ref{sec:iso-proof-dist}. The distortion of sets arising from lattices has been also studied in the context of lattice quantizers, see, e.g., Conway and Sloane \cite{CS82} as well as \cite[Ch.~8]{GL00}.

Given a sequence of lattice point sets, we can interpret Theorem~\ref{thm:sob-lat} as follows. 
\begin{corollary}\label{cor:optimal-lattices}
A sequence of lattice point sets $\big(\cP(L_k)\big)_{k\in\IN}$ is asymptotically optimal for the $L_q$-approximation of Sobolev functions from $W^s_p([0,1)^d)$ if and only if the spectral test behaves optimally, i.e., 
\[
r\big(W^{s}_p([0,1)^{d})\hookrightarrow L_q([0,1)^d),\mathcal{P}(L_{k})\big)
\asymp r\big(W^{s}_p([0,1)^{d})\hookrightarrow L_q([0,1)^d),n_k\big)
\]
if and only if 
\[
\quad\sigma(L_{n_k})\asymp n_k^{-1/d}, 
\]
where $n_k:=\#\cP(L_k)$.
\end{corollary}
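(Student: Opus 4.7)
The plan is to simply combine Theorem~\ref{thm:sob-lat} with the known asymptotics of the minimal radius of information for Sobolev embeddings, and verify that the resulting condition on $\sigma(L_k)$ is exactly $\sigma(L_k)\asymp n_k^{-1/d}$.

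First, I would rewrite both sides of the asymptotic equivalence using the available ingredients. By Theorem~\ref{thm:sob-lat}, for each $k\in\IN$,
\[
r\big(W^{s}_p([0,1)^{d})\hookrightarrow L_q([0,1)^d),\mathcal{P}(L_{k})\big)
\asymp \sigma(L_k)^{\alpha},
\qquad \alpha:=s-d(1/p-1/q)_+,
\]
where the implicit constants are independent of $L_k$. On the other hand, the optimal rate of the minimal radius of information stated in \eqref{eq:minrad-app} (valid on bounded Lipschitz domains, in particular on the cube) gives
\[
r\big(W^{s}_p([0,1)^{d})\hookrightarrow L_q([0,1)^d),n_k\big)
\asymp n_k^{-s/d+(1/p-1/q)_+}
= n_k^{-\alpha/d}.
\]

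Next, I would note that the exponent $\alpha$ is strictly positive. Indeed, in the regime $q\ge p$ we have $\alpha = s - d/p + d/q > 0$ since $s>d/p$, while in the regime $q<p$ we have $\alpha = s > 0$. Hence we may raise the asymptotic equivalence $\sigma(L_k)^{\alpha} \asymp n_k^{-\alpha/d}$ to the power $1/\alpha$ without changing its validity, and the two sides of the displayed equivalence in Corollary~\ref{cor:optimal-lattices} are equivalent if and only if $\sigma(L_k)\asymp n_k^{-1/d}$.

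There is no real obstacle here; the corollary is essentially an algebraic consequence of Theorem~\ref{thm:sob-lat} combined with the known sampling numbers of Sobolev embeddings on the cube. The only point requiring a line of care is checking positivity of $\alpha$ (so that exponentiation preserves the equivalence) and noting that the implicit constants in both invocations are independent of $L_k$, so that the combined implicit constants can be absorbed into the single asymptotic equivalence defining optimality.
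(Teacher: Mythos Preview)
Your proposal is correct and matches the paper's intended derivation: the corollary is stated immediately after Theorem~\ref{thm:sob-lat} as ``an interpretation'' of it, with no separate proof given, so combining Theorem~\ref{thm:sob-lat} with the sampling number asymptotics~\eqref{eq:minrad-app} and checking $\alpha>0$ is exactly what is meant.
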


By Theorem~\ref{thm:iso-char} one can replace the spectral test by isotropic discrepancy in Theorem~\ref{thm:sob-lat} and Corollary~\ref{cor:optimal-lattices}. \chg{The latter shows} that lattice point sets with small spectral test may attain the optimal rates for the integration problem in Sobolev spaces but fail to attain the optimal rate for the isotropic discrepancy. Perhaps, this may be deduced from the function spaces $W^s_p([0,1)^d)$ and $V(\cK)$ themselves.

\newpage

\section{Hyperplane coverings and lower bounds} \label{sec:iso-hyp}

We will give more details on the geometric interpretation of the spectral test $\sigma(L)$ of a $d$-dimensional integration lattice $L\subset \IR^d$ and then use it to obtain lower bounds.
\begin{figure}[ht]
	\centering
	\begin{subfigure}{0.3\textwidth}
		\centering
    \begin{tikzpicture}[scale=3.5]
		\draw[black] (0,0) rectangle (1,1);
			\foreach \i in {0,...,20}
				 \fill +({\i/21},{\i*13/21-floor(\i*13/21)}) circle ({1/(50)});	
			\end{tikzpicture}
			\qquad
	\end{subfigure}
	\begin{subfigure}{0.3\textwidth}
		\centering
    \begin{tikzpicture}[scale=3.5]
	\begin{scope}    	 
    	  \clip (0,0) rectangle (1,1);
    	  	\foreach \i in {1,...,9}	
	  \draw[shorten >=-3cm, shorten <=-3cm,black] (\i*1/65,\i*8/65)--(\i*64/65,0);
	\end{scope}	
	
	\draw[black] (0,0) rectangle (1,1);
   	\foreach \i in {0,...,20}
    	 \fill +({\i/21},{\i*13/21-floor(\i*13/21)}) circle ({1/(50)});	
    	 \draw[->,black,opacity=.8,thick] (4/65+3*8/65,4*8/65-3*1/65)--(3*1/65+3*8/65,3*8/65-3*1/65);
    \end{tikzpicture}
	\end{subfigure}
	\begin{subfigure}{0.3\textwidth}
		\centering
    \begin{tikzpicture}[scale=3.5]
              
	\begin{scope}    	 
    	  \clip (0,0) rectangle (1,1);
    	  \foreach \i in {1,...,5}
    	 \draw[shorten >=-3cm, shorten <=-3cm, black] ({\i*2/21},{\i*5/21})--({\i*5/21,\i*2/21}); 
	\end{scope}	
	
	\draw[black] (0,0) rectangle (1,1);
   	\foreach \i in {0,...,20}
    	 \fill +({\i/21},{\i*13/21-floor(\i*13/21)}) circle ({1/(50)});	
	\draw[->,black, thick,opacity=.8] (3*7/42,3*7/42)--(27.5/42,27.5/42);
	
	\draw[<->, black, opacity=.8,thick] ((18.5/42,9.5/42) --(25.5/42,16.5/42);

	\node[anchor=north, black] at (25/42,14/42) {\small $\sigma(L)$};
	
    \end{tikzpicture}
	\end{subfigure}
	\\
	\vspace{.2cm}
	\hspace{1cm}
	\begin{subfigure}{.3\textwidth}
    \begin{tikzpicture}[scale=0.21]
    \begin{scope}[yshift=.9cm]
    \clip (-8.5,-8.5) rectangle (8.5,8.5);
    
    	\foreach \i in {-10,...,10}
    	  \foreach \j in {-10,...,10} 
    	   {\fill[black!30] (\i,\j) circle ({.2});}
    		  		 
      \foreach \i in {-5,...,5}
    	  \foreach \j in {-5,...,5}
    	\fill[black] ({(\i*2+\j*3)},{(-\i*5+\j*3)}) circle ({0.4});
    		\fill[white] (0,0) circle (.25);
  
    	  \end{scope}
		
		 \end{tikzpicture}
	\end{subfigure}
	\begin{subfigure}{.3\textwidth}
    \begin{tikzpicture}[scale=0.21]
    \begin{scope}[yshift=.9cm]
    \clip (-8.5,-8.5) rectangle (8.5,8.5);
    
    	\foreach \i in {-10,...,10}
    	  \foreach \j in {-10,...,10} 
    	   {\fill[black!30] (\i,\j) circle ({.2});}
    		  		 
      \foreach \i in {-5,...,5}
    	  \foreach \j in {-5,...,5}
    	\fill[black] ({(\i*2+\j*3)},{(-\i*5+\j*3)}) circle ({0.4});
    		\fill[white] (0,0) circle (.25);
  
	\fill[black!80] (-1,-8) circle (.25);
    	  \draw[->,black!80,opacity=.8, thick] (-0.1,-0.3)--(-0.9,-7.7);  
    	  \end{scope}
		 \end{tikzpicture}
		
	\end{subfigure}
	\begin{subfigure}{.3\textwidth}
    \begin{tikzpicture}[scale=0.21]
    \begin{scope}[yshift=.9cm]
    \clip (-8.5,-8.5) rectangle (8.5,8.5);
    
    	\foreach \i in {-10,...,10}
    	  \foreach \j in {-10,...,10} 
    	   {\fill[black!30] (\i,\j) circle ({.2});}
    		  		 
      \foreach \i in {-5,...,5}
    	  \foreach \j in {-5,...,5}
    	\fill[black] ({(\i*2+\j*3)},{(-\i*5+\j*3)}) circle ({0.4});
    		\fill[white] (0,0) circle (.25);
  
    	  \fill[black!70] (3,3) circle (.25);
    	  \draw[->,black,opacity=.8,thick] (0.3,0.3)--(2.7,2.7);
    	  \end{scope}
		 \end{tikzpicture}
	\end{subfigure}
	\caption{The first row depicts a Fibonacci lattice point set $\cP(L)\subset [0,1)^d$ together with two different hyperplane coverings, whose distance is marked by an arrow. \chg{Its length is} inverse to the vector in the dual lattice $L^{\bot}$, shown in the second row, which generates the hyperplane covering. The lattice in grey depicts \chg{$\ZZ^d\supset L$}.}
	\label{fig:spectraltest}
\end{figure}
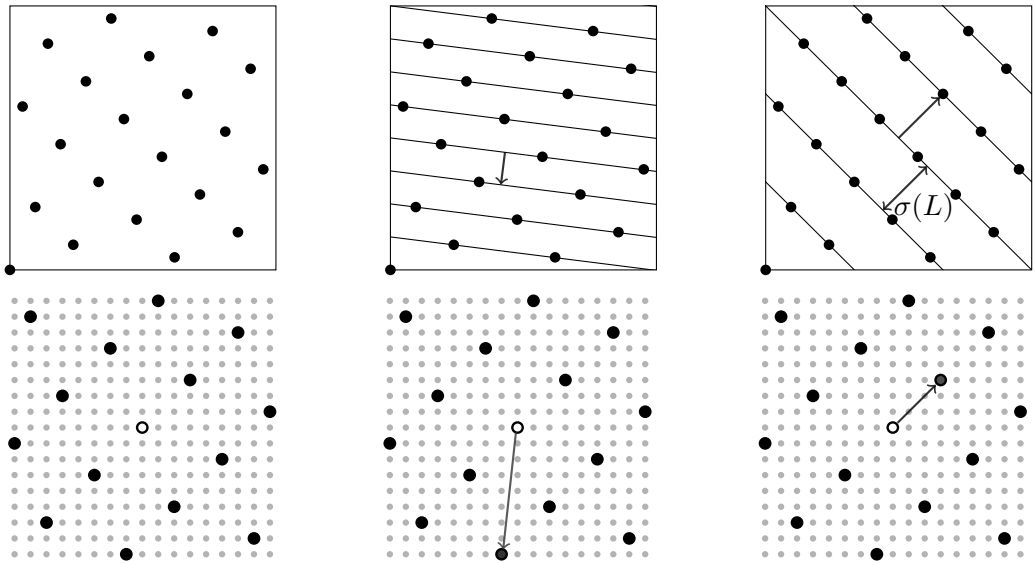

\label{loc:chstar}
Following the presentation in Hellekalek~\cite{Hel98}, we call a family $\cH$ of parallel hyperplanes a covering of $L$ if $L\subset \bigcup_{H\in\cH} H$ and every $H\in\cH$ contains some lattice point of $L$. Then the distance between any two adjacent hyperplanes in $\cH$ is equal to some number $d(\cH)\ge 0$. In fact, every primitive vector $x$ in the dual $L^{\bot}$ gives rise to a covering of $L$ by parallel hyperplanes all \chg{orthogonal} to $x$. The distance between adjacent hyperplanes in this covering is exactly $\|x\|_2^{-1}$. The spectral test is equal to the supremum $\sup_{\cH}d(\cH)$ over all coverings of $L$ by parallel hyperplanes, which is attained by the covering of hyperplanes $\cH^*=\cH^*(L)$ for which all hyperplanes are \chg{orthogonal} to the shortest vector in the dual lattice $L^{\bot}$. \chg{See Figure~\ref{fig:spectraltest} for an illustration.} Let us note that here a lattice vector is called primitive if it is non-zero and the line connecting it to the origin does not contain another lattice vector.

In the following, we will only consider the covering $\cH^*$ and use it to explain the idea behind the lower bound on the isotropic discrepancy, Theorem~\ref{thm:iso-lower}, which follows from the lower bound in Theorem~\ref{thm:iso-char} together with Proposition~\ref{pro:spectral-lower}. Detailed proofs can be found in Appendix~\ref{sec:iso-proof-lower}. The proof of the lower bound of Theorem~\ref{thm:iso-char} is divided into two cases depending on whether we suppose $\sigma(L)\le 1/2$ or not. 

In the case without any assumption on $\sigma(L)$ we will find a convex set which has no volume and contains some fraction of the points. This is achieved by intersecting $[0,1)^d$ with a hyperplane $H\in\cH^*$ containing sufficiently many points. The existence of such \chg{a hyperplane} will be a consequence of the pigeonhole principle and the fact that not too many hyperplanes of $\cH^*$ have a non-empty intersection with the unit cube as they are separated by a distance of \chg{at least} $\sigma(L)$.

\begin{remark}\label{rem:erratum-lower}
	The proof of \cite[Thm.~1]{PS20} contains a flawed estimate of the number of hyperplanes from $\cH^*$ intersecting $[0,1)^d$. There, the upper bound $\lfloor \sqrt{d}/\sigma(L)\rfloor$ was claimed but there can be in fact as many as $\lceil \sqrt{d}/\sigma(L)\rceil \le \sqrt{d}/\sigma(L)+1$. The correction performed in this thesis gives a slightly worse bound but retains the asymptotic behaviour. Here, for any $a\in\IR$, $\lceil a\rceil$ is the smallest integer not smaller than $a$. 
\end{remark}

The lower bound with the assumption $\sigma(L)\le 1/2$ relies on a recent result due to König and Rudelson~\cite{KR20} who have shown the following. 
\begin{quote}
	If an affine subspace has distance less than $1/2$ to the center of the unit cube, then the volume of their intersection is at least a positive constant only depending on the codimension of the subspace.
\end{quote}
 We apply this to the continuum of parallel hyperplanes between two adjacent hyperplanes from $\cH^*$ whose convex hull contains the center of the cube. In this way, we get a lower bound on the volume of the cube intersected with the interior of this convex hull, which contains no points of $\cP(L)$, and thus also a lower bound on the isotropic discrepancy. 

\section{Short lattice vectors and an upper bound} \label{sec:iso-lll}

For the proof of the upper bound of Theorem~\ref{thm:iso-char} we have to show that for any convex set $K\subset [0,1)^d$ the fraction of points of $\cP(L)$ contained in $K$ is not too far from its volume. For this purpose, we rely on the lattice structure to form a partition into cells of volume $1/n$ containing exactly one point. Then the discrepancy of the set $K$ depends only on cells or points near the boundary of $K$.

That kind of argument appears throughout the discrepancy literature, even though sometimes superficially (especially in high dimensions). In particular, this applies to jittered or stratified sampling where in each cell a point is uniformly distributed and which can be used to derive upper bounds on the discrepancy with respect to various set systems, see, e.g., Chapter 3 in Matou\v{s}ek's book \cite{Mat10} or \cite[Ch.~8]{BC08}. In equation (3) in the latter, an estimate on the number of intersecting cells was given, however with a hidden constant. Confer also Drmota and Tichy \cite[Sec.~2.1.2]{DT97} for another presentation. In \cite[Sec.~8]{BCC+19}, Brandolini, Chen, Colzani, Gigante and Travaglini extend this method to metric measure spaces, a key hypothesis being a bound on the measure of the neighbourhood of the boundary of the involved sets. Similar to Corollary~8.3 there, we shall derive such a bound for convex sets and give an explicit constant (see Corollary~\ref{cor:neighbourhood}). This discussion motivates a more detailed description of the proof technique of the upper bound of Theorem~\ref{thm:iso-char}; remaining proofs may be found in Appendix~\ref{sec:iso-proof-upper}. 

Consider a tiling of $\IR^d$ by disjoint translations of the following set.
\begin{definition}\label{def:fundamental}
	Let $L$ be a $d$-dimensional lattice with basis $b_1,\dots,b_d$. Then the fundamental domain or cell with respect to this basis is given by
\begin{equation} \label{eq:fundamental}
\fP:=\Big\{\sum_{i=1}^{d}\lambda_i b_i\colon 0\le \lambda_i <1\Big\}. 
\end{equation}
\end{definition}
Its definition implies that for every $x\in L$ the set $\fP_x:=\fP+x$, which is a shifted fundamental domain, contains no other point of $L$ besides $x$. Also, the sets $\fP_x$ and $\fP_y$ are disjoint for $x\neq y$ both in $L$. We have the disjoint union
$
\IR^d
=\bigcup_{x\in L}\fP_x
$.

\label{loc:det}
The volume of $\fP$ and thus any such cell is independent of the chosen basis and called the determinant $\det(L)$ of $L$. A peculiar property of $L$ being an integration lattice is that $\det(L)=1/n$, where $n=\#\cP(L)$.

For an arbitrary convex set $K\subset [0,1)^d$, its signed discrepancy 
\[
D^{\pm}(\cP(L),K)
:=\frac{\#(\cP(L)\cap K)}{n} - \vol(K)
\]
can be decomposed into the contributions made by each cell, that is,
\begin{equation} \label{eq:decomposition}
D^{\pm}(\cP(L),K)
=\frac{1}{n}\sum_{x\in L}\Big(\bfone_K(x) -n \, \vol(K\cap \fP_x)\Big).
\end{equation}

If a cell is completely contained in $K$, then it does not contribute to the sum as it contains exactly one point of $\cP(L)$ and its intersection with $K$ has exactly volume $1/n$. Also a cell not intersecting $K$ does not contribute, since then the corresponding lattice point is not in $K$ and the volume of $K$ intersected with the cell is zero. 

In order to estimate the right-hand side of \eqref{eq:decomposition}, consider the union of cells inside of $K$ and of those intersecting $K$, that is, define
\[
\Uin=\bigcup_{\substack{x\in L:\\ \fP_x\subset K}} \fP_x\quad 	\text{and}\quad
\Uout=\bigcup_{\substack{x\in L:\\ \fP_x\cap K\neq \emptyset}} \fP_x.
\]
Then we have the inclusions $\Uin\subset K \subset \Uout$. 

We get a lower bound by ignoring lattice points not belonging to $\Uin$ and thus
\[
D^{\pm}(\cP(L),K)
\ge \frac{1}{n}\sum_{x\in L\cap \Uin}\big(\bfone_K(x) - n\,\vol(\fP_x)\big)- \vol(K\setminus \Uin)
=\vol(K\setminus \Uin).
\]
Note that $\vol(\fP_x)=\frac{1}{n}$ for all $x\in L$. Similarly, we get an upper bound by adding lattice points whose cells intersect $K$ and thus
\[
D^{\pm}(\cP(L),K)
\le \frac{1}{n}\sum_{x\in L\cap \Uout}\big(\bfone_{\Uout}(x) - n\,\vol(\fP_x)\big)+ \vol(\Uout\setminus K)
=\vol(\Uout\setminus K).
\]
Taken together, they imply for the absolute value of the signed discrepancy, the discrepancy of $K$, that 
\[
D(\cP(L),K)
:=|D^{\pm}(\cP(L),K)|
\le \max\{\vol(K\setminus \Uin),\vol(\Uout\setminus K)\}.
\]
Therefore, the discrepancy is bounded in terms of the cells intersecting the boundary of $K$. In the following, we will obtain an upper bound of the volumes in the maximum by the diameter of the cell $\fP$ and bound the latter in terms of the spectral test. Taking the supremum over all convex $K\subset [0,1)^d$ then proves the upper bound of Theorem~\ref{thm:iso-char}.

The sets $K\setminus \Uin$ and $\Uout \setminus K$ are contained in $K_{\dia}^-$ and in $K_{\dia}^+$, respectively, where, for any $\varrho\in \IR$, 
\[
K_{\rho}^-:=\{x\in K\colon \dist(x,K^C)\le \rho\}\quad\text{and}\quad K_{\rho}^+:=\{x\in K^C \colon  \dist(x,K)\le \rho\}
\]
are the inner and the outer $\rho$-neighborhood, respectively. Thus, we have the bound
\[
D(\cP(L),K)
\le \max\big\{\vol\big(K_{\dia}^-\big),\vol\big(K_{\dia}^+\big)\big\},
\]
which is more effective if the diameter of the cells is small. 

We continue with the help of the following lemma.

\begin{lemma}\label{lem:neighbourhood}
For any $\rho\in [0,1]$ we have $\max\{\vol(K_{\rho}^+),\vol(K_{\rho}^-)\}\le 2^{d+3}\rho$.
\end{lemma}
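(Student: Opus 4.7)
The plan is to bound the two parts of the maximum separately by applying classical identities for parallel bodies of convex sets together with the monotonicity of surface area and intrinsic volumes on convex bodies. The common input is that $K\subset[0,1]^d$ is convex, so it is dominated by the unit cube in every isoperimetric sense that will be needed.

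First I would treat the inner neighborhood. For a convex body $K$, parametrizing $K_\rho^-$ as the image of $\partial K \times [0,\rho]$ under $(y',t)\mapsto y'-t\,\nu_{K}(y')$, where $\nu_{K}$ is the outer unit normal, gives a map whose Jacobian does not exceed $1$ (the contributions of the principal curvatures are non-negative for convex $K$). The area formula then yields the isoperimetric bound
\[
\vol(K_\rho^-) \;\le\; \rho\, S(K).
\]
Monotonicity of surface area on convex bodies gives $S(K)\le S([0,1]^d)=2d$, and since $2d\le 2^{d+3}$ for every $d\ge 1$ the required bound follows.

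Next I would treat the outer neighborhood using Steiner's formula for the outer parallel body of a convex set,
\[
\vol(K+\rho B_2^d)-\vol(K) \;=\; \sum_{i=0}^{d-1}\rho^{d-i}\,\kappa_{d-i}\,V_{i}(K),
\]
where $V_i(K)$ are the intrinsic volumes and $\kappa_j=\vol(B_2^j)$. Monotonicity of intrinsic volumes on convex sets gives $V_i(K)\le V_i([0,1]^d)=\binom{d}{i}$, and $\rho^{d-i}\le\rho$ whenever $\rho\in[0,1]$ and $d-i\ge 1$; hence
\[
\vol(K_\rho^+) \;\le\; \rho\sum_{k=1}^{d}\binom{d}{k}\kappa_k \;=\; \rho\bigl(\vol([0,1]^d+B_2^d)-1\bigr).
\]
The sequence $(\kappa_k)_{k\ge 0}$ is uniformly bounded, attaining its maximum at $k=5$ with $\kappa_5=8\pi^2/15<8$; therefore $\sum_{k=0}^{d}\binom{d}{k}\kappa_k\le 8\cdot 2^d=2^{d+3}$, and $\vol(K_\rho^+)\le 2^{d+3}\rho$ follows.

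The main obstacle is the outer estimate: the naive inclusion $K+\rho B_2^d\subset[-\rho,1+\rho]^d$ gives only $\vol(K_\rho^+)\le(1+2\rho)^d-1$, which exceeds $2^{d+3}\rho$ already in moderate dimensions. Recovering the clean constant $2^{d+3}$ forces one to use Steiner's identity intrinsically and to package the binomial-times-ball-volume sum as $\vol([0,1]^d+B_2^d)$, so that the uniform bound $\kappa_k<8$ (rather than the loose $\kappa_k\le 2^k$) can be exploited; this is also where the hypothesis $\rho\le 1$ enters, through the reduction $\rho^{d-i}\le\rho$.
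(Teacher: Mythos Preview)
Your treatment of the outer neighborhood is essentially the paper's proof: both use Steiner's formula, monotonicity of intrinsic volumes (equivalently, quermassintegrals) to pass to the cube, the reduction $\rho^{j}\le\rho$ for $\rho\le 1$, and the uniform bound $\kappa_k\le\kappa_5<8$ to conclude $\sum_{k=1}^d\binom{d}{k}\kappa_k\le 2^{d+3}$.

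For the inner neighborhood you take a genuinely different and more direct route. The paper first proves a comparison lemma $\vol(K_\rho^-)\le\vol(K_\rho^+)$ (via the derivative identity $\frac{d}{d\rho}\vol(K_\rho)=d\,W_1(K_\rho)$ together with the mean value theorem and monotonicity of $W_1$), thereby reducing the inner case to the already-handled outer one. Your argument bypasses this reduction and bounds $\vol(K_\rho^-)\le\rho\,S(K)\le 2d\rho$ directly, which is in fact sharper and avoids the auxiliary lemma entirely. One caveat on rigor: the parametrization $(y',t)\mapsto y'-t\,\nu_K(y')$ is a bijection onto $K_\rho^-$ only away from the medial axis of $K$, and the Jacobian bound $\prod_i(1-t\kappa_i)\in[0,1]$ holds precisely on that set (the nearest-point condition there forces $t\kappa_i\le 1$); past the focal distance the Jacobian can exceed $1$ in absolute value, so the blanket claim ``Jacobian does not exceed $1$'' is not literally true on all of $\partial K\times[0,\rho]$. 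A clean way to make your argument rigorous, still without the paper's comparison lemma, is to integrate $S(K_{-t})\le S(K)$ over $t\in[0,\min\{\rho,r(K)\}]$ using the same derivative identity, or equivalently to apply the coarea formula to $\dist(\cdot,\partial K)$.
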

The proof of this bound on the volume of a neighborhood of a convex set will be provided in Appendix~\ref{sec:iso-proof-upper}. It relies on Steiner's formula which expresses, for every $\rho> 0$, the volume of the Minkowski sum $K+\rho \IB_2=\{x\in\IR^d\colon \dist(x,K)< \rho\}$ of $K$ with a ball of radius $\rho>0$ as a polynomial in $\rho$. For the proof of Lemma~\ref{lem:neighbourhood} one needs suitable bounds on its coefficients. 

We shall apply Lemma~\ref{lem:neighbourhood} for $\rho=\dia$, where $\fP$ is the fundamental domain of a suitable basis $b_1,\ldots,b_d$ of $L$ consisting of short vectors such that the diameter is close to smallest possible. We will argue later why we can assume that $\dia \le 1$.

To find such a basis of short vectors, we use the LLL-lattice basis reduction algorithm, which was originally derived by Lenstra, Lenstra and Lov\'asz in \cite{LLL82} to factorize polynomials but then received widespread attention in many other fields. For more information we refer to Galbraith~\cite[Ch.~17]{Gal12}. The LLL-algorithm computes an LLL-reduced basis given as follows.

\begin{definition}\label{def:lll}
A basis $b_1,\ldots,b_d$ of a lattice $L$ is called LLL-reduced (with parameter $\delta=3/4$) if
\begin{enumerate}
	\item $|\mu_{i,j}|\le \frac{1}{2}$ for $1\le j\le i\le d$,
	\item $\|b_i^*\|_2^2\ge (\frac{3}{4}-\mu_{i,i-1}^2)\|b_i^*\|_2^2$ for $2\le i\le d$,
\end{enumerate}
where 
\[
b_1^*=b_1, \quad b_i^*=b_i-\sum_{j=1}^{i-1}\mu_{i,j}b_j^*\text{ for }i=2,\dots,d\quad \text{with }\mu_{i,j}:=\langle b_i, b_j^*\rangle/\|b_j^*\|_2^2
\]
is the Gram-Schmidt orthogonalization of $b_1,\ldots,b_d$. 
\end{definition}

Then we have the following lemma for the corresponding fundamental cell $\fP$ via Definition~\ref{def:fundamental}.

\begin{lemma}\label{lem:diam-bound}
The diameter of a fundamental domain $\fP$ of a $d$-dimensional lattice $L$ with respect to an LLL-reduced basis $b_1,\ldots,b_d$ as in Definition~\ref{def:lll} is bounded by
\[
\dia\le d\,2^{d-1}\sigma(L)
\]
\end{lemma}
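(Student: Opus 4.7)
The plan is to bound $\dia$ by the sum $\sum_{i=1}^d \|b_i\|_2$ via the triangle inequality, then estimate each $\|b_i\|_2$ through the Gram--Schmidt norms $\|b_j^*\|_2$ using the two LLL conditions, and finally bound $\|b_d^*\|_2$ by $\sigma(L)$ using its geometric interpretation as the maximal spacing of a parallel hyperplane covering of $L$ (recalled in the paragraph on page~\pageref{loc:chstar}). Summing and a routine geometric-series estimate then yield the claim.

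For the first step, any two points $x=\sum_i \lambda_i b_i$ and $y=\sum_i \lambda_i' b_i$ of $\fP$ have $\lambda_i,\lambda_i'\in [0,1)$, so $\|x-y\|_2\le \sum_{i=1}^d |\lambda_i-\lambda_i'|\|b_i\|_2<\sum_{i=1}^d \|b_i\|_2$, and it suffices to bound this sum. For the second step, consider the affine hyperplanes $H_k:=\mathrm{span}(b_1,\dots,b_{d-1})+k\,b_d$ for $k\in\ZZ$. Every lattice vector $\sum_{i=1}^d n_i b_i\in L$, $n_i\in\ZZ$, lies in $H_{n_d}$, so $\{H_k\}_{k\in\ZZ}$ is a hyperplane covering of $L$. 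The common distance between consecutive $H_k$ equals the length $\|b_d^*\|_2$ of the Gram--Schmidt component of $b_d$. Since $\sigma(L)$ is the supremum of such spacings over all parallel hyperplane coverings, we obtain $\|b_d^*\|_2\le \sigma(L)$.

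For the third step, combining condition (i) ($|\mu_{i,i-1}|\le \tfrac12$) with condition (ii) of the LLL-reduction yields $\|b_i^*\|_2^2\ge (\tfrac34-\mu_{i,i-1}^2)\|b_{i-1}^*\|_2^2\ge \tfrac12\|b_{i-1}^*\|_2^2$, so $\|b_{i-1}^*\|_2^2\le 2\|b_i^*\|_2^2$ and iterating gives $\|b_j^*\|_2^2\le 2^{d-j}\|b_d^*\|_2^2\le 2^{d-j}\sigma(L)^2$ for all $1\le j\le d$. Finally, the orthogonality of $b_1^*,\dots,b_d^*$ together with the Gram--Schmidt expansion $b_i=b_i^*+\sum_{j<i}\mu_{i,j}b_j^*$ and $|\mu_{i,j}|\le\tfrac12$ give
\[
\|b_i\|_2^2 \,=\, \|b_i^*\|_2^2+\sum_{j=1}^{i-1}\mu_{i,j}^2\|b_j^*\|_2^2 \,\le\, 2^{d-i}\sigma(L)^2 + \tfrac14\sum_{j=1}^{i-1}2^{d-j}\sigma(L)^2,
\]
and summing the geometric series gives $\|b_i\|_2^2\le 2^{d-1}\sigma(L)^2$, hence $\|b_i\|_2\le 2^{(d-1)/2}\sigma(L)\le 2^{d-1}\sigma(L)$ for each $i$. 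Summing over $i=1,\dots,d$ completes the proof.

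The main conceptual step is the bound $\|b_d^*\|_2\le\sigma(L)$, which relies on the correspondence between hyperplane coverings of $L$ and vectors in the dual lattice $L^{\bot}$. Once that is in place, everything reduces to straightforward manipulation of the LLL-inequalities; there is no real obstacle, only bookkeeping of the exponential constants, and the stated constant $d\cdot 2^{d-1}$ absorbs the loss from using the easy bound $2^{(d-1)/2}\le 2^{d-1}$.
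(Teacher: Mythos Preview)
Your proof is correct and follows essentially the same route as the paper: bound $\dia\le\sum_i\|b_i\|_2$, control each $\|b_i\|_2$ through the Gram--Schmidt norms via the LLL inequalities, and then use the hyperplane covering $H_k=\mathrm{span}(b_1,\dots,b_{d-1})+k\,b_d$ to get $\|b_d^*\|_2\le\sigma(L)$. The only cosmetic difference is that the paper quotes the standard consequences $\|b_j^*\|_2^2\le 2^{i-j}\|b_i^*\|_2^2$ and $\|b_i\|_2^2\le 2^{d-1}\|b_i^*\|_2^2$ and chains them, whereas you derive the bound on $\|b_i\|_2$ directly; your computation in fact gives the sharper estimate $\|b_i\|_2\le 2^{(d-1)/2}\sigma(L)$ (hence $\dia\le d\,2^{(d-1)/2}\sigma(L)$) before you relax it to the stated constant.
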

Inserting this into Lemma~\ref{lem:neighbourhood} completes the proof of the upper bound of Theorem~\ref{thm:iso-char}. It remains to justify why it is sufficient to consider the case $\dia\le 1$. This is because in any case the isotropic discrepancy is at most one and thus we can assume that $\sigma(L) \le d^{-1} \, 2^{-2(d+1)}$, since else the upper bound is trivial. By Lemma~\ref{lem:diam-bound} this translates to
\[
\dia\le d\, 2^{d-1}\sigma(L)
\le 2^{-d-3}<1.
\]
$\hfill\square$

We note the following corollary of Lemma~\ref{lem:neighbourhood} giving a bound on the neighbourhood of the boundary of a convex set.
\begin{corollary}\label{cor:neighbourhood}
Let $K\subset [0,1)^d$ be non-empty and convex. Then, for any $\rho\in [0,1]$ we have
\[
\vol\big(\{x\in\mathbb{R}^d \colon  \dist(x,\partial K)\le \rho\}\big)\le 2^{d+4}\rho.
\]
\end{corollary}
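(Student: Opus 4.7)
The plan is to split the $\rho$-neighborhood $N_\rho := \{x \in \IR^d : \dist(x,\partial K) \le \rho\}$ into its portion lying inside $K$, its portion lying outside $K$, and $\partial K$ itself, and then to apply Lemma~\ref{lem:neighbourhood} to each of the first two pieces. The boundary $\partial K$ of a convex set is locally a graph of a Lipschitz function and hence has Lebesgue measure zero, so it contributes nothing to the volume of $N_\rho$.

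The key step is to establish the inclusions
\[
N_\rho \cap K \subseteq K_\rho^- \qquad \text{and} \qquad N_\rho \cap K^C \subseteq K_\rho^+.
\]
For the first, fix $x \in K$ with $\dist(x,\partial K) \le \rho$ and let $\varepsilon > 0$. Choose $z \in \partial K$ with $\|x-z\|_2 \le \rho + \varepsilon$. Since $K \subseteq [0,1)^d$ is a proper subset of $\IR^d$, we have $\partial K \subseteq \overline{K^C}$, so there exists $y \in K^C$ with $\|z-y\|_2 \le \varepsilon$. Combining the two bounds yields $\dist(x,K^C) \le \rho + 2\varepsilon$, and letting $\varepsilon \to 0$ gives $x \in K_\rho^-$. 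The second inclusion is proved in the same manner, using $\partial K \subseteq \overline{K}$, which holds because $K$ is non-empty.

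Combining the two inclusions with Lemma~\ref{lem:neighbourhood} then gives
\[
\vol(N_\rho)
\le \vol(K_\rho^-) + \vol(K_\rho^+) + \vol(\partial K)
\le 2 \cdot 2^{d+3}\rho + 0
= 2^{d+4}\rho,
\]
which is the desired estimate. There is no substantial obstacle here: the corollary follows almost immediately from Lemma~\ref{lem:neighbourhood} once one notices that the $\rho$-neighborhood of $\partial K$ can be decomposed at $\partial K$ and that each half sits in the corresponding one-sided neighborhood of $K$. The only minor subtlety is the handling of the boundary itself, which is disposed of by the null-set property of $\partial K$ for convex $K$.
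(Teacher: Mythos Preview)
Your proof is correct and matches the paper's intended derivation: the corollary is stated in the paper without an explicit proof, as it follows directly from Lemma~\ref{lem:neighbourhood} via exactly the decomposition you give. One minor remark: since $K$ and $K^C$ already partition $\IR^d$, the separate handling of $\partial K$ is redundant (your two inclusions alone yield $\vol(N_\rho)\le \vol(K_\rho^-)+\vol(K_\rho^+)$), but this does no harm.
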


\begin{remark}\label{rem:erratum-upper}
	As explained in \cite{SP21}, the proof of \cite[Thm.~2]{PS20} contained an erroneous version of Lemma~\ref{lem:neighbourhood} in \cite[page~5, lines~19-21]{PS20}, where the argument used in the proof of \cite[Lem.~17]{ABD12} was incorrectly extended to higher dimensions, although the asymptotic result itself remains valid with adjusted constants in $d$. 
\end{remark}

\begin{remark}\label{rem:subexp}
	The dependence of the upper bound of Lemma~\ref{lem:neighbourhood}, and thus Corollary~\ref{cor:neighbourhood}, on the dimension $d$ can be improved to be sub-exponential in $d$. Essentially, it is of order of magnitude ${\rm e}^{c d^{2/3}}$ for $c=\frac{3}{2}(2\pi)^{1/3}$. A proof is given in Appendix~\ref{sec:iso-proof-upper}.
\end{remark}

\section{Open questions}
\label{sec:iso-open}

\begin{question}
	The first question arises canonically.
\begin{center}
	Can the unit cube be replaced by a more general set in Theorem~\ref{thm:iso-char}?
\end{center}
Thinking of the geometry behind, it appears that the cube $[0,1)^d$ can be replaced by any convex set. It would be interesting how \cite[Thm.~1.1]{KR20} may be extended to more general convex sets. This is somewhat related to the famous hyperplane conjecture, see, e.g., the book of Brazitikos, Giannopoulos, Valettas and Vritsiou~\cite{BGV+14}.
\end{question}

\bigskip

\begin{question}
	The second question is about the constants involved.
\begin{center}
	What is the correct dependence on $d$ in Theorem~\ref{thm:iso-char}?
\end{center}
The exponential upper bound is essentially due to volume arguments involved and we do not know if it can be improved or other methods have to be employed.
\end{question}

\bigskip

\begin{question}
In the introduction to this chapter we related the isotropic discrepancy to a worst-case error in a function space of functions of bounded $\cK$-variation denoted by $V(\cK)$ and into which $C^2( (0,1)^d)$ is embedded. This is already a partial answer to the following question.
\begin{center}
	Can the function space $(V(\cK),\|\cdot\|_{V(\cK)})$ be characterized in terms of smoothness?
\end{center}
Note that $V(\cK)$ contains indicator functions of convex sets. In general, relating discrepancies to worst-case errors of cubature rules is of much interest and may lead to a better understanding of the best possible behaviour of $n$-point sets. For $L_2$-averaged discrepancy there is a connection to reproducing kernel Hilbert spaces, see, e.g., \cite[Ch.~9]{NW10}. On the sphere, this becomes a beautiful connection to a sum of distances, which is known from \cite{Sto73} as Stolarsky's invariance principle, see also Brauchart and Dick~\cite{BD13}.
\end{question}

\chapter*{List of symbols}\label{ch:los}
\fancyhead[CO]{\nouppercase{\textsc{List of symbols}}}
\fancyhead[CE]{\nouppercase{\textsc{List of symbols}}}
\addcontentsline{toc}{chapter}{\protect\numberline{}List of symbols}
We list here frequently occuring notation. For basic notation see the end of Chapter~\ref{ch:intro}.

\subsection*{Sets}
\begin{tabular}{ >{\centering\arraybackslash} p{.23\textwidth}p{.75\textwidth}}
	$\overline{\Omega},\partial\Omega$ & closure/boundary of a set $\Omega\subset\IR^d$\\
	$\diam(\Omega)$ & diameter \chg{of $\Omega$}, equal to $\sup_{x,y\in\Omega}\|x-y\|_2$\\
	$\dist(x,A)$ & distance of $x$ to $A$, equal to $\inf_{y\in A}\|x-y\|_2$\\
	$\vol(A)$ & $d$-dimensional Lebesgue measure of $A$\\
	$A+B$ & Minkowski sum of sets $A,B\subset\IR^d$, see Def.~\ref{def:minkowski} (p.~\pageref{def:minkowski})\\
	$\bfone_A$ & indicator function of a set $A$\\
	$f|_A$ & restriction of a function $f$ to $A$\\
\end{tabular}
\subsection*{Linear and standard information}
\begin{tabular}{ >{\centering\arraybackslash} p{.23\textwidth}p{.75\textwidth}}
	$e(S\colon F\to G, A)$ & worst-case error of $A$, see eq.~\eqref{eq:wce} (p.~\pageref{eq:wce})\\
	$r(S\colon F\to G, N_n)$ & radius of the information map $N_n$, see Def.~\ref{def:radius} (p.~\pageref{def:radius})\\
	$r(S\colon F\to G, \Lambda,n)$ & $n$-th minimal radius of information, see Def.~\ref{def:minimal-radius} (p.~\pageref{def:minimal-radius})\\
	$\lall=F'$ & \chg{space of continuous linear} functionals (p.~\pageref{loc:lall})\\
	$c_n(S\colon F\to G)$ & $n$-th Gelfand number of $S$, see Def.~\ref{def:gelfand-number} (p.~\pageref{def:gelfand-number}) \\
	$c_n(K)$ & $n$-th Gelfand width of $K$, see eq.~\eqref{eq:gelfand-width} (p.~\pageref{eq:gelfand-width}) \\
	$G_{n,m}$ & Gaussian random matrix, see Def.~\ref{def:lin-ran} (p.~\pageref{def:lin-ran}) \\
	$\mathcal{G}_{m,m-n}$ & Grassmannian (p.~\pageref{def:lin-ran}) \\
	$\enran$ & a random subspace of codimension $n$, see Def.~\ref{def:lin-ran} (p.~\pageref{def:lin-ran}) \\
	$\delta_x$ & Delta functional with $\delta_x(f)=f(x)$ (p.~\pageref{eq:info-std})\\
	$\lstd$ & function evaluations (p.~\pageref{eq:info-std}) \\
	$\pnran$ & a random point set with $n$ points, see Def.~\ref{def:std-ran} (p.~\pageref{def:std-ran}) \\
	$\intmu,{\rm INT}$ & integration functional (p.~\pageref{loc:int}) \\
\end{tabular}
\subsection*{Function spaces}
\begin{tabular}{ >{\centering\arraybackslash} p{.23\textwidth}p{.75\textwidth}}
	$F\hookrightarrow G$ & continuous embedding $\id\colon F\to G$ (p.~\pageref{loc:embedding})\\
	$C_b(D)$ & bounded continuous functions on $D$ (p.~\pageref{loc:cb})\\
	$L_q(D,\mu)$ & $q$-integrable functions on $(D,\mu)$ (p.~\pageref{eq:lqnorm})\\
	$A^{\alpha}_q(\TT^d)$ & function space defined via Fourier coefficients, see Def.~\ref{def:aaq} (p.~\pageref{def:aaq})\\
	$W^s_p(D)$ & Sobolev functions on $D$, see Def.~\ref{def:sobolev} (p.~\pageref{def:sobolev})\\
	$C^s(D)$ & Hölder continuous functions on $D$ (p.~\pageref{eq:semi-norm-hoelder})\\
	${\rm Lip}(D)$ & Lipschitz continuous functions on $D$ (p.~\pageref{eq:lip-wce})\\
	$B^s_{p\tau}(D)$ & Besov space on $D$, see Appendix~\ref{sec:sob-proof-ext} (p.~\pageref{loc:bandf})\\
	$F^s_{p\tau}(D)$ & Triebel-Lizorkin space on $D$, see Appendix~\ref{sec:sob-proof-ext} (p.~\pageref{loc:bandf})\\
\end{tabular}

\subsection*{Notation for Chapter 3}
\begin{tabular}{ >{\centering\arraybackslash} p{.23\textwidth}p{.75\textwidth}}
	$\rad(K,E)$ & radius of the intersection $K\cap E$, see eq.~\eqref{eq:radius-def} (p.~\pageref{eq:radius-def}) \\
	$\crn(K)$ & $n$-th random Gelfand width of $K$, see eq.~\eqref{eq:crn-def} (p.~\pageref{eq:crn-def}) \\
	$\elz$ & ellipsoid with semiaxes given by $\sigma$, see eq.~\eqref{eq:ellipsoid} (p.~\pageref{eq:ellipsoid}) \\
	$\elp$ & $\ell_p$-ellipsoid with semiaxes given by $\sigma$, see Def.~\ref{def:elp} (p.~\pageref{def:elp}) \\
	$D_{\sigma}$ & diagonal operator (p.~\pageref{fig:ellipsoids}) \\
	$\|\cdot\|_{p,\sigma}$ & (quasi-)norm with unit ball $\elp$, see eq.~\eqref{eq:psigma-norm} (p.~\pageref{eq:psigma-norm}) \\
	$\|\cdot\|_{p,q}$ & Lorentz (quasi-)norm, see Def.~\ref{def:lorentz}  (p.~\pageref{def:lorentz}) \\
	$\decay\big(\rad(\elp,n)\big)$ & rate of polynomial decay of minimal radii  (p.~\pageref{loc:decay-det}) \\
	$\decay\big(\rad(\elp,\enran)\big)$ & rate of polynomial decay of the random radii (p.~\pageref{loc:decay-ran}) \\
	$M^*(K)$ & mean width of $K$, see eq.~\eqref{eq:mean-width} (p.~\pageref{eq:mean-width}) \\
	$\Delta_p$ & $\ell_p$-minimization, see eq.~\eqref{eq:ellp-min} (p.~\pageref{eq:ellp-min}) \\
	$\delta_s(N_n)$ & restricted isometry constant of order $s$ of $N_n$ (p.~\pageref{eq:rip}) \\
\end{tabular}
\subsection*{Notation for Chapter 4}
\begin{tabular}{ >{\centering\arraybackslash} p{.23\textwidth}p{.75\textwidth}}
	$h_{\Pn,\Omega}$ & covering radius of $\Pn$ in $\Omega$, see eq.~\eqref{eq:covering-rad} (p.~\pageref{eq:covering-rad}) \\
	$C(x,\xi,r,\theta)$ & cone, see Definition~\ref{def:cone-condition} (p.~\pageref{def:cone-condition}) \\
	$B(x,\varrho),Q(x,\varrho)$ & ball/cube with center $x$ and radius $\varrho$, see eq.~\eqref{eq:cubes-balls} (p.~\pageref{eq:cubes-balls}) \\
	$Q_{\Pn}(x)$ &  good cube centered at $x$ with radius $r_{\Pn}(x)$, see Def.~\ref{def:small-cube} (p.~\pageref{def:small-cube}) \\
	$D_{\mu,\Pn,\gamma}$ &  distortion, see eq.~\ref{eq:distortion} (p.~\pageref{eq:distortion}) \\
	$T_{\Pn}$ & optimal quantizer, see eq.~\ref{eq:opt-quant} (p.~\pageref{eq:opt-quant}) \\
	$C(x_i,\Pn)$ & Voronoi cell of $x_i\in \Pn$, see eq.~\ref{eq:opt-quant} (p.~\pageref{eq:opt-quant}) \\
\end{tabular}
\subsection*{Notation for Chapter 5}
\begin{tabular}{ >{\centering\arraybackslash} p{.23\textwidth}p{.77\textwidth}}
	$\nu_{\Pn,a}$ & discrete measure on $\Pn$ with weights $a$, see eq.~\eqref{eq:discrete} (p.~\pageref{eq:discrete}) \\
	$Q_{\Pn,a}$ & cubature rule with points $\Pn$ and weights $a$, see eq.~\eqref{eq:cubature} (p.~\pageref{eq:cubature}) \\
	$Q_{\Pn,\mu}$ & optimal cubature rule for $\lipd$, see eq.~\eqref{eq:opt-cubature} (p.~\pageref{eq:opt-cubature})  \\
	$Q_{\Pn}$ & equal-weight cubature rule, see eq.~\eqref{eq:qmc} (p.~\pageref{eq:qmc}) \\
\end{tabular}
\subsection*{Notation for Chapter 6}
\begin{tabular}{>{\centering\arraybackslash}p{.23\textwidth}p{.77\textwidth}}
	$D_{\cA}(\Pn)$ & discrepancy of $\Pn$ with respect to $\cA$, see eq.~\eqref{eq:disc-def} (p.~\pageref{eq:disc-def}) \\
	$\cK$ & class of convex sets contained in $[0,1)^d$ (p.~\pageref{eq:iso-def}) \\
	\chg{$\mathcal{A}$} & class of axis-parallel boxes contained in $[0,1)^d$ (p.~\pageref{eq:iso-def}) \\
	$D_{\cK}(\Pn)$ & isotropic discrepancy of $\Pn$, see eq.~\eqref{eq:iso-def} (p.~\pageref{eq:iso-def}) \\
	$V(\cK)$ & functions with finite $\cK$-variation (p.~\pageref{eq:iso-def}) \\
	$D_{\cK}(n)$ & $n$-th minimal isotropic discrepancy, see eq.~\eqref{eq:iso-min} (p.~\pageref{eq:iso-min}) \\
	$\cP(L)$ & lattice point set generated by $L$, see Def.~\ref{def:lattice} (p.~\pageref{def:lattice}) \\
	$\sigma(L)$ & spectral test of $L$, see Def.~\ref{def:spectral-dual} (p.~\pageref{def:spectral-dual}) \\
	$L^{\bot}$ & dual lattice of $L$, see Def.~\ref{def:spectral-dual} (p.~\pageref{def:spectral-dual}) \\
	$\cH^*$ & hyperplane covering of maximal distance (p.~\pageref{loc:chstar}) \\
	$\fP,\fP_x$ & (translated) fundamental cell, see Def.~\ref{def:fundamental} (p.~\pageref{def:fundamental}) \\
	$\det(L)$ & determinant of $L$ or volume of $\fP$ (p.~\pageref{loc:det}) \\
\end{tabular}

\cleardoublepage
\newpage
\phantomsection
\fancyhead[CO]{\nouppercase{\textsc{\rightmark}}}
\fancyhead[CE]{\nouppercase{\textsc{\leftmark}}}
\addcontentsline{toc}{chapter}{\protect\numberline{}Bibliography}
\bibliography{thesis_lit.bib}

\clearpage
\appendix
\chapter{Additional material for Chapter 3} \label{ch:ell-app}
\section{Gelfand numbers of diagonal operators} \label{sec:ell-gelfand}

In what follows, we collect useful knowledge about the Gelfand numbers of diagonal operators. At the end of this section we will state and prove Corollary~\ref{cor:min-order} from which the expression in \eqref{eq:min-order} on the rate of polynomial decay of the minimal radius for an $\ell_p$-ellipsoid with polynomial semiaxes is deduced. 

Let $0< p,q\le \infty$ and $\sigma=(\sigma_{j})_{j\in\IN}$ with $\sigma_{1}\ge \sigma_{2}\ge \cdots \ge 0$. To this sequence we associate the diagonal operator
\[
	D_{\sigma}\colon\ell_{p}\to\ell_{q},
	\quad x=(x_{j})_{j\in\IN}\mapsto (\sigma_{j}x_{j})_{j\in\IN},
\]
which, for any $m\in\IN$, can be considered as an operator from $\ell_p^m$ to $\ell_q^m$. Recall that
\begin{equation}\label{eq:radiusgelfand}
	\rad(\elp,n)
	= c_{n+1}(D_{\sigma}\colon \ell_{p}^{m}\to\ell_{2}^{m})\quad\text{for all }0\le n< m,
\end{equation}
and an analogous result holds if 2 is replaced by $q$ and we measure the radius in $\|\cdot\|_q$.  Although we will need only the case $q=2$, we state the following result in a more general form. 
\begin{proposition}\label{pro:gelfandtail}
Let $0< q \le p\le \infty$ and $\sigma_{1}\ge \sigma_{2}\ge \cdots\ge 0$. Then, for any $1\le n\le m$, we have
\[
	c_{n}(D_{\sigma}\colon\ell_{p}^{m}\to\ell_{q}^{m})=\Big(\sum_{j=n}^{m}\sigma_{j}^{r}\Big)^{1/r},
\]
where $\frac{1}{r}=\frac{1}{q}-\frac{1}{p}$ if $q<p$ and $r=\infty$ if $q=p$.
\end{proposition}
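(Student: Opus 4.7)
The plan is to establish the equality by proving matching upper and lower bounds for $c_n(D_\sigma\colon\ell_p^m\to\ell_q^m)$.

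For the upper bound, the canonical coordinate subspace $E_{n-1}:=\{x\in\IR^m:x_1=\cdots=x_{n-1}=0\}$, which has codimension $n-1$, suffices. For $x\in E_{n-1}$ one has
\[
\|D_\sigma x\|_q^q \,=\, \sum_{j=n}^m \sigma_j^q\,|x_j|^q.
\]
In the case $q<p$, an application of Hölder's inequality with the conjugate exponents $p/q$ and $r/q$ (whose reciprocals sum to one precisely because $1/r=1/q-1/p$) yields
\[
\|D_\sigma x\|_q \,\le\, \Big(\sum_{j=n}^m \sigma_j^r\Big)^{1/r}\|x\|_p.
\]
The case $q=p$ (so $r=\infty$) follows directly from $\sigma_j\le\sigma_n$ for $j\ge n$. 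Invoking Definition~\ref{def:gelfand-number} with this specific subspace gives $c_n(D_\sigma)\le\|(\sigma_n,\dots,\sigma_m)\|_r$.

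The lower bound is the substantive step: for every codimension-$(n-1)$ subspace $E$ of $\IR^m$ we must produce $x\in E$ achieving the ratio $\|D_\sigma x\|_q/\|x\|_p\ge\|(\sigma_n,\dots,\sigma_m)\|_r$. A naive dimension count (using $\dim(E\cap\mathrm{span}(e_1,\dots,e_n))\ge 1$) yields only the weak estimate $c_n\ge\sigma_n$ and has to be strengthened. For $p,q\ge 1$ the cleanest route is via the duality between Gelfand and Kolmogorov numbers: since $D_\sigma$ is self-adjoint, one has $c_n(D_\sigma\colon\ell_p^m\to\ell_q^m)=d_n(D_\sigma\colon\ell_{q^*}^m\to\ell_{p^*}^m)$. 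Because $q^*\ge p^*$, the Kolmogorov number on the right can be evaluated by approximating the image via the coordinate subspace $\mathrm{span}(e_1,\dots,e_{n-1})$ in the target; the residual error then reduces to the operator norm of the tail diagonal $D_{(\sigma_n,\dots,\sigma_m)}\colon\ell_{q^*}^{m-n+1}\to\ell_{p^*}^{m-n+1}$, which by Hölder equals $\|(\sigma_n,\dots,\sigma_m)\|_r$. Optimality of this approximating subspace among all $(n-1)$-dimensional ones follows from the monotonicity of $(\sigma_j)$ via a rearrangement argument.

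The main obstacle is the quasi-Banach regime $p<1$ or $q<1$, where the Gelfand-Kolmogorov duality is not directly available. A direct construction would parametrize vectors in $E=\bigcap_{i=1}^{n-1}\ker L_i$ by their tail $y=\sum_{j\ge n}a_j e_j$ together with a compensating head $z\in\mathrm{span}(e_1,\dots,e_{n-1})$ enforcing $L_i(y+z)=0$; the difficulty is that $\|z\|_p$ depends on the geometry of $E$ and is not immediately comparable to $\|y\|_p$ in a uniform way. Overcoming this requires a more subtle approach, for instance an interpolation or smoothing argument transferring the Banach-case bound, a quasi-Banach analogue of the Gelfand-Kolmogorov duality as developed by Linde and others, or a rearrangement-type argument that equidistributes the compensation cost across choices of $(a_n,\dots,a_m)$. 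Once both regimes are handled, combining upper and lower bounds yields the claimed equality.
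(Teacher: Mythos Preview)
The paper does not supply its own proof of this proposition; it simply cites Pietsch \cite[Sec.~11.11]{Pie80} and remarks that the argument there, stated for $q\ge 1$, in fact goes through for $q<1$ as well. So the comparison is really between your proposal and Pietsch's direct computation.

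Your upper bound is fine and is the standard one. The lower bound, however, is not complete. In the Banach range you invoke the duality $c_n(D_\sigma\colon\ell_p^m\to\ell_q^m)=d_n(D_\sigma\colon\ell_{q^*}^m\to\ell_{p^*}^m)$, but this merely relocates the difficulty: you still need the \emph{lower} bound for the Kolmogorov number, i.e., that no $(n-1)$-dimensional subspace of the target does better than $\mathrm{span}(e_1,\dots,e_{n-1})$. You assert this ``follows from the monotonicity of $(\sigma_j)$ via a rearrangement argument'' without giving one; that is precisely the substantive step, and it is not obvious. (For instance, a rearrangement argument of the Hardy--Littlewood--P\'olya type would need to be made explicit here.) In the quasi-Banach range you acknowledge the gap outright and only list possible strategies; none is carried out.

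The reason the paper can assert validity for $q<1$ is that Pietsch's argument is \emph{direct}: it does not pass through duality. One works inside an arbitrary codimension-$(n-1)$ subspace $E$ and exploits the fact that $E\cap\mathrm{span}(e_1,\dots,e_k)$ has dimension at least $k-n+1$ for each $k\ge n$, building a vector in $E$ coordinate by coordinate whose tail realises the Hölder equality case. Such a construction uses only the quasi-norm structure of $\ell_p^m$ and $\ell_q^m$ and therefore survives when $p$ or $q$ is below $1$. If you want a self-contained proof, that is the route to take rather than duality.
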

A proof can be found in \cite[Sec.~11.11]{Pie80} for the case $q\ge 1$ but is in fact also valid if $q<1$. In addition to Proposition~\ref{pro:gelfandtail}, we have for all $0< p,q\le \infty$ that
\[
	c_{1}(D_{\sigma}\colon \ell_{p}^{m}\to\ell_{q}^{m})
	= \|D_{\sigma}\colon \ell_{p}^{m}\to\ell_{q}^{m}\|
	= \Big(\sum_{j=1}^{m}\sigma_{j}^{r}\Big)^{1/r},
\]
where $\frac{1}{r}=(\frac{1}{q}-\frac{1}{p})_{+}$. This shows that $\|\sigma\|_{r}<\infty$ is necessary to ensure that the operators $D_{\sigma}\colon \ell_{p}^{m}\to\ell_{q}^{m}, m\in\IN,$ are uniformly bounded and explains the condition $\lambda>(\frac{1}{2}-\frac{1}{p})_+$ in Section~\ref{sec:ell-poly}. 

All of the above extends to the infinite-dimensional case in a canonical way and we now state an equivalence between the Lorentz quasi-norm of an infinite sequence $\sigma$ and the Gelfand numbers of the corresponding diagonal operators. It is taken from Buchmann~\cite{Buc99} and one implication can be extracted from Linde \cite[Thm.~5]{Lin85}. The technique behind makes use of well-known results on the Gelfand numbers of the identity operator from $\ell_p$ to $\ell_q$ by, to name a few, Garnaev and Gluskin~\cite{GG84}, Gluskin~\cite{Glu81,Glu84}, Ismagilov~\cite{Ism74} and Kashin~\cite{Kas77}.
\begin{proposition}\label{pro:lorentzeq}
Let $1\le p,q\le \infty$ and $r>0$ with $\frac{1}{r}>(\frac{1}{q}-\frac{1}{p})_{+}$ as well as $0< t\le\infty$. Then
\[
	\sigma\in\ell_{r,t}
	\quad\Leftrightarrow\quad \big(c_{n}(D_{\sigma}\colon \ell_{p}\to\ell_{q})\big)_{n\in\IN}\in \ell_{u,t},
\]
where 
\begin{enumerate}
	\item  if $1\le q\le p\le\infty$, then $\frac{1}{u}=\frac{1}{r}+\frac{1}{p}-\frac{1}{q}$,
	\item if $1\le p<q\le 2$, then $\frac{1}{u}=
		\begin{cases}
			\frac{p^*}{2r}&\text{if }\frac{1}{r}<\frac{1}{p^*}\frac{1/p-1/q}{1/p-1/2},\\
			\frac{1}{r}	+\frac{1}{p}-\frac{1}{q}&\text{if }\frac{1}{r}>\frac{1}{p^*}\frac{1/p-1/q}{1/p-1/2},
		\end{cases}$
	\item if $1\le p<2<q\le \infty$, then $\frac{1}{u}=
		\begin{cases}
			\frac{p^*}{2r}&\text{if }\frac{1}{r}<\frac{1}{p^*},\\
			\frac{1}{r}	+\frac{1}{p}-\frac{1}{q}&\text{if }\frac{1}{r}>\frac{1}{p^*},
		\end{cases}$
	\item if $2\le p<q\le\infty$, then $\frac{1}{u}=\frac{1}{r} $.
\end{enumerate}
\end{proposition}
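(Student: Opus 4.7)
The plan is to split the proof into the four regimes and treat case 1 ($q\le p$) as essentially a corollary of Proposition~\ref{pro:gelfandtail}, while cases 2--4 ($p<q$) require combining a dyadic block decomposition of $\sigma$ with the known asymptotics for Gelfand numbers of the identity between finite-dimensional $\ell$-balls (Kashin, Garnaev--Gluskin, Gluskin). The four cases of the proposition mirror exactly the four regimes in which these identity Gelfand widths behave differently.

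For case 1, Proposition~\ref{pro:gelfandtail} gives the closed form $c_n(D_\sigma\colon\ell_p\to\ell_q)=\big(\sum_{j\ge n}\sigma_j^{r_0}\big)^{1/r_0}$ with $1/r_0=1/q-1/p$. The statement then reduces to a classical Hardy--Lorentz-type equivalence: for a non-increasing non-negative sequence $\sigma$ and parameters with $1/r>1/r_0$, one has $\sigma\in \ell_{r,t}$ if and only if the tail $r_0$-sums lie in $\ell_{u,t}$ with $1/u=1/r-1/r_0=1/r+1/p-1/q$. This is a straightforward rearrangement/summation-by-parts argument.

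For the remaining cases $p<q$, I would introduce the dyadic blocks $B_k=\{2^{k-1},\dots,2^k-1\}$ and write $D_\sigma$ as the sum of its restrictions $D_{\sigma^{(k)}}$ to the coordinates in $B_k$. Each restriction satisfies the factorization bound
\[
c_n\bigl(D_{\sigma^{(k)}}\colon\ell_p\to\ell_q\bigr)\le \sigma_{2^{k-1}}\,c_n\bigl(\id\colon\ell_p^{2^{k-1}}\to\ell_q^{2^{k-1}}\bigr),
\]
and, via a reverse restriction to a coordinate subspace on which $\sigma$ is comparable to $\sigma_{2^k}$, a matching lower bound. Using additivity of Gelfand numbers $c_{n_1+\cdots+n_N}(\sum T_i)\le \sum c_{n_i}(T_i)$ on the upper side, and splitting $n$ optimally among the blocks, one obtains upper bounds for $c_n(D_\sigma)$; the matching lower bounds come from restricting to a single block in a coordinate subspace. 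Plugging in the Garnaev--Gluskin--Kashin--Gluskin asymptotics for $c_n(\id\colon\ell_p^m\to\ell_q^m)$ (which have distinct behaviors depending on whether $p,q\le 2$, $p\le 2\le q$, or $2\le p$) and resumming the dyadic estimates in the Lorentz quasi-norm yields the stated exponents $1/u$.

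The main obstacle, and where most of the work will go, is the threshold appearing in cases 2 and 3. It reflects a competition between two regimes inside the dyadic sums: when $1/r$ is small (i.e.\ $\sigma$ decays slowly), the dominant contribution to $c_n(D_\sigma)$ comes from blocks $B_k$ where $n$ is comparable to $2^k$ and the identity Gelfand width behaves like the full operator norm $m^{1/q-1/p}$, producing the exponent $p^*/(2r)$; when $1/r$ is large, the tail-sum regime of case 1 takes over and gives $1/u=1/r+1/p-1/q$. Making the split precise at the critical value $1/r=\tfrac{1}{p^*}\cdot\tfrac{1/p-1/q}{1/p-1/2}$ (respectively $1/r=1/p^*$ in case 3) and verifying that the upper and lower estimates match in the Lorentz norm is the technical heart of the proof. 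Case 4 is simpler because for $2\le p<q$ the identity widths have a single asymptotic regime, so no threshold appears and one recovers $1/u=1/r$.
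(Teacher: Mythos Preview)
The paper does not provide its own proof of this proposition. It is stated as a result ``taken from Buchmann~\cite{Buc99}'' with one implication attributed to Linde~\cite[Thm.~5]{Lin85}, and the only remark about the method is that ``the technique behind makes use of well-known results on the Gelfand numbers of the identity operator from $\ell_p$ to $\ell_q$'' (citing Garnaev--Gluskin, Gluskin, Ismagilov, Kashin). So there is nothing in the paper to compare your argument against beyond that one-line description.

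Your outline is consistent with that description: reducing to the identity widths via a dyadic block decomposition is exactly the standard route for such diagonal-operator results, and your treatment of case~1 via Proposition~\ref{pro:gelfandtail} together with a Hardy-type tail-sum equivalence is the natural shortcut. The explanation of the threshold in cases~2 and~3 as a competition between the ``operator-norm'' regime and the ``tail-sum'' regime is the right intuition. If you want to turn this into a complete proof rather than a sketch, the nontrivial work is in the lower bounds for $c_n(D_\sigma)$: restricting to a single dyadic block gives only a one-block lower bound, and matching the upper bound in the Lorentz scale requires a careful choice of block (and, at the threshold, some additional care), which is presumably what Buchmann carries out. But as far as comparison with the paper goes, the paper simply imports the result.
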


By means of equality \eqref{eq:radiusgelfand}, the above Propositions \ref{pro:gelfandtail} and \ref{pro:lorentzeq} apply to the minimal radius for $\ell_p$-ellipsoids. Note that in the latter proposition some cases are missing; for example, if $q=2$, there is a gap for $\frac{1}{r}=\frac{1}{p^*}$. In this case, we can deduce \chg{the following corollary from Theorem~\ref{thm:ell-main}.} 

\begin{corollary}\label{cor:minrad}
For all $n\in\IN$,
\[
c_n(D_{\sigma}\colon \ell_p\to\ell_2)
	\lesssim
	\begin{cases}
		n^{-1/2}\sup\limits_{k\le j\le \infty}\sigma_{j}\sqrt{\log(j)+1} &\text{if }p=1,\\
		\sqrt{p^*}n^{-1/2} \Big(\sum\limits_{j=k}^{\infty}\sigma_{j}^{p^{*}}\Big)^{1/p^{*}} &\text{if }1< p\le \infty,
	\end{cases}
\]
where $k\asymp \frac{n}{p^*}$ for $p>1$, while $k\asymp n$ for $p=1$. In particular, $c_{n}(D_{\sigma}\colon \ell_{p}\to\ell_{2})\in \ell_{2,\infty}$ if $1<p\le \infty$ and $\sigma\in \ell_{p^*}$.
\end{corollary}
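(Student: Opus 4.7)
The plan is to deduce the corollary from Theorem~\ref{thm:ell-main} by combining, for each ambient dimension $m$, the typical upper bound for random sections with the identity \eqref{eq:gelfand-diag} that converts Gelfand widths of $\elp$ into Gelfand numbers of the diagonal operator, and then passing to the limit $m\to\infty$.

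First I would fix $m>n$ and apply Theorem~\ref{thm:ell-main} to $\elp\subset\IR^m$. Since the bound there holds with strictly positive probability, there exists a deterministic codimension-$n$ subspace $E_n^*\subset\IR^m$ realising it, so that \eqref{eq:gelfand-diag} yields
\[
c_{n+1}(D_\sigma\colon\ell_p^m\to\ell_2^m)
\;=\;\rad(\elp,n)
\;\le\;\rad(\elp,E_n^*)
\;\le\;C\,B_{n,m},
\]
where $B_{n,m}$ denotes the partial sup (for $p=1$) or the partial $\ell_{p^*}$-norm (for $p>1$) appearing on the right-hand side of Theorem~\ref{thm:ell-main}. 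Both versions of $B_{n,m}$ are manifestly non-decreasing in $m$ and hence dominated by the corresponding quantity $B_{n,\infty}$ obtained by taking $m=\infty$; consequently the inequality $c_{n+1}(D_\sigma\colon\ell_p^m\to\ell_2^m)\le C\,B_{n,\infty}$ holds uniformly in $m$.

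Next I would pass from finite to infinite dimension. If $B_{n,\infty}=\infty$ there is nothing to prove, so assume $B_{n,\infty}<\infty$; in either case this forces $\sigma_j\to 0$, so that $D_\sigma\colon\ell_p\to\ell_2$ is compact. The identity
\[
c_{n+1}(D_\sigma\colon\ell_p\to\ell_2)
\;=\;\sup_{m\in\IN} c_{n+1}(D_\sigma\colon\ell_p^m\to\ell_2^m)
\]
then holds: one direction follows by intersecting a near-optimal codimension-$n$ subspace of $\ell_p$ with $\ell_p^m$ (which only decreases the supremum), while for the other any codimension-$n$ subspace $F_m\subset\ell_p^m$ realising the finite-dimensional infimum can be extended to $\ell_p$ by adjoining all tail coordinates with $j>m$, the additional contribution being controlled by $\sup_{j>m}\sigma_j\to 0$. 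Taking the supremum over $m$ in the displayed inequality thus yields the claimed bound on $c_{n+1}(D_\sigma\colon\ell_p\to\ell_2)$; the index shift from $c_{n+1}$ to $c_n$ is absorbed into the constant.

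Finally, for the last assertion, assume $1<p\le\infty$ and $\sigma\in\ell_{p^*}$. Then the tails $\bigl(\sum_{j\ge k}\sigma_j^{p^*}\bigr)^{1/p^*}$ are uniformly bounded in $k$, so the main bound reduces to $c_n(D_\sigma\colon\ell_p\to\ell_2)\lesssim n^{-1/2}$. Since Gelfand numbers are non-increasing, their non-increasing rearrangement coincides with themselves, and by Definition~\ref{def:lorentz} this is precisely the condition $\bigl(c_n(D_\sigma\colon\ell_p\to\ell_2)\bigr)_{n\in\IN}\in\ell_{2,\infty}$. The main obstacle in the above outline is the careful justification of the passage to infinite dimension, where compactness of $D_\sigma$ is essential in order to control the contribution of the tail coordinates when extending finite-dimensional near-optimal subspaces back to $\ell_p$.
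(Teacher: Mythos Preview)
Your approach is essentially the paper's: deduce the finite-dimensional bound from Theorem~\ref{thm:ell-main} via the existence of a good subspace and then let $m\to\infty$. The paper handles the limit more cleanly by invoking the Lipschitz property of Gelfand numbers in the operator norm, namely
\[
|c_n(D_\sigma^m\colon\ell_p\to\ell_2)-c_n(D_\sigma\colon\ell_p\to\ell_2)|\le\|D_\sigma-D_\sigma^m\colon\ell_p\to\ell_2\|=\Big(\sum_{j>m}\sigma_j^{s}\Big)^{1/s},\qquad \tfrac1s=\Big(\tfrac12-\tfrac1p\Big)_+,
\]
which is exactly the tail contribution your extension-of-subspaces argument produces. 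Your stated control ``$\sup_{j>m}\sigma_j$'' is only correct for $p\le 2$; for $p>2$ the tail you actually get is this $\ell_s$-norm, which still tends to zero under the standing hypothesis since $\sigma\in\ell_{p^*}\subset\ell_s$.
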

\begin{proof}
	We use \eqref{eq:radiusgelfand} to state Theorem~\ref{thm:ell-main} for Gelfand numbers. Then, for each $m\in\IN$, let $D_{\sigma}^m$ be the restriction of the operator $D_{\sigma}$ to the first $m$ coordinates. This gives 
\[
c_n(D_{\sigma}\colon \ell_p^m\to\ell_q^m)
=c_n(D_{\sigma}^m\colon \ell_p\to\ell_q),
\]
and further, by continuity and H\"older's inequality,
\[
\big|c_n(D_{\sigma}^m\colon \ell_p\to\ell_q)-c_n(D_{\sigma}\colon \ell_{p}\to\ell_{q})\big|
\le \Big(\sum_{j=m+1}^{\infty}\sigma_j^s\Big)^{1/s},
\]
with $\frac{1}{s}=(\frac{1}{2}-\frac{1}{p})_+$. Letting $m\to\infty$ for each $n\in\IN$ completes the proof.
\end{proof}

Let us note that bounding the Gelfand numbers of operators into $\ell_2$ via $M^*$-estimates has been done before, e.g., in \cite{PT86}.

As a corollary to Proposition~\ref{pro:lorentzeq} we obtain the following result on the order of decay of the minimal radius of $\ell_p$-ellipsoids with polynomially decaying semiaxes. For $p\ge 2$ it also follows from Proposition~\ref{pro:gelfandtail} which shows that $\rad(\elp,n) \lesssim n^{-\lambda+1/2-1/p}$ for all $m>n$ with a matching lower bound for $m$, say, larger than $2n$.

\begin{corollary}\label{cor:min-order}
Let $1\le p\le \infty$. If $\sigma_j=j^{-\lambda}$, $j\in\IN$, for some $\lambda>(\frac{1}{2}-\frac{1}{p})_+$, then
\begin{equation*}
\decay\big(\rad(\elp,n)\big)
=	\begin{cases}
	\lambda\cdot\frac{p^{*}}{2}&\text{if } 1\le p<2 \text{ and }\lambda<\frac{1}{p^{*}},\\
	\lambda+\frac{1}{p}-\frac{1}{2}&\text{otherwise.}
	\end{cases}
\end{equation*}
\end{corollary}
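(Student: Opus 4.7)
The plan is to convert the problem into one about Gelfand numbers of the diagonal operator $D_\sigma$ via the identity $\rad(\elp,n)=c_{n+1}(D_\sigma\colon\ell_p^m\to\ell_2^m)$ from \eqref{eq:radiusgelfand}, and then to read off the polynomial decay of these numbers in $n$, uniformly in $m$. The required uniformity is provided by the monotonicity relation $c_n(D_\sigma\colon\ell_p^m\to\ell_2^m)\le c_n(D_\sigma\colon\ell_p\to\ell_2)$ together with the convergence of the former to the latter as $m\to\infty$, both observed in the proof of Corollary~\ref{cor:minrad}. The shift by one in the index of $c_n$ is irrelevant for polynomial decay, and I would split into the regimes $p\ge 2$ and $1\le p<2$, treated by Proposition~\ref{pro:gelfandtail} and Proposition~\ref{pro:lorentzeq} respectively.

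If $p\ge 2$, Proposition~\ref{pro:gelfandtail} applies with $q=2\le p$ and $1/r=1/2-1/p$ and gives the explicit identity $c_n(D_\sigma\colon\ell_p^m\to\ell_2^m)=\big(\sum_{j=n}^m j^{-\lambda r}\big)^{1/r}$. The assumption $\lambda>(1/2-1/p)_+=1/r$ ensures $\lambda r>1$, so sending $m\to\infty$ and using $\sum_{j\ge n}j^{-\lambda r}\asymp n^{1-\lambda r}$ yields the uniform upper bound $c_n\lesssim n^{-(\lambda+1/p-1/2)}$. Conversely, taking $m\ge 2n$ and keeping only the terms $n\le j\le 2n$ produces a matching lower bound of the same polynomial order, so $\decay(\rad(\elp,n))=\lambda+1/p-1/2$ as claimed.

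For $1\le p<2$, I would invoke Proposition~\ref{pro:lorentzeq} (Case~2) with $q=2$, $r=1/\lambda$ and $t=\infty$. The sequence $\sigma_j=j^{-\lambda}$ satisfies $\|\sigma\|_{1/\lambda,\infty}=1<\infty$, so the proposition yields $(c_n(D_\sigma\colon\ell_p\to\ell_2))_n\in\ell_{u,\infty}$, i.e.\ the pointwise bound $c_n\lesssim n^{-1/u}$, where $1/u=p^*\lambda/2$ if $\lambda<1/p^*$ and $1/u=\lambda+1/p-1/2$ if $\lambda>1/p^*$. Monotonicity transfers this to the finite-dimensional operators and gives $\decay(\rad(\elp,n))\ge 1/u$. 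The matching upper bound on $\decay$ exploits the converse direction of the equivalence: since $\|\sigma\|_{1/\lambda,t}^t=\sum_j j^{-1}=\infty$ for every $t<\infty$, the sequence $\sigma$ fails to lie in $\ell_{1/\lambda,t}$, and Proposition~\ref{pro:lorentzeq} forces $(c_n(D_\sigma\colon\ell_p\to\ell_2))_n\notin\ell_{u,t}$ for every $t<\infty$. A hypothetical bound $\rad(\elp,n)\le Cn^{-\varrho}$ valid uniformly in $m$ with $\varrho>1/u$ would, upon letting $m\to\infty$, give $c_n(D_\sigma\colon\ell_p\to\ell_2)\le Cn^{-\varrho}$, and then $\sum_n n^{t/u-1}c_n^t\le C^t\sum_n n^{t(1/u-\varrho)-1}<\infty$ for any $t<\infty$, contradicting the non-membership in $\ell_{u,t}$.

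The main obstacle is the boundary case $\lambda=1/p^*$ for $1<p<2$, where Proposition~\ref{pro:lorentzeq} leaves a small gap in its case distinction. Here one can close the gap by a sandwich argument: both branches $1/u=p^*\lambda/2$ and $1/u=\lambda+1/p-1/2$ yield the same value $1/2$ at $\lambda=1/p^*$, and since $\rad(\elp,n)$ is monotone in $\lambda$ (increasing $\lambda$ shrinks the ellipsoid), applying the already proven cases to $\sigma_j=j^{-(1/p^*\pm\varepsilon)}$ and letting $\varepsilon\to 0$ gives the desired rate.
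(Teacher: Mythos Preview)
Your proof is correct and follows essentially the same route as the paper: reduce to the infinite-dimensional Gelfand numbers via the convergence established in the proof of Corollary~\ref{cor:minrad}, treat $p\ge 2$ directly with Proposition~\ref{pro:gelfandtail}, and for $1\le p<2$ read off the decay from the Lorentz equivalence of Proposition~\ref{pro:lorentzeq}. The paper only spells out the case $1\le p<2$, $\lambda<1/p^*$ and dismisses the remaining ones as analogous; the one cosmetic difference is that the paper perturbs $r$ around $1/\lambda$ (taking $r>1/\lambda$ for membership in $\ell_{r,\infty}$ and $r<1/\lambda$ for non-membership in $\ell_{r,t}$), whereas you fix $r=1/\lambda$ and vary $t$ between $\infty$ and finite values --- both exploit the same equivalence and yield the same conclusion. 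Your explicit monotonicity sandwich at $\lambda=1/p^*$ is a clean way to close the gap that Proposition~\ref{pro:lorentzeq} leaves at the threshold, and is a point the paper passes over silently.
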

\begin{proof}
	We only prove the first case since the other case does not require additional ideas. To show that $\decay\big(\rad(\elp,n)\big)\ge \lambda p^*/2$, it is sufficient by \eqref{eq:radiusgelfand} to find $C>0$ such that, for all $\varrho<\lambda p^*/2$ large enough,
	\begin{equation}\label{eq:decay}
	c_{n,m}
	:=c_{n}(D_{\sigma}\colon \ell_{p}^{m}\to\ell_{q}^{m})
	\le C n^{-\varrho}\quad \text{for all }m\in\IN \text{ and }1\le n\le m.
\end{equation}
	This is satisfied if the sequence of Gelfand numbers $c_{n}(D_{\sigma}\colon \ell_{p}\to\ell_{q})\geq c_{n,m}$ ($n\in\IN$) belongs to $\ell_{u,\infty}$ with $u=1/\varrho$. By Proposition~\ref{pro:lorentzeq}, this holds if $\sigma\in \ell_{r,\infty}$ with a certain $r>1/\lambda$, which is true by assumption. 

	For the other inequality we assume that \eqref{eq:decay} holds for some $\varrho>\lambda p^*/2$. Choosing $m$ large enough compared to $n$, see the proof of Corollary~\ref{cor:minrad}, we deduce from \eqref{eq:decay} that for some $\varrho>\lambda p^*/2$ and every $n\in\IN$, 
	\begin{equation}\label{eq:decay-inf}
		c_n
	:=c_{n}(D_{\sigma}\colon \ell_{p}\to\ell_{q})
	\le 2C n^{-\varrho}.
	\end{equation}
	For every $r< 1/\lambda$ and $0<t<\infty$, it follows from $\sigma\not\in \ell_{r,t}$ and Proposition~\ref{pro:lorentzeq} that
	\[
	\sum_{n=1}^{\infty}\frac{1}{n}c_{n}^t n^{tp^*/2r}=\infty,
	\]
	which implies $c_n n^{p^*/2r}\log^{2/t} n\to \infty$, contradicting \eqref{eq:decay-inf}.
\end{proof}

\section{Proofs for the convex case}\label{sec:ell-proof-convex}

In this section we complete the proof ideas for Sections~\ref{sec:ell-rounding} and \ref{sec:ell-lower}, in particular the upper bound of Theorem~\ref{thm:ell-main} and the lower bound of Proposition~\ref{pro:lower-general}.

We start with the proof of Proposition~\ref{pro:mstar-estimate} on the mean width of a rounded $\ell_p$-ellipsoid which is then used to derive Theorem~\ref{thm:ell-main}. The following elementary estimate for $\ell_q$-norms of structured Gaussian random vectors will be useful.
\begin{lemma}\label{lem:khintchine-gaussian}
	Let $k\in\IN$ and $1\le q<\infty$. If $b=(b_{j})_{j=1}^k\in\IR^k$ and $X=(b_j g_j)_{j=1}^k$ with independent standard Gaussian random variables $g_1,\dots,g_k$, then
\[
	\gamma_{1}\|b\|_{q}
	\le \IE \|X\|_{q}
	\le \gamma_{q}\|b\|_{q},
	\quad \text{where }\quad
	\gamma_{q}
	:=\big(\IE|g_{1}|^{q}\big)^{1/q}
	\asymp \sqrt{q}.
\]
Further, 
\[
	\IE	\|X\|_{\infty}
	\asymp \sup_{1\le j\le k}b_{j}^{*}\sqrt{\log(j)+1},
\]
where $(b_j^*)_{j=1}^k$ is the non-increasing rearrangement of $(|b_j|)_{j=1}^k$ (see Definition~\ref{def:lorentz}).
\end{lemma}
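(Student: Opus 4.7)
For the finite $q$ part, both bounds will follow from two instances of Jensen's inequality. The upper bound is routine: since $t\mapsto t^{q}$ is convex for $q\ge 1$, Jensen gives
\[
\IE\|X\|_q \;=\; \IE\Bigl(\sum_{j=1}^k |b_j g_j|^q\Bigr)^{1/q}
\;\le\; \Bigl(\IE\sum_{j=1}^k |b_j|^q |g_j|^q\Bigr)^{1/q}
\;=\; \gamma_q\,\|b\|_q,
\]
using independence only through linearity of expectation (which is not even needed here). For the lower bound, I would exploit that the $\ell_q$-norm on $\IR^k$ is itself a convex function. Applying Jensen to the random vector $Y=(|b_1 g_1|,\dots,|b_k g_k|)$ yields
\[
\IE\|X\|_q \;=\; \IE\|Y\|_q \;\ge\; \|\IE Y\|_q \;=\; \gamma_1\,\|b\|_q.
\]
The asymptotic $\gamma_q\asymp\sqrt{q}$ comes from the explicit formula $\gamma_q^{q}=2^{q/2}\pi^{-1/2}\Gamma\bigl((q+1)/2\bigr)$ together with Stirling (cf.\ \eqref{eq:stirling}), giving $\gamma_q=\sqrt{q}\,(1+o(1))$ as $q\to\infty$, while on a bounded range of $q$ the equivalence is clear by continuity and positivity.

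For the $q=\infty$ estimate, I would first reduce to the rearranged vector: since the Gaussians are i.i.d., the distribution of $\|X\|_\infty$ depends only on $(b_j^{*})_{j=1}^{k}$, so it suffices to bound $\IE\max_{j\le k} b_j^{*}|g_j|$. For the upper bound, set $A:=\sup_{1\le j\le k} b_j^{*}\sqrt{\log(j)+1}$. Then $b_j^{*}\le A/\sqrt{\log(j)+1}$, hence
\[
\IE\max_{j\le k} b_j^{*}|g_j| \;\le\; A\cdot\IE\max_{j\le k}\frac{|g_j|}{\sqrt{\log(j)+1}}.
\]
A union bound using the Gaussian tail $\IP(|g_j|>t)\le 2\exp(-t^2/2)$ gives, for $t\ge 2$,
\[
\IP\Bigl(\max_{j\le k}\frac{|g_j|}{\sqrt{\log(j)+1}}>t\Bigr)\;\le\; 2\sum_{j\ge 1} j^{-t^2/2}\,\mathrm{e}^{-t^2/2},
\]
which is summable and decays fast enough that integrating the tail over $t\ge 0$ yields an absolute constant. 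For the matching lower bound, for each $j_0\le k$ I use the trivial inequality $\max_{j\le k}b_j^{*}|g_j|\ge b_{j_0}^{*}\max_{j\le j_0}|g_j|$ together with the classical asymptotic $\IE\max_{j\le j_0}|g_j|\asymp\sqrt{\log(j_0)+1}$ (with the equivalence holding uniformly in $j_0\ge 1$). Taking the supremum over $j_0$ produces $\IE\|X\|_\infty\gtrsim A$.

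The only mildly delicate step is confirming that $\IE\max_{j\le j_0}|g_j|\gtrsim \sqrt{\log(j_0)+1}$ holds with an absolute constant all the way down to small values of $j_0$; this can be handled separately for $j_0\le j_1$ (some fixed threshold, where the bound is trivial) and $j_0>j_1$ (where the standard lower bound via $\IP(\max_{j\le j_0}|g_j|>\sqrt{\log j_0})\ge 1-(1-c/\sqrt{\log j_0})^{j_0}$ applies). Everything else reduces to the two applications of Jensen and the two-sided Gaussian tail estimate, so I expect no other obstacle.
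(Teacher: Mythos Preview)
Your argument for finite $q$ is exactly the paper's: Jensen upwards for the upper bound, Jensen applied to the convex $\ell_q$-norm on the vector of absolute values for the lower bound. For $q=\infty$ the paper does not prove anything but simply cites van Handel \cite[Lem.~2.3 and 2.4]{van17}; you instead give a self-contained argument via the reduction to the rearranged vector, a union bound with Gaussian tails for the upper estimate, and the monotonicity trick $\max_{j\le k}b_j^{*}|g_j|\ge b_{j_0}^{*}\max_{j\le j_0}|g_j|$ for the lower one. This is correct and is essentially the standard proof (and presumably what the cited reference contains), so the only difference is that you have written out what the paper outsources.
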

\begin{proof}
For $1\le q<\infty$ the upper bound follows from Jensen's inequality and the lower bound follows from $\|\IE X'\|_{q}\le \IE\|X'\|_{q}=\IE\|X\|_{q}$, where $X'=(|b_j g_j|)_{j=1}^k$. The asymptotics for $q=\infty$ are taken from van Handel \cite[Lem.~2.3 and 2.4]{van17}.
\end{proof}

\begin{proof}[Proof of Proposition~\ref{pro:mstar-estimate}]
Fix $m\in\IN$, $0\le k< m$ and $\varrho>0$. We shall use the representation \eqref{eq:mstargaussian}, which translates to
\[
M^{*}(\elp\cap\varrho\IBo_2^m)
=\frac{1}{c_m}\IE\sup_{t\in\elp\cap\varrho\IBo_2^m}\sum_{j=1}^{m}t_j g_j,
\]
where $c_m\asymp \sqrt{m}$. In order to bound the supremum, fix some $x\in\IR^{m}$ and $y\in \elp \cap \varrho\IBo_{2}^{m}$. Then it follows from Hölder's inequality that, for every $0\leq k < m$,
\[
	\langle x,y\rangle
	\le \sum_{j=1}^{k}|x_{j}y_{j}|+\sum_{j=k+1}^{m}|x_{j}y_{j}|
	\le \varrho\Big(\sum_{j=1}^{k}x_{j}^{2}\Big)^{1/2}+\Big(\sum_{j=k+1}^{m}\sigma_{j}^{p^{*}}|x_{j}|^{p^{*}}\Big)^{1/p^{*}},
\]
where the first, and consequently the third, sum is empty if $k=0$.
Combining this estimate with Lemma \ref{lem:khintchine-gaussian}, we obtain that if $p>1$,
\begin{align*}
	\IE\sup_{y\in \elp \cap \varrho\IBo_{2}^{m}}\sum_{j=1}^{m}y_{j}g_{j}
	\le \varrho\, a_{k}+\gamma_{p^{*}}\Big(\sum_{j=k+1}^{m}\sigma_{j}^{p^{*}}\Big)^{1/p^{*}},
\end{align*}
where we used that $a_k=\IE\Big(\sum_{j=1}^{k}g_i^2\Big)^{1/2}$, see \eqref{eq:asymptotic ak}. By the previously stated asympotics for $a_{k}$ and for $\gamma_{p^{*}}$ in Lemma \ref{lem:khintchine-gaussian}, we obtain the statement for $p>1$.

If $p=1$, then we deduce from Lemma \ref{lem:khintchine-gaussian} that, for some suitable $C>0$,
\[
	\IE\,\sup_{y\in \elp\cap \varrho\IBo_{2}^{m}} \sum_{j=1}^{m}y_jg_{j}
	\le \varrho\sqrt{k}+C\sup_{k+1\leq j \leq m}\sigma_{j}\sqrt{\log(j)+1}.
\]
This completes the proof.
\end{proof}

\begin{proof}[Proof of Theorem \ref{thm:ell-main}]
For any $\varrho>0$, it follows from Gordon's $M^*$-estimate (Proposition \ref{pro:escape}) applied to the convex body $\elp\cap \varrho \IBo_2^m$ that, with probability as claimed, a random subspace $\enran$ of codimension $n$ satisfies
\[
	\rad(\elp\cap\varrho \IBo_{2}^{m}, \enran)	
	=\sup_{x\in \elp \cap \varrho \IBo_{2}^{m}\cap \enran}\|x\|_{2}
	\le 2\frac{a_{m}}{a_{n}}M^{*}(\elp\cap\varrho \IBo_{2}^{m}).
\]
We start with the case $p>1$. Inserting the bound obtained in Proposition \ref{pro:mstar-estimate}, we obtain a constant $C>0$ such that, for any $0\le k< m$ and $1\leq n< m$,
\[
	\rad(\elp\cap\varrho \IBo_{2}^{m}, \enran)	
	\le C \frac{\sqrt{p^*}}{\sqrt{n}}\Big(\varrho\sqrt{k}+\Big(\sum_{j=k+1}^{m}\sigma_{j}^{p^{*}}\Big)^{1/p^{*}}\Big).
\]
In accordance with relation \eqref{eq:rounding}, we seek to choose $\varrho$ suitably small and $k$ large such that this radius is bounded by $\varrho$. 

First, let $\frac{n}{p^*}>4C^2$. Setting $\varrho:=\frac{1}{\sqrt{k}}\Big(\sum_{j=k+1}^{m}\sigma_{j}^{p^{*}}\Big)^{1/p^{*}}$ with $k:=c\frac{n}{p^*}$, where the constant $c>0$ is chosen (sufficiently small) such that $k\in \IN$ with $1\le k<m$ and 
\[
	\rad(\elp\cap\varrho \IBo_{2}^{m}, \enran)	
	<\varrho
\]
and so in particular that	
\[
	\rad(\elp, \enran)	
	<\varrho
\]
for all $m\in \IN$ and $1\le n <m$. Noting that 
\[
	\varrho
	\lesssim \frac{\sqrt{p^*}}{\sqrt{n}}\Big(\sum_{j=k+1}^{m}\sigma_{j}^{p^{*}}\Big)^{1/p^{*}},
\]
proves the result for $p>1$ in this case. If $\frac{n}{p^*}\le 4C^2$, then let $k=0$ and let $\varrho>0$ be large enough such that $\elp\subset \varrho \IBo_2^m$ and thus 
\[
\rad(\elp,\enran)
\le C\frac{\sqrt{p^*}}{\sqrt{n}}\Big(\sum_{j=1}^{m}\sigma_{j}^{p^{*}}\Big)^{1/p^{*}}.
\]
In both cases, $k+1\asymp \frac{n}{p^*}$. The proof for $p=1$ is carried out analogously. 
\end{proof}

Next, we prove the general version of the lower bound in Proposition~\ref{pro:lower}.

\begin{proof}[Proof of Proposition~\ref{pro:lower-general}]
By Lemma \ref{lem:large-coord}, with probability at least $1-\varepsilon$, we find $x\in \IR^m$ with
\[
x_1^2 \ge 1-\frac{n}{\varepsilon m},\quad
\|x\|_2=1,\quad\text{and}\quad
G_{n,m} x=0.
\]
We estimate
\[
	\Big(\sum_{j=1}^{m}\frac{|x_j|^p}{\sigma_j^p}\Big)^{1/p}
	\le \frac{1}{\sigma_1}+ \Big(\sum_{j=2}^{m}\frac{|x_j|^p}{\sigma_j^p}\Big)^{1/p}
	\le \frac{1}{\sigma_1}+ \frac{1}{\sigma_m}\Big(\sum_{j=2}^{m}|x_j|^p\Big)^{1/p}
\]
and by means of Hölder's inequality, we obtain
\[
	\frac{1}{\sigma_m}\Big(\sum_{j=2}^{m}|x_j|^p\Big)^{1/p}
	\le \frac{m^{1/p-1/2}}{\sigma_m}\Big(\sum_{j=2}^{m}x_j^2\Big)^{1/2}
	= \frac{m^{1/p-1/2}}{\sigma_m}\big(1-x_1^2\big)^{1/2}.
\]
Since $1-x_1^2\le \frac{n}{\varepsilon m}$, we have
\[
	\Big(\sum_{j=1}^{m}\frac{|x_j|^p}{\sigma_j^p}\Big)^{1/p}
	\le \frac{1}{\sigma_1}+1
\]
if $n\le \varepsilon m^{2p^*}\sigma_m^2$. In this case, we can normalize such that $\tilde{x}:=x/(1+\frac{1}{\sigma_1})$ satisfies
\[
\tilde{x}\in \elp,\quad
G_{n,m} \tilde{x}=0,\quad\text{and}\quad
\|\tilde{x}\|_2=\frac{\sigma_1}{1+\sigma_1},
\]
which completes the proof.
\end{proof}

\section{Proofs for the non-convex case}\label{sec:ell-proof-quasi}

We continue with the results leading to the proof of Theorem~\ref{thm:ell-main-quasi}. It is a special case of Theorem~\ref{thm:ell-main-quasi-general} from Section~\ref{sec:ell-sparse}. The proof relies on Lemma~\ref{lem:nonlin-app} which is proven first.

\begin{proof}[Proof of Lemma~\ref{lem:nonlin-app}]
Approximating every $x\in\elp$ by the $s$-sparse vector consisting of its $s$ largest entries we obtain
\[
\sigma_s(x)_q
\le\Big(\sum_{j=s+1}^{m}(x_j^*)^q\Big)^{1/q},
\]
where $(x_j^*)_{j=1}^m$ is the non-increasing rearrangement as in Definition~\ref{def:lorentz}. If $q=\infty$, then take the maximum norm instead and analogously for $p$ in the following. Recall that in this thesis we set $a/\infty=0$ for every $a\in\IR$. 

Using that $x\in\elp$, we can write
\[
1
\ge \sum_{j=1}^{m}\frac{|x_j|^p}{\sigma_{j}^p}
= \sum_{j=1}^{m}\frac{(x_j^*)^p}{\sigma_{\pi(j)}^p}
\ge \sum_{j=1}^{k}\frac{(x_k^*)^p}{\sigma_{\pi(j)}^p},
\]
for any $1\leq k \leq m$, where $\pi:\{1,\ldots,m\}\to\{1,\ldots,m\}$ is a suitable permutation. Thus,
\[
(x_k^*)^p
\le \Big(\sum_{j=1}^{k}\frac{1}{\sigma_{\pi(j)}^p}\Big)^{-1}
\le \Big(\sum_{j=1}^{k}\frac{1}{\sigma_{j}^p}\Big)^{-1}
\asymp k^{-\lambda p-1},
\]
where the notation $\asymp$ (and $\lesssim$ below) means in this proof that the implicit constants do not depend on $k,s$ or $m\ge 2s$ but may depend on $p,q$ and $\lambda$.  Inserting this bound above yields
\[
\sigma_s(x)_q
\lesssim\Big(\sum_{k=s+1}^{m}k^{-\lambda q-q/p}\Big)^{1/q}
\asymp s^{-\lambda+1/q-1/p}
\]
as $\lambda>(1/q-1/p)_+$. 

The lower bound is achieved by a vector on the boundary of $\elp$ having its support on the first $2s$ coordinates which are all equal to 
\[
\Big(\sum_{i=1}^{2s}\sigma_i^{-p}\Big)^{-1/p}
\asymp s^{-\lambda-1/p}.
\]
\end{proof}

\begin{proof}[Proof of Theorem~\ref{thm:ell-main-quasi-general}]
	Let $m\in\IN$, $1\le n<m$, $N_n:=n^{-1/2}G_{n,m}$ and set $r=\min\{1,q\}$. By Proposition~\ref{pro:radius-general-+}, see also Remark~\ref{rem:ell-quasi},
\begin{equation} \label{eq:radq-upper}
\rad_q(\elp,\ker G_{n,m})
\le 2^{(1/q-1)_+} \sup_{x\in \elp}\|x-\Delta_r(N_n x)\|_q,
\end{equation}
where we specified $\varphi(y)=\Delta_r(n^{-1/2}y)=$ arg min $\|z\|_r$ subject to $N_n z=n^{-1/2}y$ for $y=G_{n,m} x$. 

We follow the proof of \cite[Thm.~3.2]{FPR+10} in order to obtain an upper bound. To this end, let $D\in (0,\infty)$ be large enough such that
\[
D/2>{\rm e} \quad\text{and}\quad \frac{D/2}{1+\log(D/2)}>C_1,
\]
where $C_1\in(0,\infty)$ is the constant from Proposition~\ref{pro:rip-gaussian} with $\delta=1/3$. Also choose $s\ge 1$ as the largest integer smaller than $ n/D\log({\rm e}m/n)$ such that
\[
\frac{n}{2D\log({\rm e}m/n)}
\le s
< \frac{n}{D\log({\rm e}m/n)}.
\]
Then, if $n\ge D\log({\rm e}m/n)$ it holds that $n>C_1 (2s)\log({\rm e}m/2s)$. This can be seen as follows. Put $t=2s$ such that $t/n<2/D<1/{\rm e}$. As the function $x\mapsto x\log(x)$ is decreasing on $[0,1/{\rm e}]$, we obtain
\[
n\ge\frac{D}{2}t \log({\rm e}m/n)
= \frac{D}{2}t \log({\rm e}m/t)
+ \frac{D}{2}n (t/n) \log({\rm e}t/n)
> \frac{D}{2}t \log({\rm e}m/t)
-n \log(D/2)
\]
and therefore
\[
n\ge\frac{D/2}{1+\log(D/2)}t\log({\rm e}m/t)
>C_1\,t\log({\rm e}m/t).
\]
Then, by Proposition~\ref{pro:rip-gaussian} the matrix $N_n$ satisfies $\delta_{2s}(N_n)\le \delta$ with probability at least $1-2\exp(-C_2 n)$. Assume from now on that $N_n$ is replaced by a realization satisfying this.

Let $x\in\elp$ be arbitrary. Then $v:=x-\Delta_r(N_n x)$ belongs to the kernel and we will apply the restricted isometry property to a decomposition of $v$ in order to bound $\|v\|_q$. Decompose the set $\{1,\dots,m\}$ into disjoint sets $S_1,\dots,S_M$ which are of size $s$, except for possibly $S_M$, such that $|v_i|\ge |v_j|$ for all $i\in S_{k-1}$ and $j\in S_k$, where $k\ge 2$. Write $v=\sum_{k=1}^{M}v_{S_k}$, where $v_{S_k}$ is a vector whose coordinates coincide with $v_i$ if $i\in S_k$ and with zero else. Then $r\le 2$ and Hölder's inequality imply 
\begin{equation} \label{eq:v-hoelder}
\|v_{S_k}\|_2\le s^{1/2-1/r}\|v_{S_{k-1}}\|_r,\quad k=2,\dots,M,
\end{equation}
which will be used a few lines below. Together with Hölder's inequality we deduce from the lower bound in the definition \eqref{eq:rip} of the restricted isometry property for the $s$-sparse (and also $2s$-sparse) vector $v_{S_k}$ that
\[
\|v\|_q^r
\le \Big\| \sum_{k=1}^{M} v_{S_k}\Big\|_q^r
\le \sum_{k=1}^{M} \|v_{S_k}\|_q^r
\le  \sum_{k=1}^{M}(s^{1/q-1/2}\|v_{S_k}\|_2)^r
\le \Big(\frac{s^{1/q-1/2}}{\sqrt{1-\delta}}\Big)^r \sum_{k=1}^{M}\|N_nv_{S_k}\|_2^r.
\]
Since $v\in \ker N_n$, it holds that $N_n v_{S_1}=-\sum_{k=2}^{M}N_n v_{S_k}$ and further, since $r\le 2$,
\[
\|N_n v_{S_1}\|_2^r
=\Big\|\sum_{k=2}^{M}N_n v_{S_k}\Big\|_2^r
=\Big(\sum_{k=2}^{M}\|N_n v_{S_k}\|_2^2\Big)^{r/2}
\le \sum_{k=2}^{M}\|N_n v_{S_k}\|_2^r.
\]
Thus, we can continue our estimate with
\[
\|v\|_q^r
\le 2 \Big(\frac{s^{1/q-1/2}}{\sqrt{1-\delta}}\Big)^r \sum_{k=2}^{M}\|N_nv_{S_k}\|_2^r.
\]
Applying now the upper bound in the restricted isometry property of \eqref{eq:rip} yields that
\[
\|v\|_q^r
\le 2 \Big(\frac{\sqrt{1+\delta}}{\sqrt{1-\delta}}s^{1/q-1/2}\Big)^r \sum_{k=2}^{M}\|v_{S_k}\|_2^r.
\]
Using the inequality \eqref{eq:v-hoelder} and that $\delta=1/3$ yields
\[
\|v\|_q
\le 2^{1/r+1/2} s^{1/q-1/r}\Big(\sum_{k=1}^{M}\|v_{S_k}\|_r^r\Big)^{1/r}
= 2^{1/r+1/2}s^{1/q-1/r}\|v\|_r
\]
and inserting now for $s$ yields for some constant $C>0$ depending on $r$ and $C_1$ that
\[
\|v\|_q\le C \,\Big(\frac{\log({\rm e}m/n)}{n}\Big)^{1/r-1/q} \|v\|_r.
\]
After inserting for $v=x-\Delta_r(N_n x)$, we can apply Proposition~\ref{pro:rip-sparse} to get a bound in terms of the error of best $s$-term approximation of $x$. Then taking the supremum shows that there exists a constant $C>0$, depending on $p,q$ and $C_1$, such that
\[
\sup_{x\in \elp }\|x-\Delta_r(N_n x)\|_q
\le C\, \Big(\frac{\log({\rm e}m/n)}{n}\Big)^{1/r-1/q}\sup_{x\in \elp }\sigma_s(x)_r.
\]
Until now we considered a specific realization of $N_n$ and therefore this holds with probability at least $1-2\exp(-C_2 n)$ for the random matrix $N_n$.

With Lemma~\ref{lem:nonlin-app} the proof of Theorem~\ref{thm:ell-main-quasi-general} is complete if we can show that $s\le m/2$. Indeed, since the function $n\mapsto n/\log({\rm e}m/n)$ is increasing for $1\le n\le m$, we have $s\le m/D < m/2{\rm e}$.
\end{proof}

\chapter{Additional material for Chapter 4} \label{ch:sob-app}

We provide the proofs for the statements claimed in Chapter~\ref{ch:sob} and in particular follow the ideas given in Sections~\ref{sec:sob-mls} and \ref{sec:sob-fool-int} to complete the proof of Theorem~\ref{thm:sob-main} and its extension, Theorem~\ref{thm:sob-main-ext}. 

First, in Section~\ref{sec:sob-proof-discuss}, we provide the proofs for the asymptotics of the optimal distortion \eqref{eq:dist-optimal} and for Example~\ref{ex:big-hole}.

In Section~\ref{sec:sob-proof-poly} we continue with technical preparations regarding the approximation of Sobolev functions by polynomials (Lemma~\ref{lem:sob-poly}) and their extension from bounded Lipschitz domains to the whole space (Lemma~\ref{lem:sob-extension}). We also give a proof of the known fact that a bounded convex domain has Lipschitz boundary (Lemma~\ref{lem:convex-lip}). 

Section~\ref{sec:sob-proof-ge} contains the proof of Theorem~\ref{thm:sob-main} for the case $q\ge p$. As a preparation for the upper bound, Lemma~\ref{lem:wendland} is deduced from \cite[Thm.~4.7]{Wen04} and the fact that \chg{the} interior cone condition is preserved by the closure (Lemma~\ref{lem:closure-CC}) is proven. The lower bound relies on finding a suitable large hole which is provided by Lemma~\ref{lem:ballinball}.

In Section~\ref{sec:sob-proof-le} we give the proof of Theorem~\ref{thm:sob-main} for the case $q<p$. For this, we commence by proving the local cone condition (Lemma~\ref{lem:local-CC}) and the error bound of moving least squares applied to a cube (Lemma~\ref{lem:local-estimate}). Afterwards, Lemmas~\ref{lem:sob-trivial} and \ref{lem:inf-min} justify the Definition~\ref{def:small-cube} of the smallest good cube. Then the existence of an efficient covering (Proposition~\ref{pro:good-cover}) and of a hole of proportional size in every smallest good cube (Lemma~\ref{lem:good-hole}) are proven. With these preparations at hand the proof of the upper and lower bound in the case $q<p$ is given. 

Finally, in Section~\ref{sec:sob-proof-ext} we prove the extension, Theorem~\ref{thm:sob-main-ext}, of our results to more general function spaces. There, we shall also provide a definition of Besov and Triebel-Lizorkin spaces.

In this appendix we will make use of the asymptotic notations $\lesssim,\gtrsim,\asymp$ as given in Definition~\ref{def:asymp-std}. Additionally, whenever an arbitrary but fixed function $f$ occurs, the implicit constants will not depend on it.

\section{Proofs for Section~4.2}\label{sec:sob-proof-discuss}

Let us prove the asymptotics for the optimal points on a bounded set $\Omega\subset\IR^d$. 
\begin{proof}[Proof of \eqref{eq:dist-optimal}]
	Let $n\in\IN$ and $\Pn=\{x_1,\dots,x_n\}\subset\IR^d$. Since 
\begin{equation} \label{eq:dist-norms}
	\|\dist(\cdot,\Pn)\|_{L_{\infty}(\Omega)}\ge \vol(\Omega)^{1/\gamma}\|\dist(\cdot,\Pn)\|_{L_{\gamma}(\Omega)}\text{ for every }0<\gamma<\infty,
\end{equation}
it suffices to prove the upper bound for $\gamma=\infty$ and the lower bound for $0<\gamma<\infty$.

For the upper bound we construct an $\varepsilon$-net, with $\varepsilon>0$ to be fixed later, by iteratively selecting $y_1\in\Omega$, $y_2\in\Omega\setminus B(y_1,\varepsilon)$ and $y_{k+1}\in\Omega$ not contained in $\bigcup_{j=1}^k B(y_j,\varepsilon)$ until the process stops after finitely many steps. Then we are left with $N$ balls $B(y_1,\varepsilon),\dots,B(y_N,\varepsilon)$ which cover $\Omega$ and the balls $B(y_1,\varepsilon/2),\dots,B(y_N,\varepsilon/2)$ are disjoint.

If $\varepsilon<1$, we have the volume estimate
\[
\vol\big(\Omega+B(0,1/2)\big)
\ge \vol\Big(\bigcup_{j=1}^N B(y_j,\varepsilon/2)\Big)
= N (\varepsilon/2)^d \vol(\IB_2^d).
\]
We refer to Definition~\ref{def:minkowski} in Appendix~\ref{sec:iso-proof-upper} for the sum of sets.

If we choose, for sufficiently large $n$, 
\[
\varepsilon=2\,\vol(\IB_2^d)^{-1/d}\vol\big(\Omega+B(0,1/2)\big)^{1/d}n^{-1/d},
\]
it follows that $N\le n$ and therefore $P_N=\{y_1,\dots,y_N\}$ has at most $n$ points and every point of $\Omega$ is not more than $\varepsilon$ from the nearest point of $P_N$. This proves the upper bound. For completeness, adding arbitrary points ensures that $\#P_N=n$.

For the proof of the lower bound, let $0<\gamma<\infty$ and fix 
\[
\delta=\big(\vol(\Omega)/2\,\vol(\IB_2^d)\big)^{1/d} n^{-1/d}\quad \text{ such that }\quad n\,\vol\big(B(0,\delta)\big)= \vol(\Omega)/2.
\]
Then for any $\Pn=\{x_1,\dots,x_n\}\subset \IR^d$ the set 
\[
\Omega_{\Pn}:=\Omega\setminus \bigcup_{i=1}^n B(x_i,\delta)
\]
has volume at least $\vol(\Omega)/2$ and $\dist(x,\Pn)^{\gamma}\ge \delta^{\gamma}$ for each $x\in \Omega_{\Pn}$. Integrating over $\Omega_{\Pn}$ yields
\[
\|\dist(\cdot,\Pn)\|_{L_{\gamma}(\Omega)}^{\gamma}
\ge \int_{\Omega_{\Pn}}\dist(x,\Pn)^{\gamma}\dd x
\gtrsim n^{-\gamma/d},
\]
concluding the proof.
\end{proof}

Next, we provide the proof for the claimed necessary condition in Example~\ref{ex:big-hole} for the optimality of a sequence of point sets.

\begin{proof}[Proof of condition \eqref{eq:condition-largest-hole}]
Assume that for each $n$ the ball $B_n:=B(y_n,r_n)$ with $y_n\in\Omega$ and $r_n>0$ does not contain any points of $\Pn$ and that the points of $\Pn$ cover $\Omega\setminus B_n$ nicely, i.e., the covering radius of $\Pn$ in $\Omega\setminus B_n$ is of order $n^{-1/d}$. Then we have
\[
\int_{\Omega}\dist(x,\Pn)^{\gamma}~{\rm d}x \,\ls\, n^{-\gamma/d}+r_n^{\gamma+d}\quad \text{for all }0<\gamma<\infty.
\]
Rearranging, this means that if $r_n\lesssim n^{-1/d+1/(\gamma+d)}$, then $\|\dist(\cdot,\Pn)\|_{L_{\gamma}(\Omega)}\lesssim n^{-1/d}$ for the sequence $(\Pn)_{n\in\IN}$.   

To show that this condition is also necessary, note that $\dist(x,\Pn) \ge r_n/2$ for every $x\in\Omega \cap B(y_n,r_n/2)$ and $n\in\IN$. The interior cone condition satisfied by $\Omega$ implies that there is a constant $c>0$ depending only on $\Omega$ such that $\vol\big(\Omega \cap B(y_n,r_n/2)\big)\ge c\, r_n$, and thus
\[
\int_{\Omega}\dist(x,\Pn)^{\gamma}~{\rm d}x \,\gs\, r_n^{\gamma+d}.
\]
Letting $1\le q<p \le \infty$ and $\gamma$ as above, we obtain from Corollary \ref{cor:sob-char} that $(\Pn)_{n\in\IN}$ is asymptotically optimal for $L_q$-approximation on $W_p^s(\Omega)$ if and only if condition \eqref{eq:condition-largest-hole}, that is,
\begin{equation*}
r_n\ls n^{-1/d+1/(\gamma+d)}
\end{equation*}
is satisfied.
\end{proof}
\section{Approximation and extension of Sobolev functions} \label{sec:sob-proof-poly}

In the following, let $\Omega\subset \IR^d$ be a domain as well as $1\le p\le \infty$ and $s\in\IN$ \chg{as in \eqref{eq:embedding}}. Recall from Definition~\ref{def:sobolev} that the Sobolev norm of a function $f\in W^s_p(\Omega)$ is given by
\[
\|f\|_{W^s_p(\Omega)}=\Big(\sum_{|\alpha|\leq s} \|D^{\alpha}f\|_{L_p(\Omega)}^p\Big)^{1/p},
\]
where the sum is over all multi-indices $ \alpha\in \mathbb{N}_0^d $ with sum of entries $ |\alpha|=\alpha_1+\ldots+\alpha_d $ at most $s$.  The seminorm
\begin{equation} \label{eq:sob-seminorm}
 |f|_{W^s_p(\Omega)}:= \Big(\sum_{|\alpha|= s} \|D^{\alpha}f\|_{L_p(\Omega)}^p\Big)^{1/p}
\end{equation}
is zero on polynomials of degree at most $s-1$ and will be used to bound the error by polynomial approximation. 

Variants of the following result about optimal polynomial approximation on cubes are well known, see, e.g., the references provided in \cite[Sec.~1.3.12]{Nov88}. The proof given here is not adapted from \cite[Lem.~9]{KS20}, where we stated it originally, but from the similar \cite[Lem.~4]{KNS21}.
\begin{lemma}\label{lem:sob-poly}
	For any $1\le p\le \infty$ and $s\in \IN$ \chg{as in \eqref{eq:embedding}}, there is a constant $\cpoly>0$ such that the following holds.  For any $0< \varrho \le 1$, any cube $Q(x,\varrho)$ with center $x\in\IR^d$ and radius $ \varrho $, and any $f\in W^s_p(\IR^d)$, there is a polynomial $\pi$ of degree at most $s$ such that
\[
 \sup_{x\in Q(x,\varrho)}\vert(f - \pi)(x)\vert \le \cpoly \,  \varrho ^{s-d/p} \vert f \vert_{W_p^s(Q(x,\varrho))}.
\]
\end{lemma}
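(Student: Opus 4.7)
The plan is to reduce the assertion to the unit cube $Q_0 := Q(0,1)$ by a standard affine scaling argument, and to establish the bound on $Q_0$ via the Bramble--Hilbert lemma combined with Sobolev embedding.

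First I would prove the reference-cube estimate: there exists a constant $C_0 > 0$ such that for every $g \in W^s_p(Q_0)$ one can find a polynomial $\pi_0$ of degree at most $s-1$ with
\[
\sup_{y \in Q_0} |g(y) - \pi_0(y)| \le C_0 \, |g|_{W^s_p(Q_0)}.
\]
This follows from two classical ingredients on the fixed domain $Q_0$. The Bramble--Hilbert (Deny--Lions) lemma supplies, for each $g$, a polynomial $\pi_0$ of degree at most $s-1$ (so that $|g - \pi_0|_{W^s_p(Q_0)} = |g|_{W^s_p(Q_0)}$ while the lower-order seminorms of $g - \pi_0$ are controlled) such that $\|g - \pi_0\|_{W^s_p(Q_0)} \le C_1 |g|_{W^s_p(Q_0)}$. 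Since $s > d/p$, the Sobolev embedding $W^s_p(Q_0) \hookrightarrow L_\infty(Q_0)$ holds with some constant $C_2$, and chaining the two inequalities yields the claim with $C_0 = C_1 C_2$.

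Next I would rescale. Given a cube $Q(x,\varrho)$ with $0 < \varrho \le 1$ and $f \in W^s_p(\IR^d)$, set $g(y) := f(x + \varrho y)$ for $y \in Q_0$. By the chain rule $D^{\alpha} g(y) = \varrho^{|\alpha|}(D^{\alpha} f)(x + \varrho y)$, and the change of variables $z = x + \varrho y$ (with Jacobian $\varrho^d$) gives, for $|\alpha| = s$,
\[
\|D^{\alpha} g\|_{L_p(Q_0)}^p = \varrho^{sp - d} \, \|D^{\alpha} f\|_{L_p(Q(x,\varrho))}^p,
\]
so that $|g|_{W^s_p(Q_0)} = \varrho^{s - d/p} |f|_{W^s_p(Q(x,\varrho))}$. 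Applying the reference-cube estimate to $g$ produces a polynomial $\pi_0$ of degree at most $s-1$ with the desired bound on $Q_0$. Pulling back by setting $\pi(z) := \pi_0((z - x)/\varrho)$, which is again a polynomial of degree at most $s-1 \le s$, and using $\sup_{Q(x,\varrho)} |f - \pi| = \sup_{Q_0} |g - \pi_0|$, one obtains
\[
\sup_{z \in Q(x,\varrho)} |f(z) - \pi(z)| \le C_0 \, \varrho^{s - d/p} \, |f|_{W^s_p(Q(x,\varrho))},
\]
so $\cpoly := C_0$ works.

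The main obstacle is really only the reference-cube bound, which rests on two well-known but nontrivial facts not proved in the excerpt (Bramble--Hilbert and Sobolev embedding for $s > d/p$). The scaling step itself is a routine bookkeeping exercise; the only thing to be careful about is that the exponent $s - d/p$ is strictly positive (guaranteed by the hypothesis on $s$ and $p$), so the bound is genuinely small for small $\varrho$, and that pulling $\pi_0$ back by an affine map preserves the polynomial degree. The restriction $\varrho \le 1$ in the statement plays no role in the argument beyond matching the intended regime of application.
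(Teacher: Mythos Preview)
Your proof is correct and follows essentially the same approach as the paper: reduce to the reference cube by affine scaling, invoke a Poincar\'e/Bramble--Hilbert type inequality on $Q_0$ to control the full Sobolev norm by the top-order seminorm, and then apply the Sobolev embedding $W^s_p(Q_0)\hookrightarrow C_b(Q_0)$. The paper cites Maz'ja's generalized Poincar\'e inequality where you invoke Bramble--Hilbert/Deny--Lions, but these are the same device; your scaling computation is in fact slightly sharper (you obtain an equality for the seminorm where the paper writes an inequality with a constant).
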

\begin{proof}
We use an affine map $T$ from $Q=Q(x,\varrho)$ to the reference cube $Q_0=Q(0,1)$ with $T(y)=\varrho^{-1}(y-x)$ for every $y\in Q$. Applied to the function $f\circ T^{-1}\colon Q_0\to\IR$, the generalized Poincar\'e inequality from \cite[1.1.11]{Maz85} implies that there exists a constant $c_1>0$ independent of $f$ and $Q$ such that we find a polynomial $\pi$ of degree at most $s-1$ with
	\begin{equation}\label{eq:poincare}
	\|f\circ T^{-1} - \pi\|_{W^s_p(Q_0)}
	\, \le \, c_1\, |f\circ T^{-1}|_{W^s_p(Q_0)}.
\end{equation}
A change of variables, together with the chain rule and integration by substitution, gives another constant $c_2> 0$ such that
 \begin{equation}\label{eq:cov}
	|f\circ T^{-1}|_{W^s_p(Q_0)}
	\, \le \, c_2\,\varrho^{s-d/p}\,|f|_{W^s_p(Q)}.
\end{equation}
The continuous embedding of $W^s_p(Q_0)$ into $C_b(Q_0)$, see the introduction to Chapter~\ref{ch:sob}, yields $c_3>0$ with
	\[
	\Vert f - \pi' \Vert_{L_\infty(Q)}
	\,=\, \|f\circ T^{-1} - \pi \|_{L_{\infty}(Q_0)}
	\,\le\,c_3\,\|f\circ T^{-1} - \pi \|_{W^s_p(Q_0)},
	\]
	where $\pi'=\pi\circ T$ is again a polynomial of degree at most $s-1$.  Combining this with the estimates \eqref{eq:poincare} and \eqref{eq:cov} yields 
	\[
	\Vert f - \pi' \Vert_{L_\infty(Q)}
	\,\le\,\cpoly\,\varrho^{s-d/p}|f|_{W^s_p(Q)},
	\]
	where $\cpoly:=c_1\,c_2\,c_3$ is independent of $f$ and $Q$. The continuity of $f-\pi'$ concludes the proof.
\end{proof}

The reason that Lemma~\ref{lem:sob-poly} is stated for functions defined on the whole space $\IR^d$ is that we can extend the functions from $W^s_p(\Omega)$ to $W^s_p(\IR^d)$, when $\Omega$ is a bounded convex domain, which is the content of well-known Sobolev extension theorems. See, e.g., the historical comments in \cite[Sec.~1.1.19]{Maz85}. To state a version suitable for our needs, we introduce the notion of a bounded Lipschitz domain, which is \chg{a popular} regularity condition under which Sobolev embedding and extension theorems hold. The following definition was used in \cite{KS20} and taken from \cite{NT06}.

\begin{definition}\label{def:bounded-lipschitz}
A bounded Lipschitz domain $\Omega$ is a domain with the following property. There are points $x_1,\ldots,x_N\in\partial\Omega$ on the boundary and radii $r_1,\ldots,r_N>0$ such that $\partial\Omega$ is covered by the balls $B(x_1,r_1),\ldots,B(x_N,r_N)$ and
\[
\Omega\cap B(x_i,r_i)= \Omega_i\cap B(x_i,r_i), \quad i=1,\ldots,N,
\]
where $\Omega_i$ is a suitable rotation of a special Lipschitz domain in $\mathbb{R}^d$. Here, a special Lipschitz domain in $\mathbb{R}^d$, $d\geq 2$, is the collection of all points $x=(x',x_d)$ with $x'\in\mathbb{R}^{d-1}$ such that
\[
h(x')<x_d<\infty,
\]
where $h\colon\mathbb{R}^{d-1}\to \IR$ is some Lipschitz function, i.e., there exists a constant $C>0$ with $|h(x')-h(y')|\leq C\|x'-y'\|_2$ for all $x',y'\in\mathbb{R}^{d-1}$. 
\end{definition}
The following extension theorem is taken from Stein \cite[Sec.~VI.3]{Ste71}.
\begin{lemma} \label{lem:sob-extension}
Let $\Omega \subset \IR^d$ be a bounded Lipschitz domain, $1\le p\le \infty$ and $s\in \IN$.  Then there is a bounded linear operator ${\rm ext}\colon W^s_p(\Omega) \to W^s_p(\IR^d)$ with ${\rm ext}(f)\vert_\Omega = f$ for all $f\in W^s_p(\Omega)$.
\end{lemma}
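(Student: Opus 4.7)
The plan is to recapitulate Stein's universal extension theorem from \cite[Ch.~VI]{Ste71}, proceeding in three stages: localize to a neighborhood of the boundary, reflect across each local piece of the boundary via an averaged Lipschitz reflection, and glue via a partition of unity.

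First, I would localize. Using the covering of $\partial\Omega$ by the balls $B(x_i,r_i)$, $i=1,\dots,N$, from Definition~\ref{def:bounded-lipschitz} together with an open set $U_0$ compactly contained in $\Omega$ such that $U_0\cup \bigcup_{i=1}^N B(x_i,r_i)\supset \overline{\Omega}$, I would pick a smooth partition of unity $\{\varphi_0,\varphi_1,\dots,\varphi_N\}$ subordinate to this cover. The interior piece $\varphi_0 f$ is supported away from $\partial\Omega$ and is extended to $\IR^d$ by zero; the product rule for Sobolev norms controls its $W^s_p(\IR^d)$-norm. Each boundary piece $\varphi_i f$ is supported in $\Omega\cap B(x_i,r_i)$, which after a rigid motion lies in a special Lipschitz domain $\Omega_\ast=\{(x',x_d)\colon x_d>h(x')\}$. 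The problem is thus reduced to constructing, for a fixed special Lipschitz domain, a bounded linear operator $\mathrm{ext}_\ast\colon W^s_p(\Omega_\ast)\to W^s_p(\IR^d)$ that restricts to the identity on $\Omega_\ast$.

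Second, I would build $\mathrm{ext}_\ast$ by an averaged reflection. Fix a rapidly decreasing weight $\psi\colon[1,\infty)\to\IR$ with $\int_1^\infty\psi(\lambda)\,\dd\lambda=1$ and $\int_1^\infty\lambda^k\psi(\lambda)\,\dd\lambda=0$ for $k=1,\dots,s$, and a regularized distance $\delta^\ast$, i.e.\ a smooth function on $\IR^d\setminus\overline{\Omega_\ast}$ comparable to $\mathrm{dist}(\cdot,\partial\Omega_\ast)$ with $|D^\alpha\delta^\ast(x)|\lesssim\delta^\ast(x)^{1-|\alpha|}$ for every multi-index $\alpha$. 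For $x_d\le h(x')$ set
\[
  (\mathrm{ext}_\ast g)(x',x_d) := \int_1^\infty g\big(x',x_d+2\lambda\,\delta^\ast(x',x_d)\big)\,\psi(\lambda)\,\dd\lambda,
\]
and extend by $g$ on $\Omega_\ast$. The mass condition on $\psi$ together with the Lipschitz continuity of $\delta^\ast$ make $\mathrm{ext}_\ast g$ agree with $g$ in trace across $\partial\Omega_\ast$; the moment conditions annihilate the boundary terms that would otherwise appear when a derivative of order at most $s$ is moved inside the integral. Performing the substitution $y_d=x_d+2\lambda\,\delta^\ast(x',x_d)$, whose Jacobian is comparable to $\lambda$, and using the rapid decay of $\psi$ then yields
\[
  \|D^\alpha \mathrm{ext}_\ast g\|_{L_p(\IR^d\setminus\overline{\Omega_\ast})}\,\lesssim\, \sum_{|\beta|\le|\alpha|}\|D^\beta g\|_{L_p(\Omega_\ast)}
\]
for every $|\alpha|\le s$, with a constant depending only on $s$, the Lipschitz norm of $h$, and $\psi$. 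Summing over $|\alpha|\le s$ gives $\|\mathrm{ext}_\ast g\|_{W^s_p(\IR^d)}\lesssim\|g\|_{W^s_p(\Omega_\ast)}$.

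I expect the hard part to be the construction of the regularized distance $\delta^\ast$: it must be smooth on $\IR^d\setminus\overline{\Omega_\ast}$ with derivatives of every order bounded by appropriate negative powers of $\delta^\ast$ itself, yet equivalent to the genuine distance to $\partial\Omega_\ast$. This is carried out by taking a Whitney decomposition of $\IR^d\setminus\overline{\Omega_\ast}$ into dyadic cubes and summing their side lengths against a smooth partition of unity subordinate to this decomposition — a construction that is by now standard but for which no real shortcut seems available, and which is precisely what forces the boundary to be Lipschitz rather than merely continuous. Once $\delta^\ast$ is in hand, gluing is routine: setting
\[
  \mathrm{ext}(f) := (\varphi_0 f)\,\bfone_\Omega+\sum_{i=1}^N \widetilde{\mathrm{ext}}_i(\varphi_i f),
\]
where $\widetilde{\mathrm{ext}}_i$ is $\mathrm{ext}_\ast$ transferred back by the rigid motion associated to the $i$-th boundary chart, produces a linear operator with $\mathrm{ext}(f)|_\Omega=f$ (since $\sum_i\varphi_i\equiv 1$ on $\Omega$) and $\|\mathrm{ext}(f)\|_{W^s_p(\IR^d)}\lesssim\|f\|_{W^s_p(\Omega)}$, the constant depending only on $\Omega$, $p$ and $s$.
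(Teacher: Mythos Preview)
Your sketch is a faithful outline of Stein's universal extension argument and is correct in all essentials; the paper, however, does not give a proof at all but simply records the statement and refers to \cite[Sec.~VI.3]{Ste71}, noting that Stein's notion of a minimally smooth domain includes bounded Lipschitz domains. So you and the paper agree on the source, but you have supplied what the paper omits.
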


Note that Stein uses the notion of a minimally smooth domain which, however, entails bounded Lipschitz domains. Further, the following lemma shows that a bounded convex domain is a bounded Lipschitz domain. As we could not find a suitable proof of this intuitive fact, we provide one ourselves. A similar result may be found in Dekel and Leviatan \cite[Lem.~2.3]{DL04}.
\begin{lemma}\label{lem:convex-lip}
Every bounded convex domain is a bounded Lipschitz domain.
\end{lemma}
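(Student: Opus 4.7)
The plan is to cover $\partial\Omega$ by finitely many balls in each of which, after a suitable rotation, the set $\Omega$ coincides with the epigraph of a Lipschitz function, matching Definition~\ref{def:bounded-lipschitz}. The key geometric input is Lemma~\ref{lem:convex-cone}: a bounded convex domain $\Omega$ contains a closed ball $\overline{B(x_0,r)}$ for some $x_0\in\Omega$ and $r>0$, and hence satisfies an interior cone condition with radius $r$ and angle $\theta=2\arcsin\bigl(r/2\,\diam(\Omega)\bigr)$, where at each $x\in\overline\Omega\setminus\{x_0\}$ the cone can be oriented along $\xi(x):=(x_0-x)/\|x_0-x\|$.

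Fix a boundary point $x\in\partial\Omega$ and rotate and translate coordinates so that $x=0$ and $\xi(x)=e_d$; then $C(0,e_d,r,\theta)\subset\overline{\Omega}$. I would then define, on a small open ball $U\subset\mathbb{R}^{d-1}$ centered at $0$,
\[
h(y'):=\inf\bigl\{y_d\in\mathbb{R}\colon(y',y_d)\in\Omega\bigr\}.
\]
The interior cone guarantees that for $y'\in U$ sufficiently small there are points $(y',t)\in\Omega$ (take $t$ close to $r\sin\theta$), so $h(y')$ is bounded above; boundedness of $\Omega$ bounds it below. The convexity of $\Omega$ directly yields convexity of $h$: if $(y',s),(z',t)\in\Omega$ and $\lambda\in[0,1]$, then by convexity $(\lambda y'+(1-\lambda)z',\lambda s+(1-\lambda)t)\in\Omega$, so passing to the infimum gives $h\bigl(\lambda y'+(1-\lambda)z'\bigr)\le \lambda h(y')+(1-\lambda)h(z')$.

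Next I would show the graph of $h$ describes $\partial\Omega$ in a small ball $B(0,\rho)$, i.e.\
\[
\Omega\cap B(0,\rho)=\bigl\{(y',y_d)\colon y'\in U,\ y_d>h(y')\bigr\}\cap B(0,\rho).
\]
One inclusion is by definition of $h$; the reverse uses that the vertical slice $\{y_d\colon (y',y_d)\in\Omega\}$ is an interval by convexity, so every point above the lower endpoint lies in $\Omega$. Since bounded convex functions on open convex sets are Lipschitz on any compact subset of the interior (a standard fact about convex functions), after shrinking $U$ and $\rho$ the function $h$ is Lipschitz on $U$. Covering the compact set $\partial\Omega$ by finitely many such neighborhoods yields the bounded Lipschitz domain structure of Definition~\ref{def:bounded-lipschitz}.

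The main obstacle is the verification of the graph representation in Step~3: one must argue that, in a sufficiently small ball around $x$, no part of $\partial\Omega$ ``bends back'' under the interior cone axis, so that the boundary really is a single-valued graph over the chosen hyperplane. This is where the cone condition (rather than mere convexity) is essential, as it forces a genuinely inward-pointing direction and prevents the boundary from becoming vertical; once this is in place, the local Lipschitz property of convex functions closes the argument without further effort.
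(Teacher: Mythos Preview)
Your proposal is correct and follows essentially the same approach as the paper: cover $\partial\Omega$ by finitely many balls, rotate so the direction toward a fixed interior ball is $e_d$, define $h$ as the infimum over vertical slices, deduce convexity and hence local Lipschitz continuity of $h$, and verify the graph representation using that vertical slices of a convex set are intervals. The paper works directly with the interior ball $B(x_0,r)$ rather than invoking the cone condition, and includes one small technical step you omit: since Definition~\ref{def:bounded-lipschitz} requires $h\colon\mathbb{R}^{d-1}\to\mathbb{R}$, the paper extends $h$ radially from the smaller ball $B^{d-1}(0,r/2)$ to all of $\mathbb{R}^{d-1}$ to obtain a globally Lipschitz function.
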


\begin{proof}
If $\Omega\subset \mathbb{R}^d$ is a bounded convex domain, we find a ball $B(x_0,r)\subset \Omega$. As $\partial\Omega$ is compact, there are points $x_1,\ldots, x_N$ such that the balls $B(x_i,r/2)$, $i\le N$, cover $\partial\Omega$.  For all $i\le N$ we show that 
\begin{equation}\label{eq:Lip}
 \Omega\cap B(x_i,r/2)=\Omega_i\cap B(x_i,r/2),
\end{equation} 
where $\Omega_i$ is a rotation of a special Lipschitz domain in $\mathbb{R}^d$.  Applying a suitable rotation (and translation), we may assume that $x_0=0$ and that $x_i=(0,\hdots,0,a)$ for some $a\le -r$. Consider the open ball $B'=B^{d-1}(0,r)$ in $\IR^{d-1}$.  For $x'\in B'$, we define the set $A(x') = \{x_d\in\IR \colon (x',x_d)\in \Omega\} $.  Since $\Omega$ is convex and open, $A(x')$ is an open interval.  Moreover, $A(x')$ is non-empty since $0\in A(x')$.  We define $h(x')$ to be the infimum of $A(x')$.  The convexity of $\Omega$ implies that the function $h\colon B'\to \IR$ is convex.  Since every convex function on a convex domain in $\IR^{d-1}$ is Lipschitz on every compact subset of the domain, see \cite{WSU72}, the function $h$ is Lipschitz on the $d-1$-dimensional ball $B^*=B^{d-1}(0,r/2)$. This Lipschitz continuity carries over to the whole $\IR^{d-1}$ if we set $h(\lambda x')=h(x')$ for all $x' \in \partial B^*$ and $\lambda \ge 1$ (thereby redefining $h$ on $B'\setminus B^*$).  It remains to note that for every $x=(x',x_d)\in B(x_i,r/2)$ it holds that
\[
 x \in \Omega 
 \quad\Leftrightarrow\quad 
 x_d \in A(x')
 \quad\Leftrightarrow\quad 
 x_d > h(x'),
\]
proving \eqref{eq:Lip} for the special Lipschitz domain $\Omega_i=\{(x',x_d)\colon x_d>h(x')\}$.
\end{proof}

Moreover, let us mention that a bounded Lipschitz domain satisfies an interior cone condition. This seems to be well known and is used for example in \cite{NT06}, see also \cite[4.11]{AF03}. However we could not find a proof of this fact. To convince ourselves, we gave a proof of it in \cite[Lem.~5]{KS20} which we will not repeat here.

\section{The proof of the characterization if $q\ge p$} \label{sec:sob-proof-ge}

Let $\Omega\subset\IR^d$ be a bounded convex domain and $\Pn=\{x_1,\dots,x_n\}\subset\Omega$. Further, let the parameters $p,q,s$ be as in the statement of Theorem~\ref{thm:sob-main} with $q\ge p$ and fix \chg{$ r\in (0,1]$} such that $\Omega$ contains a closed ball of radius $r$.  By Lemma~\ref{lem:convex-cone} the set $\Omega$ satisfies an interior cone condition as in Definition~\ref{def:cone-condition} with radius $r>0$ and some angle $\theta\in (0,\pi/2)$. In the following, all constants are allowed to depend on the parameters mentioned and in particular on the domain $\Omega$ and its dimension $d$. 

For the proof of the upper bound of Theorem~\ref{thm:sob-main} we apply Lemma~\ref{lem:wendland} to the point set $\Pn$ and the closure $\overline{\Omega}$. To this end, we first deduce Lemma~\ref{lem:wendland} and then give a proof of the fact that the closure of a set inherits the interior cone condition.

\begin{proof}[Proof of Lemma~\ref{lem:wendland}]
	We will show that for any point set $\Pn\subset K$ there is a subset $X\subset \Pn$ with $h_{X, K}\le 2 h_{\Pn, K}$ and separation distance $q_X\ge \frac 12 h_{\Pn,K}$. Then the condition \eqref{eq:quasi-uniform} is satisfied for $X$ and we can apply \cite[Thm.~4.7]{Wen04} to the subset $X$. We then set the functions $u_i$ for every $x_i\in \Pn \setminus X$ equal to the zero function. 

 Take $y_1=x_1\in \Pn$ and iteratively choose $y_i \in \Pn\setminus \bigcup_{j<i} B(y_j,h_{\Pn, K})$ until this set difference is empty. In this way, we obtain a subset $X$ of $\Pn$ which satisfies $q_X\ge \frac 12 h_{\Pn, K}$. Moreover, for any $x\in  K$, there is some $x_i\in \Pn$ with $\Vert x- x_i \Vert_2 \le h_{\Pn, K}$. Since $\Pn\setminus \bigcup_{y \in X} B(y,h_{\Pn, K})=\emptyset$, there is some $y \in X$ with $\Vert x_i - y \Vert_2 \le h_{\Pn, K}$. The triangle inequality gives $\Vert x - y\Vert_2 \le 2\, h_{\Pn, K}$ and therefore $h_{X, K}\le 2\, h_{\Pn, K}$.
\end{proof}

The following proof ensures that the closure $\overline{\Omega}$ satisfies an interior cone condition with $r$ and $\theta$ as above.

\begin{proof}[Proof of Lemma~\ref{lem:closure-CC}]
	Assume that $\Omega\subset\IR^d$ satisfies an interior cone condition with radius $r$ and angle $\theta$. For each limit point $x$ in the closure $\overline{\Omega}$ consider a sequence $(x_n)_{n\in\IN}\subset \Omega$ converging to it and let $C(x_n):=C(x_n,\xi(x_n),r,\theta)\subset \Omega$, where $n\in\IN,$ be the corresponding sequence of cones given by the interior cone condition. As the sphere is a compact metric space when equipped with the geodesic distance, the sequence $\xi(x_n)\in\IS^{d-1}$ of directions contains a convergent subsequence. Upon passing to this subsequence we may assume that $\xi(x_n)$ is convergent itself and define $\xi(x)$ to be its limit point. Then the cone $C(x):=C(x,\xi(x),r,\theta)$ is contained in $\overline{\Omega}$. To give a proof by contradiction, suppose that this does not hold, i.e., there is a point $y\in C(x)$ with positive distance from $\overline{\Omega}$. The convergence of $\xi(x_n)\to \xi(x)$ and of $x_n\to x$ implies that every point of $C(x)$ will be arbitrarily close to some point from $C(x_n)$ whenever $n$ is large enough. This is a contradiction to $y\in C(x)$ having a positive distance from $\overline{\Omega}$ and thus of every $C(x_n)$, where $n\in\IN$.
\end{proof}

After these preparations, we prove the already known part of our main result. 
\begin{proof}[Proof of the upper bound of Theorem~\ref{thm:sob-main} in the case $q\ge p$]
	
	Let $m>s$ be an integer and let $c_0,c_1,c_2>0$ be as in Lemma~\ref{lem:wendland}. In order to apply it to the point set $\Pn=\{x_1,\dots,x_n\}$ and $\overline{\Omega}$ we need to have $ h_{\Pn,\overline{\Omega}}\le c_1  r $. Without loss of generality we can assume that this is satisfied as the upper bound becomes trivial if this does not hold; compare with Lemma~\ref{lem:sob-trivial} in Appendix~\ref{sec:sob-proof-le} below.  

We consider the linear algorithm
\[
S_{\Pn}\colon W_p^s(\Omega) \to L_q(\Omega), \quad S_{\Pn}(f) = \chg{\sum_{i=1}^n} f(x_i) u_i,
\]
with $u_i$ provided by Lemma~\ref{lem:wendland} for the compact set $ \overline{\Omega} $.

For technical reasons we need a suitable covering of the domain by pieces of radius of order $h_{\Pn,\Omega}$. In order to find such a covering, consider the balls $B(y,h_{\Pn,\Omega})$, $y\in \Omega$.  By compactness we can extract a finite subset $I\subset \Omega$ such that the balls with $y\in I$ form a covering of $\overline{\Omega}$. Because of a well-known finite covering lemma due to G.~Vitali we can select from this finite covering a subset of pairwise disjoint balls $B_i:=B(y_i,h_{\Pn,\Omega})$ such that the balls $3B_i:=B(y_i,3h_{\Pn,\Omega})$, where $i=1,\dots,N$, cover $\overline{\Omega}$.

Let $f\in W_p^s(\Omega)$ with $\Vert f \Vert_{W_p^s(\Omega)} \le 1$.  By Lemma~\ref{lem:sob-extension}, we can replace $f$ with its extension and assume that $f\in W_p^s(\IR^d)$ with $\Vert f \Vert_{W_p^s(\IR^d)} \le C_1$, where $C_1>0$ is independent of $f$. Fix some $i\le N$ and consider the cube $Q_i$ centered at $y_i$ with radius $(3+c_2)h_{\Pn,\Omega}$.  By Lemma~\ref{lem:sob-poly}, we find a polynomial $\pi_i$ of degree at most $s$ such that
\begin{equation*}
 \sup_{y \in Q_i} \big\vert (f-\pi_i)(y)\big\vert \le \cpoly\, h_{\Pn,\Omega}^{s-d/p} \vert f \vert_{W_p^s(Q_i)}.
\end{equation*}
For each $y\in \Omega_i:= \Omega\cap 3B_i$, we note that $u_i(y)=0$ for $x_i\not\in Q_i$, and obtain
\begin{align}
\nonumber
\big\vert (f - S_{\Pn} f)(y) \big\vert &= \big\vert (f - \pi_i)(y) - S_{\Pn}(f-\pi_i)(y) \big\vert \\
 \label{eq:polyapp}
 &\le \big\vert (f- \pi_i) (y)\big\vert + \Big\vert \sum_{i=1}^n (f-\pi_i)(x_i) u_i(y) \Big\vert 
 \le C_2\, h_{\Pn,\Omega}^{s-d/p} \vert f \vert_{W_p^s(Q_i)},
\end{align}
where $C_2=(1+c_0) \cpoly$. In particular, this yields
\[
\Vert f - S_{\Pn}(f) \Vert_{L_\infty(\Omega)} 
 \le C_2\,  h_{\Pn,\Omega}^{s-d/p} \vert f \vert_{W_p^s(\IR^d)},
\]
which proves the case $q=\infty$. For $p \le q<\infty$, we use $ \Omega_i\subset Q_i $ and \eqref{eq:polyapp} to get
\[
	\Vert f - S_{\Pn}(f) \Vert_{L_q(\Omega)}^q 
	\le \sum_{i=1}^N \int_{\Omega_i} \big\vert \big(f - S_{\Pn}(f)\big)(y) \big\vert^q {\rm d} y
 \le C_2^q\sum_{i=1}^N \, h_{\Pn,\Omega}^{sq-dq/p} \vert f \vert_{W_p^s(Q_i)}^q \vol(\Omega_i),
\]
and thus, as a continuation,
\[
\Vert f - S_{\Pn}(f) \Vert_{L_q(\Omega)}^q
 \le C_3\, h_{\Pn,\Omega}^{sq-dq/p+d} \sum_{i=1}^N \vert f \vert_{W_p^s(Q_i)}^q
 \le C_3\, h_{\Pn,\Omega}^{sq-dq/p+d} \Big(\sum_{i=1}^N \vert f \vert_{W_p^s(Q_i)}^p\Big)^{q/p}.
\]
It remains to bound the sum on the right-hand side by a constant. A volume argument shows that there exists $M\in\IN$ such that for all $\Pn\subset\Omega$ every $x\in \IR^d$ is contained in at most $ M\in \mathbb{N} $  of the $ N $ cubes $Q_i$. To see this, note that the balls $B_i\subset Q_i$ are disjoint and of radius $h_{\Pn,\Omega}$. Therefore, 
\begin{equation}\label{eq:norms-efficient}
 \sum_{i=1}^N \vert f \vert_{W_p^s(Q_i)}^p
 = \sum_{\vert \alpha \vert =s} \int_{\IR^d} \Big( \vert D^\alpha f(x) \vert^p \sum_{i=1}^N  \mathbf{1}_{Q_i}(x)\Big) {\rm d} x
 \le M\, \vert f \vert_{W_p^s(\IR^d)}^p 
 \le M C_1^p
\end{equation}
and we arrive at the desired inequality. 
\end{proof}

As described in Section~\ref{sec:sob-fool-int}, the lower bound will be proven with the help of a fooling function $f_\ast$ from the unit ball of $W_p^s(\Omega)$ which has a large $L_q$-norm and is supported in the largest hole of the point set. 

\bigskip

\noindent\textit{Proof of the lower bound of Theorem~\ref{thm:sob-main} in the case $q\ge p$.} 
Let $\Pn\subset\Omega$ be arbitrary and choose $ x_0\in \Omega $ such that 
\begin{equation} \label{eq:large-hole}
\dist(x_0,\Pn)\ge r_0 := \min\{ r ,h_{\Pn,\Omega}/2\}.
\end{equation}
 Then the ball $ B(x_0,r_0)$ does not contain any point of $ \Pn $ and is a large hole in which we can place a fooling function. However, this ball need not be contained in $\Omega$ . To remedy this, we will find a ball in the intersection $\Omega\cap B(x_0,r_0)$ with radius proportional to $r_0$. This is the content of Lemma~\ref{lem:ballinball} below, for which we need the following special case of \cite[Lem.~3.7]{Wen04}.

\begin{lemma}\label{lem:ballincone}
Every cone $C(x,\xi,r,\theta)$ contains a closed ball of radius $c_{\theta}r$ with $c_{\theta}:=\frac{\sin \theta}{1+\sin\theta}$.
\end{lemma}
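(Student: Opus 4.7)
Since $C := C(x,\xi,r,\theta)$ is rotationally symmetric about the axis $x + \IR\xi$, the natural candidate for a largest inscribed ball is centered on that axis, so I would try $z := x + t\xi$ with $t \in (0,r]$ and ask how large the radius $\varrho$ can be. Two obstructions need to be respected: the ball must not protrude past the flat/spherical far boundary at slant-distance $r$ from the apex, and it must not protrude across the lateral surface of the (infinite) cone. These lead to the geometric constraints $\varrho \le r - t$ and $\varrho \le t\sin\theta$ respectively.

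\textbf{Choice of parameters.} Equating the two constraints $t\sin\theta = r - t$ yields
\[
t = \frac{r}{1+\sin\theta}, \qquad \varrho = \frac{r\sin\theta}{1+\sin\theta} = c_\theta\, r,
\]
which is exactly the value claimed in the lemma. The plan is then to verify that $B(z,\varrho) \subset C$ for this choice; once established, the lemma follows.

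\textbf{Verification.} For an arbitrary $w = z + u$ with $\|u\|_2 \le \varrho$, decompose $u = u_\parallel \xi + u_\perp$ with $u_\perp \perp \xi$, so that $w - x = (t + u_\parallel)\xi + u_\perp$. The length bound is immediate:
\[
\|w - x\|_2 \le t + \|u\|_2 \le t + \varrho = r,
\]
so the first condition defining $C$ is met with $\lambda = \|w-x\|_2 \le r$. The angular condition $\langle (w-x)/\|w-x\|_2,\,\xi\rangle \ge \cos\theta$ is equivalent (once $t + u_\parallel > 0$, which follows from $u_\parallel \ge -\varrho$ and $t - \varrho = r(1-\sin\theta)/(1+\sin\theta) \ge 0$) to
\[
\|u_\perp\|_2 \le \tan\theta\,(t + u_\parallel).
\]
Applying Cauchy--Schwarz to the pair $(\|u_\perp\|_2, -u_\parallel)$ and $(1,\tan\theta)$ gives
\[
\|u_\perp\|_2 - \tan\theta\cdot u_\parallel \le \sqrt{\|u_\perp\|_2^2 + u_\parallel^2}\,\sqrt{1+\tan^2\theta} \le \varrho\sec\theta,
\]
so the desired inequality reduces to $\varrho\sec\theta \le t\tan\theta$, i.e.\ $\varrho \le t\sin\theta$, which holds by our choice of $t$ and $\varrho$.

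\textbf{Main obstacle.} There is really no serious obstacle; the only care needed is to handle the angular condition cleanly, and for this the orthogonal decomposition $u = u_\parallel\xi + u_\perp$ together with the Cauchy--Schwarz bound above is the cleanest route. Alternatively one could argue that the distance from $z$ to the lateral surface of the infinite cone equals $t\sin\theta$ (standard right-triangle geometry in the plane containing $\xi$ and any outward normal to the lateral surface) and that the distance from $z$ to the cap at distance $r$ is $r-t$; both routes lead to the same optimal $(t,\varrho)$.
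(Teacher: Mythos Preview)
Your proof is correct. The paper does not give its own proof of this lemma but simply cites it as a special case of Lemma~3.7 in Wendland's book \cite{Wen04}; your direct argument---center the ball on the cone's axis, balance the lateral constraint $\varrho\le t\sin\theta$ against the radial constraint $\varrho\le r-t$, and verify containment via the orthogonal decomposition and Cauchy--Schwarz---is a clean self-contained alternative.
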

With this geometric fact we obtain the following simple but useful consequence.
\begin{lemma}\label{lem:ballinball}
	Let $ \Omega\subset \mathbb{R}^d $ satisfy an interior cone condition with parameters $ r $ and $ \theta $. If $ B(x,\varrho)$ is a ball with center $ x\in\Omega $ and radius $ 0<\varrho\leq r $, there is a ball $ B(y,c_{\theta}\varrho)$ contained in $\Omega\cap B(x,\varrho)$ with $c_{\theta} $ as in Lemma \ref{lem:ballincone}.
\end{lemma}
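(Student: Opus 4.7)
The plan is to exploit the interior cone condition at the specific point $x$ and then invoke Lemma~\ref{lem:ballincone} on a suitably truncated cone. First I would note that, since $x\in\Omega$, the interior cone condition supplies a direction $\xi(x)\in\mathbb{S}^{d-1}$ such that the full cone $C(x,\xi(x),r,\theta)$ is contained in $\Omega$.

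Next, I would replace the full radius $r$ by $\varrho\le r$ and consider the truncated cone $C(x,\xi(x),\varrho,\theta)$. By the very definition of the cone in Definition~\ref{def:cone-condition}, shrinking the length parameter from $r$ to $\varrho$ only removes points, so $C(x,\xi(x),\varrho,\theta)\subset C(x,\xi(x),r,\theta)\subset \Omega$. Moreover, every point of $C(x,\xi(x),\varrho,\theta)$ lies within Euclidean distance $\varrho$ from the apex $x$, hence the truncated cone is also contained in $B(x,\varrho)$. Therefore $C(x,\xi(x),\varrho,\theta)\subset \Omega\cap B(x,\varrho)$.

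Finally, I would apply Lemma~\ref{lem:ballincone} to the truncated cone $C(x,\xi(x),\varrho,\theta)$: this lemma guarantees the existence of a closed ball of radius $c_\theta\,\varrho$ inside it, where $c_\theta=\sin\theta/(1+\sin\theta)$ matches the constant in the statement. Calling $y$ the center of this ball, we obtain $B(y,c_\theta\varrho)\subset C(x,\xi(x),\varrho,\theta)\subset \Omega\cap B(x,\varrho)$, which is precisely the claim.

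The argument is essentially a one-liner once the right truncation is chosen, so I do not anticipate a genuine obstacle; the only minor subtlety is verifying that the truncated cone is indeed contained in $B(x,\varrho)$, which is immediate from the cone's parametrization by $\lambda\in[0,\varrho]$.
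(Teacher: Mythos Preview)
Your proof is correct and follows exactly the same route as the paper: invoke the cone at $x$, truncate its radius to $\varrho$ so that it lies in $\Omega\cap B(x,\varrho)$, and then apply Lemma~\ref{lem:ballincone}. The only cosmetic point is that the closed truncated cone meets the sphere $\partial B(x,\varrho)$ at $\lambda=\varrho$, which is why the paper phrases it as ``its interior is contained in $\Omega\cap B(x,\varrho)$''; since the resulting ball $B(y,c_\theta\varrho)$ is open and its center sits at distance $(1-c_\theta)\varrho$ from $x$, the desired inclusion holds regardless.
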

\begin{proof}
	By the interior cone condition, there is a cone with apex $x$, radius $\varrho$ and angle $\theta$ such that its interior is contained in $ \Omega\cap B(x,\varrho) $. Now Lemma \ref{lem:ballincone} completes the proof.
\end{proof}

Returning to the proof of the lower bound of Theorem~\ref{thm:sob-main}, consider again the empty ball $B(x_0,r_0)$, where $r_0$ is as in \eqref{eq:large-hole}. By Lemma \ref{lem:ballinball} we find a ball $B(y, \varrho)$ with $ \varrho := c_\theta r_0, $ which is contained in $ \Omega\cap B(x_0,r_0) $.  Now, take a smooth non-negative function $ \varphi $ supported in $ B(0,1) $ with $ \varphi(0)=1 $ and consider the function $x\mapsto f_*(x)= \varphi\big(\varrho^{-1}(x-y)\big)$, which is supported in $B(y,\varrho)$.  One can easily compute the scaling properties
\begin{equation}\label{eq:scaling-LB}
	\|f_*\|_{L_q(\Omega)}\asymp \varrho^{d/q}\qquad 
	\text{and} \qquad 	\|f_*\|_{W^s_p(\Omega)} \ls \varrho^{-s+d/p}
\end{equation}
by taking into account that $\varrho \le 1$ and \chg{$s\ge d/p$}, see also the proof of \cite[Thm.~4.11]{HKN+20}. Here, the implicit constants are independent of $\varrho$, but depend on the choice of $\varphi$. Replacing $f_*$ by its normalization  $ f_*/\|f_*\|_{W^s_p(\Omega)}$, it satisfies 
	\[ 
	\|f_*\|_{W^s_p(\Omega)}\le 1, \quad 
	f_*|_{P_n}=0
  \quad\text{and}\quad
  \|f_*\|_{L_q(\Omega)} \gs h_{\Pn,\Omega}^{s-d(1/p-1/q)}. 
	\]
This follows after inserting for $\varrho$ in \eqref{eq:scaling-LB} and noting that $\varrho=c_{\theta}\min\{ r , h_{\Pn,\Omega}/2\}$ as well as that $h_{\Pn,\Omega}$ is bounded from above. Since $f_*\ge 0$, this completes the proof also for the integration problem. 
	$\hfill \square$

\section{The proof of  the characterization if $q< p$} \label{sec:sob-proof-le}

Let $\Omega,\Pn,p,q,s,r$ and $\theta$ as in the previous section but now with $q<p$.  Before we give the proof of Theorem~\ref{thm:sob-main}a, we prove the auxilary results Lemma~\ref{lem:local-CC}, Lemma~\ref{lem:local-estimate} and Proposition~\ref{pro:good-cover} from Section~\ref{sec:sob-mls}. 

For the proof of Lemma~\ref{lem:local-CC} we shall need that a full-dimensional convex set satisfies an interior cone condition. This has been stated in Lemma~\ref{lem:convex-cone} and is a consequence of \cite[Prop.~11.26]{Wen04} which, in fact, holds for more general sets, which are star-shaped with respect to a ball. Here, a set is called star-shaped with respect to a ball if the line connecting any point in this ball with any point in the set itself is fully contained in it. In particular, every convex set is star-shaped with respect to any ball inside it.

\begin{proof}[Proof of Lemma~\ref{lem:local-CC}]
	Let $x\in\Omega$ and $0<\varrho\le r$. By Lemma~\ref{lem:convex-cone}, the set $\Omega$ contains a cone with radius $ \varrho $, apex $x$ and angle $\theta$.  Clearly, this cone is contained in $\overline{\Omega \cap B(x, \varrho )}$, which in turn is a subset of $A(x, \varrho ):=\overline{\Omega\cap Q(x, \varrho )}$.  By Lemma~\ref{lem:ballincone}, there is a closed ball of radius $c_{\theta} \varrho $ in this cone and thus in $A(x, \varrho )$.  The proof is finished if we apply Lemma~\ref{lem:convex-cone} to the convex set $A(x, \varrho )$ since its diameter is at most $2\varrho\sqrt{d}$ and therefore we can set $\theta'=2\arcsin(c_{\theta}/4\sqrt{d})$.
\end{proof} 

Together with Lemma~\ref{lem:ballinball} from Section~\ref{sec:sob-proof-ge} we can deduce the proof of the local approximation result which gave rise to the notion of a good cube.

\begin{proof}[Proof of Lemma~\ref{lem:local-estimate}]
	 Let $Q=Q(x,\varrho)$ for some $x\in\Omega$ and $0<\varrho\le r$. By Lemma~\ref{lem:local-CC} the set $ A(x,\varrho):=\overline{\Omega \cap Q}$ satisfies an interior cone condition with radius $ \varrho '=c_\theta  \varrho $ and angle~$\theta'$.  
	
	Let $m\in \IN$ be the smallest integer greater than $s$ and let $c_0,c_1>0$ be as in Lemma~\ref{lem:wendland} for the parameters $\theta'$ and $m$.  We can assume without loss of generality that $c_1<1$.  We set 
\begin{equation} \label{eq:cgood}
\cgood:=c_{\theta} c_{\theta'} c_1/2,
\end{equation}
where $c_{\theta'}=\frac{\sin\theta'}{1+\sin\theta'}$ is from Lemma~\ref{lem:ballincone}.  Then assumption \eqref{eq:good-cube} implies that every ball $B(y,2\cgood  \varrho )$, $y\in \Omega\cap Q$, contains a point of $\Pn$.  By Lemma \ref{lem:ballinball} applied to $A(x,\varrho)$ every ball $ B(x,2c_{\theta'}^{-1}\cgood  \varrho ), x\in \Omega\cap Q $, contains a ball $ B(y,2\cgood  \varrho )\subset A(x,\varrho)$  and therefore a point of $\Pn\cap Q$.  We thus have
\[
h_{\Pn\cap Q,\Omega\cap Q}
=\sup_{y\in \Omega\cap Q}\dist(y,\Pn\cap Q) 
\leq 2c_{\theta'}^{-1} \cgood   \varrho  
= c_1  \varrho '.
\]
Thus, we may apply Lemma~\ref{lem:wendland} to the point set $\Pn\cap Q$ within the compact set $A(x,\varrho)$ and obtain bounded continuous functions $u_i\colon \Omega \cap Q \to \mathbb{R}$ for $i=1,\dots,n$ with 
\[
 \sum_{i=1}^n |u_i(y)|\leq c_0 \qquad\text{and}\qquad \pi(y)=\sum_{i=1}^n \pi(x) u_i(y)
 \] 
for all $\pi\in\mathcal{P}_m^d$ and $y\in \Omega\cap Q$, where we set $u_i=0$ for $x_i \in \Pn\setminus Q$.  For any $f\in W^s_{p}(\IR^d)$, by Lemma \ref{lem:sob-poly} there is a polynomial $\pi\in \mathcal{P}_m^d$ with
\[
\sup_{y\in Q} \big\vert (f-\pi)(y)\big\vert 
\,\le\, \cpoly  \varrho ^{s-d/p}|f|_{W^s_{p}(Q)}.
\]
Here, we used that $\varrho\le r\le 1$. Similar to the proof of the upper bound of Theorem~\ref{thm:sob-main} in the case $q\ge p$ in Section~\ref{sec:sob-proof-ge}, we get for all $y\in \Omega\cap Q$ that
\begin{align*}
\Big\vert f(y) - \sum_{i=1}^n f(x_i) u_i(y) \Big\vert
& = \Big\vert (f-\pi)(y) - \sum_{i=1}^n (f-\pi)(x_i) u_{i}(y) \Big\vert\\
& \,\le\, \big\vert (f-\pi)(y) \big\vert + c_0\, \max_{x_i\in \Pn\cap Q} \big\vert (f-\pi)(x_i) \big\vert\\
& \,\le\, (1+c_0)\cpoly\,  \varrho ^{s-d/p}|f|_{W^s_{p}(Q)},
\end{align*}
as it was to be proven.
\end{proof}

We state and prove the following justification for $r_{\Pn}(x)\le r$ for all $x\in\Omega$, which ensures the integrity of Definition~\ref{def:small-cube}.
\begin{lemma} \label{lem:sob-trivial}
Let $\Pn\subset\Omega$ be a non-empty finite point set. If, for some $x\in\Omega$, we can not find $\varrho\in(0,r)$ such that \eqref{eq:good-cube}, i.e.,
\[
\sup_{y\in \Omega\cap Q(x, \varrho )} \dist(y,\Pn) \le \cgood  \varrho 
\]
holds, then $\|\dist(\cdot,\Pn)\|_{L_{\gamma}(\Omega)}\ge c$ for any $0<\gamma\le \infty$ and some constant $c>0$ independent of the point set $\Pn$.
\end{lemma}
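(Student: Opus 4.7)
The plan is to extract, from the failure of \eqref{eq:good-cube} at a single scale, a ball inside $\Omega$ on which $\dist(\cdot,\Pn)$ is bounded below by a constant depending only on $\Omega$, and then integrate.

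First I would fix an arbitrary scale, say $\varrho_0 := r/2 \in (0,r)$. By hypothesis, condition \eqref{eq:good-cube} fails for $\varrho_0$, so there exists a point $y \in \Omega \cap Q(x,\varrho_0)$ with
\[
\dist(y,\Pn) > \cgood\, \varrho_0 = \cgood\, r/2.
\]
Next, by the triangle inequality, for every $z \in B(y,\cgood\, r/4)$ we have
\[
\dist(z,\Pn) \,\ge\, \dist(y,\Pn) - \|z-y\|_2 \,>\, \cgood\, r/4.
\]
This already settles the case $\gamma = \infty$: indeed $y \in \Omega$, so $\|\dist(\cdot,\Pn)\|_{L_\infty(\Omega)} \ge \cgood\, r/2$, a constant depending only on $\Omega$.

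For finite $\gamma$, the ball $B(y,\cgood\,r/4)$ need not lie in $\Omega$, so I would invoke the interior cone condition of $\Omega$ (provided by Lemma~\ref{lem:convex-cone}) together with Lemma~\ref{lem:ballinball} applied at the point $y \in \Omega$ and radius $\varrho' := \cgood\,r/4 \le r$ (since $\cgood < 1$). This yields a ball $B(y', c_\theta\,\varrho') \subset \Omega \cap B(y,\varrho')$ contained in $\Omega$. On this ball, $\dist(\cdot,\Pn) > \cgood\,r/4$, so
\[
\|\dist(\cdot,\Pn)\|_{L_\gamma(\Omega)}^\gamma \,\ge\, \vol\bigl(B(y',c_\theta\,\cgood\,r/4)\bigr)\cdot (\cgood\,r/4)^\gamma \,=\, c^\gamma,
\]
with $c > 0$ depending only on $d$, $\gamma$, $\cgood$, $c_\theta$ and $r$, i.e., only on $\Omega$ and $\gamma$, as required.

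There is no substantial obstacle here: the only subtlety is that one cannot directly use the ball $B(y,\cgood\,r/4)$ to bound an integral over $\Omega$, which is handled cleanly by Lemma~\ref{lem:ballinball}. Note that in spite of the assumption being formulated for \emph{all} $\varrho \in (0,r)$, the argument only uses its failure at one fixed scale, which suffices since the resulting lower bound is a constant independent of $\Pn$.
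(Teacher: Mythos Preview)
Your proposal is correct and follows essentially the same approach as the paper's proof: both extract from the failure of \eqref{eq:good-cube} at a single scale (the paper also uses $\varrho=r/2$) a point $y\in\Omega$ with $\dist(y,\Pn)>\cgood r/2$, then invoke Lemma~\ref{lem:ballinball} to produce a ball inside $\Omega$ on which the distance function is uniformly bounded below, and integrate. The only cosmetic differences are that the paper first reduces to finite $\gamma$ via \eqref{eq:dist-norms} rather than treating $\gamma=\infty$ separately, and it applies Lemma~\ref{lem:ballinball} to the larger ball $B(y,\cgood r/2)$ before passing to a half-radius subball, whereas you shrink first and then apply the lemma---neither affects the argument.
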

\begin{proof}
Because of \eqref{eq:dist-norms} in Appendix~\ref{sec:sob-proof-discuss} it is sufficient to consider finite $\gamma$. By assumption we find $y\in\Omega$ with $\dist(y,\Pn)> \cgood r/2$, that is the ball $B(y,\cgood r/2)$ is empty of $\Pn$. By Lemma~\ref{lem:ballinball} we find a ball $B(z,r')$ of radius $r'=c_{\theta}\cgood r/2$ contained in $\Omega\cap B(y,\cgood r/2)$. Hence, the distance function satisfies $\dist(x,\Pn)\ge r'/2$ for all $x\in B(z,r'/2)$. Raising this inequality to the power of $\gamma$ and integrating over $\Omega$ yields the desired result.
\end{proof}

The following lemma is used to show that $Q_{\Pn}(x)$ is a good cube for every $x\in\Omega$.
\begin{lemma} \label{lem:inf-min}
Let $x\in\Omega$. Then 
\[
\sup_{y\in\Omega\cap Q(x,r_{\Pn}(x))}\dist(y,\Pn) \le \cgood\,r_{\Pn}(x).
\]
\end{lemma}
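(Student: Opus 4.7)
My plan is to show that the infimum in the definition of $r_{\Pn}(x)$ is attained by exploiting the monotonicity of the cubes $Q(x,\varrho)$ in $\varrho$, then passing to the limit. Write $\varrho^\ast := r_{\Pn}(x)$. By Lemma~\ref{lem:sob-trivial} we may assume (otherwise the bound of Theorem~\ref{thm:sob-main}a is trivial and there is nothing to prove) that the set of $\varrho \in (0, r)$ for which $Q(x,\varrho)$ is a good cube is non-empty, so the infimum $\varrho^\ast$ exists and lies in $[0, r]$.

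First I would pick a sequence $\varrho_n \downarrow \varrho^\ast$ with $\varrho_n \in (0, r)$ such that each $Q(x,\varrho_n)$ satisfies the good-cube condition \eqref{eq:good-cube}, i.e.
\[
\sup_{y \in \Omega \cap Q(x,\varrho_n)} \dist(y, \Pn) \le \cgood\,\varrho_n.
\]
Since the open cubes are monotone in the radius, $Q(x,\varrho^\ast) \subset Q(x,\varrho_n)$ for every $n$, and hence for each fixed $y \in \Omega \cap Q(x,\varrho^\ast)$ one has $y \in \Omega \cap Q(x,\varrho_n)$ for all $n$. Applying the above inequality at this $y$ gives $\dist(y,\Pn) \le \cgood\,\varrho_n$ for all $n$. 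Taking the supremum over $y \in \Omega \cap Q(x,\varrho^\ast)$ first yields
\[
\sup_{y \in \Omega \cap Q(x,\varrho^\ast)} \dist(y, \Pn) \le \cgood\,\varrho_n
\]
for every $n$, and then letting $n \to \infty$ finishes the argument.

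I do not expect a genuine obstacle: the proof reduces to the fact that the open cube $Q(x,\varrho^\ast)$ is contained in every $Q(x,\varrho)$ with $\varrho > \varrho^\ast$, which allows a trivial interchange of supremum and limit (only one direction of inequality is needed, and no compactness argument is required because the supremum on the left is over a set that shrinks as $\varrho$ decreases). The only mildly delicate point is to note that, by definition, the infimum is taken over $(0,r)$, so one must ensure that the sequence $\varrho_n$ can indeed be chosen strictly less than $r$; this is immediate unless $\varrho^\ast = r$, in which case Lemma~\ref{lem:sob-trivial} already forces the forthcoming estimates to be trivial and the claim can be verified directly.
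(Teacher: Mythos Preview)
Your proof is correct and essentially identical to the paper's own argument: pick a sequence $\varrho_n\downarrow r_{\Pn}(x)$ of good radii, use the inclusion $Q\big(x,r_{\Pn}(x)\big)\subset Q(x,\varrho_n)$, and pass to the limit. The paper is slightly more terse (it just says ``taking the infimum completes the proof''), while you spell out the supremum/limit interchange and the edge case $\varrho^\ast=r$, but the content is the same.
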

\begin{proof}
	Take a sequence $(\varrho_n)_{n\in\IN}\subset (0,r)$ converging from above to $r_{\Pn}(x)$ such that $Q(x,\varrho_n)$ satisfies \eqref{eq:good-cube}, that is,
\[
 \sup_{y\in \Omega\cap Q(x,\varrho_n)} \dist(y,\Pn) \le \cgood\, \varrho_n. 
\]
As $Q\big(x,r_{\Pn}(x)\big)\subset Q(x,\varrho_n)$ for every $n$, taking the infimum completes the proof.
\end{proof}

Before we come to the main part of the proof of the upper bound of Theorem~\ref{thm:sob-main} in the case $q<p$, we need to validate the covering by good cubes given in Proposition~\ref{pro:good-cover}. As mentioned, we will make use of a Besicovitch-type covering result in \cite[Ch.~1,Thm.~1.1]{DeG75}. Remark (2) directly below it allows us to consider open cubes instead of closed ones. For convenience we rephrase the statement here.

\bigskip

\begin{quote}
Consider a bounded set $D$ in $\IR^d$ and for each point $x\in D$ a cube $Q\big(x,r(x)\big)$ centered at this point with some radius $r(x)>0$. Then one can choose a (possibly finite) sequence among the given cubes which is a cover of $A$, can be distributed in finitely many families, each consisting of pairwise disjoint cubes, and every point of $\IR^d$ is contained in at most a constant many cubes of the sequence. These constants depend only on the dimension.
\end{quote}

\bigskip

Similar to the proof we provided in \cite{KS20} to find a suitable covering, the idea behind the proof is to iteratively subtract from $D$ a cube of maximal size. 

\begin{proof}[Proof of Proposition~\ref{pro:good-cover}]
	For each point set $\Pn$, we will apply \cite[Ch.~1,Thm.~1.1]{DeG75} to the bounded convex domain $\Omega$ and the collection $Q_{\Pn}(x)$ of small good cubes. In this way, we obtain a sequence $y_1,y_2,\dots$ of points belonging to $\Omega$ such that (i), (ii) and (iii) in Proposition~\ref{pro:good-cover} are satisfied with possibly infinite $N$. The proof is complete if we can show that the sequence is indeed finite. This is what we show in the following using a volume argument.

	Consider the families $\cQ_j, j=1,\dots,M_1$ of pairwise disjoint cubes into which the set $Q_{\Pn}(y_i),i\in\IN,$ is distributed. We show that an arbitrary such family $\cQ_{j_0}$ is finite and thus the whole collection $Q_{\Pn}(y_i),i\in\IN$. All the cubes $Q\in \cQ_{j_0}$ are pairwise disjoint and contained in the neighborhood $\Omega_r:=\{y\in\IR^d\colon \exists x \text{ with } \|x-y\|_{\infty}\le r\}$. It is sufficient to show that there is a constant $c>0$ such that $\vol(Q)\ge c$ for all of these cubes since then
\[
\#\cQ_{j_0}\, c\le \sum_{Q\in \cQ_{j_0}}\vol(Q)\le \vol(\Omega_r)
\]
implies that the family has to be of finite cardinality and that the sequence $y_1,y_2,\dots$ is in fact finite and of length $N$ for some $N\in\IN$. 

We will show that in fact that there exists $c>0$ such that $\vol\big(Q_{\Pn}(x)\big)\ge c$ for every $Q_{\Pn}(x)$ with $x\in\Omega$. For this, note that any such good cube contains at least two points of $\Pn$.  In particular, for any $x\in\Omega$,
\[
0<\frac{q_{\Pn}}{\sqrt{d}}<r_{\Pn}(x)\le r,
\]
where $q_{\Pn}=\frac{1}{2}\min_{i\neq j}\|x_i-x_j\|_2$ is the separation distance of $\Pn=\{x_1,\dots,x_n\}$. This gives a uniform lower bound of the form $\vol\big(Q_{\Pn}(x)\big)=\big(2\,r_{\Pn}(x)\big)^d\ge c:= (2\,q_{\Pn}/\sqrt{d})^d$. 
\end{proof}

The following proof is similar to that of \cite[Prop.~2]{KS20}.
\begin{proof}[Proof of Lemma~\ref{lem:good-hole}]
Let $x\in \Omega$ and $Q_{\Pn}(x)=Q\big(x,r_{\Pn}(x)\big)$. By definition, $r_{\Pn}(x)$ is the infimum over all $\varrho\in (0,r)$ such that \eqref{eq:good-cube} holds. In particular, we find a point $y\in \Omega\cap Q(x,r_{\Pn}(x)/2)$ with $\dist(y,\Pn)\ge \cgood r_{\Pn}(x)/4$ meaning that the ball $B(y,\cgood r_{\Pn}(x)/4)\subset Q_{\Pn}(x)$ is empty of $\Pn$. By Lemma~\ref{lem:ballinball} applied to $\Omega$ we obtain a ball of radius $\chole r_{\Pn}(x)$ with $\chole:=c_{\theta}c/4$ which is contained in $\Omega\cap B(y,\cgood r_{\Pn}(x)/4)$. As $\cgood<1$ (see \eqref{eq:cgood}), this ball is also contained in $\Omega\cap Q_{\Pn}(y)$. 
\end{proof}

Let us conclude this section with the proof of our main result.

\begin{proof}[Proof of the upper bound of Theorem~\ref{thm:sob-main} in the case $q<p$]
	We choose $y_i\in\Omega$, $r_i:=r_{\Pn}(y_i)$, $Q_i=Q(y_i,r_i)$ for $1\le i \le N$ as in Proposition~\ref{pro:good-cover}. To get a well-defined algorithm, we will make the covering disjoint by letting
 \[
 \Omega_i
 :=(Q_i\cap\Omega)\setminus \bigcup_{j<i} \Omega_j
 \]
 for all $i=1,\dots, N$. That is, $\Omega$ is the disjoint union of the sets $\Omega_i\subset Q_i$. 
 
 Then, for any $x_j\in \Pn$ and $y\in \Omega_i$, we define $u_j(y)$ according to Lemma~\ref{lem:local-estimate}, applied to the cube $Q_i$.  This yields bounded functions $u_j\colon \Omega\to \IR$ and a linear algorithm 
 \[
 S_{\Pn}\colon W^s_{p}(\Omega) \to L_q(\Omega), \quad S_{\Pn}(f)=\sum_{j=1}^n f(x_j) u_j.
 \]
 We proceed by showing that this algorithm which uses function samples at the point set $\Pn$ satisfies the required bound on the worst-case error. To this end, let $f\in W^s_{p}(\Omega)$ with $\Vert f\Vert_{W^s_{p}(\Omega)}\le 1$.  As in the proof for the case $q\ge p$, using Lemma~\ref{lem:sob-extension}, we may assume that $f\in W^s_{p}(\IR^d)$ with $\|f\|_{W^s_{p}(\IR^d)}\le C$. With the help of Lemma~\ref{lem:local-estimate} we derive the estimate
 \begin{equation*}
	 \Vert f - S_{\Pn}(f) \Vert_{L_q(\Omega_i)}^q 
 \lesssim  r_i^{(s-d/p)q+d} |f|_{W^s_{p}(Q_i)}^q,\quad i=1,\dots,N. 
 \end{equation*}
 Using this we obtain
 \[
 \Vert f - S_{\Pn}(f) \Vert_{L_q(\Omega)}^q 
 \le \sum_{i=1}^{N} \Vert f - S_{\Pn}(f) \Vert_{L_q(\Omega_i)}^q 
 \lesssim \sum_{i=1}^{N} r_i^{(s-d/p)q+d} |f|_{W^s_{p}(Q_i)}^q. 
 \]
 If $p=\infty$, we use that $|f|_{W^s_{\infty}(Q_i)}\le |f|_{W^s_{\infty}(\IR^d)}\le C$ to obtain
 \[
 \Vert f - S_{\Pn}(f) \Vert_{L_q(\Omega)}^q 
 \lesssim \sum_{i=1}^{N} r_i^{sq+d}. 
 \]
 And if $p<\infty$, we have by means of $(s-d/p)q+d=(\gamma+d)(1-q/p)$ and H\"older's inequality that
 \begin{equation*} 
	 \Vert f - S_{\Pn}(f) \Vert_{L_q(\Omega)}^q 
 \ls\, \sum_{i=1}^N r_i^{(\gamma+d)(1-q/p)}|f|_{W^s_{p}(Q_i)}^q
 \,\le\, \bigg(\sum_{i=1}^N r_i^{\gamma+d}\bigg)^{1-q/p} 
 \bigg(\sum_{i=1}^N |f|_{W^s_{p}(Q_i)}^p \bigg)^{q/p}.
 \end{equation*} 
 Since the cubes $Q_i$ form an efficient covering, we can proceed as in \eqref{eq:norms-efficient} to obtain that the second factor in the previous estimate is bounded by a constant. We will show that the first factor is bounded by $\|\dist(\cdot,\Pn)\|_{L_{\gamma}(\Omega)}^{qs}$. 

 To this end, let $B_i:=B(z_i,\chole r_i)\subset \Omega\cap Q_i$ be the ball of radius $\chole r_i$ given by Lemma~\ref{lem:good-hole} for each $i=1,\ldots,N$. Write, for every $\gamma\in (0,\infty)$, 
 \begin{equation} \label{eq:first-factor}
\sum_{i=1}^N r_i^{\gamma+d} 
=c \sum_{i=1}^{N} r_i^{\gamma}\vol(B_i'),
 \end{equation}
where $B_i':=B(z_i,\chole r_i/2)$ is the ball of half the radius which is concentric with $B_i$ and $c>0$ depends on $d$ and $\chole$. Since $B_i$ is empty of $\Pn$, the distance function satisfies the lower bound $\dist(x,\Pn)\ge \chole r_i/2$ for every $x\in B_i'$. Therefore, integrating over $B_i'$ shows that
\begin{equation} \label{eq:dist-lower-int}
(\chole r_i/2)^{\gamma}\vol(B_i')\le \int_{B_i'}\dist(x,\Pn)^{\gamma}\dd x.
\end{equation}
Combining the estimates \eqref{eq:first-factor} and \eqref{eq:dist-lower-int} yields
\[
\sum_{i=1}^N r_i^{\gamma+d} 
\lesssim \sum_{i=1}^{N} \int_{B_i'}\dist(x,\Pn)^{\gamma}\dd x
\le \int_{\Omega} \Big(\dist(x,\Pn)^{\gamma}\sum_{i=1}^{N}\mathbf{1}_{Q_i}(x)\Big)\dd x.
\]
Again the efficiency of the cover $Q_i,i=1,\ldots,N$, yields the required bound. Noting that $(1-q/p)=qs/\gamma$, this completes the proof.
\end{proof}

We give now the proof of the corresponding lower bound. For this purpose we use a fooling function supported in a disjoint subset of the holes given by Proposition~\ref{pro:good-cover} together with Lemma~\ref{lem:good-hole}.

\begin{proof}[Proof of the lower bound of Theorem~\ref{thm:sob-main} in the case $q<p$]
	To begin, choose a cover by good cubes $Q_i=Q_{\Pn}(y_i)=Q(y_i,r_i)$, where $i=1,\ldots,N$, as given by Proposition~\ref{pro:good-cover}. Let, for every $i=1,\ldots,N$, the ball $B_i:=B(z_i,\chole r_i)\subset \Omega\cap Q_i$ be as in Lemma~\ref{lem:good-hole}. 
	
	Then, for every $j=1,\ldots, M_1$, we collect the balls $B_i,i=1,\ldots,N$, such that $Q_i$ belongs to the family $\cQ_j$ into the family $\cB_j$. Thus, each ball $B_i, i=1,\ldots,N,$ belongs to exactly one of the families $\cB_j, j=1,\ldots,M_1$ and the families themselves are pairwise disjoint. To select a suitable family, note that \chg{since} a sum over some family must be at least the average over the sums over all families, there must be an index $1\le j_0\le M_1$ with  
\begin{equation} \label{eq:max-avg}
	\sum_{\substack{1\le i\le N\\ B_i\in \cB_{j_0}}} r_i^{\gamma+d}
	\ge \frac{1}{M_1}\sum_{j=1}^{M_1}\sum_{\substack{1\le i\le N\\ B_i\in \cB_{j}}}r_i^{\gamma+d}
	=\frac{1}{M_1}\sum_{i=1}^{N}r_i^{\gamma+d}.
\end{equation}
We shall use this bound later for estimating the norm of the fooling function we consider. Consider the disjoint balls belonging to the family $\cB_{j_0}$, each of which will contain part of the support of the fooling function. Let $I_0\subset\{1,\ldots,N\}$ be the subset of indices with $B_i\in \cB_{j_0}$.  To define the fooling function we use atomic decompositions as discussed by Triebel \cite[Ch.~13]{Tri11}, see Section~\ref{sec:sob-fool-int}.
	
Let $K>s$ be an integer and $Q_0=Q(0,1/2)$ be the cube of sidelength $1$ which is centered at the origin. Further, let $\psi\in C^{\infty}(\IR^d)$ be non-negative with $\mathrm{supp}\, \psi\subset Q_0$ such that $\psi(0)>0$ and $\Vert D^\alpha \psi \Vert_\infty \le 1$ for all $\alpha\in\IN_0^d$ with $|\alpha|\le K$. The exact choice of the function $\psi$ is not important for our argument. Then according to Definition~13.3 in \cite{Tri11} the function 
\[
\psi_{\nu m}\colon \IR^d\to\IR,\quad \psi_{\nu m}(x)
=2^{-\nu(s-d/p)}\psi(2^{\nu}x-m),\quad x\in\IR^d,\quad\text{where }\nu\in \IN_0, \ m\in \ZZ^d,
\]
is an $(s,p)_{K,-1}$-atom supported in $Q_{\nu m}$, the cube with center $2^{-\nu}m$ and sidelength $2^{-\nu}$.  

Choose, for every $i\in I_0$, a point $m_i\in \ZZ^d$ and $\nu_i\in \IN_0$ with $2^{-\nu_i}\le  \sqrt{d}\,\chole r_i\le 2^{-\nu_i+1}$ such that the dyadic cube $Q_i^*=Q_{\nu_i m_i}$ is contained in $B_i$. Define the fooling function
\[
f_*:=\sum_{i\in I_0}\lambda_i \psi_i,\quad \text{where }\lambda_i:=2^{-\nu_i \beta} \text{ with }\beta=(\gamma+d)/p,
\]
which is supported in $\Omega$ and satisfies $f_*|_{\Pn}=0$. The choice $\lambda_i$ will become clear in a moment. 

To estimate $\|f_*\|_{W^s_p(\Omega)}$ from above, we use Theorem 13.8 in \cite{Tri11}, which yields a constant $C>0$ depending only on $d,s,p$ such that
\begin{equation} \label{eq:atomic}
\|f_*\|_{W^s_p(\IR^d)}^p
\le C\, \|\lambda\|_{f_{pp}}^p,\quad
\text{where }\|\lambda\|_{f_{pp}}
:=\bigg(\int_{\IR^d}\Big(\sum_{i\in I_0} |\lambda_i 2^{\nu_i d/p}\mathbf{1}_{Q_i^*}(x)|^{p}\Big)\dd x\bigg)^{1/p}.
\end{equation}
Here, the symbol $f_{pp}$ denotes a (quasi-)normed space of sequences which are indexed by $\nu\in\IN_0, m\in\ZZ^d$, see \cite[Def.~13.5]{Tri11}. In the sum in \eqref{eq:atomic}, for every $x\in\IR^d$, only one summand is not equal to zero and since $\mathrm{vol}(Q_i^*)=2^{-\nu_i d}$, we have
\[
\|f_*\|_{W^s_p(\IR^d)}^p
\lesssim \sum_{i\in I_0}\int_{\IR^d}(2^{-\nu_i})^{\beta p-d}\mathbf{1}_{Q_i^*}(x)\dd x
= \sum_{i\in I_0}(2^{-\nu_i})^{\beta p}.
\]
As $\beta p=\gamma +d$ and $2^{-\nu_i}\asymp r_i$ and $\|f_*\|_{W^s_p(\Omega)}=\|f_*\|_{W^s_p(\IR^d)}$, we arrive at
\[
\|f_*\|_{W^s_p(\Omega)}^p
\lesssim \sum_{i\in I_0} r_i^{\gamma+d}.
\]
Since the balls $B_i\in \cB_{j_0}$ containing the cubes $Q_i^*$ are pairwise disjoint, a substitution shows that	
\[
\|f_*\|_{L_q(\Omega)}^q
=\sum_{i\in I_0}\int_{Q_i^*}(2^{-\nu_i})^{\beta q}\psi_i(x)^q\dd x
=\sum_{i\in I_0}(2^{-\nu_i})^{\beta q+sq-d q/p+d}\int_{Q}\psi(x)^q\dd x.
\]
Since $\gamma=s(1/q-1/p)^{-1}$ we have $\beta q+sq = \gamma +d q/p$ and thus
\[
\|f_*\|_{L_q(\Omega)}^q
\asymp\sum_{i\in I_0} r_i^{\gamma+d}.
\]
From this it follows that if we replace $f_*$ by the normalized function $f_*/\|f_*\|_{W^s_p(\Omega)}$, it satisfies
\begin{equation} \label{eq:fooling-subset}
\|f_*\|_{L_q(\Omega)}
\gtrsim \Big(\sum_{i\in I_0} r_i^{\gamma+d}\Big)^{1/q-1/p}.
\end{equation}
We now employ the bound \eqref{eq:max-avg} for the specific choice of the family $\cB_{j_0}$ to estimate
\begin{equation} \label{eq:fooling-avg}
\sum_{i\in I_0} r_i^{\gamma+d}
\ge \frac{1}{M_1}\sum_{i=1}^{N}r_i^{\gamma+d}
\ge \frac{1}{2^dM_1}\sum_{i=1}^{N}r_i^\gamma \vol(\Omega\cap Q_i),
\end{equation}
where $Q_i$ is as above. By definition, we have $\dist(x,\Pn)\le \cgood r_i$ for every $x\in \Omega\cap Q_i$. Integrating this inequality yields
\begin{equation} \label{eq:dist-upper-int}
\int_{\Omega\cap Q_i}\dist(x,\Pn)^{\gamma}\dd x
\le \cgood^{\gamma} r_i^{\gamma} \vol(\Omega\cap Q_i),\quad i=1,\ldots,N.
\end{equation}
Thus, combining the bounds \eqref{eq:fooling-avg} and \eqref{eq:dist-upper-int} gives
\[
\sum_{i\in I_0} r_i^{\gamma+d}
\gtrsim \sum_{i=1}^{N} \int_{\Omega\cap Q_i}\dist(x,\Pn)^{\gamma}\dd x
\ge \int_{\Omega}\dist(x,\Pn)^{\gamma}\dd x
\]
since the cubes $Q_i,i=1,\dots,N$, cover $\Omega$. Inserting this into \eqref{eq:fooling-subset} yields the lower bound for the $L_q$-norm of the fooling function and thus the lower bound for the $L_q$-approximation problem. Since $f_{\ast}\ge 0$, also the lower bound for the integration problem is proven.

\end{proof}

\section{Other function spaces} \label{sec:sob-proof-ext}

In this section we give a proof of Theorem~\ref{thm:sob-main-ext}. For the convenience of the reader we will give a definition of Triebel-Lizorkin spaces and also Besov spaces on bounded Lipschitz domains. We will mostly follow \cite{NT06}. For more details see, e.g., Triebel \cite{Tri92} or DeVore and Sharpley \cite{DS93}.

\bigskip

\textbf{The definition of Besov and Triebel-Lizorkin spaces. }We first define the Besov space $B^s_{pq}(\IR^d)$ and the Triebel-Lizorkin space $F^s_{pq}(\IR^d)$ of real-valued functions on $\IR^d$ with the help of a suitable dyadic resolution of unity of the Fourier domain. The Fourier transform maps a function $\psi\colon \IR^d\to\IR$ belonging to $S(\IR^d)$, the Schwartz space of rapidly decreasing smooth functions, to the function $\widehat{\psi}$ defined by
\[
\widehat{\psi}(\xi)
=F(\psi)(\xi)
:=(2\pi)^{-d/2}\int_{\IR^d}{\rm e}^{-i\langle x,\xi\rangle}\psi(x)\dd x,\quad \xi\in\IR^d.
\]
The inverse $F^{-1}$ of $F$ acts the same way but with $-i$ replaced by $i$. 

Choose $\psi\in S(\IR^d)$ with
\[
\psi(x)=1\quad \text{if}\quad \|x\|_2\le 1\qquad \text{and}\qquad \psi(x)=0 \quad \text{if}\quad \|x\|_2\ge \frac{3}{2}.
\]
If we put $\psi_0:=\psi$ and 
\[
\psi_j(x)
:=\psi(2^{-j}x)-\psi(2^{-j+1}x)\quad\text{for }x\in\IR^d \text{ and }j\in\IN,
\]
then the system $(\psi_j)_{j\in\IN_0}$ forms a dyadic resolution of unity, that is $\sum_{j=1}^{\infty}\psi_j(x)=1$ for all $x\in\IR^d$. For any tempered distribution $f\in S'(\IR^d)$ its Fourier and inverse Fourier transform may be defined and the function $F^{-1}(\psi_j \widehat{f})$ can be evaluated pointwise. Thus, the following definition makes sense.

\label{loc:bandf}
Let $s\in\IR$ and $0<q\le\infty$. For $0<p\le \infty$ define the Besov space $B^s_{pq}(\IR^d)$ as the collection of all $f\in S'(\IR^d)$ such that
\[
\|f\|_{B^s_{pq}(\IR^d)}^{\psi}
:=\Big(\sum_{j=0}^{\infty}2^{jsq}\|F^{-1}(\psi_j \widehat{f})\|_{L_p(\IR^d)}^q\Big)^{1/q}
\]
is finite. If $q=\infty$ we modify as usual by taking the $\ell_{\infty}$-norm. For $0<p<\infty$ the Triebel-Lizorkin space $F^{s}_{pq}(\IR^d)$ is given as the collection of all $f\in S'(\IR^d)$ with finite
\[
\|f\|_{F^s_{pq}(\IR^d)}^{\psi}
:=\Big\|\Big(\sum_{j=0}^{\infty}2^{jsq}|F^{-1}(\psi_j \widehat{f})(\cdot)|^q\Big)^{1/q}\Big\|_{L_p(\IR^d)}.
\]
The expression $\|\cdot\|_{A^s_{pq}(\IR^d)}^{\psi}$, where $A$ stands for either $B$ or $F$, is a quasi-norm and becomes a norm if both $p,q\ge 1$. A different choice of $\psi$ yields an equivalent (quasi-)norm, which is why $\psi$ is omitted in the notation. 

On a bounded Lipschitz domain $\Omega\subset\IR^d$ one can define the space $A^s_{pq}(\Omega)$ via restriction as in \eqref{eq:restriction}.  By definition, the space $A^s_{pq}(\Omega)$ contains distributions but an embedding theorem shows that if $s>d/p$ we have the continuous embedding $A^s_{pq}(\Omega)\hookrightarrow C_b(\Omega)$. Further, extension of these functions onto $\IR^d$ is possible if $s>d/p$. Thus, Lemma~\ref{lem:sob-extension} extends to these spaces. For further information consult for example Triebel's book~\cite{Tri92} and see also Rychkov~\cite{Ryc99} for the extension theorem.

\bigskip

\textbf{The proof of Theorem~\ref{thm:sob-main-ext}. } Let us first note that the analogues of Corollary~\ref{cor:sob-char} and Corollary~\ref{cor:sob-ran} immediately follow if we can extend Theorem~\ref{thm:sob-main}, which is what we prove in the following.

For the space $C^s(\Omega)$ with $s\in\IN$ the extension of Theorem~\ref{thm:sob-main}a is already included in Section~\ref{sec:sob-proof-ge}. Namely, the upper bound is immediate from the continuous embedding $C^s(\Omega) \hookrightarrow W_\infty^s(\Omega)$. For this note that all norms on the finite-dimensional space of multi-indices are equivalent. The lower bound holds since our fooling functions $f_\ast$ for $W_\infty^s(\Omega)$ are smooth and thus contained also in $C^s(\Omega)$ with an equivalent norm.  

For the remaining cases, we follow the lines of the proof of Theorem~\ref{thm:sob-main} as given in the previous Sections~\ref{sec:sob-proof-ge} and \ref{sec:sob-proof-le}. We will discuss the necessary changes and refrain from copying the proof.

Let $0<p,q,\tau \le \infty$ and $s\in\IR$ with $s>d/p$. We first give the proof of the upper bound of Theorem~\ref{thm:sob-main-ext} in the remaining cases, where we replace $W^s_p(\Omega)$ by
\begin{itemize}
	\item $C^s(\Omega)$ and $s\not\in\IN$ if $p=\infty$,
	\item $F^{s}_{p\tau}(\Omega)$ if $p<\infty$.
\end{itemize}
Note that it suffices to consider the case $\tau=\infty$ since we have the continuous embedding $ F^s_{p\tau_1}(\Omega)\hookrightarrow F^s_{p\tau_2}(\Omega) $ for $ \tau_1\le \tau_2 $.  Everywhere in the proof of Theorem~\ref{thm:sob-main} we replace $| f |_{W_p^s(\Omega)}$ by the following (quasi-)seminorms.

In the case $p=\infty$ and $s\not\in\IN$, we use the seminorm $|f|_{C^s(\Omega)}$ as defined in \eqref{eq:semi-norm-hoelder}.

If $p<\infty$, we use a (quasi-)seminorm which is defined via the averaged means 
\[
(d_t^{M,\Omega}f)(x) :=
t^{-d}\int_{V^M_{\Omega}(x,t)}\bigl|(\Delta_{h,\Omega}^M f)(x)\bigr|\,\text{d} h,
\]
where $M:=\lfloor s+1 \rfloor$, $\Delta_{h,\Omega}^M$ is an $M^{\rm{th}}$-order difference operator restricted to $\Omega$ and $V^M_{\Omega}(x,t)$ is the set of directions $h\in\IR^d$ of length less than $t>0$ with $x+a h\in\Omega$ for all $0\le a \le M$. More precisely, 
\[
\Delta_{h,\Omega}^M(f)
:=
\begin{cases}
	\Delta_{h}^M(f)&\text{if }x+lh\in\Omega\text{ for }l=0,\ldots,M,\\
	0&\text{otherwise,}
\end{cases}
\]
where, for every $x,h\in\IR^d$,
\[
\Delta_{h}^M(f)
= \Delta_{h}^{1}\Delta_{h}^{M-1}(f) \quad\text{with}\quad \Delta_{h}^{1}(f):=f(x+h)-f(x).
\]

Choosing then in the case $p<\infty$ the (quasi-)seminorm
\[
|f|_{F^s_{p\infty}(\Omega)} :=
 \bigg\| \sup_{0\le t \le 1}  \frac{(d_t^{M,\Omega}f)(\cdot)}{t^s} \bigg\|_{L_p(\Omega)},
\]
it is known that then
\[
 F^s_{p\infty}(\Omega)=\{f\in L_{\infty}(\Omega)\colon |f|_{F^s_{p\infty}(\Omega)}<\infty\}
\]
with
$
\|\cdot \|_{L_{\max\{p,1\}}(\Omega)}+|\cdot|_{F^s_{p\infty}(\Omega)}
$
being an equivalent quasi-norm, see Proposition 6 in \cite{NT06} and set $ u=1$ as well as $r=\infty $. The same is true for $\Omega=\IR^d$. 

It is readily verified that these (quasi-)seminorms have the following scaling property.  If $T\colon\IR^d\to\IR^d$ is of the form $T(y)= \varrho ^{-1} (y-x)$ with $ \varrho \le 1$ and $x\in\IR^d$ and $f\in F^s_{p\infty}(\Omega)$ or $f\in C^s(\Omega)$, then 
\begin{equation}\label{eq:scaling}
|f\circ T^{-1}|_{F^s_{p\infty}(Q_0)} \le  \varrho ^{s-d/p} |f|_{F^s_{p\infty}(Q)},
 \qquad
 |f\circ T^{-1}|_{C^s(Q_0)} =  \varrho ^{s} |f|_{C^s(Q)},\quad  \text{respectively.}
\end{equation}
As mentioned above the extension theorem in Lemma~\ref{lem:sob-extension} holds without changes for the spaces $F^s_{p\tau}(\Omega)$ and $C^s(\Omega)$, where we note that $C^s(\Omega)=B^s_{\infty\infty}(\Omega)$ for $s\not\in\IN$.  If we use \cite[Cor.~11]{NT06} (for $p<\infty$) and \cite[Thm.~6.1]{DS80} (for $p=\infty$) instead of \cite[Lem.~1.1.11]{Maz85} and the scaling properties \eqref{eq:scaling}, we see that Lemma~\ref{lem:sob-poly} concerning polynomial approximation on cubes remains valid under the modifications of Theorem~\ref{thm:sob-main-ext}. Thus, also Lemma~\ref{lem:local-estimate} defies our modifications.

To complete the proof of the upper bound in both cases $q\ge p$ and $q<p$, it only remains to note that the seminorm $|\cdot |_{F^s_{p\infty}(\Omega)}$ behaves equally well with respect to an efficient covering. Analogous to \eqref{eq:norms-efficient}, we have
\begin{equation}\label{eq:norms-efficient-fspace}
\sum_{i=1}^N |f|^p_{F^s_{p\infty}(Q_i)}
= \int_{\mathbb{R}^d} \left(\sup_{0\le t \le 1}  \frac{(d_t^{M,Q_i}f)(x)}{t^s}\right)^p
\sum_{i=1}^N \bfone_{Q_i}(x){\rm d}x
\le c_7 |f|^p_{F^s_{p\infty}(\mathbb{R}^d)}
\le c_8
\end{equation}
since $d_t^{M,Q_i}f(x)\le d_t^{M,\mathbb{R}^d}f(x)$ for every $x\in\mathbb{R}^d$. With these preparations at hand, we may copy the proof of the upper bound in both cases $q\ge p$ and $q<p$. 

Let us now discuss the lower bound. In case of $q\ge p$ we can use the same fooling function supported in a large hole. The scaling properties \eqref{eq:scaling-LB} for $C^s(\IR^d)$ require a straightforward computation, and for $F^s_{p\tau}$ they may be obtained e.g.~from Proposition~2.3.1/1 in Edmunds and Triebel~\cite{ET96} and the translation-invariance of the (quasi-)seminorm. 

Let us therefore consider the case $q>p$ where we used an atomic decomposition which is also valid for Besov and Triebel-Lizorkin spaces. If, for $p<\infty$, we want to replace $W^s_p$ by $F^s_{p\tau}$, we have to substitute the bound \eqref{eq:atomic} by
\[
\|f_*\|_{F^s_{p\tau}(\IR^d)}^p
\le C\, \|\lambda\|_{f_{p\tau}}^p,\quad
\text{where }\|\lambda\|_{f_{p\tau}}
:=\bigg(\int_{\IR^d}\Big(\sum_{i\in I_0} |\lambda_i 2^{\nu_i d/p}\mathbf{1}_{Q_i^*}(x)|^{\tau}\Big)^{p/\tau}\dd x\bigg)^{1/p}.
\]
\chg{Here,} $\|\cdot\|_{f_{p\tau}}$ is another sequence quasi-norm. Otherwise the proof can remain unchanged.

In the case $q<p=\infty$, where we replace $W^s_p$ by $C^s$, we use the convention that $a/\infty=0$ for any $a\in\IR$, whence $\beta=0$ and $\lambda_i=1$ for all $i\in I_0$. In this case, the fooling function is equal to 
\[
f_{*}
:=\sum_{i\in I_0}2^{-s\nu_i}\psi(2^{\nu_i}x-m_i).
\]
The norm of this function can be bounded as follows. If $s\in\IN$, one can use the disjoint supports of the summands to see that $\|f_{*}\|_{C^s(\IR^d)}\le \sup_{x\in\IR^d}|\psi(x)|\le 1$, and if $s\not\in\IN$ we use that Theorem~13.8 in \cite{Tri11} is also valid for Besov spaces and the bound 
\[
\|f_{*}\|_{C^s(\IR^d)}
\le C\,\|\lambda\|_{b_{\infty\infty}},\quad
\text{where }\|\lambda\|_{b_{\infty\infty}}:=\max_{i\in I_0}|\lambda_i|
= 1
\]
instead of $\eqref{eq:atomic}$. Here we used that if $s\not\in\IN$, then $C^s(\IR^d)=B_{\infty\infty}^s(\IR^d)$ with equivalent norms. The remainder of the proof of the lower bound requires only obvious modifications.

Note that also Theorem~\ref{thm:sob-main}b extends to the spaces since $\psi$ is non-negative. See also the discussion in Section~\ref{sec:sob-fool-int}.

\chapter{Additional material for Chapter 5}\label{ch:interlude-app}
\fancyhead[CO]{\nouppercase{\textsc{\leftmark}}}
\fancyhead[CE]{\nouppercase{\textsc{\leftmark}}}

In the following, we prove equality \eqref{eq:transport} between the minimal Wasserstein distance to a measure supported on a point set and the distortion, identity \eqref{eq:hoelder-wce} for the worst-case error for Hölder functions, the extension to the normed spaces given in Proposition~\ref{pro:lip} and Proposition~\ref{pro:weights} stating that weights can be assumed to be normalized.

The following proof is inspired by the one for \cite[Lem.~3.4]{GL00}. Recall that $\mu$ is a Borel probability measure on $\IR^d$ absolutely continuous with respect to the Lebesgue measure and that its distortion is given by \eqref{eq:distortion}. 
\vspace{0cm}
\begin{proof}[Proof of \eqref{eq:transport}]
For the upper bound we will exhibit an explicit transference plan between the absolutely continuous measure $\mu$ and the discrete measure $\mu_{\Pn,a}$ given by \eqref{eq:discrete}. To this end, let $T_{\Pn}\colon \IR^d\to\IR^d$ be the quantizer from \eqref{eq:opt-quant} in Section~\ref{sec:sob-quant} which maps $x$ to the closest point of $\Pn$ and is of the form
\[
T_{\Pn}=\sum_{i=1}^{n}x_i \bfone_{C(x_i,\Pn)},
\]
where $C(x_i,\Pn)$ is as in \eqref{eq:voronoi}. 

Define the map \chg{$G_{\Pn}\colon \IR^d\to \IR^d\times \IR^d$} by $G_{\Pn}(x)=\big(x,T_{\Pn}(x)\big)$. Then the pushforward measure $\pi_{\Pn}:=\mu\circ G_{\Pn}^{-1}$ satisfies, for any Borel sets $A,B\subset\IR^d$,
\begin{align*}
	\pi_{\Pn}(A\times \IR^d) 
	&= \mu(x\in\IR^d\colon x\in A, T_{\Pn}(x)\in \IR^d)
=\mu(A)\\
\pi_{\Pn}(\IR^d\times B) 
&= \mu(x\in\IR^d\colon T_{\Pn}(x)\in B)
=\nu_{\Pn}(B),
\end{align*}
where $\nu_{\Pn}:=\sum_{i=1}^{n}\mu\big(C(x_i,\Pn)\big)\delta_{x_i}$. Thus, $\pi_{\Pn}$ has marginals $\mu$ and $\nu_{\Pn}$ and belongs to $\pi(\mu,\nu_{\Pn})$.  Then, by \eqref{eq:dist-quant}, the distortion is equal to
\begin{equation*}
	\int_{\IR^d} \|x-T_{\Pn}(x)\|^r \dd\mu(x)
=\int_{\IR^d} \|x-y\|^r \dd\pi_{\Pn}(x,y)
\ge T_r(\mu,\nu_{\Pn})
\ge \inf_{a_1,\ldots,a_n\in\IR} T_r(\mu,\nu_{\Pn,a}),
\end{equation*}
which completes the upper bound.

\newpage

For the lower bound we note that, for every $a_1,\dots,a_n\in\IR$ and every transference plan $\pi\in\pi(\mu,\nu_{\Pn,a})$,
\begin{align*}
\int_{\IR^d\times\IR^d} \|x-y\|^r \dd\pi(x,y)
&\ge \int_{\IR^d\times \Pn} \|x-y\|^r \dd\pi(x,y)\\
&\ge \int_{\IR^d\times \Pn}\min_{i=1,\dots,n} \|x-x_i\|^r \dd\pi(x,y)
= D_{\mu,\Pn,r}
\end{align*}
Taking the infimum over all couplings and then over all weights completes the proof.
\end{proof}

Next, we deduce the expression for the minimal worst-case error of weighted algorithms for Hölder and Lipschitz functions from the proof of Theorem~5 in \cite{Gru04}.
\begin{proof}[Proof of \eqref{eq:hoelder-wce}]
Let $s\in (0,1)$. For the lower bound consider the fooling function $f_{\ast}:=\dist(\cdot,\Pn)^{s}$ which vanishes on $\Pn$ and has integral $\intmu(f_{\ast})=D_{\mu,\Pn,s}$.  It is a consequence of the triangle and Hölder's inequality that $|f_{\ast}|_{C^{s}(\IR^d)}\le 1$. The method of fooling functions described in Section~\ref{sec:sob-fool-int} then establishes the lower bound.

For the upper bound we note that the infimum over all weights can be bounded from above if we insert the optimal cubature rule from \eqref{eq:opt-cubature}, and then 
\begin{align*}
\inf_{a_1,\dots,a_n\in\IR}\sup_{|f|_{C^{s}}\le 1}\Big| \intmu(f)-\sum_{i=1}^{n}f(x_i) a_i\Big|
&\le \sup_{|f|_{C^{s}}\le 1}\Big| \intmu(f)-Q_{\Pn,\mu}(f) \Big|\\
&= \sup_{|f|_{C^{s}}\le 1}\Big| \sum_{i=1}^{n}\int_{C(x_i,\Pn)}f(x)-f(x_i)\dd \mu(x) \Big|
\end{align*}
Using the triangle inequality and that $|f|_{C^{s}(\IR^d)}\le 1$ implies \chg{$|f(x)-f(x_i)|\le \|x-x_i\|_{2}^{s}$}, we see that this bounded from above by the distortion $ D_{\mu,\Pn,s} $ completing the proof of the identity for $s\in (0,1)$. The modifications necessary for $s=1$ and $\lip$ are straightforward.
\end{proof}

\begin{proof}[Proof of Proposition~\ref{pro:lip}]
	Let $s\in (0,1)$. For the lower bound we again choose the fooling function $f_{\ast}:=\dist(\cdot,\Pn)^s$ which satisfies $|f_{\ast}|_{C^s(D)}\le 1$. To normalize, we divide by $1+\|f_{\ast}\|_{\infty}=1+h_{\Pn,D}^s$, which yields the lower bound
\[
e(C^s(D),\intmu,Q_{\Pn,a})
\ge \frac{1}{1+h_{\Pn,D}^s}\int_{D}\dist(x,\Pn)^s\dd\mu(x),
\]
where we can replace $C^s(D)$ by $\lipd$ for $s=1$. To have a constant independent of the point set $\Pn$, and also of $s$, one can use that
\[
h_{\Pn,D}^{s}
\le \max\{1,h_{\Pn,D}\}
\le \max\{1,\diam(D)\}.
\]

For the proof of the upper bound we use the just proven identity \eqref{eq:hoelder-wce} and that optimal weigths are given by \eqref{eq:voronoi}. It remains to note that the infimum over arbitrary real weights is bounded from above by the infimum over weights summing to one and that the worst-case error over functions with $|f|_{C^s(\IR^d)}\le 1$ is at least as large as the worst-case error over all functions with $\|f\|_{C^s(\IR^d)}\le 1$, and analogously for $s=1$ and $\lipd$.
\end{proof}

Let us conclude this appendix with the following proof.

\begin{proof}[Proof of Proposition~\ref{pro:weights}]
	Let $f\in F$ with $\|f\|_F\le 1$ be arbitrary and $Q_{\Pn,a}$ a cubatrue rule with points $\Pn=\{x_1,\ldots,x_n\}\subset D$ and weights $a=(a_1,\ldots,a_n)$. Then the cubature rule $Q_{\Pn,a^*}$ using the normalized weights $a^*$ with $a_i^*=a_i/\sum_{i=1}^{n}a_i, i=1,\ldots,n,$ satisfies
\[
\Big|\sum_{i=1}^{n}a_i^* f(x_i)-\intmu(f)\Big|
\le \Big|\sum_{i=1}^{n} a_i f(x_i) - \intmu(f)\Big|
+\Big|\sum_{i=1}^{n} (a_i^*-a_i)f(x_i)\Big|. 
\]
The first term on the right-hand side is bounded by $e(F,\intmu,Q_{\Pn,a})$ and the second term is bounded in absolute value by
\[
\Big|\sum_{i=1}^{n} (a_i^*-a_i)f(x_i)\Big|
\le C\,\Big|1-\sum_{i=1}^{n}a_i\Big|
\le C\,\|\bfone_D\|_F\, e(F,\intmu,Q_{\Pn,a})
\]
as the normalized function $\bfone_D/\|\bfone_D\|_F$ belongs to the unit ball of $F$.
\end{proof}
\cleardoublepage
\phantomsection
\chapter{Additional material for Chapter 6}\label{ch:iso-app}
\fancyhead[CO]{\nouppercase{\textsc{\rightmark}}}
\fancyhead[CE]{\nouppercase{\textsc{\leftmark}}}

In this appendix we provide the missing proofs for Chapter~\ref{ch:iso}. In Section~\ref{sec:iso-proof-lower} we prove the lower bound of its characterization in terms of the spectral test (Theorem~\ref{thm:iso-char}) and the lower bound on the spectral test (Proposition~\ref{pro:spectral-lower}), thus completing the proof of the lower bound in Theorem~\ref{thm:iso-lower}. 

Section~\ref{sec:iso-proof-upper} contains the proofs of Lemmas~\ref{lem:neighbourhood} and \ref{lem:diam-bound} to fill in the gaps of the proof sketch for the upper bound of Theorem~\ref{thm:iso-char} given in Section~\ref{sec:iso-lll}. Additionally, we shall give the proof behind the asymptotics in Remark~\ref{rem:subexp}.  Finally, Section~\ref{sec:iso-proof-dist} contains the proof of Proposition~\ref{pro:distspectral}.

Throughout, let $L$ be a $d$-dimensional lattice and $\cP(L)=L\cap [0,1)^d$ be the corresponding lattice point set. Further, let $\cH^*$ be a hyperplane covering of $L$ as in Section~\ref{sec:iso-hyp}, where the distance between adjacent hyperplanes is maximal and equal to the spectral test $\sigma(L)$.

\section{Lower bounds}\label{sec:iso-proof-lower}

\begin{proof}[Proof of the lower bound of Theorem~\ref{thm:iso-char}]
By the pigeonhole principle we find a hyperplane from $\cH^*$ which contains sufficiently many points of $\cP(L)$. Since the unit cube $[0,1)^d$ has diameter $\sqrt{d}$, it can be intersected by no more than $\sqrt{d}/\sigma(L)+1$ hyperplanes from $\cH^*$. If all of them contained strictly less than 
\[
n_{-}
:=\frac{\sigma(L)}{\sqrt{d}+\sigma(L)}n
\]
points of the lattice point set $\cP(L)$, this would be a contradiction to the fact that there are at most $\sqrt{d}/\sigma(L)+1$ of them and any point of $\cP(L)$ must lie on one of these hyperplanes. Therefore, we may find at least $n_{-}$ points of $\cP(L)$ lying on some hyperplane. This implies the lower bound.

Suppose now that additionally $\sigma(L)\leq 1/2$. 
In order to find a suitable pair of hyperplanes such that their convex hull contains the center $(1/2,\dots,1/2)$ of the cube $[0,1)^d$, consider the one-dimensional space orthogonal to all hyperplanes in $\cH^*$ which is spanned by some $h\in L^{\bot}$. The rays emanating from the center of the cube into the opposite directions $\pm h$ hit \chg{a hyperplane} of $\cH^*$ at distance at most $\sigma(L)$ from the center of the cube. In this way, we get a pair of adjacent hyperplanes $H_1,H_2\in\cH^*$ sandwiching the center of the cube with possibly one of them containing the center. Denote the collection of all hyperplanes which lie between $H_1$ and $H_2$ by $\widetilde{\cH}$ such that the interior of the convex hull of $H_1$ and $H_2$ satisfies 
\[
\widetilde{C}
={\rm int}\,{\rm conv}(H_1\cup H_2)=\bigcup_{H\in\widetilde{\cH}} H.
\]
Then $C:=\widetilde{C}\cap [0,1)^d$ does not contain any point from $\cP(L)$. Since all hyperplanes in $\widetilde{\cH}$ have distance at most $\sigma(L)\leq 1/2$ from the center, \cite[Thm.~1.1]{KR20} yields a constant independent of the dimension such that
\begin{equation} \label{eq:slab-lower}
\vol(C)
\geq \sigma(L) \inf_{H\in \widetilde{\cH}} \text{vol}_{d-1}(H\cap [0,1)^d)
\ge c\, \sigma(L).
\end{equation}
Consequently, the isotropic discrepancy of $\cP(L)$ is at least $c\,\sigma(L)$. 
\end{proof}

We briefly explain how the lower bound on the isotropic discrepancy in Theorem~\ref{thm:iso-lower} follows. If $\sigma(L)>1/2$, then we use the fact that the function $f\colon x\mapsto x/(\sqrt{d}+x), x\ge 1/2,$ is increasing and satisfies $f(1/2)=1/(2\sqrt{d}+1)$. In the other case we apply the lower bound on the spectral test in Proposition~\ref{pro:spectral-lower} which we prove next.

Before doing that, let us recall the content of Minkowski's fundamental theorem, which states the following: 
\begin{quote}
Let $L$ be a lattice in $\IR^d$. Then any convex set in $\IR^d$ which is symmetric with respect to the origin and with volume greater than $2^d \det(L)$ contains a non-zero lattice point of $L$.
\end{quote}
See, e.g., \cite[Thm.~447]{HW79} in the book by Hardy and Wright.

\begin{proof}[Proof of Proposition~\ref{pro:spectral-lower}]
	By the definition of the spectral test we need an upper bound on the shortest vector in the dual lattice $L^{\bot}$. To this end, we will apply Minkowski's theorem to $L^{\bot}$ and the ball $r\IB_2^d$ of radius $r>0$. According to Sloan and Kachoyan \cite[Sec.~3 and 4]{SK87} (see also \cite[Thm.~5.30]{Nie92}) we have $\det(L^{\bot})=n$. The volume of $r\IB_2^d$ is 
\[
\vol(r\IB_2^d)
=r^d \frac{\pi^{d/2}}{\Gamma(\frac{d}{2}+1)}.
\]
Hence, by Minkowski's theorem, if
\[
r^d \frac{\pi^{d/2}}{\Gamma(\frac{d}{2}+1)} > 2^d \det(L^\bot) =2^d n,
\]
i.e., if
\[
r > \frac{2}{\sqrt{\pi}} \big(\Gamma(\frac{d}{2}+1)\big)^{1/d}  n^{1/d}=:\widetilde{r}(d,n)
\]
then $r\IB_2^d$ contains a non-zero point from $L^\bot$. In other words, $L^\bot$ contains a non-zero lattice point of length at most $\widetilde{r}(d,n)$ and therefore the length of the shortest vector of $L^{\bot}$ is at most $\widetilde{r}(d,n)$ and the spectral test at least $\widetilde{r}(d,n)^{-1}$.
\end{proof}

\section{The upper bound for the characterization}\label{sec:iso-proof-upper}

In the following, we introduce useful concepts from convex geometry, see, e.g., the book of Schneider~\cite{Sch14} for an introduction. 
\begin{definition}\label{def:minkowski}
Given non-empty $A,B\subseteq \mathbb{R}^d$ we define the Minkowski addition and the Minkowski difference by
\[
A+B:=\bigcup_{b\in B} (A+b)\quad \text{and}\quad A\div B:=\bigcap_{b\in B} (A-b),\text{ respectively,}
\]
where $A\pm b=\{a\pm b\colon a\in A\}$ for every $b\in B$.
\end{definition}
Then, for all $\rho> 0$,
\begin{align*}
K+\rho \IB_2^d&=\{x\in\mathbb{R}^d\colon \dist(x,K)<\rho\},\\
K\div\rho \IB_2^d&=\{x\in K\colon \dist\big(x,K^C\big)\ge \rho\}.
\end{align*}
We define a family of convex parallel sets by
\[
K_{\rho}:=
\begin{cases}
K+\rho \IB_2^d& \text{for }\rho\ge 0,\\
K\div (-\rho)\IB_2^d & \text{for } \rho < 0.
\end{cases}
\]
The largest $\rho>0$ such that $K\div \rho \IB_2^d\neq\emptyset$ is given by the inradius of $K$, which is defined by $r(K):=\sup\{\rho\ge 0\colon x+\rho \IB_2^d\subseteq K \text{ for some } x\in \mathbb{R}^d\}$. As a consequence, $\vol(K_{-r(K)})=0$ and $\rho<-r(K)$ implies $K_{\rho}=\emptyset$. For $\rho> 0$ we have $K_{\rho}=K+\rho \IB_2^d$ and $K_{-\rho}=K\div \rho \IB_2^d$. Further, $K_0=K$. 

Comparing the definitions we see that for any $\rho\ge 0$
\begin{equation}\label{eq:volumedifference}
\vol(K_{\rho}^+)=\vol(K_{\rho})-\vol(K) \quad\text{and}\quad \vol(K_{\rho}^-)=\vol(K)- \vol(K_{-\rho}).
\end{equation}

We will use Steiner's formula (see, e.g., Schneider \cite[eq.~(4.8)]{Sch14}) stating that, for every $\rho\ge 0$,
\begin{equation}\label{eq:steiner}
\vol(K+\rho \IB_2^d)=\sum_{j=0}^d \binom{d}{j}W_j(K)\rho^j,
\end{equation}
where $W_j(K)$ is the $j$-th quermassintegral of $K$. As a mixed volume, it is \chg{monotone with respect to} set inclusion, i.e., it satisfies $W_j(K_1)\le W_j(K_2)$ for $j=0,\ldots, d$, whenever $K_1\subseteq K_2$ are convex bodies. Note that $W_0(K)=\vol(K)$ and $d\, W_1(K)$ is the surface area of $K$.

We shall need a result noted by Hadwiger in his book \cite[Eq.~(30), page 207]{Had57}; compare also to \cite[Prop.~2.6]{RG20} by Richter and Saor\'in G\'omez who give additional references. 

\begin{lemma}\label{lem:diffvol}
The function $v(\rho):=\vol(K_{\rho})$ is differentiable on $(-r(K),\infty)$ and its derivative satisfies $v'(\rho)=d\, W_1(K_{\rho})$.
\end{lemma}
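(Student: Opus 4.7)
Proof plan. I would split according to the sign of $\rho$ and in each case use Steiner's formula \eqref{eq:steiner} to identify the derivative as the coefficient of the linear term in a local expansion of $v$ around $\rho$.

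For $\rho \geq 0$, Steiner's formula expresses $v(\rho) = \vol(K + \rho \IB_2^d) = \sum_{j=0}^d \binom{d}{j} W_j(K) \rho^j$ as a polynomial in $\rho$, so $v$ is smooth on $(0,\infty)$. To identify $v'(\rho)$ with $d\, W_1(K_\rho)$, I would apply Steiner's formula a second time, now to the convex body $K_\rho$: for any auxiliary $s \geq 0$,
$$\sum_{k=0}^d \binom{d}{k} W_k(K_\rho)\, s^k = \vol(K_\rho + s\IB_2^d) = \vol(K + (\rho+s)\IB_2^d) = \sum_{j=0}^d \binom{d}{j} W_j(K)(\rho+s)^j,$$
where the middle equality uses $K_\rho + s\IB_2^d = K + (\rho+s)\IB_2^d$ for $\rho \geq 0$. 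Expanding $(\rho+s)^j$ and comparing the coefficient of $s^1$ on both sides gives $d\,W_1(K_\rho) = \sum_{j=1}^d j\binom{d}{j} W_j(K)\rho^{j-1} = v'(\rho)$.

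For $-r(K) < \rho < 0$, I would use the coarea formula applied to the $1$-Lipschitz function $f(x) := \dist(x, K^C)$. Since $K_\rho = \{x \in K : f(x) \geq |\rho|\}$, the coarea formula together with $|\nabla f| = 1$ almost everywhere yields
$$v(\rho) = \int_{|\rho|}^{\infty} \mathcal{H}^{d-1}\bigl(\{x \in K : f(x) = t\}\bigr)\, \dd t = \int_{|\rho|}^{\infty} \mathcal{H}^{d-1}(\partial K_{-t})\, \dd t.$$
Differentiating in $\rho$ and using the standard identification of the $(d-1)$-dimensional surface area of a convex body with its first quermassintegral, $\mathcal{H}^{d-1}(\partial K_\tau) = d\, W_1(K_\tau)$, gives $v'(\rho) = d\,W_1(K_\rho)$. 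The case $\rho = 0$ is then handled by continuity: the right derivative (from the polynomial in the first case) and the left derivative (from the coarea argument) both equal $d\, W_1(K)$.

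The main technical step is upgrading the almost-everywhere differentiability provided by the coarea formula to pointwise differentiability on all of $(-r(K), 0)$. This will follow from the fact that $\rho \mapsto K_\rho$ is continuous in the Hausdorff metric on $(-r(K), \infty)$ and that $W_1$ is continuous on the space of convex bodies with this metric, so that the integrand $t \mapsto d\, W_1(K_{-t})$ in the coarea representation is continuous. This continuity also provides the matching of the one-sided derivatives at $\rho = 0$ needed to glue the two regimes together.
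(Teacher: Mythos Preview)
The paper does not actually prove this lemma: it is quoted from Hadwiger~\cite{Had57} (with an additional reference to \cite{RG20}) and used as a black box. So there is no ``paper's proof'' to compare against; your proposal is a genuine proof where the paper only cites one.

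Your argument is essentially correct. The case $\rho\ge 0$ is clean: Steiner's formula makes $v$ a polynomial, and the double application of Steiner's formula to identify $v'(\rho)$ with $d\,W_1(K_\rho)$ is exactly right. For $\rho<0$, the coarea route works: the key inputs are that $|\nabla\dist(\cdot,K^C)|=1$ almost everywhere on $\mathrm{int}(K)$, that $f^{-1}(t)=\partial K_{-t}$, and the standard identity $\mathcal H^{d-1}(\partial C)=d\,W_1(C)$ for convex bodies. The only point that deserves a bit more care is the claimed Hausdorff continuity of $t\mapsto K_{-t}$ on $(0,r(K))$. This is true, but note it is not a Lipschitz statement: the easy inclusion $K_{-t}+(t-s)\IB_2^d\subset K_{-s}$ only gives one direction, while the reverse inclusion $K_{-s}\subset K_{-t}+(t-s)\IB_2^d$ can fail (think of a long thin triangle). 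Continuity nevertheless holds by a monotone convergence argument: as $t\uparrow t_0$ the sets decrease to $K_{-t_0}$, and as $t\downarrow t_0$ they increase with closure $K_{-t_0}$, which suffices for Hausdorff convergence of compact convex sets. Once that is in place, continuity of $W_1$ and the fundamental theorem of calculus give pointwise differentiability, and the one-sided derivatives match at $\rho=0$ as you say.
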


From this we derive the following inequality, which will enable us to prove Lemma~\ref{lem:neighbourhood}.

\begin{lemma}\label{lem:outervsinner}
For all $\rho\ge 0$ we have $\vol(K_{\rho}^+)\ge \vol(K_{\rho}^-).$
\end{lemma}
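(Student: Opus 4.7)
The strategy is to recast the claim using \eqref{eq:volumedifference} as the statement
\[
\vol(K_{\rho}) + \vol(K_{-\rho}) \ge 2\vol(K) \quad\text{for every }\rho\ge 0,
\]
with the convention $\vol(K_{-\rho})=0$ for $\rho\ge r(K)$, and to verify it by differentiating in $\rho$. The main ingredient is Lemma~\ref{lem:diffvol}, which furnishes derivatives of both $\vol(K_\rho)$ and $\vol(K_{-\rho})$ on the regime $\rho < r(K)$, combined with the monotonicity of the mixed volume $W_1$ under set inclusion already noted below Steiner's formula~\eqref{eq:steiner}.

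For the principal range $0\le \rho <r(K)$, I would introduce $g(\rho):=\vol(K_{\rho})+\vol(K_{-\rho})-2\vol(K)$. Lemma~\ref{lem:diffvol} applied to both $\rho\mapsto\vol(K_\rho)$ and, via the chain rule, $\rho\mapsto\vol(K_{-\rho})$, shows that $g$ is differentiable with
\[
g'(\rho) = d\,W_1(K_\rho) - d\,W_1(K_{-\rho}).
\]
Because $K_{-\rho}\subseteq K\subseteq K_\rho$, monotonicity of $W_1$ gives $g'(\rho)\ge 0$. Starting from $g(0)=0$, integration yields $g(\rho)\ge 0$ on $[0,r(K))$, which is exactly the desired inequality on this range.

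The remaining case $\rho\ge r(K)$ is the only place where one has to be mildly careful, since the inner parallel set degenerates. Here $\vol(K_{-\rho})=0$, so the claim reduces to $\vol(K_\rho)\ge 2\vol(K)$. I would obtain this by passing to the limit $\rho\uparrow r(K)$ in the inequality proved above, using continuity of $v(\rho)=\vol(K_\rho)$ from Steiner's formula and the fact that $\vol(K_{-\rho})\to 0$, to deduce $\vol(K_{r(K)})\ge 2\vol(K)$; monotonicity of $v$ then gives $\vol(K_\rho)\ge \vol(K_{r(K)})\ge 2\vol(K)$ for all $\rho\ge r(K)$. The main obstacle is thus the clean passage across the threshold $\rho=r(K)$, but both monotonicity and continuity of $v$ make it automatic.
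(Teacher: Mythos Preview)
Your proof is correct and follows essentially the same route as the paper: both arguments reduce to $v(\rho)-v(0)\ge v(0)-v(-\rho)$, invoke Lemma~\ref{lem:diffvol} together with the monotonicity of $W_1$ to compare derivatives on $(-r(K),\infty)$, and then handle $\rho\ge r(K)$ separately via monotonicity of $v$. The only cosmetic difference is that the paper applies the mean value theorem on $[0,\rho]$ and $[-\rho,0]$ where you integrate $g'\ge 0$ directly; these are equivalent.
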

\begin{proof}
Using \eqref{eq:volumedifference}, this inequality can be written as $v(\rho)-v(0)\ge v(0)-v(-\rho)$. Suppose first that $0<\rho\le r(K)$. Lemma \ref{lem:diffvol} and the mean value theorem yield some $\rho_1\in (0,\rho)$ and $\rho_2\in (-\rho,0)$ such that
\[
\frac{v(\rho)-v(0)}{\rho}=v'(\rho_1) \quad \text{and}\quad \frac{v(0)-v(-\rho)}{\rho}=v'(\rho_2).
\]
Since $K_{\rho_2}\subseteq K_{\rho_1}$ and the quermassintegral $W_1(\cdot)$ is \chg{monotone}, we have $v'(\rho_1)\ge v'(\rho_2)$. This completes the proof in this case.

If $\rho=0$, we have equality by definition, and if $\rho>r(K)$, the monotonicity of the volume yields $v\big(r(K)\big)\le v(\rho)$, and thus from the previously established case $\rho=r(K)$ it follows that
\[
v(\rho)-v(0)\ge v\big(r(K)\big)-v(0)\ge v(0)-v\big(-r(K)\big)=v(0)-v(-\rho)
\]
since $v\big(-r(K)\big)=v(-\rho)=0$. By means of 
 \eqref{eq:volumedifference} this completes the proof.
\end{proof}

\begin{proof}[Proof of Lemma~\ref{lem:neighbourhood}]
If $\rho=0$, there is nothing to show and thus assume $\rho\in (0,1]$ from now on. Lemma \ref{lem:outervsinner} implies that $\max\{\vol(K_{\rho}^+),\vol(K_{\rho}^-)\}= \vol(K_{\rho}^+)$ and it remains to estimate the latter. By Steiner's formula \eqref{eq:steiner} we have
\[
\vol(K_{\rho}^+)=\vol(K_{\rho})-\vol(K)=\sum_{j=1}^d \binom{d}{j} W_j(K) \rho^j.
\]
The monotonicity of the quermassintegrals yields
\[
\vol(K_{\rho}^+)\le \sum_{j=1}^d \binom{d}{j} W_j([0,1]^d) \rho^j
\]
with equality for $K=[0,1]^d$. According to e.g.\ Lotz, McCoy, Nourdin, Peccati and Tropp \cite[Ex.~6.1.3]{LMN+19} it is a classical fact that for $j=0,1,\ldots, d$ the $j$-th intrinsic volume $V_j([0,1]^d)$ of the unit cube equals $\binom{d}{j}$. The relation to the quermassintegrals is given by $\binom{d}{j}W_j([0,1]^d)=\kappa_jV_{d-j}([0,1]^d),$ where $\kappa_j$ is the $j$-dimensional volume of the unit ball of $(\mathbb{R}^j,\|\cdot\|_2)$. Together with the symmetry of the binomial coefficients, this implies $W_j([0,1]^d)=\kappa_j$. Therefore, as $\rho\le 1$, we have
\[
\vol(K_{\rho}^+)\le \rho\sum_{j=1}^d \binom{d}{j}\kappa_j \rho^{j-1} \le \rho\sum_{j=1}^d\binom{d}{j}\kappa_j.
\]
Using the fact that $\kappa_j\le \kappa_5=8\pi^2/15\le 2^3$ for every $ j\in \IN$ and that $\sum_{j=0}^d \binom{d}{j}=2^d$ completes the proof of Lemma~\ref{lem:neighbourhood}.
\end{proof}

We finish the proof of the upper bound of Theorem~\ref{thm:iso-char} with the following proof of the estimate on the diameter of a cell with respect to an LLL-reduced basis.

\begin{proof}[Proof of Lemma~\ref{lem:diam-bound}]
	Let $\fP=\{\sum_{i=1}^{d} \lambda_i b_i\colon 0\le \lambda_i<1, i=1,\ldots,d\}$ be the fundamental cell with respect to an LLL-reduced basis $b_1,\ldots,b_d$ of $L$ as given by Definition~\ref{def:lll}. Then, from the properties of a reduced basis it can easily be deduced, see, e.g., \cite[Lem.~17.2.8]{Gal12}, that
\begin{enumerate}
\item[(a)] $\Vert b^{\ast}_j\Vert_2^2\leq 2^{i-j}\Vert b^{\ast}_{i}\Vert_2^2$ for $1\leq j \leq i \leq d$ and
\item[(b)] $\Vert b_i\Vert_2^2\leq 2^{d-1}\Vert b^{\ast}_i\Vert_2^2$ for $1\leq i\leq d$.
\end{enumerate}
Together these estimates imply
\[
\Vert b^{\ast}_d\Vert_2\geq 2^{-(d-1)/2}\max_{1\leq i\leq d}\Vert b^{\ast}_i\Vert_2\geq 2^{-d+1}\max_{1\leq i \leq d}\Vert b\Vert_2.
\]
Using this, we bound the diameter of $\fP$ by
\[
\dia
\leq \sum_{i=1}^d \Vert b_i\Vert_2 
\leq d \max_{1\leq i \leq d} \Vert b_i\Vert_2
\leq d \, 2^{d-1} \, \Vert b_d^{\ast}\Vert_2.
\]
By the Gram-Schmidt algorithm we have 
\[
\Vert b_d^{\ast}\Vert_2 = \left\Vert b_d - \sum_{i=1}^{d-1} \mu_{d,j}b^{\ast}_j\right\Vert_2.
\]
That is, the length of the last vector in the Gram-Schmidt orthogonalization is the length of the projection of the vector $b_d$ onto the orthogonal complement of the subspace span$\{b^{\ast}_1,\dots,b^{\ast}_{d-1}\}=\text{span}\{b_1,...,b_{d-1}\}$ spanned by the other basis vectors. This is exactly the distance between two adjacent hyperplanes of the family of parallel hyperplanes
\[
k\, b_d+\text{span}\{b_1,...,b_{d-1}\},\quad k\in\mathbb{Z},
\]
which covers the entire lattice $L$. Therefore, since this distance cannot be larger than the spectral test, we have
\[
d\, 2^{d-1}\, \sigma(L)
\geq d\, 2^{d-1} \, \Vert b^{\ast}\Vert_2 \geq \text{diam}(\fP).
\]
\end{proof}
This completes the proof of the upper bound of Theorem~\ref{thm:iso-char}.

We give now the proof behind Remark~\ref{rem:subexp}. It was suggested by us by an anonymous reviewer when the paper \cite{SP21} was submitted and improved our previous asymptotic estimates. The improvement of Remark~\ref{rem:subexp} can be achieved because the above upper estimate of the sum 
\[
\sum_{j=1}^d\binom{d}{j}\kappa_j \quad \text{with the volumes of the unit balls } \kappa_j=  \frac{\pi^{j/2}}{\Gamma(1+j/2)}
\]
can be replaced with the following asymptotics
\begin{equation} \label{eq:subexp}
\sum_{j=1}^d {d \choose j} \frac{\pi^{j/2}}{\Gamma(1+j/2)} = {\rm e}^{\frac{3}{2}(2\pi)^{1/3}d^{2/3}+\mu(d)},
\end{equation}
 where $\vert \mu(d)\vert \le c\, d^{1/3}$ for some constant $c>0$ and all $d$.

 \begin{proof}[Proof of \eqref{eq:subexp}]
Setting
\[
	a_{j}:={d \choose j} \frac{\pi^{j/2}}{\Gamma(1+j/2)} \quad \text{for } j=1,\ldots,d,
\]
it holds that $a_{j}\le a_{j+2}$ if and only if
\[
	(j+1)(j+2)^{2}\le 2\pi (d-j)(d-j-1).
\]
From this one can deduce that there exists an absolute constant $C>0$ such that for each $d$ the maximal index $j_{d}$, i.e., $a_{j_{d}}=\max_{1\le j \le d}a_{j}$, satisfies $j_{d}= (2\pi)^{1/3}d^{2/3} + \mu_{0}(d)$  with $\vert \mu_{0}(d)\vert\le  C\, d^{1/3}$.

Using Stirling's formula for the Gamma function stating that
\begin{equation} \label{eq:stirling}
\Gamma(x)=\sqrt{\frac{2\pi}{x}} \left(\frac{x}{{\rm e}}\right)^x {\rm e}^{\mu_{1}(x)},\quad
\text{where }0< \mu_{1}(x) < \frac{1}{12 x}\text{ for all }x>0,
\end{equation}
and the asymptotics
\[
d(d-1)\cdots (d-j+1)=d^{j}{\rm e}^{-(j^{2}/2d)\big(1+\mu_{2}(j/d)\big)}\quad \text{for } j=1,\ldots,d,
\]
where $\mu_{2}(x)\to 0$ if $x\to 0$, we compute 
\[
	a_{j}=\frac{\sqrt{2}}{\pi j}\frac{d^{j}(2\pi)^{j/2}}{j^{3j/2}}{\rm e}^{3j/2-(j^{2}/2d)\big(1+\mu_{2}(j/d)\big)-\mu_{1}(j)-\mu_{1}(j/2)}\quad \text{for } j=1,\ldots,d. 
\]
Inserting $j=j_{d}$ gives
\[
	a_{j_{d}}={\rm e}^{\frac{3}{2}(2\pi)^{1/3}d^{2/3}+\mu_{3}(d)}
\]
with $\vert \mu_{3}(d)\vert\le C'\, d^{1/3}$ for some $C'>0$ and all $d$. Finally, using $a_{j_{d}}\le \sum_{j=1}^{d} a_{j}\le d\, a_{j_{d}}$ proves the statement.
\end{proof}

\section{Proof of Proposition 6.13}\label{sec:iso-proof-dist}

	H\"older's inequality implies 
\[
\|\dist\big(\cdot,\mathcal{P}(L)\big)\|_{L_{\gamma}([0,1)^d)}\le \|\dist\big(\cdot,\mathcal{P}(L)\big)\|_{L_{\infty}([0,1)^d)},
\]
and therefore it suffices to prove the lower bound for $\gamma \in (0,\infty)$  and the upper for $\gamma=\infty$.

We start with the proof of the lower bound and let $\gamma\in (0,\infty)$ be arbitrary. Take a hyperplane covering $\cH^*$ of $L$ as in the beginning of Section~\ref{sec:iso-hyp} with distance $\sigma(L)$ between adjacent hyperplanes. For $t\in (0,1/2)$ consider the set
\[
A_t:=\bigg\{x\in [0,1)^d \ \colon \ \dist\bigg(x,\bigcup_{H\in\cH^*}H\bigg)\ge t \, \sigma(L)\,\bigg\}.
\]
As the family $\cH^*$ covers the lattice $L$, we have $\mathcal{P}(L)\subseteq \bigcup_{H\in\cH^*}H$, and thus for $x\in A_t$ it holds that $\dist\big(x,\mathcal{P}(L)\big)\ge t\, \sigma(L)$. Taking powers and integrals on both sides yields
\[
\int_{[0,1)^d}\dist\big(x,\mathcal{P}(L)\big)^{\gamma}\,{\rm d}x\ge  t^{\gamma} \, \sigma(L)^{\gamma} \, \vol(A_t).
\]

For establishing the lower bound it suffices to find $t_d>0$ such that $\vol(A_{t_d})\ge 1/2$. To this end, we show that for some $t_d>0$ the complement $B_{t}:=[0,1)^d\backslash A_{t}$ satisfies $\vol(B_{t_d})\le 1/2$. 

For any $t>0$, we first decompose the set $B_t$ into the disjoint union $B_t=\bigcup_{H\in\cH^*} S_t(H),$
where we let 
\[
S_t(H):=\{x\in [0,1)^d \ \colon \ \dist(x,H)< t\, \sigma(L)\}.
\]
Consequently, we have $\vol(B_t)=\sum_{H\in\cH^*} \vol\big(S_t(H)\big)$. 

For any $t>0$, at most $\sqrt{d}/\sigma(L)+2$ of the sets $S_t(H), H\in\cH^*,$ are non-empty, and thus only finitely many terms of the sum are non-zero. This is because the cube $[0,1)^d$ has diameter $\sqrt{d}$ and can therefore be intersected by no more than $\sqrt{d}/\sigma(L)$ hyperplanes contained in $\cH^*$. The volume of a set $S_t(H)$ is bounded by its width, which is at most $2t\,\sigma(L)$ times the quantity $\sup_H \vol_{d-1}(H\cap [0,1)^d)$, where the supremum is extended over all hyperplanes $H$ in $\IR^d$. Since $[0,1)^d$ is bounded, this supremum is bounded by some constant only depending on the dimension, call it $v_d$. This implies, since $H\in\cH^*$ can be arbitrary,
\[
\vol(B_t)\le (\sqrt{d}/\sigma(L)+2)\, 2t \,\sigma(L)\, v_d=\big(2\sqrt{d}+4\sigma(L)\big)\,v_d\, t.
\]
Using that $\sigma(L)\le \sqrt{d}$ we can choose $t_d=(12\sqrt{d}v_d)^{-1}$ such that $\vol(B_{t_d})\le 1/2$. This completes the proof of the lower bound.

We turn to the proof of the upper bound for which we have to find $C_d>0$ such that 
\[
\|\dist\big(\cdot,\mathcal{P}(L)\big)\|_{L_{\infty}([0,1)^d)}\le C_d\,\sigma(L).
\]
If we choose 
\[
r_0:=\frac{1}{2} \|\dist\big(\cdot,\mathcal{P}(L)\big)\|_{L_{\infty}([0,1)^d)},
\]
then there exists a ball $B(y,r_0)$ with center $y\in [0,1)^d$ and radius $r_0>0$ which is empty of points from $\mathcal{P}(L)$. We now use the fact that by Lemma~\ref{lem:convex-cone} in Section~\ref{sec:sob-mls} the cube $[0,1)^d$ satisfies an interior cone condition. Then Lemma~\ref{lem:ballinball} implies that there exists a ball $B(z,r_0')$ contained in $ B(y,r_0)\cap [0,1)^d$ with $r_0'= u_d r_0$, where the quantity $u_d>0$ only depends on $d$. 

Let $\fP$ be the fundamental parallelotope with respect to a LLL-reduced basis of $L$ (see Section~\ref{sec:iso-lll}) and fix $x\in L$ such that the translate $\fP_x=x+P$ contains the center $z$ of the ball. By Lemma~\ref{lem:diam-bound}, we have $\dia\le d\, 2^{d-1}\sigma(L)$. 

Thus, if $\dia\le r_0'$, then we must have the inclusions $\fP_x\subset B(z,r_0')\subset [0,1)^d$ and consequently $x\in \mathcal{P}(L)\cap B(z,r_0')$. This is a contradiction to $B(y,r_0)\cap \cP(L)=\emptyset$. Therefore, we must have $\dia>r_0'\ge u_d r_0$ and 
\[
\|\dist\big(\cdot,\mathcal{P}(L)\big)\|_{L_{\infty}([0,1)^d)}\le C_d\, \sigma(L),
\]
where $C_d:=d\,2^{d}u_d^{-1}$. This completes the proof of the upper bound of Proposition~\ref{pro:distspectral}.$\hfill \qed$

\end{document}